\documentclass[a4paper,11pt]{article}

\usepackage{amsmath}
\usepackage{amssymb}
\usepackage{amsthm}
\usepackage{graphicx}
\usepackage{mathabx}
\usepackage{verbatim}
\usepackage{color}
\usepackage[%
ocgcolorlinks,%
linkcolor=blue,%
filecolor=blue,%
citecolor=blue,%
urlcolor=blue]{hyperref}
\usepackage{lipsum}
       
\usepackage[font={small,it}]{caption}

\newcommand{\bbG}{\mathbb{G}}

\newcommand{\bbE}{\mathbb{E}}
\newcommand{\bbV}{\mathbb{V}}
\newcommand{\bbX}{\mathbb{X}}
\newcommand{\bbY}{\mathbb{Y}}
\newcommand{\Var}{\bbV{\rm ar}}

\newcommand{\bbP}{\mathbb{P}}

\newcommand{\bbN}{\mathbb{N}}
\newcommand{\bbZ}{\mathbb{Z}}
\newcommand{\bbR}{\mathbb{R}}

\newcommand{\eps}{\epsilon}

\newcommand{\ol}{\overline}

\newcommand{\wh}{\widehat}
\newcommand{\wc}{\widecheck}

\newcommand{\Argmin}{\text{Argmin}}
\newcommand{\argmax}{\text{argmax}}

\newcommand{\cA}{\mathcal A}

\newcommand{\cZ}{\mathcal Z}
\newcommand{\cC}{\mathcal C}

\newcommand{\cJ}{\mathcal J}
\newcommand{\cB}{\mathcal B}

\newcommand{\cT}{\mathcal T}

\newcommand{\cY}{\mathcal Y}

\newcommand{\cE}{\mathcal E}
\newcommand{\cN}{\mathcal N}
\newcommand{\cF}{\mathcal F}
\newcommand{\cM}{\mathcal M}
\newcommand{\cL}{\mathcal L}
\newcommand{\cR}{\mathcal R}
\newcommand{\cO}{\mathcal O}
\newcommand{\cQ}{\mathcal Q}

\newcommand{\rmc}{{\rm c}}
\newcommand{\rmF}{{\rm F}}
\newcommand{\rmd}{{\rm d}}
\newcommand{\rme}{{\rm e}}
\newcommand{\rmB}{{\rm B}}

\newcommand{\rmL}{{\rm L}}
\newcommand{\rmP}{{\rm P}}

\newcommand{\rmA}{{\rm A}}
\newcommand{\rmV}{{\rm V}}
\newcommand{\rmW}{{\rm W}}
\newcommand{\rmU}{{\rm U}}
\newcommand{\rmD}{{\rm D}}

\newcommand{\rmH}{{\rm H}}
\newcommand{\rmX}{{\rm X}}
\newcommand{\rmY}{{\rm Y}}
\newcommand{\rmZ}{{\rm Z}}
\newcommand{\rmQ}{{\rm Q}}
\newcommand{\rmO}{{\rm O}}

\newcommand{\rmE}{{\rm E}}

\newcommand{\rmM}{{\rm M}}
\newcommand{\rmS}{{\rm S}}

\newcommand{\bfA}{{\mathbf A}}
\newcommand{\bfG}{{\mathbf G}}
\newcommand{\bfW}{{\mathbf W}}
\newcommand{\bfF}{{\mathbf F}}
\newcommand{\bfN}{{\mathbf N}}
\newcommand{\bfL}{{\mathbf L}}

\newcommand{\bfT}{{\mathbf T}}
\newcommand{\bfR}{{\mathbf R}}
\newcommand{\bfX}{{\mathbf X}}

\newcommand{\bft}{{\mathbf t}}

\newcommand{\bfone}{{\mathbf 1}}
\newcommand{\bfXi}{{\bf \Xi}}
\newcommand{\bfLambda}{{\bf \Lambda}}
\newcommand{\bfGamma}{{\bf \Gamma}}
\newcommand{\bfS}{{\bf S}}

\newcommand{\rad}{{\rm rad}}
\newcommand{\Leb}{{\rm Leb}}
\newcommand{\supp}{{\rm supp}}

\newcommand{\wt}{\widetilde}

\newcommand{\la}{\langle}
\newcommand{\ra}{\rangle}

\newtheorem{theorem}{Theorem}[section]

\newtheorem{lem}[theorem]{Lemma}
\newtheorem{prop}[theorem]{Proposition}
\newtheorem{thm}[theorem]{Theorem}
\newtheorem*{rem*}{Remark}
\newtheorem{rem}[theorem]{Remark}

\newtheorem{mainthm}{Theorem}

\newtheorem{phasethm}{Theorem}

\numberwithin{equation}{section}
\graphicspath{{pic/}}

\newcommand{\textd}{\text{\rm d}\mkern0.5mu}

\newcommand{\texte}{\text{\rm e}\mkern0.7mu}

\newcommand{\1}{{1\mkern-4.5mu\textrm{l}}}
\renewcommand{\1}{\text{\sf 1}}

\newcommand{\E}{\mathbb E}

\newcommand{\G}{\mathbb G}

\newcommand{\N}{\mathbb N}

\newcommand{\R}{\mathbb R}

\newcommand{\Z}{\mathbb Z}

\newcommand{\TodoF}[1]{\footnote{{\bf Todo}: #1}}

\DeclareMathOperator{\Bigeo}{Bi-Geo}
\DeclareMathOperator{\Biexp}{Bi-Exp}
\DeclareMathOperator{\Poigeo}{Poi-Ge}

\title
{A Limit in Law for the Cover Time and Last Visited Vertex of Wired Planar Domains}
\author
{Oren Louidor\thanks{oren.louidor@gmail.com,  saglietti.sj@uc.cl.} 
\\ Technion, Israel \and Santiago Saglietti\footnotemark[1]\\ Pontificia Universidad\\ Cat\'olica de Chile}
\date{}

\begin{document}
\maketitle

\begin{abstract} 
We derive a scaling limit in law for the cover time of a simple random walk on a lattice version of a scaled-up planar domain with wired boundary conditions. The limiting distribution is that of a Gumbel Random Variable shifted randomly by an independent quantity which is equal to the full mass of a variant of the critical Liouville Quantum Gravity Measure on the same domain. We also derive a limit in law for the scaled location of the vertex visited last by the walk. Here the limit turns out to be precisely the critical Liouville Measure, normalized by its total mass. Both limits hold jointly with the limiting joint law explicitly described. 
These results resolve well known open problems in the field, in the case of wired boundary conditions. The proof is based on comparison with the extremal landscape of the discrete Gaussian Free Field, and in particular a version there-of obtained by conditioning the average value of the field to be zero.
\end{abstract}

\setcounter{tocdepth}{2}
\tableofcontents

\section{Introduction and results}

\subsection{Setup and main results}
\label{s:1}
Let $\rmD \subseteq \bbR^2$ be an non-empty bounded simply connected domain whose boundary is a finite union of connected components, each having positive Euclidean diameter.
For $N \geq 1$, consider its discretized scaled-up version
\begin{equation}
\label{e:1.1}
  \rmD_N := \Big\{ x \in \bbZ^2 :\: \rmd(x/N, \rmD^\rmc) > 1/N \Big\} \,. 
\end{equation} 
Above $\rmd(\cdot, \cdot)$ is the usual point-to-set distance with respect to the Euclidean norm on $\bbR^2$. Viewing subsets of $\bbZ^2$ as sub-graphs of the lattice, let $\wh{\rmD}_N$ be the graph obtained from $\rmD_N \cup \partial \rmD_N$, where $\partial \rmD_N$ is the outer boundary of $\rmD_N$, by contracting all vertices in $\partial \rmD_N$ into a single vertex to be denoted by $\partial$. 

On $\wh{\rmD}_N$ run a continuous time simple random walk $\bfX=(\bfX_u :\: u \geq 0)$, which starts from $\partial$ and transitions at (edge) rate $1$. The {\em cover time} of $\wh{\rmD}_N$ by $\bfX$ is defined as the first time by which all of the vertices of $\wh{\rmD}_N$ have been visited, namely
\begin{equation}
\label{e:1.1a}
	\wc{\bfT}_N := \min \big\{t \geq 0:\: \bigcup_{u \in [0,t]}\{ \bfX_u\} = \wh{\rmD}_N \big\} \,.
\end{equation}
The {\em last visited vertex} is
\begin{equation}
	\wc{\bfX}_N := \bfX_{\wc{\bfT}_N}.
\end{equation}

In this work, we show that the cover time and the last visited vertex obey a joint weak scaling limit at large $N$ and characterize their limiting law. The first of the main results asserts the existence of such limit.
\begin{mainthm}
\label{t:1.1}
Let $\wc{\bft}_N$ be defined via the relation:
\begin{equation}
\label{e:1.5}
\sqrt{\wc{\bft}_N} = \frac{1}{\sqrt{\pi}} \log N - \frac{1}{4 \sqrt{\pi}} \log \log N \,.
\end{equation}
Then, there exist random variables $\wc{\bfT}_\rmD$ and $\wc{\bfX}_\rmD$, defined on the same probability space and take values in $\bbR$ and $\rmD$ respectively, such that
\begin{equation}
\label{e:2.4}
\Big(\frac{\wc{\bfT}_N/|\rmD_N| - \wc{\bft}_N}{\log N}
\,,\,\, \frac{\wc{\bfX}_N}{N}\Big)
	\underset{N \to \infty}\Longrightarrow \, \big(\wc{\bfT}_\rmD,\, \wc{\bfX}_\rmD\big) \,.
\end{equation}
\end{mainthm}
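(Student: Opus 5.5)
We plan to go through local times and use the generalized second Ray--Knight isomorphism (Eisenbaum--Kaspi--Marcus--Rosen--Shi) on the wired graph $\wh{\rmD}_N$, where the DGFF $h_N$ on $\rmD_N$ with zero boundary values serves as the Gaussian field pinned at $\partial$. Let $L_t(x)$ be the local time at $x$, normalized by the degree, at the inverse local time $\tau_t$ at $\partial$. Then
\[
\bigl(L_{t}(x) + \tfrac12 h_N(x)^2\bigr)_{x} \overset{d}{=} \bigl(\tfrac12 (h'_N(x) + \sqrt{2t})^2\bigr)_{x},
\]
with $h_N, h'_N$ independent of the walk. The cover time event $\{\wc{\bfT}_N \le \tau_t\}$ is the same as $\{\min_x L_t(x) > 0\}$. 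So we first study the walk at the random time $\tau_t$, which is in the right scale when $\sqrt{t}\approx \sqrt{\wc{\bft}_N}$ plus corrections of order $\log N$, and translate back to real time only at the end. Since $\tau_t \approx t |\rmD_N|$ with Gaussian-type fluctuations, the time change should be deterministic to first order. Here the conditioning on the average of the field enters: the fluctuation of $\tau_t/|\rmD_N|$ around $t$ is, through the isomorphism, linked to the spatial average of $h'_N$. This is why we expect a DGFF conditioned to have zero average, and why the shift in the limiting law is a Liouville-type mass for that conditioned field.

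The key steps are as follows. First, we would establish, following the extremal theory of the DGFF (Biskup--Louidor), the convergence of the extremal process of $h_N$ (and of $h_N$ conditioned on zero mean) near level $m_N = 2\sqrt{g}\log N - \tfrac34\sqrt g\log\log N$. The limit is a Cox process with intensity $Z_\rmD(dx)\otimes e^{-\alpha h}dh$, where $Z_\rmD$ is the critical LQG measure. We also need the analogous statement for the \emph{low} values of $\sqrt{2t}+h'_N$, that is, for points where the shifted field comes close to $0$.

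Second, we would show that $\{L_t(x)=0\}$ essentially happens exactly when $|h'_N(x)+\sqrt{2t}|$ is small while $h_N(x)$ is comparable. Equivalently, we use the inverse-isomorphism approach from the cover-time tightness literature (Belius--Rosen--Zeitouni, Abe): conditional on $h'_N+\sqrt{2t}$ having a deep minimum near $x$, the local time vanishes there with a computable probability. Here the $\log\log N$ correction in \eqref{e:1.5} is exactly $\tfrac14$ rather than $\tfrac34$ times the constant, because of the square-root transformation and the barrier estimates, which shift the effective exponent.

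Third, we would compute the limit of
\[
\bbP\bigl(\min_x L_t(x)>0\bigr)
\]
with $\sqrt{t}=\sqrt{\wc{\bft}_N} + s\log N/\ldots$. By a Poisson approximation on the (conditioned) extremal process this becomes $\bbE\exp(-c\,e^{-\alpha s}Z_\rmD(\rmD))$. This is a randomly shifted Gumbel. Carrying the location of the point where the local time is exhausted last gives $Z_\rmD(dx)/Z_\rmD(\rmD)$ jointly.

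Fourth, we would convert from $\tau_t$ to real time using the concentration of $\tau_t/|\rmD_N|$ on scale $\log N$, with the fluctuation absorbed into the zero-average conditioning. We would then pass from $\tau_t$ to the actual cover time by monotonicity, since the last uncovered vertex is hit at an excursion from $\partial$.

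The main obstacle is the second step. Vanishing local time is a much finer event than being a low value of the isomorphism field, and the two fields in the isomorphism are coupled in a non-independent way at the extremes. We would try first to localize: condition on the configuration outside small balls around candidate minima, use the Markov property of the DGFF and an excursion decomposition of the walk, and show that the local behaviour is asymptotically i.i.d.\ across well-separated candidates. This would give a cluster law whose total weight produces the constant in the Gumbel.
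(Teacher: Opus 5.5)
There is a genuine gap in your Steps~2--3. The isomorphism is informative about the vacant set only at the $\partial$-time at which the shift ($\sqrt{2t}$ in your normalization) sits at the DGFF extreme level $m_N$. At the cover-time scale the shift exceeds $m_N$ by order $\log\log N$. So with high probability no point has $h'_N+\sqrt{2t}$ near $0$, and the identity cannot distinguish an empty vacant set from one with $O(1)$ clusters. Hence $\bbP(\min_x L_t(x)>0)$ at $t\approx\wc{\bft}_N$ cannot be obtained from a Poisson approximation of a Gaussian extremal process.

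Even at the right time, the correspondence runs opposite to your Step~2. Since $L_t$ and $h_N$ are \emph{independent}, the low set of $(h'_N+\sqrt{2t})^2$ is an independent thinning of the low-local-time set. A cluster survives only where $h_N$ (variance $\asymp\log N$) is $O(1)$, i.e.\ with probability $\Theta((\log N)^{-1/2})$. Conditioning on Gaussian minima therefore sees only the $O(1)$ surviving clusters out of $\Theta(\sqrt{\log N})$ vacant ones, and your claim that ``$\{L_t(x)=0\}$ essentially happens when $h'_N(x)+\sqrt{2t}$ is small'' is false.

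The paper goes the other way. It proves that the local-time profiles on the clusters are asymptotically i.i.d. This needs more than a Markov property, because different clusters see different macroscopic downcrossing counts; the paper shows, via sharp ballot estimates, that under a repelled downcrossing trajectory the cluster-scale law forgets the count at a larger scale. Consequently the Gaussian cluster process has the Laplace functional of a Cox process whose intensity is a constant times $(\log N)^{-1/2}$ times the vacant-cluster process. Matching this with the known limit ${\rm PPP}(C_u\cZ_\rmD)$ identifies the limit of the vacant-cluster process as $C_\star\cZ_\rmD$.

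What then produces the Gumbel law, the last visited vertex and the correct $\log\log N$ coefficient is a second phase that your plan lacks. Start from $\Theta(\sqrt{\log N})$ well-separated microscopic clusters. Each is hit essentially independently at rate $\approx 1/\log N$ per unit $\partial$-time (in the paper's retuned units), and is then covered within that same excursion with high probability. A coupon-collector computation over an extra $\partial$-time $\tfrac12\log N\log\log N+s\log N$ gives $\exp(-\rme^{-s}K/\sqrt{\log N})$, with $K$ the number of clusters. The last visited cluster is uniform among them, whence $\cZ_\rmD^\circ$. This extra time is precisely what lowers the $\log\log N$ coefficient from $\tfrac{3}{8\sqrt\pi}$ (that of $m_N$) to $\tfrac{1}{4\sqrt\pi}$; it is not a square-root or barrier effect.

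Finally, the time change is not negligible. $\tau_t/|\rmD_N|-t$ fluctuates on order $\sqrt t\asymp\log N$, exactly the normalization in \eqref{e:2.4}, and is asymptotically a multiple of $\sqrt t$ times the average $\ol h'_N$. You therefore need the \emph{joint} limit of the vacant-cluster process and $\ol h'_N$, hence of the DGFF extremal process and the field average. The zero-average DGFF is the tool for proving that joint limit (via $h_N=\psi_N\ol h_N\oplus\wh h_N$). The resulting shift is $\log\|\cZ_\rmD\|$ plus a multiple of the coupled $\ol h_\rmD$, not the Liouville mass of the conditioned field.
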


Turning to describing the limiting law in~\eqref{e:2.4}, while far from being immediately apparent, the latter is intimately related to asymptotics of certain extreme order statistics of the {\em discrete Gaussian Free Field} (DGFF) on the same domain. We thus begin by recalling that the latter is a Gaussian random field $h_N = (h_N(x) :\: x \in \rmD_N)$ with zero mean and covariance $G_N$ given by 
\begin{equation}
\label{e:1.6i}
G_N:=\frac{1}{2}\G_{\rmD_N}^{\bfX} \,,
\end{equation} 
where $\G^{\bfX}_{\rmD_N}$ is the (discrete) Green Function on $\rmD_N$ associated with $\bfX$. Explicitly,
\begin{equation}\label{eq:defdgf}
\G^{\bfX}_{\rmD_N}(x,y):=\E_x \int_0^{\tau_\partial} 1_y(\mathbf{X}_u)\rmd u
\quad ; \qquad x,y \in \rmD_N \,,
\end{equation} 
where $\tau_\partial$ denotes the hitting time of $\partial$ and $\bbE_x$ denotes expectation with respect to the walk $\bfX$ started from $x$. (Note that, by our choice of rates, $\G^{\bfX}_{\rmD_N}$ is $\frac{1}{4}$ times the, more standard, Green Function
 for (discrete time) simple random walk on $\rmD_N$, which together with the choice of $1/2$ as the normalization constant in~\eqref{e:1.6i}, makes $h_N$ a $1/\sqrt{8}$ multiple of the DGFF on $\rmD_N$ under the common normalization. Such choices are arbitrary and made to simplify the constants in what follows.)

A standard way to record the min-extreme values of $h_N$ is via the so-called 
{\em multi-scale min-extremal process} of $h_N$ (henceforth extremal process for short). The latter is defined for $N, r \geq 0$ and a centering sequence $(m_N)_{N \geq 1}$ as
\begin{equation}\label{eq:defempgff}
\eta_{N,r}:=\sum_{x\in D_N}
\delta_{\,x/N}\otimes\delta_{\,h_N(x)+m_N}\otimes\delta_{\,\{h_N(x+y)-h_N(x)\colon y\in\Z^2\}} 
\1_{\{h_N(x)=\min_{z\in \rmB(x; r)} h_N(z)\}}\,.
\end{equation}
This is a point process on $\ol{\rmD} \times \bbR \times \bbR^{\bbZ_2}$ which records the {\em scaled} position, {\em centered} value and relative heights of all $r$-local minima of $h_N$. The sequence $(-m_N)_{N \geq 1}$ captures the typical value of the min-extremes of $h_N$.

Indeed, it was shown in~\cite[Theorem~2.1]{BL3} that with the choice 
\begin{equation}
m_N:= \frac{1}{\sqrt{\pi}} \log N - \frac{3}{8\sqrt{\pi}}  \log \log N \,,
\end{equation}
and whenever 
\begin{equation}
\label{e:1.11i}
r_N \wedge (N/r_N) \underset{N\to\infty}\longrightarrow \infty \,,
\end{equation} 
we have
\begin{equation}
\label{e:1.10i}
\eta_{N,r_N}
\underset{N \to \infty}\Longrightarrow \eta_\rmD 
\quad; \qquad
	\eta_\rmD \,\big|\, \cZ_\rmD \sim\  \text{\rm PPP}\bigl(\cZ_\rmD(\textd x)\otimes\texte^{-\alpha s}\textd s\otimes\nu(\rmd \omega)\bigr) \,,
\end{equation}
as (random) elements of the space of Radon measures on $\ol{\rmD} \times \bbR \times \bbR^{\bbZ^2}$ equipped with the vague topology. 
Above, ${\rm PPP}$ stands for {\em Poisson Point Process}, $\alpha = 4\sqrt{\pi}$ is a constant, $\nu$ is a deterministic distribution on $\bbR_+^{\bbZ_2}$ (whose law can be explicitly described) and $\cZ_\rmD$ is a random Radon measure on $\rmD$ known as the {\em critical Liouville Quantum Gravity Measure} (cLQGM) on~$\rmD$. The similarity between $m_N$ and $\sqrt{\wc{\bft}_N}$ is, of course, not a coincidence (see Section~\ref{sec:phaseA}).

A formal definition for the cLQGM $\cZ_\rmD$ can be given in terms of its (random) Radon-Nykodim Derivative w.r.t. the Lebesgue Measure on $\rmD$ (up-to an unimportant constant which we omit):
\begin{equation}\label{eq:deflqgm}
	\cZ_\rmD(\rmd x) = \big(\alpha \bbE h^2_\rmD(x) - h_\rmD(x)\big) \rme^{\alpha h_\rmD(x) - \frac{\alpha^2}{2}\bbE h^2_\rmD(x)}\, \rad_\rmD(x)^2 \rmd x \,,
\end{equation}
where $\alpha$ is as before, $\rad_\rmD(x)$ is the conformal radius of $x$ about $\rmD$, and $h_\rmD$ is the {\em continuous Gaussian Free Field} (CGFF) on $\rmD$. The CGFF can be defined as a proper $N \to \infty$ scaling-limit of $h_N$, or, more directly, as a centered Gaussian field on $\rmD$ with covariance $G_\rmD$ given by 
\begin{equation}\label{eq:defcgff}
G_\rmD:=\frac{1}{2}{\G}_\rmD \,.
\end{equation}
Above, $\G_\rmD$ denotes the (usual, continuous) Green Function on $\rmD$ which, under our normalization, 
is also the $N \to \infty$ limit of $\G^{\bf{X}}_{\rmD_N}\big(\lfloor N \cdot \rfloor, \lfloor N \cdot \rfloor\big)$ from~\eqref{e:1.6i}.
Due to the logarithmic singularity of the Green Function on the diagonal, the field $h_\rmD$ exists only as a random distribution. As such, the definition in~\eqref{eq:deflqgm} is only a formal one and a limiting procedure is required to make sense of it rigourosly (see, e.g.,~\cite{rhodes2014gaussian}).

Aside from the measure $\cZ_\rmD$, the description of the limiting law in Theorem~\ref{t:1.1} also requires the empirical average of the continuous field:
\begin{equation}
\label{e:1.15i}
	\ol{h}_\rmD := \frac{1}{\Leb(\rmD)} \int_{\rmD} h_\rmD(x) \rmd x \,.
\end{equation}
While again only a formal definition, $\ol{h}_\rmD$ can be naturally made sense of as
\begin{equation}
\label{e:1.7p}
\ol{h}_\rmD \sim \cN\big(0, \ol{\sigma}^2_\rmD\big)
\quad; \qquad 
		\ol{\sigma}^2_\rmD := \frac{1}{\big(\Leb(\rmD)\big)^2} \int_{\rmD \times \rmD} G_\rmD(x,y) \,\rmd x \rmd y \,.
\end{equation}
Equivalently, $\ol{h}_\rmD$ is the $N \to \infty$ weak limit of the discrete field averages:
\begin{equation}
\label{e:1.17i}
	\ol{h}_N := \frac{1}{|\rmD_N|} \sum_{x \in \rmD_N} h_N(x) \,.
\end{equation}

While it is elementary to see that discrete averages tend in law to their continuous analog, the joint occurrence of this limit 
together with that of the extremal process of the field~\eqref{e:1.10i}, is a non-trivial fact. This fact is crucially required both for showing the existence of the limit in Theorem~\ref{t:1.1}, and for establishing a coupling between $\cZ_\rmD$ and $\ol{h}_\rmD$, using which the limiting law can be defined. This leads to the second main result in this work.
\begin{mainthm}
\label{t:1.2i}
There exists a coupling between
$\eta_\rmD$ from~\eqref{e:1.10i}, $\cZ_\rmD$ from~\eqref{eq:deflqgm} and $\ol{h}_\rmD$ from~\eqref{e:1.7p}, such that $\eta_\rmD$ and $\cZ_\rmD$ 
are related as in~\eqref{e:1.10i}, $\cZ_\rmD$ is measurable w.r.t. $\eta_{\rmD}$ and $\eta_\rmD$ is conditionally independent of $\ol{h}_\rmD$ given  $\cZ_\rmD$. Under this coupling, if $(r_N)_{N \geq 1}$ satisfies~\eqref{e:1.11i} then
\begin{equation}
\label{E:1.9}
\big(\eta_{N,r_N},\, \ol{h}_N\,\big)
\underset{N \to \infty}\Longrightarrow \big(\eta_\rmD,\, \ol{h}_\rmD \big) \,,
\end{equation}
with the underlying space for the first component taken as in~\eqref{e:1.10i}. In particular, the joint law of $\eta_\rmD$, $\cZ_D$ and $\ol{h}_\rmD$ in any such coupling is the same.
\end{mainthm}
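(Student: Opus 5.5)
Since $\ol{h}_N$ is a linear functional of the field, I will split $h_N$ into its projection onto the average plus an orthogonal remainder. The plan is to show that in the limit the extremal process depends only on the remainder, except through a deterministic shift that is asymptotically negligible at the scale of the extremes.

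Define $\phi_N(x) := \mathrm{Cov}(h_N(x), \ol{h}_N)/\bbV{\rm ar}(\ol{h}_N) = |\rmD_N|^{-1}\sum_y G_N(x,y)/\bbV{\rm ar}(\ol{h}_N)$. Write $h_N = h_N' + \phi_N \ol{h}_N$, where $h_N'$ is independent of $\ol{h}_N$; $h'_N$ is exactly the field conditioned to have zero average. By Green function asymptotics, $\phi_N(\lfloor N x\rfloor) \to \phi(x) := \int_\rmD G_\rmD(x,y)\rmd y/(\Leb(\rmD)\ol{\sigma}_\rmD^2)$ uniformly on compacts. Moreover $\phi_N$ is bounded and has vanishing local oscillation: on balls of radius $r_N=o(N)$ it changes by $o(1)$, and on lattice-scale neighbourhoods by $O(1/N)$. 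Hence the local-minimum indicator and the relative heights in $\eta_{N,r_N}$ are, up to negligible events, the same for $h_N$ and $h'_N$. This needs $r_N$-local minima to be stable under perturbations of size $o(1)$ in a window, which follows from the tightness and the gap statements implicit in~\eqref{e:1.10i}. The main work is then to show that the extremal process $\eta'_{N,r_N}$ of $h'_N$ converges, i.e.
\begin{equation*}
\eta'_{N,r_N} \Longrightarrow \eta'_\rmD \,;\qquad \eta'_\rmD\,|\,\cZ'_\rmD \sim {\rm PPP}\big(\cZ'_\rmD(\rmd x)\otimes \rme^{-\alpha s}\rmd s\otimes\nu\big)
\end{equation*}
for some random measure $\cZ'_\rmD$.

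To obtain this I will rely on the existing result rather than redo the proof. Consider Laplace functionals $\bbE\exp(-\langle \eta_{N,r_N}, f\rangle - \lambda \ol h_N)$. Conditioning on $\ol{h}_N=t$ shifts the heights by $t\phi_N(x)$, so for $f$ of the form $f(x,s,\omega)$ the functional equals $\bbE[\exp(-\langle \eta'_{N,r_N}, f_{t\phi}\rangle)]$ integrated against the Gaussian law of $\ol h_N$, with $f_{t\phi}(x,s,\omega)=f(x,s+t\phi(x),\omega)$. Tightness of $\eta'_{N,r_N}$ follows from that of $\eta_{N,r_N}$ and independence. Along any subsequence, $(\eta'_{N,r_N},\ol h_N)$ then converges to a pair $(\eta', \ol h)$ with independent components, and the limit $\eta$ of the full process is $\eta'$ shifted by $\ol h\phi$. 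Knowing the law of $\eta$ from~\eqref{e:1.10i}, I will identify the law of $\eta'$ by a deconvolution argument. The Laplace functional of $\eta$ is $\bbE\exp(-\int \cZ_\rmD(\rmd x)\, F(x))$, where $F(x)=\int(1-\rme^{-f})\rme^{-\alpha s}\rmd s\,\nu(\rmd\omega)$. Shifting by $t\phi$ multiplies the intensity by $\rme^{-\alpha t\phi(x)}$. It is therefore natural to expect $\eta'$ to be Cox with intensity $\cZ'_\rmD$ satisfying $\cZ_\rmD \overset{d}{=} \rme^{\alpha \ol h_\rmD\phi}\cZ'_\rmD$ with $\ol h_\rmD$ independent. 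This matches the continuum decomposition $h_\rmD = h'_\rmD + \phi\,\ol{h}_\rmD$ inserted into~\eqref{eq:deflqgm}. The derivative prefactor produces an extra term $-\phi\,\ol h_\rmD$ times a subcritical-type measure, which vanishes at criticality, so I will define $\cZ'_\rmD$ directly as the critical LQG built from $h'_\rmD$.

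To show that $\eta'$ is indeed Cox with that intensity, I will rerun the standard uniqueness argument (invariance under Dysonization, or the Liggett characterization used in~\cite{BL3}) for $h'_N$. Alternatively I will show directly, via the same approach as in BL3, that the limit is determined by $\cZ'$ built from $h'$. The cleaner route is to construct the coupling explicitly: take the continuum decomposition $h_\rmD=h'_\rmD+\phi\ol h_\rmD$, set $\cZ_\rmD := \rme^{\alpha\ol h_\rmD\phi}\cZ'_\rmD$, and let $\eta_\rmD$ be the Cox process with intensity $\cZ_\rmD$ given $(\cZ_\rmD,\ol h_\rmD)$. Conditional independence of $\eta_\rmD$ and $\ol h_\rmD$ given $\cZ_\rmD$ is then built in. Measurability of $\cZ_\rmD$ with respect to $\eta_\rmD$ follows from the known fact that the intensity of a Cox process with $\rme^{-\alpha s}$ tails can be recovered from the points, via their asymptotic counts as $s\to\infty$. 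Joint convergence~\eqref{E:1.9} follows by matching joint Laplace functionals.

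The main obstacle is justifying that the discrete conditional process $\eta'_{N,r_N}$ converges to the Cox process driven by $\cZ'_\rmD$, jointly with $\ol h_N$. Identifying the limit law from the law of $\eta$ alone requires injectivity of the multiplicative convolution $\cZ'\mapsto \rme^{\alpha \ol h\phi}\cZ'$. I will first try the Laplace-transform injectivity argument: the Gaussian factor has an everywhere nonvanishing characteristic function along each direction $\phi$. If that fails, I will fall back on redoing the BL3 convergence proof for $h'_N$, whose covariance differs from $G_N$ by a smooth rank-one term.
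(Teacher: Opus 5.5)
Your plan is sound, and its skeleton is the paper's. You use the same splitting $h_N=\psi_N\ol{h}_N\oplus\wh{h}_N$ (your $\phi_N$ is the paper's $\psi_N$). Tightness of the zero-average extremal process is inherited from that of $\eta_{N,r_N}$ and $\ol{h}_N$. Any subsequential limit $\eta'$ is independent of $\ol{h}_\rmD$ with $\eta'\circ\tau^{-1}_{\ol{h}_\rmD\phi}\overset{\rmd}{=}\eta_\rmD$ (Lemma~\ref{l:1104.3}). Uniqueness comes from Gaussian deconvolution, and measurability of $\cZ_\rmD$ from recovering the intensity out of point counts.

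The real difference is how you produce a solution of the deconvolution problem. You build the deconvolved intensity in the continuum, as the critical chaos of the zero-average CGFF $h'_\rmD$, via the Cameron--Martin shift rule for the derivative-martingale measure. The paper avoids any continuum construction. Inside a subsequential limit it sets $\wh{\cZ}_\rmD:=\rme^{-\alpha h^\psi_\rmD}\cZ_\rmD$ and shows by the count-recovery \eqref{e:1104.23} that this is a measurable function of $\wh{\eta}_\rmD$. It is therefore automatically independent of $\ol{h}_\rmD$, so existence comes for free from tightness.

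Your route buys an explicit description of the coupling: $\cZ_\rmD$ is the critical LQG of the very field $h_\rmD$ whose average is $\ol{h}_\rmD$, which the paper leaves implicit. The price is threefold. You must import the derivative-martingale construction for $h'_\rmD$. You must identify the limit in \eqref{e:1.10i} with that chaos as a function of $h_\rmD$. And you need a normalization correction: if $\cZ'_\rmD$ is normalized by the variance of $h'_\rmD$, the shift computation gives $\cZ_\rmD=\rme^{\alpha\phi\ol{h}_\rmD-\frac{\alpha^2}{2}\phi^2\ol{\sigma}^2_\rmD}\cZ'_\rmD$ rather than $\rme^{\alpha\phi\ol{h}_\rmD}\cZ'_\rmD$, so the deterministic factor must be absorbed into $\cZ'_\rmD$.

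The step you flag as the main obstacle goes through with your first option, provided you deconvolve the Laplace functional of the point process in the shift variable, not the intensities. For continuous compactly supported $f\geq 0$, the function $g(z):=\bbE\exp(-\la\eta',f\circ\tau_{z\phi}\ra)$ is bounded and continuous, and $\bbE\exp(-\la\eta_\rmD,f\circ\tau_{z\phi}\ra)=\bbE\, g(z+\ol{h}_\rmD)$. The same identity holds for the Cox process with intensity $\cZ'_\rmD(\rmd x)\otimes\rme^{-\alpha s}\rmd s\otimes\nu(\rmd\omega)$, with $\cZ'_\rmD$ independent of $\ol{h}_\rmD$. So the two functions have equal Gaussian convolutions and hence coincide; this is Lemma~\ref{l:104.4}.

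This identifies $\eta'$ without knowing beforehand that it is Cox, so no Liggett/Dysonization argument and no rerun of~\cite{BL3} is needed. Phrased only as injectivity of $\cZ'\mapsto\rme^{\alpha\ol{h}\phi}\cZ'$, the argument would presuppose exactly that Cox structure.

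Finally, the vague topology lives on $\ol{\rmD}$, so uniform convergence of $\phi_N\to\phi$ on compacts is not enough. You need it uniformly up to the boundary (Lemma~\ref{l:1104.1}, via Lemma~\ref{lem:hitting}), or else boundedness plus negligibility of extremes near $\partial\rmD$.
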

\noindent
%As $\cZ_\rmD$ is measurable w.r.t. $\eta_\rmD$, the joint law of $\ol{h}_\rmD$ and $\cZ_\rmD$ in the coupling of Theorem~\ref{t:1.2i} is uniquely determined by the joint law of $\eta_\rmD$ and $\ol{h}_\rmD$.

In order to prove the theorem, we write $h_N$ as the independent sum of its orthogonal projection (w.r.t. a suitable inner product) onto the space of mean zero fields on $\rmD_N$ (i.e. functions on $\rmD_N$ which sum-up to zero) and its complement. This gives the ``pointwise'' decomposition
\begin{equation}
\label{e:1.22i}
h_N = \big(\psi_N \ol{h}_N\big) \oplus \wh{h}_N\,,
\end{equation} 
where $\psi_N$ is an explicit deterministic field on $\rmD_N$, $\ol{h}_N$ is the average of the field as before and $\wh{h}_N$ is a Gaussian field on $\rmD_N$ whose law is that of $h_N$ conditioned on its average to be zero. We call the latter the {\em zero-average discrete Gaussian Free Field}. Here and after, we write $\oplus$ to indicate the sum of independent quantities.

The advantage of the above decomposition is that, thanks to the independence of the summands in~\eqref{e:1.22i}, in order to show the joint convergence in law of the empirical average and the extremal process of $h_N$, it suffices to separately establish the weak convergence of $\ol{h}_N$ and that of the extremal process of $\wh{h}_N$, namely
\begin{equation}
\label{e:1.14j}
\wh{\eta}_{N,r}:=\sum_{x\in D_N}
\delta_{\,x/N}\otimes\delta_{\,\wh{h}_N(x)+m_N}\otimes\delta_{\,\{\wh{h}_N(x+y)-\wh{h}_N(x)\colon y\in\Z^2\}} 
\1_{\{\wh{h}_N(x)=\min_{z\in \rmB(x; r)} \wh{h}_N(z)\}}\,.
\end{equation}
While obtaining asymptotics in law for $\ol{h}_N$ is straightforward, deriving a limit in law for $\wh{\eta}_{N,r}$ requires non-trivial work.
\begin{mainthm}	
\label{t:1.4i}
Assume $(r_N)_{N \geq 1}$ is as in~\eqref{e:1.11i}. Then,
with the underlying space is as in~\eqref{e:1.10i},
\begin{equation}
\label{E:1.9n}
\wh{\eta}_{N,r_N}
\underset{N \to \infty} \Longrightarrow \ \wh{\eta}_\rmD \,,
\quad ; \qquad
	\wh{\eta}_\rmD \,\big|\, \wh{\cZ}_\rmD \sim\  \text{\rm PPP}\bigl(\wh{\cZ}_\rmD(\textd x)\otimes\texte^{-\alpha s}\textd s\otimes\nu(\rmd \omega)\bigr) \,,
\end{equation}
where $\alpha$ and $\nu$ as in~\eqref{e:1.10i}, and $\wh{\cZ}_\rmD$ is a random Radon measure on $\rmD$.
The law of $\wh{\cZ}_\rmD$ is uniquely defined via the relation:
\begin{equation}
\label{e:1104.14n}
	\wh{\cZ}_\rmD \rme^{\alpha \psi_\rmD \ol{h}_\rmD} \overset{\rmd} = \cZ_\rmD \,,
\end{equation}
where $\ol{h}_\rmD$ is as in~\eqref{e:1.7p} and independent of $\wh{\cZ}_\rmD$, $\psi_\rmD: \rmD \to \bbR$ is the deterministic field:
\begin{equation}
\label{e:1.9p}
	\psi_\rmD(\cdot) : =  \Leb(\rmD) \frac{\int_\rmD G_\rmD(\cdot,z)\, \rmd x}{\int_{\rmD \times \rmD} G_\rmD(x,y)\, \rmd x \rmd y} \,,
\end{equation}
and $\cZ_\rmD$ is as in~\eqref{eq:deflqgm}. The joint law of $\ol{h}_\rmD$ and $\cZ_\rmD$ as determined by~\eqref{e:1104.14n} is the same as that in the coupling of Theorem~\ref{t:1.2i}.
\end{mainthm}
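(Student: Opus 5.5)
We plan to deduce the convergence of $\wh{\eta}_{N,r_N}$ from the known convergence~\eqref{e:1.10i} of $\eta_{N,r_N}$ (i.e., \cite[Theorem~2.1]{BL3}), using the independent decomposition~\eqref{e:1.22i}. The first step is to make $\psi_N$ explicit. Writing $h_N = \psi_N \ol{h}_N + \wh{h}_N$ with $\psi_N(x) = \mathrm{Cov}(h_N(x),\ol{h}_N)/\mathrm{Var}(\ol{h}_N) = |\rmD_N|\sum_z G_N(x,z)/\sum_{y,z}G_N(y,z)$, standard Green function asymptotics give $\psi_N(\lfloor N\cdot\rfloor) \to \psi_\rmD(\cdot)$ uniformly on compacts of $\rmD$. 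We also need the discrete gradients of $\psi_N$ to be $O(1/N)$, and $\psi_N$ to be bounded on $\rmD_N$. Consequently:
\begin{itemize}
\item adding $\psi_N\ol{h}_N$ changes relative heights $h_N(x+y)-h_N(x)$ in a fixed window by $o(1)$;
\item it preserves $r_N$-local minima up to a negligible set, since $r_N = o(N)$ and near-minimal values are separated by $O(1)$ gaps;
\item it shifts the value at $x$ by $\psi_\rmD(x/N)\ol{h}_N + o(1)$.
\end{itemize}
Near $\partial\rmD$ we will rely on the usual tightness estimates showing that no extremal points lie there.

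The key identity, in Laplace functional form, is the following. For nonnegative continuous compactly supported $f$ on $\rmD\times\bbR\times\bbR^{\bbZ^2}$ (depending on finitely many coordinates), let $f_t(x,s,\omega) := f(x, s + \psi_\rmD(x)t, \omega)$. By independence and the above,
\begin{equation*}
\bbE\, \rme^{-\la \eta_{N,r_N}, f\ra} = \bbE\Big[\Phi_N\big(\ol{h}_N\big)\Big] + o(1), \qquad \Phi_N(t) := \bbE\, \rme^{-\la \wh{\eta}_{N,r_N}, f_{t}\ra}.
\end{equation*}
Tightness of $\wh{\eta}_{N,r_N}$ follows from tightness of $\eta_{N,r_N}$, since on $\{|\ol{h}_N|\le M\}$ the two processes differ only by a bounded shift. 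We therefore take a subsequential limit $\wh{\eta}$.

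Next, we transfer the structure of the limit to $\wh{\eta}$. We would like to show directly that any such limit is a Cox process of the form~\eqref{E:1.9n} for some random measure $\wh{\cZ}$. The route we plan to follow is the same one used in \cite{BL3}: the Gibbs-like invariance under independent Dysonization or Gaussian perturbation of points, together with a Liggett-type characterization. That argument applies to $\wh{h}_N$ as well, because $\wh{h}_N$ still admits the Markov/concentric decomposition away from a rank-one perturbation.

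Given this form, the subsequential Laplace identity reads
\begin{equation*}
\bbE \exp\Big(-\int \wh{\cZ}(\rmd x)\, \rme^{\alpha\psi_\rmD(x)\ol{h}_\rmD}\, g(x)\Big) = \bbE \exp\Big(-\int \cZ_\rmD(\rmd x)\, g(x)\Big)
\end{equation*}
for all $g$ of the form $\int(1-\rme^{-f})\rme^{-\alpha s}\rmd s\,\nu(\rmd\omega)$, with $\ol{h}_\rmD$ independent of $\wh{\cZ}$. This yields~\eqref{e:1104.14n}.

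Uniqueness of the law of $\wh{\cZ}$ given~\eqref{e:1104.14n} is a deconvolution problem. Conditional on $\ol{h}_\rmD=t$, the measure $\wh{\cZ}\rme^{\alpha\psi_\rmD t}$ is a deterministic transform of $\wh{\cZ}$, and the law of $\cZ_\rmD$ is a Gaussian mixture of these transforms. To invert, we use that the Gaussian family is complete. Consider $t\mapsto \bbE\exp(-\la \wh{\cZ}\rme^{\alpha\psi_\rmD t}, g\ra)$, which is a bounded function with known Gaussian transform. Extending the identity via varying scaled tests, or analytically in $t$, should determine it. Concretely, we would replace $g$ by $\lambda g$ and use the Weierstrass transform's injectivity on bounded continuous functions of $t$ (after noting the dependence on $t$ and $\lambda$ combines since $\psi_\rmD>0$ is continuous). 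This should identify the law, so all subsequential limits agree and convergence follows. Finally, the coupling statement comes by setting $\cZ_\rmD := \wh{\cZ}_\rmD\rme^{\alpha\psi_\rmD\ol{h}_\rmD}$ on the product space, which matches the coupling of Theorem~\ref{t:1.2i} via the joint limit~\eqref{E:1.9}.

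The main obstacle is the step that transfers the structure: showing that every subsequential limit of $\wh{\eta}_{N,r_N}$ is a Cox process with intensity $\wh{\cZ}(\rmd x)\otimes\rme^{-\alpha s}\rmd s\otimes\nu$. The deconvolution identity alone does not force Poissonian structure. For this step we would try to directly rerun the BL3 machinery for the zero-average field, controlling the rank-one correction $\psi_N\ol{h}_N$ uniformly. The uniqueness/deconvolution step is the second delicate point, since injectivity needs the family of test functions $g$ to be rich enough; we would try first the scaling $g\mapsto\lambda g$ together with the fact that $\psi_\rmD$ is bounded away from $0$ on compacts.
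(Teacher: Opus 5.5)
Your skeleton matches the paper's: the independent decomposition $h_N=\psi_N\ol h_N\oplus\wh h_N$, tightness inherited from $\eta_{N,r_N}$, and the identity $\bbE\,\rme^{-\la\eta_{N,r_N},f\ra}=\bbE\,\Phi_N(\ol h_N)+o(1)$ passed to a subsequential limit. The gap is the step you flag as the main obstacle. You leave it unproved, and the reason you give for needing it is wrong.

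\textbf{The Cox step is not needed.} You propose to show that every subsequential limit $\wh\eta$ is a Cox process by rerunning the Dysonization/Liggett argument of \cite{BL3} for $\wh h_N$. That is a substantial argument you do not carry out: invariance, identification of the cluster law, and its conditional independence for the conditioned field. It is also unnecessary. The deconvolution identity does determine the law of $\wh\eta$, once you apply it to the point process rather than to its intensity.
\begin{itemize}
\item In the limit your identity reads $\bbE\,\rme^{-\la\eta_\rmD,f\ra}=\bbE\,\Phi(\ol h_\rmD)$, with $\Phi(t)=\bbE\,\rme^{-\la\wh\eta,f_t\ra}$ bounded and continuous and $\ol h_\rmD$ independent of $\wh\eta$.
\item Since $(f_z)_t=f_{t+z}$, replacing $f$ by $f_z$ gives $\bbE\,\Phi(\ol h_\rmD+z)=\bbE\,\rme^{-\la\eta_\rmD,f_z\ra}$ for every $z\in\bbR$. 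The right side does not depend on the law of $\wh\eta$.
\item Injectivity of Gaussian convolution on bounded continuous functions (the paper's Lemma~\ref{l:104.4}) then pins down $\Phi(0)=\bbE\,\rme^{-\la\wh\eta,f\ra}$.
\end{itemize}
So all subsequential limits have the same law, and it suffices to exhibit one Cox-type solution.

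\textbf{Constructing a solution.} This is the second ingredient you are missing.
\begin{itemize}
\item Write $\eta_\rmD=\wh\eta\circ\tau^{-1}$, where $\tau(x,s,\omega)=(x,s+\psi_\rmD(x)\ol h_\rmD,\omega)$ and $\wh\eta$ is independent of $\ol h_\rmD$.
\item Recover $\cZ_\rmD(A)=\lim_{u\to\infty}\alpha\rme^{-\alpha u}\eta_\rmD(A\times[-u,\infty))$ by the law of large numbers, and set $\wh\cZ:=\rme^{-\alpha\psi_\rmD\ol h_\rmD}\cZ_\rmD$.
\item Sandwich $\wh\eta(A\times[-u,\infty))$ between $\eta_\rmD(A\times[-u+\sup_A\psi_\rmD\ol h_\rmD,\infty))$ and $\eta_\rmD(A\times[-u+\inf_A\psi_\rmD\ol h_\rmD,\infty))$ over fine partitions of $A$. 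This shows $\wh\cZ(A)=\lim_{u\to\infty}\alpha\rme^{-\alpha u}\wh\eta(A\times[-u,\infty))$.
\item Hence $\wh\cZ$ is $\wh\eta$-measurable, so it is independent of $\ol h_\rmD$ and satisfies~\eqref{e:1104.14n}. The Cox process driven by $\wh\cZ$ therefore solves the identity.
\end{itemize}
Uniqueness then forces every subsequential limit to be this Cox process. Once the \cite{BL3} rerun is dropped, you need this construction to have any candidate solution. The existence of a random measure, independent of $\ol h_\rmD$, whose tilt by $\rme^{\alpha\psi_\rmD\ol h_\rmD}$ has the law of $\cZ_\rmD$ is not automatic.

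\textbf{Deconvolution of $\wh\cZ$.} This step fails as written. Because $\psi_\rmD$ is not constant, replacing $g$ by $\lambda g$ is not a shift in $t$ of $\la\wh\cZ\rme^{\alpha\psi_\rmD t},g\ra$. So you never obtain the full Gaussian convolution of $t\mapsto\bbE\exp(-\la\wh\cZ\rme^{\alpha\psi_\rmD t},g\ra)$. The correct substitution is $g\mapsto g\,\rme^{\alpha\psi_\rmD z}$, which is exactly what shifting $f$ in the $s$-variable by $\psi_\rmD(x)z$ produces. With it, Gaussian injectivity gives uniqueness in law of $\wh\cZ$; the paper instead quotes an external uniqueness theorem for this. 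In the corrected argument above, this step is in any case subsumed by uniqueness of the law of $\wh\eta$ itself.
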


The theorem, which (we believe) is of a stand-alone interest, shows that the extremal process associated with the zero-average DGFF admits a similar limit as that of the unconditional field. The centering sequence is the same, as well as the exponential density and the cluster law in the last two components of the intensity measure. The only difference lies in the first component, where a different random measure $\wh{\cZ}_\rmD$ takes the place of $\cZ_\rmD$. Relation~\eqref{e:1104.14n} provides another characterization of the coupling between $\ol{h}_\rmD$ and $\cZ_\rmD$, as used in Theorem~\ref{t:1.3i} to define the limiting law.

We now have all ingredients to describe the limiting law in Theorem~\ref{t:1.1}. Henceforth, for a measure $\mu$ we write $\|\mu\|$ and $\mu^\circ$ for its total mass and normalized version, respectively, so that $\mu^\circ = \mu/\|\mu\|$. We also recall that ${\rm Gumbel}(\mu, \beta)$ denotes a probability distribution on $\bbR$ whose CDF is 
given by $F(x) = \exp(-\rme^{-(x-\mu)/\beta})$. 
\begin{mainthm}
\label{t:1.3i}
The joint law of $\wc{\bfT}_\rmD$, $\wc{\bfX}_\rmD$ from Theorem~\ref{t:1.1} is given by
\begin{equation}
\label{e:1.6q}
\big(\wc{\bfT}_\rmD, \wc{\bfX}_\rmD\big) \,\big|\, \cZ_\rmD, \ol{h}_\rmD\, \sim  {\rm Gumbel}\,\Big(\frac{1}{2\pi} (\log (C_\star\|\cZ_\rmD\|) + 4\sqrt{\pi}\,\ol{h}_\rmD\big) \,,\,\, \frac{1}{2\pi} \Big) \otimes \cZ_\rmD^\circ \,,
\end{equation}
where $C_\star \in (0,\infty)$ is a universal constant, $\cZ_\rmD$ is as in~\eqref{eq:deflqgm}, $\ol{h}_\rmD$ is as in~\eqref{e:1.7p} and they are coupled together as in 
Theorem~\ref{t:1.2i} (equivalently, Theorem~\ref{t:1.4i}).
Explicitly,
\begin{equation}\label{eq:alternativeformulation}
\bbP \big(\wc{\bfT}_\rmD \leq s,\,\, \wc{\bfX}_\rmD \in \rmd x\big)
= \bbE\,\rme^{-C_\star\|\cZ_\rmD\| \rme^{-2\pi s + 4\sqrt{\pi} \,\ol{h}_\rmD}}\cZ^\circ_\rmD(\rmd x) \,.
\end{equation}	
\end{mainthm}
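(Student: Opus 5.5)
Theorem~\ref{t:1.3i} will be derived by comparing the cover time with the extremal landscape of the DGFF through an isomorphism theorem. The natural tool is the (generalized) second Ray--Knight theorem of Eisenbaum--Kaspi--Marcus--Rosen--Shi for the wired graph $\wh{\rmD}_N$ with the walk started at $\partial$. Let $L_t(x)$ be the local time at $x$ and $\tau(t)$ the inverse local time at $\partial$, so that $\tau(t)$ is the time at which $\partial$ has accumulated local time $t$. Then
\[
\big(L_{\tau(t)}(x) + h'_N(x)^2\big)_{x} \overset{\rmd}= \big((h_N(x)+\sqrt{t})^2\big)_x ,
\]
with $h'_N$ an independent copy of $h_N$. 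This holds because the DGFF on $\rmD_N$ is exactly the field pinned at the contracted vertex $\partial$, and our normalizations $G_N = \frac12\G^\bfX_{\rmD_N}$ are chosen so that the constants match.

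The first step is to reduce to a statement in terms of local time at $\partial$. The cover time in local-time units is $\wc{\bft}^\partial_N := \inf\{t : \min_x L_{\tau(t)}(x) > 0\}$. We then need to relate $\wc{\bfT}_N$ to $\tau(\wc{\bft}^\partial_N)$. At first order, $\tau(t) = \sum_x L_{\tau(t)}(x) \approx |\rmD_N|\, t$. The fluctuation of $\sum_x L_{\tau(t)}(x) - |\rmD_N|t$ is of order $\sqrt{t}\,|\rmD_N|$ times the average $\ol h_N$. Indeed, by the isomorphism, the average of $L_{\tau(t)}$ equals $t + 2\sqrt t\,\ol h_N + O(1)$ in law. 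This is exactly how the shift $4\sqrt{\pi}\,\ol{h}_\rmD$ enters: since $\sqrt{t} \approx \log N/\sqrt\pi$, we get $2\sqrt t\,\ol h_N/\log N \to 2\ol h_\rmD/\sqrt\pi$. We will need a joint statement coupling this average with the event of being covered, which is where Theorem~\ref{t:1.2i} enters.

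The second step uses the isomorphism in the sharp form of a ``thick/thin point'' comparison. For $t = (\sqrt{\wc{\bft}_N} + s/\sqrt{\log N}\,\cdot\,c)^2$ with $c$ suitable, the event $\{\min_x L_{\tau(t)}(x)=0\}$, i.e.\ some vertex is still uncovered, will be compared with the event that $h_N + \sqrt t$ comes near zero. More precisely, we compare the point process of vertices with small local time, recorded as (position, $\sqrt{L}-\sqrt{t}$), against the min-extremal process of $-h_N$ near level $-\sqrt t$. By Theorem~\ref{t:1.2i}, jointly with $\ol h_N$, the number of $r_N$-local minima of $h_N$ with $h_N+\sqrt t$ in a window is asymptotically Poisson with intensity $\cZ_\rmD(\rmd x)\, \rme^{-\alpha u}\rmd u$. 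The shift from $m_N$ to $\sqrt{\wc{\bft}_N}$ (the difference $\frac{1}{8\sqrt\pi}\log\log N$) accounts for the probability, of order $(\log N)^{-1/2}$ per cluster, that a near-minimum of the field really corresponds to a vertex of zero local time. Integrating against the cluster law $\nu$ produces the constant $C_\star$. The conclusion is that the uncovered set at time $\tau(t)$ converges to a Cox process with intensity $C_\star \rme^{-2\pi s}\cZ_\rmD$, where the rate $2\pi$ arises from $\alpha\cdot(\text{scale conversion } 1/(2\sqrt\pi))$. Hence $\bbP(\text{covered}\mid \cZ_\rmD,\ol h_\rmD)\approx \exp(-C_\star\|\cZ_\rmD\|\rme^{-2\pi s})$. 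Moreover, the last uncovered vertex is distributed as a single point of this Cox process, so its location is sampled from $\cZ_\rmD^\circ$ independently of the Gumbel part.

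Finally, we combine the two steps, converting local time at $\partial$ into real time by the average fluctuation from the first step. This gives the Gumbel location $\frac1{2\pi}\big(\log(C_\star\|\cZ_\rmD\|)+4\sqrt\pi\,\ol h_\rmD\big)$ and yields \eqref{eq:alternativeformulation}. The main obstacle is the second step. The isomorphism only equates laws of sums $L + h'^2$, so we must show that zeros of $L_{\tau(t)}$ correspond in law to near-minima of $h_N+\sqrt t$ with the precise $(\log N)^{-1/2}$ factor. This requires a local analysis near each extremal cluster, which I would carry out by conditioning on the cluster shape from $\nu$ and using a Bessel-type computation for the local time of a single site. It also requires controlling the independent $h'^2$ term jointly with $\ol h_N$, which is precisely why the zero-average decomposition of Theorem~\ref{t:1.4i} is needed: it decouples the average shift from the local extremal structure.
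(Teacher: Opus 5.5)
\textbf{There is a genuine gap, and it is in your second step.} You apply the isomorphism at a $\partial$-local time $t$ with $\sqrt t=\sqrt{\wc{\bft}_N}+O(1)$. (The $s$-shift in $\sqrt t$ is $\frac{\sqrt\pi}{2}s$, of order one, not $s/\sqrt{\log N}$.) But $\sqrt{\wc{\bft}_N}-m_N=\frac{1}{8\sqrt\pi}\log\log N\to\infty$. So the window $\{h_N+\sqrt t\in[a,b]\}$ lies that far below the typical minimum. Theorem~\ref{t:1.2i} then says this window is empty with probability tending to one; the count there is not asymptotically Poisson with intensity $\cZ_\rmD(\rmd x)\rme^{-\alpha u}\rmd u$.

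Your thinning heuristic is also reversed. Since $L=(h_N+\sqrt t)^2-(h'_N)^2$, every near-minimum of $h_N+\sqrt t$ is automatically a low-local-time vertex. Conversely, a low-local-time vertex becomes such a near-minimum only if the independent field $h'_N$ is $O(1)$ there, which has probability $\Theta((\log N)^{-1/2})$. The $\frac{1}{8\sqrt\pi}\log\log N$ shift is not a thinning probability. It is the extra time needed to reduce the $\Theta(\sqrt{\log N})$ clusters present at the extremal level to $O(1)$.

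Consequently, at the cover-time level the isomorphism only relates events of vanishing probability, of order $(\log N)^{-1/2}$ up to logarithmic corrections from the heavy tail of $\|\cZ_\rmD\|$. Theorem~\ref{t:1.2i} does not even give their asymptotics. Laplace functionals on both sides linearize, yielding at most a truncated first moment of the uncovered cluster process, not its conditional law given $(\cZ_\rmD,\ol h_\rmD)$. Moreover, a fixed-time Cox description does not by itself identify the law of the last visited vertex. For that you need the dynamics by which the remaining clusters get covered.

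\textbf{The missing idea is the paper's two-phase decomposition.}

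\emph{Phase A.} Run to $\partial$-time $t^A_n$ with $\sqrt{t^A_n}$ equal to the extremal centering. There the DGFF near-minima are $O(1)$ in number, while $\Theta(\sqrt n)$ clusters of unvisited vertices remain. Thinning $\Theta(\sqrt n)$ clusters at rate $\Theta(n^{-1/2})$ has a genuine Cox limit. This relation is inverted via Proposition~\ref{p:300.2} with $\xi_n=\rme^{i\mu\wh\bfT_{n,t^A_n}}$, together with Propositions~\ref{p:1103.6} and~\ref{p:300.4}. The result is $n^{-1/2}\bfXi_{n,0}\Rightarrow C_\star\cZ_\rmD$ jointly with the real-time fluctuation converging to $\ol h_\rmD$.

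The inversion needs the local-time profiles on distinct clusters to be asymptotically i.i.d.\ with a deterministic law (Proposition~\ref{p:300.1a}), because each cluster's survival probability under the thinning depends on that profile. This is the bulk of the paper: downcrossings, the spatial Markov property, and sharp ballot estimates for repelled downcrossing trajectories. Neither a one-site Bessel computation nor the DGFF cluster law $\nu$ supplies it, and $C_\star$ involves the limiting local-time profile law, not just $\nu$.

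\emph{Phase B.} Run an extra $\partial$-time $t^B_n+sn$. Excursions from $\partial$ form a Poisson process. Each $(r_n,n-r_n)$-clustered cluster is hit first at rate $(1+o(1))/n$ and is then covered in that same excursion with high probability. So clusters disappear at i.i.d.\ exponential times. This gives survival probability $n^{-1/2}\rme^{-s}$ per cluster, hence $\exp(-\rme^{-s}\|n^{-1/2}\bfXi\|)$. It also makes the last cluster uniform among the survivors, which produces $\cZ_\rmD^\circ$.

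Your first step matches the paper. Note, however, that the average that matters is that of the field inside the square; the independent squared field is negligible by a variance bound. The zero-average decomposition enters only through the proof of Theorem~\ref{t:1.2i}.
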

Theorem~\ref{t:1.3i} shows that the limiting law of the cover time is that of a Gumbel, shifted by an independent random quantity, which is the sum of the logarithm of the total mass of the cLQGM on $\rmD$ and a dependent Gaussian random variable. This ``randomly shifted Gumbel'' form is a well-known characteristic of limiting laws of extreme order statistics in logarithmically correlated systems (see Subsection~\ref{s:1.2}). The limiting law of the last visited vertex is that of the cLQGM itself, normalized to become a random probability measure on $\rmD$. 

\subsection{Context and open problems}
\label{s:1.2}

\subsubsection{Local time extreme order statistics, logarithmically correlated fields and Gaussian multiplicative chaos}
The cover time can be alternatively expressed in terms of the time spent by the walk at each vertex. Indeed, if 
we define the {\em local time} at $x \in \wh{\rmD}_N$ by {\em real time} $u \in \bbR_+$ as
\begin{equation}
	\bfL_{t}(x) := \int_{0}^t 1_x\big(\bfX_{u}\big) \rmd u\,,
\end{equation}
then, the cover time of $\wh{\rmD}_N$ can be equivalently defined as
\begin{equation}
	\label{e:1.4a}
	\wc{\bfT}_N := \min \big\{t \geq 0:\: \min_{x \in \wh{\rmD}_N} \bfL_{t}(x) > 0 \big\} \,. 
\end{equation}
It is thus the first real time by which the minimal local time is positive. 

Display~\eqref{e:1.4a} recasts the cover time as an extreme order statistic of the field of local times $\bfL_t := (\bfL_t(x) :\: x \in \wh{\rmD}_N)$. This field (and its square root) is known to have approximate logarithmic correlations in two dimensions. Indeed, the covariance matrix of $\bfL_t$ (as well as that of $\sqrt{\bfL_t}$) is closely related to the discrete Green Function $\bbG_{\rmD_N}^{\bfX}$ on $\rmD_N$, which in our normalization takes the form
\begin{equation}
	\bbG^{\bfX}_{\rmD_N}(x,y) = \frac{1}{2 \pi} \log N - \frac{1}{2 \pi}\log \big(\|x-y\| \vee 1\big) + O(1) \,,
\end{equation} 
for $x$,$y$ in the bulk of $\rmD_N$ (see, e.g. \cite[Lemma~3.1]{Tightness} for a more precise statement).
%\ToS{Related to the Green Function, not the covariance of the field. Hence change of consts.}

Extreme order statistics of fields with (approximately) logarithmic correlations have been the subject of extensive research over the past two decades. 
Other notable examples which have been studied include Branching Brownian Motion, the discrete Gaussian Free Field on planar domains or tree-like graphs, the (logarithm of the) modulus of the Riemann zeta function on a uniformly chosen interval on the critical line and the (logarithm of the) modulus of the characteristic polynomial of a CUE matrix on the unit circle. For more information, a recent survey on this subject can be found in~\cite{arguin2016extrema}.

In all known examples, statistics of extreme values of such fields exhibit similar asymptotic features. These include a negative log-log correction to the typical height of the maximum/minimum compared to the i.i.d. case, randomly shifted Gumbel fluctuations in the limit, clustering of extreme values and a Cox-type Law for their joint limiting distribution and connection to Gaussian Multiplicative Chaos measures at criticality. The results here therefore provide additional evidence to support the conjectural universality of such features among all logarithmically correlated fields. 

The appearance of the cLQGM in the limiting law of the cover time and last visited vertex ties the cover time asymptotics to the theory of {\em Gaussian Multiplicative Chaos} (GMC). The latter are random measures obtained, formally, by taking the exponential of a log-correlated Gaussian Field as their Radon-Nykodim derivative w.r.t. a deterministic reference measure. The case of the cLQGM corresponds to the CGFF as the underlying field. The theory of GMC, which goes back to the works of H{\o}egh-Krohn~\cite{hoegh1971general} and Kahane~\cite{kahane1985chaos}, has seen considerable advances in the past decades, with far reaching connections to random geometry, combinatorics and conformal field theory. See~\cite{rhodes2014gaussian} and, more recently, also~\cite{Berestycki_Powell_2025}, for excellent sources on the subject.

\subsubsection{Historical account}
Let us now briefly survey the historical development of this problem. One of the earliest mentioning of the search for asymptotics of the cover time can be found in the editorial piece of Hebert Wilf~\cite{wilf1989editor} who, for self-amusement, simulated a random walk on his television screen, acting as a two dimensional discrete torus $\bbZ^2/(N\bbZ^2)$ of side length $N \geq 1$. In this casual-reading article, the author wondered about the time it took for the screen to become ``completely white''. 

An upper bound of the correct first order was found by Aldous~\cite{aldous1991threshold} in '89. A non-matching first order lower bound (in expectation) was then given by Lawler~\cite{lawler1993covering} in '92. This was later improved in the seminal work of Dembo, Peres, Rosen and Zeitouni~\cite{dembo2004cover} in '04, who settled the question of first order asymptotics. Key to the proof in~\cite{dembo2004cover} were sharp estimates for the first and second moments of the number of unvisited vertices, subject to a ``truncation'' event, which was added in order to make this random quantity concentrated around its mean.

Turning to the second order term, Ding~\cite{ding2012cover} in '12 showed that it is of order $O(\log \log N)$. This result relied on an earlier work by Ding, Lee and Peres~\cite{ding2011cover} in `11 who exploited, for the first time (as far as we know), the connection between the cover time and the maximum/minimum of the discrete Gaussian Free Field on the same underlying graph. As our proof is more in line with this approach, further details on this connection could be found in Subsection~\ref{sec:phaseA}. 

The full derivation for the second order term was done by Belius and Kistler~\cite{belius2017subleading} in `17 for the analogous problem of the cover time by a Brownian Motion (sausage) and then by Abe~\cite{abe2021second} in '20 for a random walk. These works, however, stopped short of showing tightness. For the case of wired planar domains, as in the present paper, this was shown recently by the authors  in~\cite{Tightness}. For the case of Brownian Motion on the two dimensional sphere, this was recently settled by Belius, Rosen and Zeitouni~\cite{belius2020tightness}.

The case of the discrete torus $\bbZ^d/(N\bbZ^d)$ in dimension $d \geq 3$ was settled by Belius~\cite{belius2013gumbel} in `12, which showed that the limiting law of the cover time is that of a non-shifted Gumbel. The vanishing of the random shift is a consequence of the fast decay, away from the diagonal, of the Green Function in this case, putting the local time field in the same extreme-universality class as that of fields with independent entries. A more general result was recently given in~\cite{berestycki2022universality}, showing that i.i.d.-type asymptotics are obtained as soon as the diameter of the graph is $o(\sqrt{n/\log n})$, where $n$ is the number of its vertices. Random geometric graphs on $\bbR^d$ were considered in~\cite{martinez2025jump}.

A more related graph is that of the rooted regular tree of depth $n$, where correlations are hierarchical and thus approximately logarithmic in a dyadic embedding of the leaves on the real line. Here, analog results to those in Theorem~\ref{t:1.1} and Theorem~\ref{t:1.3i} (albeit for the cover time only) were shown by the authors and Cortines in~\cite{cortines2021scaling}, and also by Dembo, Rosen and Zeitouni in~\cite{dembo2021limit}. The limiting law of the cover time is again a randomly shifted Gumbel, with the random shift directly related to the so-called derivative martingale, which is the tree analog of the total mass of the cLQGM.

Lastly, it is worth mentioning the substantial work that has been conducted on the ``opposite side'' of extremality in local times of random walks and Brownian Motion. Here one typically studies the so-called most favorite (or thick) and $\lambda$-favorite points of the underlying motion, namely points where the local time is maximal or at least a $\lambda \in (0,1)$ fraction of the typical value of the maximum. 

Research in this area started in the pioneering work of Erd\"os and Taylor~\cite{ErdosTaylor} who derived non-matching upper and lower bound on the order of $(\log n)^2$ for the highest local time of a planar simple random walk run until time $n$. The lower bound was ``corrected'' more than forty years later, in the work of Dembo, Peres, Rosen and Zeitouni~\cite{DPJZ_Thick}, establishing $\pi^{-1} (\log n)^2$ as the leading order term in the asymptotics for the local time at the most favorite point (for both the simple random walk and Brownian Motion). The second order term along with tightness, in the case of Brownian Motion, was recently shown by Rosen~\cite{Rosen_Thick}.

$\lambda$-favorite points were also treated in~\cite{DPJZ_Thick}, where it was showed that with high probability there are $n^{1-\lambda + o(1)}$ many points whose local time is at least $\lambda \pi^{-1} (\log n)^2$ for $\lambda \in (0,1)$. These asymptotics were sharpened considerably by Jego~\cite{Jego1, Jego2, Jego3}, who recently established a weak scaling limit for a random-measure encoding of these points. The limiting law, describing the asymptotics of both the number and spatial distribution of the $\lambda$-favorite points turns out to be the so-called (sub-critical) {\em Brownian Multiplicative Chaos} - an analog, in some sense, to the GMC but driven by the local time of Brownian Motion.

A similar derivation in the case where the underlying walk is run for times comparable with the cover time of the domain (with periodic boundary conditions), was  by Abe and Biskup~\cite{AbeBiskup1, AbeBiskup2}. In this very different order of time scales, the limit turns out to be closely related to the (sub-critical) GMC associated with the CGFF on the domain. Similar problems when the underlying graph is the rooted regular tree were studied by Abe~\cite{Abe_Tree} and by Biskup and Louidor~\cite{BL_FavoritePoint}.

\subsubsection{Open problems and conjectures}
Lastly, we would like to address the imposed boundary conditions. The reason for considering wired boundary conditions, stems from our use of a comparison tool, know as the Generalized Second Ray-Knight Theorem (Theorem~\ref{t:103.1}) which relates the law of the local time field to that of the DGFF. Putting boundary conditions on the boundary of the domain for the DGFF, translates under this theorem, to a wiring of the boundary for the walk with respect to which the local time is defined. For our method of proof, this is thus absolutely essential. 

It thus remains an open problem to derive asymptotics for the cover time when different boundary conditions are assumed, e.g. periodic (the torus case) or free (the no-boundary case). While second order asymptotics are available for the mean value of the cover time in the case of the torus (as mentioned above), showing weak convergence or even tightness are still open problems. We conjecture that similar results as in Theorems~\ref{t:1.1} and~\ref{t:1.3i} hold. In particular, we believe that the limiting law takes the same form as in~\eqref{e:1.6q} only with a different random measure and (dependent) Gaussian variable appearing in place of $\cZ_\rmD$ and $\ol{h}_\rmD$.

\subsection*{Organization of the Paper}
The rest of the paper is organized as follows. In the following Section~\ref{s:2} we provide a top-level proof for the convergence of the cover time and the last visited vertex, namely  Theorems~\ref{t:1.1} and~\ref{t:1.3i}. The proof builds on a key idea from~\cite{Tightness}, whereby the running time of the walk is divided into two consecutive phases: A and B of prescribed lengths. The analysis of these phases are summarized in Theorem~\ref{t:2.1} and Theorem~\ref{t:2b}, which are stated in Section~\ref{s:2} and then used to prove the main convergence results. The proofs of these two theorems are deferred to the sections which follow.

The proof of Theorem~\ref{t:2.1}, which constitutes one of the main efforts in the manuscript, takes up Sections~\ref{sec:phaseA} through~\ref{s:barrier}, as well as Section~\ref{s:5n}. We also use a top-down approach for the presentation of the proof of Theorem~\ref{t:2.1}, with Section~\ref{sec:phaseA} providing a top-level proof of the theorem, using results which are stated in this section but proved only in Sections~\ref{sec:conseq},~\ref{sec:iid},~\ref{s:barrier} and~\ref{s:5n}. The proof of Theorem~\ref{t:2b} then follows in Section~\ref{sec:proofb}. 

Section~\ref{s:5n} includes the proof for the joint convergence of the extremal process and the field average, as well as that of the convergence of the extremal process of the zero-averaged DGFF. In other words, it is where Theorems~\ref{t:1.2i} and~\ref{t:1.4i} are proved. Lastly, Appendix~\ref{sec:auxproof} includes proofs of miscellaneous auxiliary results which are used throughout the article. These results are either standard, shown before or technical but not illuminating and thus their proofs were deferred to the end of this work.

\section{Existence and identification of the limit: Top-level proofs of Theorems~\ref{t:1.1},~\ref{t:1.3i}}
\label{s:2}
In this section we provide a top-level proof for the joint weak convergence of the cover time and last visited vertex. Unlike the separation in the exposition of the results, the proof takes care of both the existence of the limit and its characterization.

\subsection{Notation}
Let us introduce some notation. As the proofs follow a multi-scale analysis along an exponential scale, we shall henceforth treat all length parameters as if given on such scale. Therefore, from now on and with a slight override of notation, we shall henceforth write $\rmD_n$ to mean 
$\rmD_N$ with $N=\rme^n$, as was defined in~\eqref{e:1.1} (throughout $n > 0$ need not be an integer).

The Euclidean and Supremum norms will be denoted by $\|\cdot\|$ and $|\cdot|$, respectively. Given $x \in \bbZ^2$ and $r \geq 0$, we let
$\rmB(x;r) := \{y \in \Z^2 :\: \|x-y\| \leq \lfloor \rme^r \rfloor \}$ and $\rmQ(x;r) := \{y \in \Z^2 :\: x-y \in  (-\frac{\lfloor \rme^r \rfloor}{2},\frac{\lfloor \rme^r \rfloor}{2}]^2 \}$ be the discrete ball (resp. box) of radius (resp. side-length) $\lfloor \rme^r\rfloor$ about $x$, which will be omitted from the notation whenever it is the origin.
%Given $\rmU \subset \bbZ^2$ non-empty, the \textit{diameter} of $\rmU$ will be as usual the supremal (Euclidean) distance between any two points in $\rmU$. For $r \geq 0$, we will define the $r$-\textit{bulk} of $\rmU$ as $\rmU^r := \{x \in \bbZ^2 :\: \rmd(x, \rmU^\rmc) > \rme^r \}$. Whenever $U=\rmD_n$ is the $\rme^n$-scale of some domain $\rmD \subseteq \R^2$, we will write $\rmD_n^r$ as short for $(\rmD_n)^r$.
%Given $\rmU \subset \bbZ^2$ non-empty, the \textit{diameter} of $\rmU$ will be as usual the supremal (Euclidean) distance between any two points in $\rmU$. For $r \geq 0$, we will define the $r$-\textit{bulk} of $\rmU$ as $\rmU^r := \{x \in \bbZ^2 :\: \rmd(x, \rmU^\rmc) > \rme^r \}$. Whenever $U=\rmD_n$ is the $\rme^n$-scale of some domain $\rmD \subseteq \R^2$, we will write $\rmD_n^r$ as short for $(\rmD_n)^r$.
% \Maybe{Later a calligraphic font is used for continuum domain. This is inconsistent with our main domain $D$. Also, I'm not sure I like the visual outcome. Maybe $\cU$ but $U_N$. \textcolor{red}{Yes, we used that because $\rmD$ is not just any continuum domain, but THE continuum domain from the main result. In the other paper we were using several continuum domains at once ($\rmD$, unit balls, etc.), so this extra notation was perhaps necessary. Here, however, it may not be.}}
 As in the introduction, we shall view subsets $\rmU \subset \bbZ^2$ as sub-graphs of the square lattice, inheriting from it all edges which connect two any vertices in $\rmU$. For such sets, $\partial \rmU$ shall denote its \textit{outer boundary}, namely the set of vertices in $\bbZ^2 \setminus \rmU$ which share an edge of $\bbZ^2$ with at least one vertex in $\rmU$, and $\ol{\rmU}$ will be defined as $\rmU \cup \partial \rmU$.

Finally, all constants throughout will be assumed to be finite and positive, with their value possibly changing from one use to the next, and, for $f:\mathcal{X} \rightarrow \mathcal{Y}$ and $\mathcal{Z} \subseteq \mathcal{X}$, we will write $f(\cZ_\rmD)$ to denote the set $\{ f(z) : z \in \cZ_\rmD\}$.

\subsection{Time parametrization and rescaling}
The first step in the proof is to ``reparameterize'' time so that time is ``measured in terms of the local time spent at the boundary''. Formally, for $x \in \wh{\rmD}_n$ and $t \geq 0$, we define the \textit{local time}~at~$x$ by \textit{real time} $t$ as
\begin{equation}
\bfL_{n,t}(x) \equiv \bfL_t(x):=\int_0^t 1_x(\bfX_u)\,\rmd u\,,	
\end{equation} where we omit the dependence in $n$ from the notation for simplicity.
In addition, recalling that $\partial$ is the boundary vertex, we let
\begin{equation}
\bfT_{n,t} \equiv \bfT_t := \bfL^{-1}_t(\partial)                                                                                                                                                                                                                                                                                                                                                                                                                                                                                                                                                                                                                                                                                                                                                                                                                                                                                                                                                                                                                                                                                                                                                                                                                                                                                                                                                                                                                                                                                                                                                                                                                                                                                                                                                                                     = \inf \big\{u \geq 0 :\: \bfL_{u}(\partial) > t \big\} \,.
\end{equation}
be the generalized inverse of $u \mapsto \bfL_{u}(\partial)$. We shall loosely refer to this quantity as the real time by which the local time at $\partial$ is $t$, or $\partial$\textit{-time} $t$ for short. 

To distinguish between quantities in which time means real time and those in which time means $\partial$-time, we shall denote the former quantities in bold. Thus, while $\bfL_t(x)$ is the local time at $x$ by real time $t$, the local time at $x$ by $\partial$-time $t$ is 
\begin{equation}
L_{t}(x) := \bfL_{\bfT_{t}}(x)\,.
\end{equation}
Similarly, the cover time in the $\partial$-time parameterization, or {\em cover $\partial$-time}, is then
\begin{equation}
\label{e:1.4}
\wc{T}_n := \min \big\{t \geq 0:\: \min_{x \in \rmD_n} L_{t}(x) > 0 \big\} \,.
\end{equation}
The $\sigma$-algebra corresponding to $\bfT_t$ as a stopping time w.r.t. the natural filtration of the walk $\bfX$, will be denoted by
$\cF_t$.

Next, we also re-tune the rates of the underlying random walk $\bfX$, so that the (edge) transition rates are now $1/(2\pi)$ instead of $1$. By the scaling properties of the exponential distribution, this modification only amounts to a change in the (scaling of the) cover time by a factor of $2\pi$. At the same time, this makes all constants a bit simpler and, as such, reduces visual complexity. 
As we shall see, the $\partial$-time analog of $\wc{\bft}_n$ under this time rescaling and reparameterization is $\wc{t}_n$, which we define via
\begin{equation}
\label{e:2.4n}
\sqrt{\wc{t}_n} = \sqrt{(2\pi)\wc{\bft}_n } = 
\sqrt{2} n - \frac{1}{2\sqrt{2}} \log n \,.
\end{equation}

This retuning of rates will also produce modifications to other objects defined in the introduction. Indeed, henceforth both the discrete and continuous Green Function will increase by a factor of $2\pi$. Consequently, all fields (e.g., the continuum and discrete GFFs, as well as LQGM-type random measures, will be $\sqrt{2\pi}$ times their value as given by their definition in the introduction. On the other hand, the value of the critical parameter decreases by the same factor and, thus, henceforth reset to
\begin{equation}
\alpha := 2\sqrt{2} \,.
\end{equation}
%\ToS{You wrote $1/\sqrt{2\pi}$. The field is now larger and so is the measure. Not sure that this retuning is very wise. Maybe we should remove it in the next version to avoid creating unnecessary confusion. \textcolor{red}{Are you sure? If you scale the DGFF $h_n$ by $c$ and derive the asymptotics of the maximum for $ch_n$ from those corresponding to $h_n$, don't you get that $\alpha$ and $\cZ_\rmD$ are modified by exactly the same scaling $\frac{1}{c}$ (and not $\frac{1}{c}$ and $c$, respectively)? If so, then the field increases but the measure and $\alpha$ decrease. As for the retuning, agree but let's discuss later....}}

\subsection{Separation into Phases A and B}
Given $n > 0$, for $t,u \geq 0$ we define
\begin{equation}
\rmF_{n,t}(u) := \big\{ x \in \rmD_n :\: L_t(x) \leq u \big\} \,,
\end{equation}
so that $\rmF_{n,t}(0)$ denotes the set of unvisited vertices by $\partial$-time $t$. Since $\{\wc{T}_n \leq t\} = \{\rmF_{n,t}(0) = \emptyset\}$, we need to study this set at times $t$ where $\sqrt{t} = \sqrt{\wc{t}_n} + \Theta(s)$. 

To this end, we split the running $\partial$-time of the walk into two consecutive phases: A and B. The $\partial$-time at these phases will be $t_n^A$ and $t^B_n+sn$, respectively, where $s \in \R$ and  
\begin{equation}
\label{e:2.4.1}
\sqrt{t_n^A} := \sqrt{2} n - \frac{3}{4\sqrt{2}}  \log n 
\quad, \qquad
t_n^B : = \frac{1}{2} n\log n \,.
\end{equation}
Then, the total running $\partial$-time of the walk $t_n(s)$ will satisfy,
\begin{equation}
\label{e:2.5}
\sqrt{t_n(s)} \equiv \sqrt{t_n^A + t_n^B + sn} = \sqrt{\wc{t}_n} + \frac{1}{2\sqrt{2}} s + o(1) \,,
\end{equation}
where $o(1) \to 0$ as $n \to \infty$ for fixed $s$. 

We observe that
\begin{equation}
\label{e:2.16}
\rmF_{n,t_n(s)}(0) = \rmF_{n,t_n^A}(0) \cap \rmF'_{n,t_n^B+sn}(0) \,,
\end{equation}
where $\rmF'_{n,t_n^B+sn}(0)$ denotes the set of unvisited vertices during phase B. We shall therefore study the set of unvisited sites during each phase separately.

\subsubsection{Phase A}
Let us begin with phase A, which takes up most of the work in the proof of Theorem~\ref{t:1.1}. Here we show that by the end of the phase, except for a negligible set of vertices whose size is $o(\sqrt{n})$, the remaining $O(1)$ local time vertices are arranged in clusters of microscopic size whose mutual distance is macroscopic. The number of such clusters divided by $\sqrt{n}$ admits a limit in law, which holds jointly with that of the total (real) running time of the walk at this phase.

For a precise statement, for $0 < r < R$, a set $\rmA \subset \bbZ^2$ will be called $(r,R)$-\textit{clustered} if 
\begin{equation}
\label{e:2.9h}
	\log \|x-y\| \notin (r,R) \quad ; \qquad \forall x,y \in \rmA \,.
\end{equation}
A \textit{minimal $r$-cover} of $\rmA$ is any element $\bfA$ from
\begin{equation}
\label{e:2.10t}
	\Argmin \Big\{ |\bfA| :\: \rmA \subseteq \bigcup_{z \in \bfA} \rmB(z;r)\,,\,\, \bfA \subseteq \bbZ^2  \Big\} \,.
\end{equation}
While it is not unique, unless otherwise stated, all results in the sequel will hold uniformly with respect to the choice of such cover. Moreover, it is not difficult to see that there always exists a choice of $\bfA$ such that $\bfA \subseteq \rmA$, and we shall often assume that our cover is as such. 

The \textit{$r$-cluster process} associated with such $\rmA$, denoted $\bfXi_{\rmA,r}$, is a point measure encoding the points of a minimal $r$-cover of $\rmA$ after scaling, i.e.,
\begin{equation}
\label{e:2.9n}
	\bfXi_{\rmA,r} = \sum_{z \in \bfA} \delta_{\rme^{-n} z}.
\end{equation}
It inherits its non-uniqueness from the non-uniqueness of $\bfA$.

For what follows, we define the {\em clustering scale length} as 
\begin{equation}
\label{e:2.3}
	r_n = r_n(\eta_0) := n^{1/2-\eta_0} \,,
\end{equation} 
for some $\eta_0 \in (0,1/2)$ (not to be confused with $\eta_{n,r}$ from \eqref{eq:defempgff}), whose precise value is immaterial for the validity of the results to follow. Lastly, we define the bulk of $\rmD_n$ as
\begin{equation}
	 \rmD_n^\circ:= \{ x \in \bbZ^2 : \rmd(x,\rmD_n^\rmc) > n -2\log n\},
	 \end{equation}
where, as before, $\rmd(\cdot,\cdot)$ denotes the point-to-set distance with respect to the Euclidean norm.

Having given all these definitions, we can now state the result which describes the statistics of the vacant set at the end of phase A.
\begin{phasethm}[Phase A]
\label{t:2.1}
For each $n \geq 1$ there exists an $\cF_{t_n^A}$-measurable, $(r_n, n-r_n)$-clustered random subset $\wt{\rmF}_{n,t_n^A}(0) \subseteq \rmD_n^\circ$ such that
\begin{equation}
\label{e:303.21}
\bigg( \frac{1}{\sqrt{n}}\Big|\rmF_{n, t_n^A}(0) \,\Delta\, \wt{\rmF}_{n,t_n^A}(0) \Big| \,,\,\,
	\frac{1}{\sqrt{n}} \bfXi_{\wt{\rmF}_{n,t_n^A}(0),r_n} \,,\,
	\frac{\bfT_{n,t_n^A}/|\rmD_n| - t_n^A}{2\sqrt{t_n^A}} \bigg)
	\,\underset{n \to \infty} \Longrightarrow \, \big(0 ,\, C_\star \cZ_\rmD ,\,\ol{h}_\rmD \big) \,,
\end{equation}
where $\cZ_\rmD$ is as in~\eqref{eq:deflqgm}, $\ol{h}_\rmD$ is as in~\eqref{e:1.7p} and $C_\star \in (0,\infty)$ is a constant.
\end{phasethm}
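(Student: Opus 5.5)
Our approach compares the local time field at $\partial$-time $t_n^A$ with the DGFF via the Generalized Second Ray--Knight Theorem (Theorem~\ref{t:103.1}). With the retuned rates, it gives the following equality in law, jointly in $x \in \rmD_n$:
\begin{equation*}
L_{t}(x) + \tfrac12 h_n'(x)^2 \overset{\rmd}{=} \tfrac12\big(h_n(x) + \sqrt{2t}\big)^2 ,
\end{equation*}
where $h_n,h_n'$ are DGFFs on $\rmD_n$, with $h_n'$ independent of $L_t$. The first step is to use this to translate the statement of Theorem~\ref{t:2.1} into statements about the min-extremes of $h_n$. Summing the identity over $x$ and using $\sum_x L_t(x) = \bfT_t - t$ (the time spent at $\partial$ is $t$ itself), we get that $(\bfT_{t}/|\rmD_n| - t)/\sqrt{2t}$ is, up to the negligible contribution of the quadratic terms (an $O(1/n)$ correction after normalizing), the empirical average $\ol h_n$. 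Matching normalizations, this gives the third coordinate and its limit $\ol h_\rmD$.

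Next we handle the vacant set. A vertex with $L_t(x)$ of order $O(1)$ corresponds to $h_n(x) \approx -\sqrt{2t_n^A} = -2n + \tfrac{3}{4}\log n$, which is precisely the $-m_n$ level. Thus near-zero local time sites sit on top of the DGFF extremes, and the extremal process limit~\eqref{e:1.10i} applies. The Ray--Knight identity is only an equality of laws of the whole field, not a coupling of $L$ and the extremes. We therefore proceed as in~\cite{Tightness}. Via the isomorphism we compute conditional first and second moments of the number of zero-local-time vertices near each deep $r_n$-local minimum of $h_n$. Given $h_n(x)+m_n=u$, the probability that $L_t(x)=0$, after integrating over $h'_n$ and over the cluster law $\nu$, is expected to scale like $\sqrt{n}$ times a function of $u$ integrable against $\rme^{-\alpha u}$. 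This is where the $\sqrt{n}$ normalization comes from, through a ballot/barrier estimate. We then define $\wt{\rmF}_{n,t_n^A}(0)$ as the set of unvisited vertices lying in the bulk $\rmD_n^\circ$ whose local-time profile along concentric annuli stays above a barrier on scales between $r_n$ and $n-r_n$. Clustering at scale $(r_n,n-r_n)$ follows from a union bound: two such sites at intermediate distance force a two-point barrier event of vanishing probability.

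Third, we show that $|\rmF \Delta \wt{\rmF}| = o_P(\sqrt n)$. This is a first moment computation: vertices near the boundary, or failing the barrier, contribute $o(\sqrt n)$ in expectation, by the standard barrier estimates and the Green function asymptotics. For convergence of the cluster process $n^{-1/2}\bfXi$, we compute conditional Laplace functionals. Conditionally on the macroscopic (coarse) field at scale $n-r_n$, the clusters in different mesoscopic boxes are nearly independent, and each box contributes a count whose mean is $\sqrt n\, C_\star$ times the local contribution to the derivative-martingale approximation of $\cZ_\rmD$. A law of large numbers over the many boxes then gives convergence to $C_\star\cZ_\rmD$, with $C_\star$ emerging from the one-cluster asymptotic.

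The main obstacle is the joint convergence with the average $\ol h$ of the same field. For this we would use the decomposition~\eqref{e:1.22i} together with Theorems~\ref{t:1.2i} and~\ref{t:1.4i}. We condition on $\ol h_n$ so that the cluster count is governed by $\wh{\cZ}_\rmD\,\rme^{\alpha\psi_\rmD\ol h_\rmD}$, and we verify that the local-time-to-DGFF transfer is uniform in the value of the average. Establishing the near-independence of the clusters from the coarse field through the isomorphism, which is not pathwise, is the delicate step. We would first attempt it via conditional second-moment bounds on the Laplace functional, given the DGFF harmonic averages on the boundaries of the mesoscopic boxes.
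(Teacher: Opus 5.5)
Your treatment of the third coordinate (sum the isomorphism over $x$, then control $\sum_x h^2$ by a variance bound) is essentially Proposition~\ref{p:1103.6}. The argument for the second coordinate, however, has a genuine gap: nothing in it identifies the limit of $n^{-1/2}\bfXi$ with $C_\star\cZ_\rmD$.

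First, the heuristic that low-local-time sites ``sit on top of the DGFF extremes'' is backwards. The isomorphism is a genuine coupling (Theorem~\ref{t:103.1}). In the paper's notation, $L_t(x)+h_n^2(x)=f_n^2(x)$ almost surely, with $L_t$ independent of $h_n$. At a site with $L_{t_n^A}(x)=0$ this gives $f_n(x)=\pm h_n(x)$, which is typically of order $\sqrt n$. So unvisited sites typically lie $\Theta(\sqrt n)$ above the extremal window of $h'_n$. Only a $\Theta(n^{-1/2})$ fraction of the $\Theta(\sqrt n)$ low-local-time clusters survive into $\rmS_n(u)$; the factor $\sqrt n$ is the ratio $|\bfW_n(u)|/|\bfS_n(u)|$, not a conditional probability near a minimum. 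Moments of zero-local-time counts near the $O(1)$ deep local minima of the DGFF therefore see only $O(1)$ clusters, and cannot produce a $\sqrt n\,C_\star\cZ_\rmD$ limit.

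Second, your fallback has no mechanism behind it. You propose a law of large numbers over mesoscopic boxes, given a ``coarse field'' whose box contributions approximate a derivative martingale. But the walk has no Gaussian coarse field, and the coarse field of $h'_n$ is not a function of $L_t$. There is also no known way to show that a functional of the walk's coarse-scale data converges to $\cZ_\rmD$, which is a functional of the CGFF.

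The missing idea is to run the comparison in the opposite direction and invert a thinning. By~\eqref{e:3.3}, $\bfGamma_{n,u}$ is obtained from the low-local-time clusters by a thinning through the independent field $h_n$. The per-cluster survival probability is $n^{-1/2}\varphi^k_u(\omega)$, where $\omega$ is the local-time shape on the cluster (Lemma~\ref{l:300.3.7}, Proposition~\ref{p:300.2}). Hence $\bfGamma_{n,u}$ is asymptotically a Cox process with intensity $n^{-1/2}\int\varphi^k_u(\omega)\,\bfLambda^k_{n,u}(\cdot\times\rmd\omega)$.

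For this intensity to be a constant multiple of $n^{-1/2}\bfXi_{n,u}$, you need the cluster-i.i.d.\ law of large numbers of Proposition~\ref{p:300.1a}. The shapes on different clusters must be nearly independent, which downcrossings supply via the spatial Markov property. They must also be asymptotically identically distributed whatever their macroscopic environment, and this is exactly where the sharp ballot estimate for repelled downcrossing trajectories (Theorem~\ref{prop:coupling0}) is needed.

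The joint limit of $(\bfGamma_{n,u},\ol{h}'_n)$ is known from Theorem~\ref{t:1.2i} (Proposition~\ref{p:300.4}). Comparing Laplace functionals then identifies the limit of $n^{-1/2}\bfXi_{n,u}$, and Proposition~\ref{p:300.5} passes to $u=0$. The joint law with the running time comes from inserting the $\cF_{t_n^A}$-measurable factor $\rme^{i\mu\wh{\bfT}_{n,t_n^A}}$ into the thinning identity, not from conditioning on $\ol{h}_n$.

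Separately, your first-moment bound for $|\rmF\,\Delta\,\wt{\rmF}|$ fails as stated. By Lemma~\ref{lem:nhprob}, $\bbP(L_{t_n^A}(x)=0)=\rme^{-t_n^A/G_{\rmD_n}(x,x)}$ is of order $n^{3/2}\rme^{-2n}$ in the bulk. So $\bbE|\rmF_{n,t_n^A}(0)|$ is of order $n^{3/2}\gg\sqrt n$, dominated by non-repelled configurations. Controlling the difference needs the truncated thinning/resampling arguments behind Theorem~\ref{p:300.3} and Proposition~\ref{p:3.9nn}. Also, a union bound only makes $\wt{\rmF}$ clustered with high probability, whereas the statement requires it to be clustered surely; the paper achieves this by removing $\rmW_n^{[r_n,n-r_n]}(0)$.
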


A general outline of the proof of Theorem~\ref{t:2.1} will be given in Section~\ref{sec:phaseA} below, and the full proof will then be carried out in the subsequent sections.

\subsubsection{Phase B}

Next we address the set of unvisited sites during phase B. Here, we show that the cover time and last visited vertex of these remaining sites admit a joint scaling limit. Henceforth, in alignment with the notation before, given $\rmA \subset \rmD_n$, we let $\wc{T}_A$ and $\wc{X}_A$ respectively denote the cover $\partial$-time of $\rmA$ and the last visited vertex thereof. 
\begin{phasethm}[Phase B]
\label{t:2b}
Uniformly in $s \in \bbR$ on compacts, $(r_n, n-r_n)$-clustered sets $\rmA \subset \rmD^\circ_n$ and subsets $\rmW \subseteq \rmD$,
\begin{equation}
\label{e:2.15m}
\bigg| \bbP \big( \wc{T}_\rmA \leq t_n^B + sn\,,\,\, \wc{X}_\rmA \in \rme^{n} \rmW \big) -
	\exp \Big(-\rme^{-(s-\log \frac{1}{\sqrt{n}} \|\bfXi_{\rmA, r_ n}\|)}\Big)\bfXi^\circ_{\rmA, r_n}(\rmW) \bigg| \underset{n \to \infty} \longrightarrow 0 \,.
\end{equation}
Moreover, for any fixed $s \in \bbR$, if $n$ is large enough, then, uniformly in all $\rmA \subset \rmD_n$ (not necessarily clustered),
\begin{equation}
\label{e:2.10a}
\bbP \big( \wc{T}_{\rmA} > t_n^{B} + sn \big) \leq 2 \rme^{-s}\frac{|\rmA|}{\sqrt{n}} \,.
\end{equation}
Lastly, for any fixed $s \in \bbR$,
\begin{equation}
	\frac{\bfT_{n,t_n^B+sn}/|\rmD_n| - (t_n^B+s n)}{n} \overset{\bbP}{\underset{n \to \infty}\longrightarrow} 0 \,.
\end{equation}
\end{phasethm}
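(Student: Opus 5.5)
Since phase B runs for $\partial$-time $t_n^B+sn=\frac12 n\log n+sn$, which is much shorter than the phase-A time, each site of $\rmA$ is reached only through rare excursions from $\partial$. I will therefore decompose the walk into its excursions from $\partial$. In $\partial$-time these form a Poisson point process of excursions with intensity $t$ times the excursion measure. The event that a given set is not hit by $\partial$-time $t$ then has probability $\exp(-t\,\mathrm{Cap}(\cdot))$, where $\mathrm{Cap}$ is the capacity of the set relative to $\partial$ under the retuned rates.

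\textbf{Step 1: single points and the Markov bound.} For the retuned rates, $G(x,x)$ equals $n+O(1)$ in the bulk, so I take the convention in which a single point $x$ has capacity $\mathrm{Cap}(\{x\})=1/G(x,x)\approx 1/n$. Then
$$\bbP(x \text{ unvisited}) = \exp\bigl(-(t_n^B+sn)/G(x,x)\bigr) = \rme^{-s}n^{-1/2}(1+o(1)).$$
For points near the boundary $G(x,x)$ is smaller, so this probability is smaller still. The bound \eqref{e:2.10a} then follows from Markov's inequality on the number of unvisited points of $\rmA$, with the constant $2$ absorbing the error for $n$ large. I will also check that the normalization $G\sim n$ is consistent with the stated constants.

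\textbf{Step 2: clusters.} Let $\bfA$ be a minimal $r_n$-cover of $\rmA$. Each cluster $\rmA_z=\rmA\cap\rmB(z;r_n)$ lies within distance $\rme^{r_n}$ of $z$, and distinct clusters are at mutual distance at least $\rme^{n-r_n}$; this is where clusteredness is used. I will show two facts.

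(a) The probability that cluster $\rmA_z$ is not fully covered is $p_z=\rme^{-s}n^{-1/2}(1+o(1))$. The upper bound is a union bound over the points of $\rmA_z$ together with the strong Markov property, which reduces it to the single-point estimate. For the lower bound, once one point of the cluster is hit, the walk covers the rest of the small ball before returning to $\partial$ with high probability. The reason is that covering $\rmB(z;r_n)$ takes about $\rme^{2r_n}r_n^2$ steps, while escaping to $\partial$ takes of order $\rme^{2n}$ steps. So the cluster behaves like a single point of capacity $\approx 1/(n-O(r_n))=(1+o(1))/n$, and $r_n=o(\sqrt n)$ keeps the error in the exponent $o(1)$. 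I expect the near-independence of covering within a cluster, uniformly over arbitrary clustered configurations of $\rmA_z$, to be the main obstacle. My first attempt will use a capacity comparison: $\mathrm{Cap}(\rmA_z)\le \mathrm{Cap}(\rmB(z;r_n))\approx 1/(n-r_n)$. I will combine this with the fact that the number of excursions reaching $\rmA_z$ is Poisson with mean about $\frac12\log n$, and that the conditional probability that some point of $\rmA_z$ is missed is $O(r_n/n)$ per excursion, by a Harnack/Green's function estimate.

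(b) Decorrelation across clusters. Macroscopic separation gives $\bbP(\text{an excursion hits both } \rmA_z,\rmA_{z'})\lesssim r_n/n^2$. Hence the indicators of "not covered" are asymptotically independent Bernoulli variables with parameters $p_z$. This follows from a Poisson thinning of the excursion process, or from a Chen--Stein approximation.

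\textbf{Step 3: conclusion.} By Steps 1 and 2, the number of uncovered clusters is approximately Poisson with mean $\rme^{-s}\|\bfXi_{\rmA,r_n}\|/\sqrt n$, which gives the Gumbel expression. For the last visited vertex, the labels of the uncovered clusters are exchangeable, since all the $p_z$ agree to first order. Consequently the last cluster to be covered is uniform among the clusters, up to $o(1)$, and conditional on being covered by the end this yields $\bfXi^\circ_{\rmA,r_n}(\rmW)$. To make this precise, I will run the process past time $t_n^B+sn$ and use time-homogeneity of the Poisson excursion process: the ordering of the clusters' final hitting times is approximately a uniformly random permutation.

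\textbf{Step 4: real time.} Finally, $\bfT_{n,t}=t+\sum_{x\in\rmD_n}L_t(x)$. By the Ray--Knight theorem (Theorem~\ref{t:103.1}) or a direct computation, $\bbE L_t(x)=t$, so $\bbE\sum_{x}L_t(x)=t|\rmD_n|$. The variance is $t\sum_{x,y}G(x,y)\cdot O(1)=O(t|\rmD_n|^2)$. Chebyshev's inequality then gives fluctuations of $\bfT/|\rmD_n|$ of order $\sqrt{t}\sim\sqrt{n\log n}=o(n)$, which proves the last claim.
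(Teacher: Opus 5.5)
Your overall architecture matches the paper's, and Steps~1, 3 and~4 are fine. The shared structure is this:
\begin{itemize}
\item excursions from $\partial$ form a Poisson process;
\item each ball $\rmB(z;r_n)$, $z\in\bfA$, is reached at rate $(1+o(1/\log n))/n$ per unit $\partial$-time (your capacity bounds give this, and the $o(1/\log n)$ precision is what makes the last cluster uniform);
\item a reaching excursion hits a second ball with probability $O(|\bfA|r_n/n)$;
\item nearly i.i.d.\ exponential covering times give both the Gumbel factor and a last cluster that is uniform and independent of the cover time.
\end{itemize}
For Step~4, the isomorphism even gives $\Var\bigl(\sum_{x}L_t(x)\bigr)=4t\sum_{x,y}\bbE\,h'_n(x)h'_n(y)$ exactly.

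The gap is in Step~2(a): the claim that a ball, once reached, is covered before the walk returns to $\partial$. This is the one non-trivial input, and your justification does not work.
\begin{itemize}
\item The walk is not confined near $\rmB(z;r_n)$, so comparing $\rme^{2r_n}r_n^2$ steps with $\rme^{2n}$ steps is meaningless. A walk free to wander off typically needs $\rme^{\Theta(r_n^2)}$ steps to cover a disc of log-radius $r_n$. What matters is how many times it returns to the ball before escaping.
\item A Harnack or Green's-function estimate cannot give a per-excursion failure probability $O(r_n/n)$. Those estimates control one point at a time, and a union bound over the $\asymp\rme^{2r_n}$ points of the ball is useless.
\item The failure probability is in fact of order $r_n^2/n$, not $r_n/n$: with probability of that order the walk reaches $\partial$ after only $\epsilon r_n^2$ returns, too few to cover the ball. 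This is exactly where $r_n=o(\sqrt n)$ enters. The capacity error in the exponent, to which you attribute it, only needs $r_n=o(n/\log n)$.
\item Bounding $\bbP(\rmA_z\text{ not covered})$ by a union bound over the points of $\rmA_z$ loses a factor $|\rmA_z|$. The right split is $\{\rmB(z;r_n)\text{ never reached}\}\cup\{\text{reached, but no reaching excursion covers it}\}$, which leads back to the same coverage estimate.
\end{itemize}

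After the truncation described below there are at most $n^{1/2+\eta_0/8}$ balls, each reached by about $\tfrac12\log n+s$ excursions. So you need a failure probability $q$ with $q\,n^{\eta_0/8}\log n\to0$. The paper imports this as Lemma~\ref{l:8.2} from~\cite{Tightness} (giving $q\le n^{-\eta_0/3}$) and feeds it into the two-stage Poisson computation~\eqref{e:8.5n}.

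You can also prove it directly:
\begin{itemize}
\item By Lemma~\ref{lem:ll1}, the number $M$ of returns from $\partial\rmB(z;r_n+3)$ to $\rmB(z;r_n)$ before $\partial$ is hit dominates a geometric variable with mean of order $n$. Hence $\bbP(M<2r_n^2)=O(r_n^2/n)$.
\item Each return hits any fixed $y\in\rmB(z;r_n)$ before leaving $\rmB(z;r_n+3)$ with probability at least $2/(r_n+3)$. A union bound over the ball then gives $\bbP(\rmB(z;r_n)\text{ not covered},\,M\ge2r_n^2)\le C\rme^{2r_n}\rme^{-4r_n^2/(r_n+3)}\to0$.
\end{itemize}
Together these give $q=O(n^{-2\eta_0})$.

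Finally, Step~2(b) needs a truncation you omitted. $|\bfA|$ can be as large as $\rme^{2r_n}$, in which case the cross-cluster error $|\bfA|r_n/n$ is not small. The paper first reduces, by monotonicity of $\rmA\mapsto\wc{T}_\rmA$ under inclusion, to $|\bfA|\le n^{1/2+\eta_0/8}$. For larger covers both terms in~\eqref{e:2.15m} are $o(1)$.
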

The proof of Theorem~\ref{t:2b} is given in Section~\ref{sec:proofb}.

\subsection{Proof of the main theorem}
Combining Theorem~\ref{t:2.1} and Theorem~\ref{t:2b}, the proof of Theorems~\ref{t:1.1} and~\ref{t:1.3i} are fairly straightforward.

\begin{proof}[Proof of Theorems~\ref{t:1.1} and~\ref{t:1.3i}]
Let us define the quantities 
\begin{equation}
\wc\bfT_n^B := (\wc{\bfT}_n - \bfT_{n,t_n^A}) \vee 0 \qquad \text{ , }\qquad \wc{T}_n^B := (\wc{T}_n - t_n^A) \vee 0.
\end{equation} If $\rmD_n$ has not been covered by the end of phase A, then these quantities respectively correspond to the real and $\partial$-time until the full domain is covered. Accordingly, let also 
\begin{equation}
\wc{\bfX}_n^B \equiv \wc{X}_n^B:=\mathbf{X}_{\wc{\bfT}_n \vee \bfT_{n,t^A_n}} 	
\end{equation}
 be the last visited vertex after phase A (until coverage) in any time parametrization. Note~that, on the event that $\wc{\bfT}^B_n>0$ (that is, the domain is not fully covered yet by the end of phase A), we have $\wc{\bfX}_n^B=\wc{\bfX}_n$. Next, abbreviate:
\begin{equation}
 \ol{\bfT}_n^A \equiv \frac{\bfT_{n,t_n^A}}{|\rmD_n|} 
 \quad, \qquad 
	\ol{\bfT}_n^B \equiv \frac{\wc{\bfT}_n^B}{|\rmD_n|}
\quad, \qquad 
	\ol{\bfT}_n \equiv \frac{\wc{\bfT}_n}{|\rmD_n|}
	\quad , \qquad 
		\wh{\bfT}_{n,t} \equiv	\frac{\bfT_{n,t}/|\rmD_n| - t}{2\sqrt{t}} \,,
\end{equation}
and also,
\begin{equation}
	\ol{\bfXi}_n \equiv \frac{1}{\sqrt{n}} \bfXi_{\wt{\rmF}_{n,t_n^A}(0),r_n}
\quad, \qquad
	\ol{\bfX}_n^B \equiv \frac{1}{\rme^{n}} \wc{\bfX}_n^B \quad, \qquad
		\ol{\bfX}_n \equiv \frac{1}{\rme^{n}} \wc{\bfX}_n \,.
\end{equation}

We first claim that uniformly in $s \in \bbR$ on compacts and subsets $\rmW \subseteq \rmD$,
\begin{equation}
\label{e:1103.30}
\bigg|\bbP \big(\ol{\bfT}_n^B \leq t_n^B + sn \,,\,\, \ol{\bfX}_n^B \in W 
\big|\, \cF_{t_n^A}\big) - \exp \big( -\rme^{-(s - \log \|\ol{\bfXi}_n\|)}\big) \ol{\bfXi}^\circ_n(W)\bigg|
	\underset{n \to \infty}{\overset{\bbP}\longrightarrow} 0\,.
\end{equation}
Indeed, we first observe that, in view of the last part of Theorem~\ref{t:2b} and the continuity in $s$ of the exponent in~\eqref{e:1103.30}, it is enough to show~\eqref{e:1103.30} with the probability therein replaced by
\begin{equation}
	\bbP \Big(\wc{T}^B_{\rmF_{n,t_n^A}(0)} \leq t_n^B + sn \,,\,\, \wc{X}^B_{\rmF_{n,t_n^A}(0)} \in \rme^n \rmW \,\Big|\, \cF_{t_n^A}\Big) \,,
\end{equation}
Replacing $\rmF_{n,t_n^A}(0)$ by $\wt{\rmF}_{n,t_n^A}(0)$ changes the last probability by at most
\begin{equation}
\label{e:2.22r}
	2\bbP \Big(\wc{T}^B_{\rmF_{n,t_n^A}(0) \Delta \wt{\rmF}_{n,t_n^A}(0)}
> \wc{T}^B_{\rmF_{n,t_n^A}(0) \cap \wt{\rmF}_{n,t_n^A}(0)} \,\Big|\, \cF_{t_n^A}\Big) \,,
\end{equation}
which tends to zero in probability thanks to the convergence of the first two components on the left-hand side of \eqref{e:303.21} and the first two assertions in Theorem~\ref{t:2b}. The random variables in~\eqref{e:2.22r} are respectively the cover $\partial$-time of the sets $\rmF_{n,t_n^A}(0) \Delta \wt{\rmF}_{n,t_n^A}(0)$ and $\rmF_{n,t_n^A}(0) \cap \wt{\rmF}_{n,t_n^A}(0)$ by the walk viewed in phase B only.
But then the first assertion of Theorem~\ref{t:2b} gives~\eqref{e:1103.30}.

Now, on the one hand, for any $s \in \bbR$ and subset $\rmW \subseteq \rmD$ we have
\begin{equation}
\big|\bbP \big(\ol{\bfT}_n \leq \wc{t}_n + sn\,,\,\, \ol{\bfX}_n \in \rmW \big) 
 - \bbP \big(\ol{\bfT}_n^B \leq \wc{t}_n - \ol{\bfT}_n^A + sn\,,\,\, \ol{\bfX}_n \in \rmW\big)\big| \leq \bbP\big( \overline{\bfT}_n \leq \overline{\bfT}^A_n\big) 
\end{equation} which tends to $0$ as $n \to \infty$ by the convergence of the second component in the left-hand side of \eqref{e:303.21} to a non-degenerate measure. On the other hand, if $\rmW$ is stochastically continuous w.r.t. Lebesgue, then $\rmW$ is a.s. stochastically continuous w.r.t. $\cZ_\rmD$ as well (see \cite[Exercise~10.10]{biskuppims}) and therefore, by using~\eqref{e:1103.30}, we obtain
\begin{equation}
\begin{split}
	\bbP \big(\ol{\bfT}_n^B \leq \wc{t}_n - \ol{\bfT}_n^A + sn\,,\,\, \ol{\bfX}_n & \in \rmW \big)
	= \bbP \Big(\ol{\bfT}_n^B \leq t_n^B - 2\sqrt{t_n^A}\; \wh{\bfT}_{n,t_n^A} -\tfrac{5}{32}\log^2 n + sn\,,\,\, \ol{\bfX}_n \in \rmW \big)\\
	& = \bbE \bbP \Big(\ol{\bfT}_n^B \leq t_n^B - 2\sqrt{t_n^A}\, \wh{\bfT}_{n,t_n^A} -\tfrac{5}{32}\log^2 n+ sn\,,\,\, \ol{\bfX}_n \in \rmW \,\Big|\, \cF_{t_n^A} \Big) \\
	& = \bbE \Big(\exp \Big( -\rme^{-(s - (2\sqrt{2}+o(1)) \wh{\bfT}_{n,t_n^A} - \log \|\ol{\bfXi}_n\|)} \Big)\ol{\bfXi}_n^\circ(\rmW)\Big) + o(1) \\
	& = \bbE \Big(\exp \Big( -C_\star \rme^{-(s - 2\sqrt{2}\, \ol{h}_\rmD - \log \|\cZ_\rmD\|)}\Big) \cZ^\circ_\rmD(\rmW) \Big) + o(1) \,,
\end{split}
\end{equation}
where $o(1) \to 0$ as $n \to \infty$, and the last equality above follows by the convergence of the last two components in the left-hand side of \eqref{e:303.21}. Note that above we have also used the tightness of $\wh{\bfT}_{n,t_n^A}$ together with the uniform convergence in~\eqref{e:1103.30} to obtain the one before last equality. 

In view of~\eqref{e:2.4n} and recalling that we have scaled-down the random walk rates by a factor of $2\pi$, this gives~\eqref{eq:alternativeformulation} and thus completes the proof for both theorems.
\end{proof}

\section{Phase A: Top-level proof of Theorem~\ref{t:2.1}}
\label{sec:phaseA}
 In this section we prove Theorem~\ref{t:2.1}. As in the case of the overall result, the exposition will be top-down, with the key steps outlined as propositions which are first used to construct a top-level proof of the theorem and only proved after, in what constitutes most of the remainder of this manuscript.
 
The proof of the theorem, and the reason for the definition of a Phase A, builds on the idea in~\cite{Tightness}, whereby the law of the set of unvisited vertices $\rmF_{n,t_n^A}(0)$ can be compared to the law of the min-extreme values of the DGFF via the so-called Isomorphism Theorem. This makes it possible to use recently derived estimates and asymptotics for the extreme values of the DGFF to deduce analogous results in the ``local time world'' and, ultimately, to prove Theorem~\ref{t:2.1}.

We begin by recalling the Isomorphism Theorem, also known as the Generalized
 Second Ray-Knight Theorem, due to~\cite{Gang}. 
\begin{thm}[Generalized second Ray-Knight Theorem]
\label{t:103.1}
Given $t \geq 0$ and $n \geq 0$, there exists a coupling of $L_{t} = (L_{t}(x) : x \in \rmD_n)$ with two copies of the DGFF on $\rmD_n$, $h_n = (h_n(x) : x \in \rmD_n)$
and $h'_n = (h'_n(x) : x \in \rmD_n)$, such that $L_{t}$ and $h_n$ are independent of each other and almost-surely,
\begin{equation}
\label{e:3.1n}
L_{t}(x) + h^2_n(x) = 
\big(h'_n(x) + \sqrt{t})^2 
\quad , \ x \in \rmD_n \,.
\end{equation}
\end{thm}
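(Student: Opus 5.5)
This is the Generalized Second Ray–Knight Theorem of Eisenbaum, Kaspi, Marcus, Rosen and Shi~\cite{Gang}. The paper quotes it and does not reprove it. Still, here is how I would establish it in the present setting: a continuous-time walk on $\wh{\rmD}_n$ with the wired vertex $\partial$, local times measured in $\partial$-time, and $h_n$ the Gaussian field with covariance $G_n$, the Green function of $\bfX$ killed at $\partial$ (up to the normalization fixed in Section~\ref{s:2}).

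\textbf{Identity in law via Laplace transforms.} Fix $\lambda = (\lambda_x)_{x \in \rmD_n}$ with $\lambda_x \ge 0$ small, and write $\Lambda = \mathrm{diag}(\lambda)$. The aim is to show that $L_t + h_n^2$, with $h_n$ independent of $L_t$, has the same law as $(h_n' + \sqrt{t})^2$. I would compare $\bbE \exp(-\sum_x \lambda_x \cdot)$ of the two sides.

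For the right-hand side, a Gaussian computation gives
\[
\det(I + 2G\Lambda)^{-1/2}\exp\bigl(-t\,\langle \bfone, (\Lambda - \Lambda(G^{-1}+2\Lambda)^{-1}2\Lambda/2)\bfone\rangle\bigr),
\]
which simplifies to $\det(I+2G\Lambda)^{-1/2}\exp(-t\,\langle\bfone,(\Lambda^{-1}+2G)^{-1}\bfone\rangle)$ after the appropriate factor conventions.

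The $\det$ factor also appears in $\bbE\,\rme^{-\langle\lambda,h_n^2\rangle}$. It therefore remains to show that
\[
\bbE\,\rme^{-\langle \lambda, L_t\rangle} = \exp\bigl(-t\,\langle \bfone, (\Lambda^{-1}+2G)^{-1}\bfone\rangle\bigr).
\]

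\textbf{Local times at inverse boundary local time.} The process $t \mapsto L_t$ is a subordinator-valued process, by the strong Markov property at $\bfT_t$ and the fact that excursions from $\partial$ form a Poisson point process in $\partial$-time. Hence $\bbE\,\rme^{-\langle\lambda,L_t\rangle} = \rme^{-t\Phi(\lambda)}$, with $\Phi$ computed from the excursion measure. An excursion starts by jumping from $\partial$ to a neighbour $y$ at rate equal to the edge rate, and then runs until it hits $\partial$. For $y \in \rmD_n$ set
\[
u(y) := \bbE_y\bigl[1-\rme^{-\langle\lambda, \bfL_{\tau_\partial}\rangle}\bigr].
\]
By Feynman–Kac and Kac's moment formula, $u = (I+G\Lambda)^{-1}G\lambda$. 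Summing $u(y)$ over the edges from $\partial$ and using that the boundary flux of $G$ equals the total mass (harmonicity of $G(\cdot,x)$ away from $x$), $\Phi(\lambda)$ reduces to the quadratic form above. Matching the constants requires keeping track of the factor $\tfrac12$ in $G_N$ and of the retuned rates.

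\textbf{From equality in law to a coupling.} Equal Laplace transforms on a neighbourhood of $0$ in $\bbR_+^{\rmD_n}$ determine the laws of these nonnegative vectors. So $L_t + h_n^2$ and $(h_n'+\sqrt t)^2$ are equal in distribution. Taking $L_t$ and $h_n$ independent on one space, the coupling with $h'_n$ then follows by a standard gluing argument. One constructs $h'_n$ from the conditional law of $h'_n$ given $(h'_n+\sqrt t)^2$, which is the law of the sign pattern given the squares, and realizes it on an extension of the space, so that \eqref{e:3.1n} holds almost surely.

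\textbf{Main obstacle.} I expect the main difficulty to be the excursion computation of $\Phi$, and in particular the exact bookkeeping of the normalizations. These are the rates $1/(2\pi)$, the factor $\tfrac12$ in $G_N$, and the fact that local time at $\partial$ is measured in real time spent at $\partial$. The Gaussian side is routine.
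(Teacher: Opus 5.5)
Your proof is correct, but it takes a different route from the paper, which gives no proof at all. The paper simply imports Theorem~\ref{t:103.1} from \cite{Gang}, remarking only that a factor $\sqrt{2\pi}$ and an absorbed $\tfrac12$ explain the normalization. Your argument is the classical Laplace-transform proof of the generalized second Ray--Knight theorem, followed by a gluing step (regular conditional law of $h'_n$ given $(h'_n+\sqrt t)^2$) that upgrades equality in law to an almost-sure coupling. The citation is more economical. Your route has the advantage of checking the normalization the paper asserts: the identity holds exactly when $h_n$ has covariance equal to one half of the Green function of the walk in use. This is true for any symmetric edge rate, including $1/(2\pi)$, with local time at $\partial$ measured in real time.

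There are three small bookkeeping slips, none of them a gap.
\begin{itemize}
\item The intermediate Gaussian exponent should be $\Lambda-2\Lambda(G^{-1}+2\Lambda)^{-1}\Lambda$. Your version is off by a factor $2$, although your final expression is right.
\item In the excursion step, $u$ is built from the walk's Green function $g:=\bbG^{\bfX}_{\rmD_n}=2G_n$, not from $G_n$. The correct formula is $u=(I+g\Lambda)^{-1}g\lambda$.
\item The flux identity you need is $\sum_y c(\partial,y)\,g(y,x)=1$ for every $x\in\rmD_n$. It follows by summing the columns of the killed generator, or from $\bbP_x(\tau_\partial<\infty)=1$ plus symmetry.
\end{itemize}
With the push-through identity $(I+g\Lambda)^{-1}g=g(I+\Lambda g)^{-1}$, the flux identity gives
$\Phi(\lambda)=\langle\bfone,(I+\Lambda g)^{-1}\lambda\rangle=\langle\bfone,\Lambda(I+2G_n\Lambda)^{-1}\bfone\rangle$. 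This equals the Gaussian exponent, since $(\Lambda^{-1}+2G_n)^{-1}=\Lambda(I+2G_n\Lambda)^{-1}$. Writing it in that form also removes the need to assume $\lambda_x>0$.
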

We recall that the DGFFs above are related to the DGFF presented in the introduction by a multiplicative factor of $\sqrt{2\pi}$. This scaling is due to the retuning of the random walk rates, and the absorption of a $\frac12$ factor that usually multiplies $h_n$ and $h'_n$ in~\eqref{e:3.1n}.
We shall use the coupling in the theorem at time $t = t_n^{A}$, in which case we shall write the right-hand side of~\eqref{e:3.1n} as $f^2_n(x)$, where
\begin{equation}
f_n(x) := h'_n(x) + m_n 
\quad , \qquad m_n := \sqrt{t_n^{A}} \,.
\end{equation}
Uncoincidentally, $-m_n$ is precisely the typical height of the DGFF's min-extreme values, so that the $O(1)$-level sets of $f_n$ correspond to the min-extremal level sets of the DGFF. Accordingly, for $u \geq 0$, we define the (sub-)level set of the field $f^2_n$ at height $u$ by
\begin{equation}
\label{e:1103.3o}
\rmS_n(u) := \big \{ x \in \rmD^\circ_n :\: f^2_n(x) \leq u \big \} \,.
\end{equation}
We observe that, if we define 
\begin{equation}
\rmW_n(u):=\{ x \in \rmD^\circ_n : L_t(x) \leq u\}\,,	
\end{equation}
then, under the coupling in Theorem~\ref{t:103.1}, we have
\begin{equation}
\label{e:3.3}
\rmS_n(u) = \big\{ x\in \rmW_n(u) :\: h^2_n(x) \leq u - L_t(x) \big\} \subseteq \rmW_n(u) \,,
\end{equation}since the second term on the left-hand side of~\eqref{e:3.1n} is always positive. In view of this, we shall often say that a vertex $x \in \rmW_n(u)$ ``survived the isomorphism'' if $x$ is also in $\rmS_n(u)$. Clearly, this happens whenever $h_n(x) = O(1)$, which occurs with probability $\Theta(1/\sqrt{n})$ since the latter is a centered Gaussian with variance $\Theta(n)$. We see that $\rmS_n(u)$ is a (random) thinning of $\rmW_n(u)$ by a factor $\Theta(1/\sqrt{n})$. The idea is to use knowledge about the limiting law of $\rmS_n(u)$ and the well-known random properties of $h_n$ to study the asymptotics of the law of $\rmW_n(u)$, and then of the law of the vacant set $\rmF_{n,t_n^A}(0)$.

To make this idea work, several steps need to be taken. First, one has to treat the global and local scales separately. As usual in many log-correlated fields, non-trivial correlations exist at all scales, and give rise to different asymptotic stochastic behavior at different scales. In our case, these multi-scale correlations result in the set $\rmS_n(u)$ being clustered, in the sense of~\eqref{e:2.9h}, and in the field $h_n$ having strong local correlations but almost no correlations at a macroscopic scale. 

To take care of the global scale, one works at the level of clusters. Formally, for $r \geq 0$, we let $\bbX_r := \lfloor \rme^r\rfloor \bbZ^2$ be the $r$-scaled lattice, so that the $r$-boxes $(\rmQ(z;r) : z \in \bbX_r)$ form a tiling of $\bbZ^2$. Recalling $r_n$ from~\eqref{e:2.3}, we then define
\begin{equation}
	\bfW_n(u) := \Big \{ z \in \bbX_{n-r_n}  :\:  \rmW_n(u)  \cap \rmQ(z ; n-r_n) \neq \emptyset \Big \} \,.
\end{equation}
This is the set of $n-r_n$ boxes which contain $O(1)$-local-time vertices. We shall occasionally refer to $\rmW_n(u)  \cap \rmQ(z ; n-r_n)$ for $z \in \bfW_n(u)$ as the cluster of $O(1)$-local-time vertices centered at $z$. The analog quantity for the DGFF is \
\begin{equation}
	\bfS_n(u) := \Big \{ z \in \bbX_{n-r_n}  :\:  \rmS_n(u)  \cap \rmQ(z ; n-r_n) \neq \emptyset \Big \} \,.
\end{equation}
Using the well-known upper tightness of $|\bfS_n(u)|$, one can then fairly easily get upper tightness of the quantity $|\bfW_n(u)|/\sqrt{n}$ (see \cite[Lemma~4.2]{Tightness}). To show lower tightness (i.e. asymptotic almost-sure strict positivity), however, it is not enough to use the known non-triviality of $\rmS_n(u)$ in the limit, and one has to address the local structure of clusters as well.

To this end, for $k \in [0, n-r_n]$ we further let
\begin{equation}
\label{e:3.30.1}
\begin{split}
	\bfW^{k}_n(u) := \Big \{ z \in \bfW_n(u) :\: & \exists! x=x(z)\in \bbX_k \ \text{ s.t. }
	\\
	& \rmQ(x;k) \subset \rmQ(z;n-r_n) \,,\,\,
	\rmW_n(u) \cap \rmQ(x;k) \neq \emptyset 
	\Big\} \,.
\end{split}
\end{equation}
This is (essentially) the collection of center points of boxes containing clusters whose $\ell^\infty$-diameter is at most $\rme^k$. 
To rule out vertices which are not sufficiently clustered, we define for $K \subset \bbR_+$,
\begin{equation}
\label{e:3.7n}
	\rmW_n^K(u) := \Big\{ x \in \rmW_n(u) :\: \exists y \in \rmW_n(u) \ \text{ s.t. } \log |x-y| \in K \Big\} \,.
\end{equation}

The following result was shown (in a slightly different but still equivalent formulation)  in~\cite{Tightness} and constitutes one of the main efforts in that work.
\begin{thm}[Theorem 5.1 and Theorem 6.1 in~\cite{Tightness}]
\label{p:300.3}
Let $u \geq 0$.
\begin{equation}
\label{e:5.3a}
\lim_{k \to \infty}
\limsup_{n \to \infty} \bbP \big( \bfG_n(u) \setminus 
\bfW_n^k(u) \neq \emptyset \big) = 0 \,.
\end{equation}
Morevoer,
\begin{equation}
\label{e:5.3}
\frac{ \big| \bfW_n(u) \setminus 
\bfW_n^k(u) \big|}{\sqrt{n}} \overset{\bbP}\longrightarrow 0 \,,
\end{equation}
as $n \to \infty$ followed by $k \to \infty$, and 
\begin{equation}
\label{e:3.10n}
\frac{\big|\rmW_n^{[r_n, n-r_n]}(u)  \big|}{\sqrt{n}}
	\overset{\bbP}{\underset{n \to \infty}\longrightarrow} 0 \,.
\end{equation}
as $n \to \infty$.
\end{thm}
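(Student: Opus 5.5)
The plan is to transfer clustering information from the DGFF side to the local-time side through the coupling of Theorem~\ref{t:103.1} at $t=t_n^A$. The key tool is the thinning identity~\eqref{e:3.3}: $\rmS_n(u)\subseteq\rmW_n(u)$, and a vertex of $\rmW_n(u)$ survives with conditional probability of order $1/\sqrt{n}$. Here $h_n$ is independent of $L_t$, and $\bbP(h_n^2(x)\le u-L_t(x))\asymp 1/\sqrt n$ whenever $L_t(x)\le u-\delta$.

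For~\eqref{e:3.10n}, I would condition on $L_t$ and write $\bbE|\rmS_n^{K}(u)|\gtrsim n^{-1/2}\,\bbE|\{x\in\rmW^K_n(u-\delta)\}|$, where $\rmS^K_n$ is defined as in~\eqref{e:3.7n}. This requires that some witness $y$ also survive, or it requires replacing the witness condition by a weaker one. To avoid needing both $x$ and $y$ to survive, I would use only the survival of $x$ and keep the witness $y$ on the $L$-side. The key input is that, for the DGFF, vertices at level $O(1)$ above $-m_n$ that have another low vertex at distance $\rme^{s}$ with $s\in[r_n,n-r_n]$ have expected count $o(1)$. This follows from standard first/second moment estimates with a barrier (the known clustering of DGFF extremes).

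The obstruction is that $y\in\rmW_n$ need not lie in $\rmS_n$. To handle it, I would use the local correlation of $h_n$: conditionally on $h_n(x)=O(1)$, the field $h_n(y)$ at distance $\rme^s$ is Gaussian with mean $O(1)\cdot(n-s)/n$ and variance $\approx 2s$. Thus $y$ survives with probability $\gtrsim s^{-1/2}\ge n^{-1/2}$, which roughly doubles the thinning cost. I would compensate by a sharper DGFF estimate: pairs at intermediate distance have expected count bounded by $\rme^{-c\min(s,n-s)^{\epsilon}}$ times the natural $\sqrt n$ normalization. Summing over $s\in[r_n,n-r_n]$ then gives $o(\sqrt n)$.

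For~\eqref{e:5.3} and~\eqref{e:5.3a}, I would proceed similarly at the level of boxes. A box $z\in\bfW_n(u)\setminus\bfW^k_n(u)$ contains two $k$-subboxes with low local time. Either they are at distance $\ge\rme^{r_n}$, a case covered by the previous step, or they form a cluster of diameter in $(\rme^k,\rme^{r_n})$. For the latter, the DGFF local cluster-shape tightness (the limiting cluster law $\nu$ from~\eqref{e:1.10i} has a.s. finite low sets) bounds the expected number of such DGFF clusters by $\epsilon_k\to0$. The transfer costs a factor $\sqrt n$ from thinning, since surviving only one vertex per cluster suffices, while the witness survival here costs only $O(k^{-1/2})$-type factors, compatible with the $r_n$ scale. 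The statement with $\bfG_n$ would be handled identically, $\bfG_n$ being the analogous DGFF-side box set.

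The main obstacle is the double-survival issue in the intermediate range. The needed DGFF pair estimates are delicate because correlations along the branching from scale $s$ to $n$ must be controlled via a barrier (ballot) argument for the conditional random walk of circle averages of $h'_n$ conditioned on both points being low. I would first try a direct ballot estimate on the joint random walk of circle averages.
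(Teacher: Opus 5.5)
Your transfer through the isomorphism uses unconditional expectations. You combine $\bbE|\rmS_n^K(u)|\gtrsim n^{-1/2}\,\bbE|\rmW_n^K(u-\delta)|$ with a bound on the \emph{expected} number of near-minimal DGFF pairs (and, for \eqref{e:5.3}, of non-local DGFF clusters). This cannot close, because on both sides of the isomorphism the first moments are carried by atypical configurations. Already for one vertex, Lemma~\ref{lem:nhprob} gives $\bbP(L_{t_n^A}(x)=0)\asymp n^{3/2}\rme^{-2n}$ in the bulk, so $\bbE|\rmW_n(0)|\asymp n^{3/2}$, whereas $|\bfW_n(0)|$ is of order $\sqrt n$ (Theorem~\ref{t:2.1o}). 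For two vertices at distance $\rme^s$ the same excursion computation gives $\bbP(L_{t_n^A}(x)=L_{t_n^A}(y)=0)\approx\rme^{-t_n^A/(n-s/2)}$. Hence the expected number of low pairs at that distance is of order $\rme^{2s(n-s)/(2n-s)}$ up to polynomial factors, which is exponentially large throughout $[r_n,n-r_n]$. The same Gaussian computation applies to near-minimal pairs of $h_n'$. So the DGFF bound you invoke is false as an unconditional first moment. With a barrier it becomes a bound on $\bbE[\,\cdot\,;\mathcal{B}]$ for an event $\mathcal{B}$ defined through $h_n'$, i.e.\ through the pair $(L_{t_n^A},h_n)$. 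Such a truncation cannot be inserted into an inequality obtained by conditioning on $L_{t_n^A}$ and averaging over the independent $h_n$.

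You could instead work with probabilities: condition on $L_{t_n^A}$, and on the event that a low pair at distance $\approx\rme^s$ exists, both vertices survive with conditional probability $\gtrsim(ns)^{-1/2}$. This only gives $\bbP(\exists\text{ low pair at distance}\approx\rme^s)\lesssim\sqrt{ns}\,\bbP(\exists\text{ near-minimal pair of }h_n'\text{ at distance}\approx\rme^s)$. You would then need a quantitative form of Lemma~\ref{t:103.14} that beats $\sqrt{ns}$ summably in $s$. Lemma~\ref{t:103.14} is only qualitative, and near the macroscopic end such a bound is false. For $s=n-j$, the coarse field of $h_n'$ at scale $n-j$ dips by order $\log j$ below its typical minimum with probability polynomial in $j$ (of order $j^{-3/2}$ up to logarithms). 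On that event, one box of side $\rme^{n-j}$ contains, with probability bounded below, two near-minimal points at mutual distance of order $\rme^{n-j}$. There $\sqrt{ns}\asymp n$ while $r_n^{-3/2}=n^{-3/4+3\eta_0/2}\gg n^{-1}$, so the transfer is vacuous near $s=n-r_n$. In particular your claimed decay $\rme^{-c\min(s,n-s)^{\epsilon}}$ fails there. That end has to be handled as a relative count, as \eqref{e:3.10n} is formulated, not through existence probabilities.

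Two further points. For \eqref{e:5.3}, the witness survival costs a factor up to $\sqrt{r_n}$, not $O(\sqrt k)$, and qualitative convergence to the cluster law $\nu$ cannot absorb that. Also, $z\notin\bfW_n^k(u)$ can come from two adjacent low vertices straddling a $k$-subbox boundary; this needs a vanishing-proportion argument as in Lemma~\ref{l:1103.7}, not a diameter bound.

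The cited proofs in \cite{Tightness} avoid DGFF pair estimates altogether; their tools reappear here in the proofs of Propositions~\ref{p:300.32} and~\ref{prop:rest3}. They work on the local-time side with downcrossing counts across concentric annuli. They show that clusters whose downcrossing trajectory is not repelled are negligible. For repelled clusters they control pairs of low vertices by comparing the two-point and one-point estimates (Propositions~\ref{prop:LT-2} and~\ref{prop:LT-1}), which gains a factor of order $\rme^{-2\sqrt2\,\wh v}$ in the repulsion $\wh v$. They convert these conditional bounds into counts relative to $|\bfW_n(u)|$ via the Resampling Lemma~\ref{lem:resampling}. The isomorphism enters only through one-point thinning (Lemma~\ref{lem:thinning}), to get upper tightness of $|\bfW_n(u)|/\sqrt n$.
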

\noindent
 Thus, most of the clusters in $\bfW_n(u)$ are of $O(1)$ diameter and that those which are larger, do not survive the isomorphism. Furthermore, outside a set of size $o(\sqrt{n})$ in probability, all vertices are $(r_n, n-r_n)$-clustered.
Using the above theorem and the non-vanishing of $\bfS_n(u)$ in the limit, the analysis of Phase A in~\cite{Tightness} culminated in the full tightness of $|\bfW_n(0)|/\sqrt{n}$, namely,
\begin{thm}[Theorem~2.3 in~\cite{Tightness}]
\label{t:2.1o}
\begin{equation}
\lim_{\delta \downarrow 0} \limsup_{n \to \infty}
\bbP \Big(\frac{1}{\sqrt{n}} \big|\bfW_n(0)\big| \notin \big(\delta, \delta^{-1} \big) \Big) = 0 \,.
\end{equation}
\end{thm}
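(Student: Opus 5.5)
The plan is to move information between the local time field and the DGFF through the coupling of Theorem~\ref{t:103.1} at $t=t_n^A$. Two facts drive it. First, $\rmS_n(u)\subseteq\rmW_n(u)$ by~\eqref{e:3.3}, so $\bfS_n(u)\subseteq\bfW_n(u)$. Second, $h_n$ is independent of $L_{t_n^A}$, so that, conditionally on $L_{t_n^A}$, $\rmS_n(u)$ is a thinning of $\rmW_n(u)$. I will prove upper and lower tightness separately. From the DGFF side I use two known facts, after the $\sqrt{2\pi}$ rescaling:
\begin{itemize}
\item[(a)] $|\bfS_n(u)|$ is tight for each fixed $u$, since the number of macroscopically separated near-minima at level $-m_n+O(1)$ is tight.
\item[(b)] $\lim_{u\to\infty}\liminf_n\bbP(\bfS_n(u)\neq\emptyset)=1$, which is tightness of the DGFF minimum around $-m_n$.
\end{itemize}

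\emph{Upper tightness.} Fix $u>0$. Since $\rmW_n(0)\subseteq\rmW_n(u)$, for each $z\in\bfW_n(0)$ pick a vertex $x(z)\in\rmW_n(0)\cap\rmQ(z;n-r_n)$. As $L_{t_n^A}(x(z))=0$, this vertex survives the isomorphism as soon as $|h_n(x(z))|\le\sqrt{u}$. Set $N:=\sum_z 1_{\{|h_n(x(z))|\le\sqrt u\}}$, so $|\bfS_n(u)|\ge N$. Conditionally on $L_{t_n^A}$, each indicator has probability at least $c\sqrt{u}/\sqrt n$, because $h_n(x)$ is centered Gaussian with variance $n+O(1)$ in the bulk. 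Hence $\bbE[N\mid L]\ge c\sqrt u\,|\bfW_n(0)|/\sqrt n$.

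For the conditional variance, the points $x(z)$ lie in distinct $(n-r_n)$-boxes. A single box can be adjacent to at most boundedly many others, so apart from those neighbours the covariance of $h_n(x(z))$ and $h_n(x(z'))$ is $O(r_n)=o(n)$. A Gaussian decorrelation estimate then gives
\[
\bbP\big(|h_n(x)|,|h_n(x')|\le\sqrt u\big)\le(1+o(1))\,\bbP\big(|h_n(x)|\le\sqrt u\big)\,\bbP\big(|h_n(x')|\le\sqrt u\big).
\]
Consequently $\mathrm{Var}(N\mid L)\le \bbE[N\mid L]+o(1)\,\bbE[N\mid L]^2$. By Chebyshev, on $\{|\bfW_n(0)|>M\sqrt n\}$ we get $N\ge cM/2$ with conditional probability tending to one. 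Letting $M\to\infty$ contradicts (a).

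\emph{Lower tightness.} By (b), fix $u$ with $\bbP(\bfS_n(u)=\emptyset)<\epsilon$. Running the survival argument backwards, given $L_{t_n^A}$, $\bbE[|\bfS_n(u)|\mid L]\le C\sqrt u\,|\bfW_n(u)|/\sqrt n$. Hence $\bbP(|\bfW_n(u)|<\delta\sqrt n)\to0$ as $\delta\to0$, uniformly in $n$. It remains to pass from $\bfW_n(u)$ to $\bfW_n(0)$.

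By~\eqref{e:5.3} it is enough to work with $\bfW^k_n(u)$ for a large fixed $k$: clusters of diameter at most $\rme^k$ containing a vertex of local time at most $u$. The key claim is the following. Conditionally on the walk's behaviour away from such a cluster, the cluster is entirely unvisited by $\partial$-time $t_n^A$ with probability at least $p(u,k)>0$, uniformly in $n$, and these events are nearly independent across clusters at distance $\rme^{n-r_n}$. Given this claim, a second-moment computation as in the upper bound gives $|\bfW_n(0)|\ge \tfrac12 p(u,k)\,|\bfW^k_n(u)|$ with high probability. Together with~\eqref{e:5.3}, this yields lower tightness.

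\emph{Main obstacle.} I expect the claim to be the hard step. My first attempt is to decompose the walk into its Poisson collection of excursions from $\partial$. Local time at most $u$ somewhere in a box of size $\rme^k$ should force, via a Harnack-type comparison of local times within $\rmB(x;k)$ (essentially the Markov property of the local time field given the excursions crossing $\partial\rmB(x;k+1)$), only boundedly many excursion crossings into $\rmB(x;k)$, with a bound depending on $u$ and $k$. Each crossing can then be rerouted inside the annulus to avoid $\rmB(x;k)$, at a multiplicative cost $c(k)$ in probability. This resampling inside the annulus leaves the configuration at other, far-away clusters essentially unchanged, which should give the near-independence.
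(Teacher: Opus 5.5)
Your upper-tightness argument is sound. It is essentially the thinning argument behind \cite[Lemma~4.2]{Tightness} (cf.\ Lemma~\ref{lem:thinning}), and the Gaussian decorrelation for boxes at distance at least $\rme^{n-r_n}$ does what you need. The lower-tightness half, however, has two genuine gaps.

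\textbf{First gap: the bound on surviving boxes.} The bound $\bbE\big[\,|\bfS_n(u)| \mid L\,\big]\le C\sqrt u\,|\bfW_n(u)|/\sqrt n$ does not follow from ``running the survival argument backwards''. A box $z\in\bfW_n(u)$ survives as soon as \emph{some} vertex of $\rmW_n(u)\cap\rmQ(z;n-r_n)$ survives. So the per-box probability is only bounded by $|\rmW_n(u)\cap\rmQ(z;n-r_n)|\cdot C\sqrt u/\sqrt n$, and a priori clusters may be large and spread out. This is exactly the point the paper stresses before Theorem~\ref{p:300.3}: non-vanishing of $\bfS_n(u)$ alone is not enough.

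The repair is \eqref{e:5.3a}: boxes containing surviving vertices lie, with high probability, in $\bfW^k_n(u)$, and there your bound holds with a $k$-dependent constant. The part you do invoke, \eqref{e:5.3}, does not suffice. It says large clusters are few, not that they do not carry the surviving vertices.

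\textbf{Second gap: the key claim is false as stated.} This is the more serious problem. Conditioning on the walk outside the cluster does not bound the number of excursions entering $\rmB(x;k)$. Suppose the outer data put the square-root-normalized downcrossing count at scales near $k$ far above $\sqrt2 k$. Then conditioning on $\{\min_{\rmQ(x;k)}L_{t_n^A}\le u\}$ does not make the crossing count bounded, and $\bbP(\min=0)/\bbP(\min\le u)$ can be made arbitrarily small. Already for the one-dimensional walk, given local time $\ell$ at the neighbouring site, this ratio is about $\rme^{-c\sqrt{u\ell}}$.

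A uniform $p(u,k)>0$ exists only for outer data in a typical window, namely repelled trajectories. There, two-sided barrier estimates of the type of Proposition~\ref{prop:LT-1} (lower bound with $\theta=0$, upper bound with $\theta=u$) give a ratio bounded below by a constant depending on $k,u$. You must then show that clusters with non-repelled trajectories form a negligible fraction of $\bfW^k_n(u)$. This cannot come from an untruncated first moment: the expected number of low-local-time vertices is of order $n^{3/2}$, not $\sqrt n$. In \cite{Tightness}, and here in Proposition~\ref{prop:rest3}, it requires the barrier estimates combined with the thinning and resampling lemmas (Lemmas~\ref{lem:thinning} and~\ref{lem:resampling}), which transfer repulsion from the DGFF side.

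Your ``Harnack-type comparison'' and rerouting heuristic do not supply this. The near-independence also has to be set up concretely. The events defining $\bfW^k_n(u)$ depend on the inner fields, so one conditions on excursions across annuli at a larger scale around well-separated boxes, as with the subgrids $\bbX_{k,l}(j)$. Only then does the event that these boxes are the repelled, low-local-time ones factorize into independent pieces given the outer data.
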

\noindent
Taking $\wt{\rmF}_{n,t_n^A}(0)= \rmW_n(0) \setminus \rmW^{[r_n, n-r_n]}_n(0)$ gives the necessary tightness underlying the convergence of the first two components in the desired weak limit~\eqref{e:303.21}.

To go beyond tightness of the normalized number of low local time clusters, several additional steps need to be taken. First, as we also want to keep track of the location of these clusters, we naturally define 
\begin{equation}
\label{e:3.13l}
\bfXi_{n,u} :=
	\sum_{z \in \bfW_n(u)}
	\delta_{\rme^{-n}z} 
\qquad ; \qquad \qquad 
\bfXi^k_{n,u} :=
	\sum_{z \in \bfW_n^k(u)}
	\delta_{\rme^{-n}z} \,. 
\end{equation}
The analog quantity for the DGFF on the right-hand side of the isomorphism is
\begin{equation}
	\bfGamma_{n,u} := \sum_{z \in \bfS_n(u)} \delta_{\rme^{-n} z} \,.
\end{equation}
We shall refer to these two point processes respectively as the cluster process of low local time vertices and that of the DGFF min-extremes.

Second, we need to utilize the full weak convergence of $\bfGamma_{n,u}$ not just the tightness of $|\bfS_n(u)|$. Thanks to recent work, asymptotics for the law of $\bfGamma_{n,u}$ are well known. Indeed, this is a straight-forward consequence of the so-called convergence of the extremal process of the DGFF. This is not enough, however, as in order to control also the conversion between $\partial$-time and real-time, we shall need the joint convergence together with the average of the field $\ol{h}_n$, as defined in~\eqref{e:1.17i}.
We shall thus prove,
\begin{prop}
\label{p:300.4}
If $u > 0$ is sufficiently large, there exist a constant $C_u \in (0,\infty)$ and a coupling of $\cZ_\rmD$ from~\eqref{eq:deflqgm}, $\ol{h}_\rmD$ from~\eqref{e:1.7p} and a random Radon point measure $\bfGamma_{\infty,u}$ on $\rmD$ satisfying that
\begin{equation}
\bfGamma_{\infty,u} \,\big|\, \cZ_\rmD, \ol{h}_\rmD\  \sim\  {\rm PPP}\big(C_u \cZ_\rmD(\rmd x) \big) \,,
\end{equation}
such that 
\begin{equation}
\label{e:3.14n}
\big(\bfGamma_{n,u}\,,\,\, \ol{h}_n \big)
	\,\underset{n \to \infty}\Longrightarrow\, \big(\bfGamma_{\infty,u},\, \ol{h}_\rmD\big) \,.
\end{equation}
\end{prop}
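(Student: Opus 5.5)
We derive Proposition~\ref{p:300.4} from Theorem~\ref{t:1.2i}, which gives the joint convergence $(\eta_{n,r_n}, \ol{h}_n) \Rightarrow (\eta_\rmD, \ol{h}_\rmD)$ under the coupling in which $\eta_\rmD$ is conditionally independent of $\ol{h}_\rmD$ given $\cZ_\rmD$. Here $\eta_{n,r}$ denotes the extremal process of the field $h'_n$, now written on the exponential scale with $N=\rme^n$; the parameter $r$ is distinct from the clustering scale $r_n$ of~\eqref{e:2.3}. Note also that $h'_n$ and $-h'_n$ have the same law, and the paper's $h_n$ enters only through the average.

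The key observation is that $\bfS_n(u)$ records the $(n-r_n)$-boxes containing a vertex $x$ of the bulk with $|h'_n(x)+m_n| \le \sqrt{u}$. Up to events of vanishing probability, these boxes are in bijection with the clusters of $\eta_{n,r}$, i.e. the $r$-local minima $x$ with $h'_n(x)+m_n \le \sqrt u$. The condition defining $\bfS_n(u)$ holds if and only if the cluster contains at least one vertex at height in $[-\sqrt u, \sqrt u]$.

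The first step is to justify this bijection. We use the standard structural facts on the DGFF extremes: no values below $-m_n - \lambda$ with high probability as $\lambda\to\infty$; clustering, i.e. near-minimal values occur in microscopic clusters at macroscopic mutual distance; and no extremes within $\rme^{n-2\log n}$ of the boundary with high probability. Together these imply that, with probability tending to one as $n\to\infty$ and then $\lambda\to\infty$, each box of $\bfS_n(u)$ contains exactly one local minimum of $\eta_{n,r}$ (taking $r=r_n$ as in~\eqref{e:1.11i}), located at height in $[-\lambda,\sqrt u]$. Consequently
\[
\bfGamma_{n,u} = \sum_{(x,s,\omega)\in \eta_{n,r}} \delta_{x}\,
\1\Big\{s \le \sqrt u,\ \min_{y}\big(s+\omega_y\big) \ge -\sqrt u\Big\} + o_\bbP(1)
\]
in the vague sense. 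Here the cluster $\omega$ is read only on $|y|\le \rme^{k}$ with $k$ large, which is allowed since clusters are asymptotically tight by the convergence to $\nu$ in~\eqref{e:1.10i}.

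The second step passes to the limit. The functional $\eta \mapsto (\text{the point measure above}) $ is continuous at a.e. realization of $\eta_\rmD$: the intensity $\rme^{-\alpha s}\rmd s\otimes\nu$ has no atoms at the threshold sets, and restricting $s\in[-\lambda,\sqrt u]$ with a finite window in $y$ makes it a continuous mapping of a vaguely convergent process with finitely many atoms. By the continuous mapping theorem and Theorem~\ref{t:1.2i}, the pair $(\bfGamma_{n,u},\ol h_n)$ converges jointly to $(\bfGamma_{\infty,u},\ol h_\rmD)$, with $\bfGamma_{\infty,u}$ obtained by the same functional applied to $\eta_\rmD$. Given $\cZ_\rmD$, the Poisson thinning/mapping theorem shows that $\bfGamma_{\infty,u}$ is a PPP with intensity $C_u\cZ_\rmD(\rmd x)$, where
\[
C_u=\int_{-\infty}^{\sqrt u} \rme^{-\alpha s}\,\nu\big(\omega:\ \inf_y \omega_y \ge -\sqrt u - s\big)\,\rmd s .
\]
This constant is finite because $\omega\ge 0$ $\nu$-a.s. forces the integrand to vanish for $s<-\sqrt u$ (hence also the $\lambda\to\infty$ removal is harmless). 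It is positive once $u$ is large enough that the integrand is not a.s. zero, since the event $\omega_y\ge 0$ for all $y$ holds trivially and $s\in[-\sqrt u,\sqrt u]$ contributes positively. Conditional independence from $\ol h_\rmD$ given $\cZ_\rmD$ is inherited from Theorem~\ref{t:1.2i}, which yields the stated conditional law given $(\cZ_\rmD,\ol h_\rmD)$.

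The main obstacle is the first step: showing that the box-counting process $\bfGamma_{n,u}$ coincides asymptotically with the cluster-thresholded functional of $\eta_{n,r}$. This requires ruling out two situations. The first is a box of side $\rme^{n-r_n}$ containing two distinct clusters, or a cluster straddling two boxes. Two clusters in one box are excluded by macroscopic separation; straddling is avoided by noting that its probability is bounded by the $\cZ_\rmD$-expected mass near box boundaries, which vanishes since $r_n\to\infty$. The second is a cluster whose low values lie outside $\rmD_n^\circ$, which is excluded by boundary estimates for the DGFF. Once these geometric issues are settled, the remaining steps are soft.
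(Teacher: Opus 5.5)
Your first step has a genuine error in how you encode the event defining $\bfS_n(u)$. You correctly say that a box lies in $\bfS_n(u)$ iff its cluster contains a vertex with height in $[-\sqrt u,\sqrt u]$, but your indicator $\1\{s\le\sqrt u,\ \min_y(s+\omega_y)\ge-\sqrt u\}$ does not express this. Since the atom is a local minimum, $\omega_0=0$ and $\omega_y\ge 0$ on the window, so $\min_y(s+\omega_y)=s$ and your indicator is simply $\1\{|s|\le\sqrt u\}$.

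It therefore misses every cluster whose minimum lies in $[-\lambda,-\sqrt u)$ but which contains another vertex with $s+\omega_y\in[-\sqrt u,\sqrt u]$, i.e. with $f_n^2\le u$ there. Such boxes do belong to $\bfS_n(u)$; you yourself allow the minimum to lie in $[-\lambda,\sqrt u]$. They are not negligible: in the limit their number is Poisson with parameter $\|\cZ_\rmD\|\int_{-\lambda}^{-\sqrt u}\rme^{-\alpha s}\,\nu\big(\exists y:\ s+\omega_y\in[-\sqrt u,\sqrt u]\big)\,\rmd s$. This is non-zero because the increments $\omega_y$, $y\neq 0$, are non-degenerate under $\nu$. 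So your representation of $\bfGamma_{n,u}$ up to $o_\bbP(1)$ is false. The symptom shows in your constant $C_u$, from which $\nu$ drops out entirely (your ``holds trivially''). Your finiteness argument (``the integrand vanishes for $s<-\sqrt u$, so removing $\lambda$ is harmless'') rests on the same mistake.

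The repair is to threshold with $\cA_{u,\lambda,k}:=\{(s,\omega):\ s\in[-\lambda,\sqrt u],\ \exists\,|y|\le \rme^{k}:\ s+\omega_y\in[-\sqrt u,\sqrt u]\}$, which is the paper's $\cA_{u,M,r}$. Its boundary is null for $\rme^{-\alpha s}\rmd s\otimes\nu$ by Fubini: for fixed $\omega$, only finitely many $s$ place some $s+\omega_y$ at $\pm\sqrt u$. So your continuous-mapping step with Theorem~\ref{t:1.2i} goes through and gives a Cox limit with constant $C_{u,\lambda,k}$, jointly with $\ol h_\rmD$.

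The passage $\lambda,k\to\infty$ then needs a real argument. The constant $C_{u,\lambda,k}$ is monotone in $\lambda$ and $k$. Its limit is finite because the limiting Poisson counts are dominated by the tight variables $|\bfS_n(u)|$. It is positive thanks to the contribution from $y=0$, $|s|\le\sqrt u$. The bijection, which holds with probability tending to one as $n\to\infty$ and then $\lambda,k\to\infty$, then transfers the limit to $\bfGamma_{n,u}$. With this correction your proof is the paper's.

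A smaller point concerns straddling. It cannot be controlled through an expected $\cZ_\rmD$-mass, since the critical measure is not integrable. Instead, apply Lemma~\ref{lem:coverdgff} to the strips of width $\rme^{k}$ along the box boundaries. Their area fraction is $O(\rme^{k+r_n-n})$, which is small because $n-r_n\to\infty$, not because $r_n\to\infty$.
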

\noindent
This proposition is a straightforward consequence of the joint convergence of the extremal process of the DGFF and its average, namely Theorem~\ref{t:1.2i}, the proof of which is the subject of Section~\ref{s:5n}.

Having identified the limiting law of $\bfGamma_{n,u}$, we wish to derive an asymptotic relation in law between this measure and $\bfXi_{n,u}$ using~\eqref{e:3.3}. Thanks to Theorem~\ref{p:300.3}, we know that ``modulo'' a negligible fraction, the set $\rmW_n(u)$ is $(O(1), n-r_n)$-clustered, in the sense of the definition given below~\eqref{e:2.3}. The covariance structure of the DGFF then entails that $h_n$ has essentially the same law on any such cluster, and that its values on different clusters are effectively independent. Henceforth we shall say that such a field is {\em cluster-i.i.d.} on $\rmW_n(u)$. 

As the value of $h_n$ at a given vertex is Gaussian with mean zero and variance $n + O(1)$ and the values at nearby vertices are $O(1)$ apart, it follows that $h_n$ acts in~\eqref{e:3.3} on the clusters of $\rmW_n(u)$ by thinning them according to some i.i.d. law with a ``survival rate'' of $\Theta(1/\sqrt{n})$.
While $h_n$ is essentially cluster-i.i.d. on $\rmW_n(u)$, it is not clear a-priori that 
the same holds for $L_{t_n^A}$, namely that the the local time field after $\partial$-time $t_n^A$ has asymptotically the same joint law on each of the clusters of $\rmW_n(u)$, and that values at different clusters are independent of each other. This cluster-i.i.d.-ness for $L_{t_n^A}$ is essential for having $\bfGamma_{n,u}$ related to $\bfXi_{n,u}$ via an i.i.d. thinning, and in turn for relating the laws of these two random measures.

 In Section~\ref{sec:iid} we show that this is indeed the case. As an LLN-type-consequence of this i.i.d. structure, we get the existence of a limiting law for the empirical distribution of the local time field on the clusters. The latter is defined as
\begin{equation}
\label{e:300.312n}
\bfLambda^k_{n,u} :=	
	\sum_{z \in \bfW_n^k(u)}
	\delta_{\rme^{-n}z} \otimes \delta_{L_{t_n^A}(\rmQ(x(z);k))} 
\end{equation}
This is a (random) probability measure on $\rmD \times \bbR^{\rmQ(k)}$ whose 
$\rmD$-marginal is $\bfXi^k_{n,u}$.

The following shows that $\bfLambda^k_{n,u}$ is asymptotically, with high probability, a product measure with a deterministic second component. We recall that $\mu^\circ$ denotes its normalized version of the measure $\mu$.
\begin{prop}
\label{p:300.1a}
Fix $u \geq 0$. For all $k \geq 0$, there exists a probability measure $\rmP^k_u$ on $\bbR^{\rmQ(k)}$ such that, uniformly in all measurable functions $f: \rmD \times \bbR^{\rmQ(k)} \to \bbR$ with $\|f\|_\infty \leq 1$,
\begin{equation}
\frac{1}{\sqrt{n}} \Big|\int f(x,\omega) \bfLambda^{k}_{n,u}(\rmd x \times \rmd \omega) -
	\int f(x,\omega) \, \big(\bfXi^{k}_{n,u}(\rmd x) \otimes \rmP^k_u (\rmd \omega) \big)\Big|
 \underset{n \to \infty}{\overset{\bbP} \longrightarrow} 0 \,.
\end{equation}
\end{prop}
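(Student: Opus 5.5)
The idea is to prove a law of large numbers for the cluster marks $L_{t_n^A}(\rmQ(x(z);k))$, $z\in\bfW^k_n(u)$. The key input is a cluster-i.i.d. property of the local time field. Conditionally on the cluster locations, the local time profiles on distinct, macroscopically separated clusters should be asymptotically i.i.d. with a law $\rmP^k_u$ that depends neither on the location nor on $n$.

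Natural candidate for $\rmP^k_u$. Fix a box $\rmQ(x;k)$ in the bulk $\rmD_n^\circ$. Take the conditional law of $L_{t_n^A}|_{\rmQ(x;k)}$ given that $\rmW_n(u)\cap\rmQ(x;k)\neq\emptyset$, suitably re-centered so that the set of admissible $x(z)$ is in bijection with a finite set of relative positions. I would first show this conditional law converges as $n\to\infty$, uniformly in $x\in\rmD_n^\circ$, to a limit $\rmP^k_u$.

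\emph{Step 1: existence of $\rmP^k_u$.} The walk's local times near $x$ at $\partial$-time $t_n^A$ are built from excursions between $\partial\rmB(x;k')$ and $\partial\rmB(x;n-r_n)$, for a mesoscopic $k'$ with $k\ll k'\ll r_n$. The profile on $\rmQ(x;k)$ depends only on the number of excursions, together with i.i.d. excursion contents inside $\rmB(x;k')$. Conditioning on the event of a low local time vertex forces the excursion count to be $O(1)$ after a barrier-type argument. The conditional law then converges, by the local (harmonic measure) convergence of entrance points, to a limit independent of $x$ and $n$.

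\emph{Step 2: decorrelation.} For two boxes at distance $\ge e^{n-r_n}$, the excursion counts and contents are asymptotically independent. This follows from a coupling of excursions from concentric annuli, using that the hitting distributions of inner balls from far away are almost uniform (harmonic measure estimates with error $e^{-(r_n-k')}$).

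\emph{Step 3: second moment.} Fix $f$ with $\|f\|_\infty\le1$. Let $\Delta_n(f)=\sum_{z\in\bfW^k_n(u)}\big(f(e^{-n}z,\omega_z)-\int f(e^{-n}z,\cdot)\,\rmd\rmP^k_u\big)$. I would compute $\bbE\Delta_n^2$ as a sum over pairs of boxes. The diagonal contributes $O(\bbE|\bfW_n(u)|)=O(\sqrt n)$, using the first moment bound underlying the upper tightness in \cite[Lemma~4.2]{Tightness}. Off-diagonal pairs at macroscopic distance contribute $o(n)$ by Steps 1–2, since the centred marks are asymptotically uncorrelated. This gives $\Delta_n/\sqrt n\to0$ in $L^2$, hence in probability.

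Pairs at intermediate distances must be handled separately. Theorem~\ref{p:300.3} (specifically \eqref{e:5.3} and \eqref{e:3.10n}) shows that clusters not in $\bfW^k_n(u)$, or not $(r_n,n-r_n)$-separated, are $o(\sqrt n)$. We therefore restrict to well-separated boxes via truncation before taking moments. The moment computation should be done under a truncation event, as in \cite{Tightness}, so that the first and second moments of cluster counts are $O(\sqrt n)$ and $O(n)$ respectively.

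\emph{Uniformity in $f$.} Replace $f$ by functions on a finite partition of $\rmD$ into small squares times a finite partition of $\bbR^{\rmQ(k)}$; this uses tightness of the marks. The convergence then holds for finitely many indicators. The approximation error is controlled by the total mass $|\bfW_n(u)|/\sqrt n$, which is tight.

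The main obstacle is Step 2 combined with the truncation. One must show that conditioning on low local time in one box only negligibly perturbs the excursion structure seen by a distant box, despite the global log-correlations. This requires a barrier-type control on the excursion counts across all intermediate scales, which I expect to borrow from the estimates of Section~\ref{s:barrier}.
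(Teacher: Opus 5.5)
\textbf{There is a genuine gap.} Neither your candidate $\rmP^k_u$ nor the decorrelation in Step~2 captures the mechanism that makes the cluster marks i.i.d.

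\textbf{The candidate limit is the wrong object.} The marks in $\bfLambda^k_{n,u}$ are indexed by \emph{isolated} clusters: $z\in\bfW^k_n(u)$ requires that exactly one $k$-box in all of $\rmQ(z;n-r_n)$ meets $\rmW_n(u)$. Isolation across the scales between $k$ and $n-r_n$ forces the downcrossing trajectory $\sqrt{\wh N_{t_n^A}[x(z);i]}$ to be repelled above $\sqrt2(k+(i-1)k^\gamma)$. This is why the paper first discards non-repelled clusters (Proposition~\ref{prop:rest}). Your candidate, the law of $L_{t_n^A}(\rmQ(x;k))$ given only $\rmW_n(u)\cap\rmQ(x;k)\neq\emptyset$, sees none of this:
\begin{itemize}
\item Under it the trajectory above scale $k$ is an unconstrained bridge. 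The recentred datum $\wh u$ at scale $k$ therefore gets a Gaussian-type weight, lacking the linear factor $\wh u$ and the restriction to $\mathfrak I^{\eta'}_k(1)$ visible in \eqref{eq:funcfg}.
\item It also counts boxes whose cluster spills into neighbouring boxes.
\end{itemize}
Since the profile on $\rmQ(x;k)$ depends on $\wh u$, you get a different mixture. This is the analogue of confusing the DGFF cluster law $\nu$ with the Palm measure of the level set. Relatedly, the excursion count at a mesoscopic scale $k'$ is not $O(1)$ under the conditioning; it grows with $k'$, and its law depends on the coarse-scale data.

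\textbf{The moment computation has the same problem.} Without truncation, $\bbE|\bfW_n(u)|$ is not $O(\sqrt n)$, since first moments are dominated by atypically low coarse trajectories. Also, \cite[Lemma~4.2]{Tightness} obtains upper tightness by thinning through the isomorphism, not from a first-moment bound. Once you truncate by a barrier, the centring measure must be the barrier-conditioned law, not your candidate.

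\textbf{Step~2 is false as stated.} The number of excursions from $\partial\rmB(x;k')$ to $\partial\rmB(x;n-r_n)$ is driven by the local time accumulated at coarse scales around $x$. For two boxes at distance $\rme^{n-r_n}$ these are strongly correlated. Correlations through the coarse field are precisely why $|\bfW_n(u)|/\sqrt n$ has a random limit proportional to $\|\cZ_\rmD\|$. Harmonic-measure estimates decouple entrance and exit points, not counts.

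\textbf{The missing idea: insensitivity to coarse data.} For your off-diagonal terms to vanish, the conditional law of a mark, given that its box carries a cluster, must be asymptotically the same for \emph{every} admissible value of the coarse data. Conditioning on $z'$ being a cluster shifts the coarse data seen by $z$, so this is unavoidable. The paper obtains it as follows:
\begin{itemize}
\item Theorem~\ref{prop:coupling0} shows that on repelled trajectories the transition probability from scale $l=k^2$ down to scale $k$ equals $r_{k,T}(\wh u,\wh v)h_k(\ol y)(1+o_k(1))$, where $r_{k,T}$ is a product of a function of $\wh u$ and a function of $\wh v$. This gives Lemma~\ref{l:4.10}.
\item With the spatial Markov property on $l$-separated subgrids, the marks become conditionally independent, with laws within a factor $1+o_k(1)$ of a fixed $\rmO^{k,\epsilon}_u$.
\item Azuma--Hoeffding then yields the law of large numbers without any moment computation.
\end{itemize}

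\textbf{Fixed $k$ needs an extra step.} Because the insensitivity only holds up to $o_k(1)$, the result first comes out in the double limit $n\to\infty$ then $k\to\infty$. A genuine fixed-scale limit is extracted by nesting boxes $\rmQ(y;r)$ inside $\rmQ(x;k_j)$ with $\lfloor\rme^{k_j}\rfloor=\lfloor\rme^r\rfloor^j$ and letting $j\to\infty$. Your Step~1 asserts a limit at fixed $k$ but gives no mechanism that could produce it.
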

The proof of proposition~\ref{p:300.1a}, which takes up a substantial part of this manuscript, is the subject of Sections~\ref{sec:iid} and~\ref{s:barrier}.

The i.i.d. thinning of the clusters of $\bfW_n(u)$ can now be stated as follows. Here and after, we define the intersection of two counting measures as the counting measure whose support is the intersection of their supports.
\begin{prop}
\label{p:300.2}
Fix $u \geq 0$. For all $k \geq 0$, there exists a bounded function $\varphi^{k}_u :
\bbR^{\rmQ(k)} \to \bbR_+$ such that, for any measurable positive function $f: \rmD \to \bbR_+$ with compact support,
%\Oren{Check that we use compact support, as $\bfW_n$ is defined in terms of $\rmD_n^\circ$.}
\begin{equation}
\label{e:300.3.16}
\begin{split}
\lim_{n \to \infty}
\bigg|\bbE \bigg( \xi_n \exp \bigg\{ -\int f(x) & \big(\bfXi_{n,u}^k \cap \bfGamma_{n,u}\big)
(\rmd x)\bigg\} \bigg) \\
& - \bbE \bigg( \xi_n \exp \bigg\{-\frac{1}{\sqrt{n}} \int \varphi^k_u(\omega) \big(1-\rme^{-f(x)}\big)\bfLambda_{n,u}^{k}(\rmd x \times \rmd \omega)
  \bigg\}\bigg) \bigg| = 0 \,,
\end{split}
\end{equation}
uniformly over all $cF_{t^A_n}$-measurable random variables $\xi_n$ with $\rmL^\infty$-norm at most $1$.
\end{prop}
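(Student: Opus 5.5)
Since $h_n$ in the isomorphism (Theorem~\ref{t:103.1}) is independent of $L_{t_n^A}$, and hence of $\cF_{t_n^A}$, the plan is to condition on $\cF_{t_n^A}$ and compute the conditional Laplace functional of $\bfXi^k_{n,u}\cap\bfGamma_{n,u}$ over the randomness of $h_n$ alone. For $z\in\bfW^k_n(u)$ let $x=x(z)$ be the unique $k$-box carrying the cluster. Then $z$ belongs to the intersection essentially iff $\rmS_n(u)\cap\rmQ(x;k)\neq\emptyset$, i.e.\ iff $h_n^2(y)\le u-L_{t_n^A}(y)$ for some $y\in\rmQ(x;k)$. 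Vertices of $\rmS_n(u)$ in $\rmQ(z;n-r_n)$ but outside $\rmQ(x;k)$ lie in $\rmW_n(u)$ off the cluster, so they are excluded by the definition of $\bfW^k_n(u)$. Hence, conditionally,
\[
\bbE\Big(\exp\Big\{-\int f\,\rmd(\bfXi^k_{n,u}\cap\bfGamma_{n,u})\Big\}\,\Big|\,\cF_{t_n^A}\Big)
=\bbE\prod_{z\in\bfW^k_n(u)}\Big(1-(1-\rme^{-f(\rme^{-n}z)})\1_{E_z}\Big),
\]
where $E_z:=\{\exists y\in\rmQ(x(z);k):\ h_n^2(y)\le u-L_{t_n^A}(y)\}$.

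The first step is the single-cluster probability. Write $h_n(y)=h_n(x)+(h_n(y)-h_n(x))$ for $y\in\rmQ(x;k)$ and use the Gibbs--Markov decomposition. The increments $(h_n(y)-h_n(x))_{y\in\rmQ(x;k)}$ converge in law, for $x$ in the bulk $\rmD_n^\circ$, to a pinned field $\phi^k$ (the DGFF on $\bbZ^2$ pinned at the center, restricted to $\rmQ(k)$), asymptotically independent of $h_n(x)$. Moreover $h_n(x)$ has variance $n+O(1)$, hence a density $(2\pi n)^{-1/2}(1+o(1))$ uniformly on compacts. Since $E_z$ forces $h_n(x)=O(1)$ once $\phi^k$ is tight, this gives
\[
\bbP(E_z\mid\cF_{t_n^A})=\frac{1+o(1)}{\sqrt n}\,\varphi^k_u\big(L_{t_n^A}(\rmQ(x;k))\big),\qquad
\varphi^k_u(\omega):=\frac{1}{\sqrt{2\pi}}\,\bbE\,\Leb\Big\{a\in\bbR:\ \exists y,\ (a+\phi^k(y))^2\le u-\omega(y)\Big\}.
\]
Clearly $\varphi^k_u\le (2\sqrt u+\text{range})$, and is bounded after a truncation of large $\phi^k$. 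One must also check that the error is uniform in $\omega$ with $\omega\ge0$, and in $x\in\rmD_n^\circ$.

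The second step is asymptotic independence across clusters. Distinct $z$'s lie in distinct $(n-r_n)$-boxes, and by Theorem~\ref{p:300.3} one may restrict to clusters at mutual distance at least $\rme^{n-r_n}$, discarding $o(\sqrt n)$ of them at cost $o(1)$ since each has probability $O(1/\sqrt n)$. The covariance of $h_n$ between such boxes is then $r_n+O(1)=o(\sqrt n)$ relative to variance $n$. I would decompose $h_n$ via Gibbs--Markov on the union of the $(n-r_n)$-boxes: $h_n=\Phi+\sum_z h^{(z)}$ with independent $h^{(z)}$ and a harmonic coarse field $\Phi$ of variance $O(r_n)$ inside the boxes and small oscillation on $\rmQ(x;k)$. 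Conditioned on $\Phi$, the events $E_z$ are independent. Moreover, $\Phi$ shifts each $h^{(z)}(x)$ by $O(\sqrt{r_n})=o(\sqrt n)$ (by the choice $r_n=n^{1/2-\eta_0}$), which changes the Gaussian density at $O(1)$ only by a factor $1+o(1)$. The product then equals $\prod_z(1-(1-\rme^{-f})\,\varphi^k_u/\sqrt n\,(1+o(1)))$. Upper tightness of $|\bfW_n(u)|/\sqrt n$ lets us replace this by $\exp\{-n^{-1/2}\sum_z\varphi^k_u(1-\rme^{-f})\}+o(1)$, which is exactly the $\bfLambda^k_{n,u}$ integral.

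Finally, multiply by $\xi_n$ (which is $\cF_{t_n^A}$-measurable and bounded) and take expectations. The errors are uniform because they are controlled on events of high probability not depending on $\xi_n$. The main obstacle is the second step: making the conditional independence rigorous with errors uniform over the random, $\cF_{t_n^A}$-measurable configuration of clusters, especially near $\partial\rmD_n^\circ$ where variances differ. Compact support of $f$ should handle this by restricting to clusters at macroscopic distance from the boundary.
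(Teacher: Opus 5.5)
Your proposal is correct and is essentially the paper's argument. The paper proceeds in the same three steps: condition on $\cF_{t_n^A}$; derive the single-cluster asymptotics $n^{-1/2}\varphi^k_u(\omega)$ from the pinned field and the flat Gaussian density of $h_n(x)$ (your $\varphi^k_u$ is, up to the normalizing constant, the paper's $\nu_k(\cA_{k,u}(\omega))$); and decouple well-separated clusters by a Gibbs--Markov decomposition whose coarse field is of order $r_n\log r_n=o(\sqrt n)$, which is the content of Lemma~\ref{l:300.3.7} and its application.

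The one point to tighten is that Theorem~\ref{p:300.3} by itself does not remove clusters lying close to the boundary of their $(n-r_n)$-box. Such clusters can break your identification of $\{z\in\bfGamma_{n,u}\}$ with $E_z$, because some cluster points may sit in $k$-boxes straddling the box boundary. If you decompose on the tiling boxes, they also break the variance bound on $\Phi$. The paper handles this by restricting to $\bfW^k_{n,\delta}(u)$ and decomposing on boxes $\rmQ(x;n')$ centered at the cluster points. It then removes the restriction using Proposition~\ref{p:3.8nn} and the $O(n^{-1/2})$ union bound, which is the same discarding device you already use.
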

\noindent
The inclusion of the extra random variable $\xi_n$ in the statement will be useful for treating the conversion between $\partial$-time and real time. Proposition~\ref{p:300.2} is proved in Section~\ref{sec:conseq}.

Assuming the convergence of the integral, which will be a consequence of Proposition~\ref{p:300.1a}, whenever $\xi_n=1$ 
the right-hand side of~\eqref{e:300.3.16} can be identified as the Laplace Transform of a Poisson Point Process  with a random intensity given by a constant times $\frac{1}{\sqrt{n}}\bfXi_{n,u}^k$. As this is asymptotically the Laplace Transform of the cluster process of DGFF min-extremes, we get via Proposition~\ref{p:300.4} an asymptotic equivalence between the laws of $\frac{1}{\sqrt{n}}\bfXi_{n,u}^k$ and a constant multiple of $\cZ_\rmD$. This is almost the desired convergence of the second component in~\eqref{e:303.21}, except that we need to replace $u > 0$ with $u = 0$, as we are after the vacant set. This is the purpose of the next proposition, whose proof, which is a straightforward application of Proposition \ref{p:300.1a}, is included in Section~\ref{sec:iid}.
\begin{prop}
\label{p:300.5}
For each $u \geq 0$, there exists a constant $C'_u \in (0,1]$ such that, uniformly over all measurable $f: \rmD \to \bbR$ with $\|f\|_\infty \leq 1$,
\begin{equation}
\label{e:300.3.18}
\frac{1}{\sqrt{n}}\Big|\int f(x) \bfXi_{n,0}(\rmd x) - C'_u \int f(x) \bfXi_{n,u}(\rmd x)\Big|  \underset{n \to \infty}{\overset{\bbP}{\longrightarrow}} 0 \,.
\end{equation}
\end{prop}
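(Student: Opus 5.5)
Since $\rmW_n(0)\subseteq\rmW_n(u)$, every box in $\bfW_n(0)$ also lies in $\bfW_n(u)$, so $\bfXi_{n,0}\le\bfXi_{n,u}$ as point measures. The plan is to show that, modulo a negligible fraction, a cluster of $\bfW_n(u)$ belongs to $\bfW_n(0)$ exactly when the local time configuration on its $k$-box contains a zero. Proposition~\ref{p:300.1a} then says that this event happens with an asymptotically deterministic frequency $\rmP^k_u$, independently of location.

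First I reduce to $\bfW^k_n(u)$. By~\eqref{e:5.3} of Theorem~\ref{p:300.3}, $|\bfW_n(u)\setminus\bfW^k_n(u)|/\sqrt{n}\to0$ in probability as $n\to\infty$ and then $k\to\infty$. The same holds at level $0$. So, uniformly in $\|f\|_\infty\le1$, I may replace $\bfXi_{n,u}$ by $\bfXi^k_{n,u}$, and $\bfXi_{n,0}$ by its restriction to $\bfW^k_n(u)$, at cost $o(\sqrt{n})$ after the iterated limits. For $z\in\bfW^k_n(u)$ there is a unique $k$-box $\rmQ(x(z);k)$ inside $\rmQ(z;n-r_n)$ that meets $\rmW_n(u)$. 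Hence $z\in\bfW_n(0)$ if and only if $\min_{y\in\rmQ(x(z);k)}L_{t_n^A}(y)=0$. Writing $g(\omega):=1_{\{\min_y\omega(y)=0\}}$, this gives
\begin{equation*}
\int f\,\rmd\bfXi_{n,0}=\int f(x)g(\omega)\,\bfLambda^k_{n,u}(\rmd x\times\rmd\omega)+o_\bbP(\sqrt{n})\,.
\end{equation*}
Applying Proposition~\ref{p:300.1a} to the bounded function $f\otimes g$ yields $\int f\,\rmd\bfXi_{n,0}=C^{k}_u\int f\,\rmd\bfXi^k_{n,u}+o_\bbP(\sqrt{n})$, where $C^{k}_u:=\rmP^k_u(g=1)\in[0,1]$.

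The main obstacle is removing $k$. Comparing the display for two values $k<k'$, together with the tightness of $|\bfW_n(u)|/\sqrt n$ (from~\cite[Lemma~4.2]{Tightness}) and its lower tightness (Theorem~\ref{t:2.1o} with $\bfW_n(0)\subseteq\bfW_n(u)$), forces $|C^k_u-C^{k'}_u|\to0$. So $C^k_u$ is Cauchy and converges to some $C'_u$. I would alternatively argue directly via consistency of $\rmP^k_u$ under restriction, although boxes at different $k$ are aligned differently, so the Cauchy argument seems cleaner. Letting $n\to\infty$ and then $k\to\infty$ then gives~\eqref{e:300.3.18}.

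It remains to see $C'_u>0$. This follows from Theorem~\ref{t:2.1o}: $|\bfW_n(0)|/\sqrt n$ is bounded below with high probability, while $|\bfW_n(u)|/\sqrt n$ is upper tight. Taking $f\equiv1$ in~\eqref{e:300.3.18} then excludes $C'_u=0$. The bound $C'_u\le1$ holds automatically since each $C^k_u\le1$.
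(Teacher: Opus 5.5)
Your proposal is correct and follows essentially the same route as the paper. You apply Proposition~\ref{p:300.1a} with the indicator of $\{\min_{\rmQ(k)}\omega=0\}$ to get $C_{u,k}=\rmP^k_u(\min\omega=0)$, remove $k$ via Theorem~\ref{p:300.3} and a two-parameter (Cauchy) argument, and obtain positivity from Theorem~\ref{t:2.1o} plus upper tightness. Your explicit Cauchy argument is a spelled-out version of the paper's ``standard argument''; note also that, for $z\in\bfW^k_n(u)$, the equivalence $z\in\bfW_n(0)\iff\min_{\rmQ(x(z);k)}L_{t_n^A}=0$ holds only up to clusters meeting $k$-boxes that straddle the boundary of $\rmQ(z;n-r_n)$, which are negligible by Lemma~\ref{l:1103.7} (a point the paper glosses over as well).
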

\noindent

To close the argument for the convergence of the first two components in~\eqref{e:303.21}, we need two more steps.
First, we need to show that $\bfXi_{n,0}$ is asymptotically equivalent (in the Vague metric) to $\bfXi_{\rmW_n(0)}$, as defined in~\eqref{e:2.9n}. This may fail if there are clusters which fall on the boundary of a box $\rmQ(z;n-r_n)$. To show that the fraction of such clusters is negligible, for $u \geq 0$, $n \geq k \geq 1$ and $\delta > 0$, we let
\begin{equation}
\label{e:3.19n}
	\bfW^k_{n,\delta}(u) := \Big \{ z \in \bfW^k_n(u)  :\:   \rmW_n(u) \cap \rmQ(z ; n-r_n) \subseteq \rmQ(z ; n-r_n-\delta) \Big \} \,.
\end{equation}
%with $\bfW^k_{n,\delta}(u) := \bfW_{n,\delta}(u) \cap \bfW^k_{n}(u)$.
Then,
\begin{prop}
\label{p:3.8nn}
For all $u \geq 0$, as $n \to \infty$ followed by $k \to \infty$ and finally $\delta \to 0$,
\begin{equation}
	\frac{|\bfW_{n}(u) \setminus \bfW^k_{n,\delta}(u)|}{\sqrt{n}} \overset{\bbP} \longrightarrow 0 \,.
\end{equation}
\end{prop}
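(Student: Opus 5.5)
We need $|\bfW_n(u)\setminus \bfW^k_{n,\delta}(u)|/\sqrt{n}\to 0$ in probability. We split the set as
\begin{equation*}
\bfW_n(u)\setminus \bfW^k_{n,\delta}(u) \subseteq \big(\bfW_n(u)\setminus \bfW^k_n(u)\big) \cup \big(\bfW^k_n(u)\setminus \bfW^k_{n,\delta}(u)\big).
\end{equation*}
The first part is handled directly by~\eqref{e:5.3} of Theorem~\ref{p:300.3}: it is $o(\sqrt{n})$ in probability as $n\to\infty$ followed by $k\to\infty$, uniformly in $\delta$. So it remains to control clusters of diameter at most $\rme^k$ whose location in the box $\rmQ(z;n-r_n)$ lies within distance $\approx \delta\rme^{n-r_n}$ of its boundary, i.e.\ inside the annular strip $\rmQ(z;n-r_n)\setminus \rmQ(z;n-r_n-\delta)$.

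For the second part we use the empirical measure $\bfLambda^k_{n,u}$ and Proposition~\ref{p:300.1a}. The natural idea is that the position of a cluster inside its $n-r_n$ box is ``macroscopically spread out''. Since the box is microscopic on the global scale $\rme^n$, the first marginal of $\bfXi^k_{n,u}$ does not resolve the position within the box. We therefore run a first-moment (Markov) bound instead, mirroring the tightness arguments of~\cite{Tightness}. Let $\rmR_\delta:=\bigcup_{z\in\bbX_{n-r_n}}\big(\rmQ(z;n-r_n)\setminus\rmQ(z;n-r_n-\delta)\big)\cap\rmD_n^\circ$, whose density in $\rmD_n$ is $1-\rme^{-2\delta}\le 2\delta$. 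We must bound the number of boxes $\rmQ(x;k)$, $x\in\bbX_k\cap\rmR_\delta$, that contain a vertex of $\rmW_n(u)$ and are isolated in the sense of~\eqref{e:3.30.1}. Its expectation, divided by $\sqrt{n}$, should be $O(\delta)$ uniformly in $n$ large and $k$.

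To get this, we use the same comparison that gives upper tightness of $|\bfW_n(u)|/\sqrt{n}$ in \cite[Lemma~4.2]{Tightness}. Via the isomorphism~\eqref{e:3.3}, each low-local-time cluster survives with probability $\Theta(1/\sqrt{n})$. More precisely, one uses that $\bbP(x\in\rmS_n(u)\mid L)\ge c/\sqrt{n}$ on $\{x\in\rmW_n(u)\}$, because $h_n$ is independent of $L$ and has variance $n+O(1)$ in the bulk. This yields
\begin{equation*}
\bbE\big|\{z\in\bfW^k_n(u)\setminus\bfW^k_{n,\delta}(u)\}\big| \le C\sqrt{n}\,\bbE\big|\{z\in\bfS_n(u): \rmS_n(u)\cap\rmQ(z;n-r_n)\cap\rmR_\delta\neq\emptyset\}\big|,
\end{equation*}
perhaps after truncating on the good event from~\eqref{e:5.3a}, as in~\cite{Tightness}. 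The right-hand side is a DGFF quantity. By the standard first-moment bound for the DGFF minima (with the barrier/truncation estimates used for upper tightness of $|\bfS_n(u)|$), the expected number of $u$-level minima in a set of density $2\delta$ in the bulk is at most $C\delta$, uniformly in $n$. Markov's inequality then gives the claim as $\delta\to0$.

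The main obstacle is the conditional survival step: $\rmS_n(u)$ requires $h_n^2(x)\le u-L(x)$, which fails if $L(x)$ is close to $u$. We would therefore work with level $u$ on the local-time side and level $2u$ on the DGFF side, or restrict to clusters containing a vertex with $L\le u/2$. The latter requires comparing levels $u$ and $u/2$, which should follow from Proposition~\ref{p:300.5}: $|\bfXi_{n,u}|$ and $|\bfXi_{n,u/2}|$ are comparable up to the constant $C'$. A second delicate point is that the DGFF first-moment bound must be uniform over the strip geometry. Since $\rmR_\delta$ is a union of thin strips of width $\delta\rme^{n-r_n}\gg\rme^{k}$, the bulk Green function asymptotics apply there, so the standard estimate should go through.
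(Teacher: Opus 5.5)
Your reduction is the right one and matches the paper's. Theorem~\ref{p:300.3} disposes of $\bfW_n(u)\setminus\bfW^k_n(u)$. Every $z\in\bfW^k_n(u)\setminus\bfW^k_{n,\delta}(u)$ lies in $\bfW^{\wt{\rmD}_n}_n(u)$ (notation of Lemma~\ref{l:1103.7}) for the strip set $\wt{\rmD}_n:=\bigcup_{z\in\bbX_{n-r_n}}\rmQ(z;n-r_n)\setminus\rmQ(z;n-r_n-\delta)$, whose density in $\rmD_n$ is $O(\delta)$.

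The gap is in the quantitative step. Your transfer inequality compares \emph{untruncated} means. It is valid with level $u+1$ on the DGFF side, since $L_{t_n^A}(x)\le u$ and $h_n(x)^2\le 1$ give $f_n^2(x)\le u+1$; your level $2u$ breaks down at $u=0$. The DGFF bound you then need is false for this untruncated mean: it is not true that the expected number of $(u+1)$-extremes in a set of density $2\delta$ is at most $C\delta$ uniformly in $n$. Already $\bbE|\rmS_n(u+1)\cap\wt{\rmD}_n|$ is of order $\delta n$, since $\bbP(|f_n(x)|\le\sqrt{u+1})\asymp n\,\rme^{-2n}$ in the bulk. Counting $(n-r_n)$-boxes instead of vertices does not make the mean bounded in $n$. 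It is carried by rare configurations in which the coarse field is atypically low and many boxes light up together. This is exactly why upper tightness of $|\bfS_n(u)|$ is proved only after a truncation. It is also why $|\bfW_n(u)|/\sqrt n$ is known to be tight but comes with no first-moment bound.

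Nor can the truncation be grafted onto your argument. Any barrier event that makes the DGFF mean $O(\delta)$ is an event for $h'_n$, hence a joint function of $L_{t_n^A}$ and $h_n$. Once you restrict to it, you lose the conditional bound $\bbP(x \text{ survives}\mid L_{t_n^A})\ge c/\sqrt n$ that drives your inequality. An $L$-measurable truncation, such as downcrossing repulsion as in \cite{Tightness}, would in turn require showing that non-repelled clusters are $o(\sqrt n)$, which is not in your plan. The fallback through Proposition~\ref{p:300.5} is circular: that result rests on Proposition~\ref{p:300.1a}, which relies on Proposition~\ref{prop:rest3}, a strengthening of the present statement.

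What is needed is a comparison of \emph{tail probabilities} rather than means. This is the Thinning Lemma~\ref{lem:thinning} with $\rmY_n=\wt{\rmD}_n$: $\bbP(|\bfW^{\wt{\rmD}_n}_n(u)|>s\sqrt n)\le C(1+s^{-1})\,\bbP(\rmS_n(u+1)\cap\wt{\rmD}_n\neq\emptyset)$. You then need a bound on the \emph{probability} that the DGFF has any extreme in a set of small density. Lemma~\ref{lem:coverdgff} gives $\bbP(\rmS_n(u+1)\cap\wt{\rmD}_n\neq\emptyset)\le c_\rmD\rme^{\wh{c}\sqrt{u+1}}\sqrt{|\rmD_n^\circ\cap\wt{\rmD}_n|/|\rmD_n|}=O(\sqrt{\delta})$, uniformly in $n$. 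For each fixed $s>0$ this tends to $0$ as $\delta\to 0$. This is the paper's argument (Lemma~\ref{l:1103.7}, applied in Lemma~\ref{l:300.3.10}).
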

\noindent
This result is implied by the stronger Proposition~\ref{prop:rest3} below and is a consequence of Lemma~\ref{l:1103.7}, which shows more generally that any subset of the domain of vanishing proportion can be ignored in the count of clusters of low local time vertices. 

Second, as we are avoiding low local time vertices near the boundary, we need to show that their number is negligible as well.
\begin{prop}[Lemma 7.5 in~\cite{Tightness}]
	\label{p:3.9nn}
	\begin{equation}
		\label{e:7.21}
		\lim_{n \to \infty} \bbP \big(\rmF_{n,t_n^A}(0) \setminus \rmD_n^\circ \neq \emptyset \big) = 0 \,.
	\end{equation}
\end{prop}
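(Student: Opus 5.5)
The statement is Proposition~\ref{p:3.9nn}, which says that with high probability no vertex outside the bulk $\rmD_n^\circ$ is still unvisited at $\partial$-time $t_n^A$. I would prove it with a union bound combined with a boundary-sensitive first moment estimate. Partition $\rmD_n \setminus \rmD_n^\circ$ into dyadic layers according to the distance to $\rmD_n^\rmc$: for $j = 0,1,\dots, \lceil n-2\log n \rceil$, let $\rmU_j$ be the set of $x$ with $\rmd(x,\rmD_n^\rmc) \in [\rme^{j}, \rme^{j+1})$. Since the boundary of $\rmD$ is a finite union of curves of positive diameter, the layers satisfy $|\rmU_j| \leq C\rme^{n+j}$. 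I would then estimate
\[
\bbP\big(\rmF_{n,t_n^A}(0) \setminus \rmD_n^\circ \neq \emptyset\big) \leq \sum_j \bbP\big(\exists x \in \rmU_j :\: L_{t_n^A}(x) = 0\big)
\]
term by term.

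For a single vertex $x$, the excursions from $\partial$ form a Poisson process in $\partial$-time, so $\bbP(L_t(x)=0) = \exp(-t/\bbG(x,x))$. Here $\bbG(x,x)$ is the Green function at $x$ (in the retuned normalization), which is of order $\frac{1}{2}\log(\rmd(x,\rmD_n^\rmc)) + O(1) \leq (j+1)/2 + O(1)$ on $\rmU_j$. With $t_n^A \approx 2n^2$, the single-site probability is therefore about $\exp(-4n^2/(j+C))$. Summing over $x \in \rmU_j$ gives roughly $\exp(n+j - 4n^2/(j+C))$. This is superexponentially small whenever $j \leq n - 2\log n$, because then $4n^2/(j+C) \geq 4n > n+j$ plus a margin. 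The naive union bound thus already yields a vanishing sum, and the first moment suffices.

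The main obstacle is making the Green function bound uniform up to the boundary and precise enough near $j \approx n$. Near the top layers the margin $4n^2/(n+C) - 2n \approx 2n$ is large, so no sharp constant is needed. We only need $\bbG(x,x) \leq \frac{1}{2}\log \rmd(x,\rmD_n^\rmc) + C$ uniformly, which follows from comparison with the walk killed on exiting a ball of radius $\rmd(x,\rmD_n^\rmc)$ (monotonicity of Green functions in the domain) together with the standard asymptotics for the ball. Finally, one checks that the wired vertex $\partial$ itself is trivially visited, since the walk starts there.
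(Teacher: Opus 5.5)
Your route (union bound, the Poisson-excursion formula $\bbP(L_t(x)=0)=\rme^{-t/G_{\rmD_n}(x,x)}$ of Lemma~\ref{lem:nhprob}, and a Green function bound near the boundary) is the natural one. However, the step that carries all the content is both mis-normalized and mis-justified.

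In the paper's retuned normalization, the Green function in that formula is the walk's own: $G_{\rmD_n}(x,x)=\log\rmd(x,\rmD_n^\rmc)+O(1)$, which is $n+O(1)$ in the bulk (cf.\ Lemma~\ref{lem:nhprob}). The $\tfrac12\log\rmd(x,\rmD_n^\rmc)$ you use is the DGFF variance. With the correct constant and $t_n^A=2n^2-\tfrac32 n\log n+O(\log^2 n)$, every $x\notin\rmD_n^\circ$ has
$\bbP(L_{t_n^A}(x)=0)\le\exp\{-t_n^A/(n-2\log n+C)\}=O(\rme^{-2n}n^{-5/2})$.
So the margin is logarithmic, not of order $n$. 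The statement still follows. In fact the crude count $|\rmD_n|\le C\rme^{2n}$ already gives a total of $O(n^{-5/2})$, so you can drop the layers. Their size bound $|\rmU_j|\le C\rme^{n+j}$ does not follow from the hypotheses anyway, e.g.\ for fractal boundaries. But the conclusion holds only because $G_{\rmD_n}(x,x)\le\log\rmd(x,\rmD_n^\rmc)+C$ has leading constant exactly one and a bounded additive error. With $(1+\epsilon)\log\rmd(x,\rmD_n^\rmc)$ instead, the first moment over the vertices at log-distance about $n-2\log n$ from $\rmD_n^\rmc$ diverges. Your remark that no sharp constant is needed is therefore wrong.

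Your justification of that bound also runs the wrong way. The ball of radius $\rmd(x,\rmD_n^\rmc)$ about $x$ is contained in $\rmD_n$, so domain monotonicity gives $G_{\rmB}(x,x)\le G_{\rmD_n}(x,x)$, which is a lower bound. The upper bound is exactly where the hypotheses on $\partial\rmD$ enter. It would fail if the piece of $\rmD_n^\rmc$ near $x$ were a tiny isolated obstacle, since the walk would then typically escape to macroscopic distance before being killed.

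What you need is that $\rmD_n^\rmc$ is connected and macroscopic. This yields the Beurling-type escape bound \eqref{eq:bq1} of Lemma~\ref{lem:hitting}:
$\bbP_x(\tau_{\partial\rmB(x;\log R)}<\tau_\partial)\le C(\rmd(x,\rmD_n^\rmc)/R)^\delta$.
Combine it with the potential-kernel identity, which in the present normalization reads $G_{\rmD_n}(x,x)=\bbE_x\log\|Z-x\|+O(1)$, where $Z$ is the point of $\partial\rmD_n$ at which the unwired walk first exits $\rmD_n$. Integrating the tail of $\log\|Z-x\|-\log\rmd(x,\rmD_n^\rmc)$ gives $G_{\rmD_n}(x,x)\le\log\rmd(x,\rmD_n^\rmc)+C/\delta+O(1)$. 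With this input, your union bound closes.
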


It only remains to get the simultaneous convergence of the running time fluctuations, namely the third component in~\eqref{e:303.21}.
Using the Isomorphism Theorem again, the key idea here is that the scaled and centered fluctuations in the running time are asymptotically equivalent (in probability) to the average of the DGFF $h_n'$ in the isomorphism. Formally, setting
\begin{equation}
\label{e:3.23n}
	\wh{\bfT}_{n,t} := 	\frac{\bfT_{n,t}/|\rmD_n| - t}{2\sqrt{t}} \,,
\end{equation}
and letting $\ol{h}_n'$ denote the average of $h'_n$ over all its vertices as in~\eqref{e:1.17i}for $h_n$, we have
\begin{prop}
\label{p:1103.6}
If $t_n \to \infty$ but $\frac{\sqrt{t_n}}{|\rmD_n|} \to 0$ 
as $n \to \infty$, then, under the coupling of Theorem~\ref{t:103.1}, 
\begin{equation}
\label{e:1103.19}
	\wh{\bfT}_{n,t_n} -	\ol{h}'_n
	\overset{\bbP}{\underset{n \to \infty}
	\longrightarrow} 0 \,.
\end{equation}	
\end{prop}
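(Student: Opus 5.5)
Sum the isomorphism identity~\eqref{e:3.1n} at $t=t_n$ over all $x\in\rmD_n$. Since the walk is wired and $\bfT_{n,t}$ is the real time at which the local time at $\partial$ reaches $t$, we have
\begin{equation*}
\bfT_{n,t_n} = t_n + \sum_{x \in \rmD_n} L_{t_n}(x) \,.
\end{equation*}
Here $L_{t_n}(\partial)=t_n$ exactly, because $\bfT_t$ is the right-continuous inverse of a continuous function and the walk is at $\partial$ at that moment. Summing~\eqref{e:3.1n} then gives
\begin{equation*}
\sum_{x} L_{t_n}(x) = |\rmD_n|\, t_n + 2\sqrt{t_n}\sum_x h'_n(x) + \sum_x \big(h'^2_n(x) - h^2_n(x)\big) \,.
\end{equation*}
Dividing by $|\rmD_n|$, subtracting $t_n$ and dividing by $2\sqrt{t_n}$, we obtain
\begin{equation*}
\wh{\bfT}_{n,t_n} - \ol{h}'_n = \frac{\sqrt{t_n}}{2|\rmD_n|} + \frac{1}{2\sqrt{t_n}}\cdot\frac{1}{|\rmD_n|}\sum_{x\in\rmD_n}\big(h'^2_n(x)-h^2_n(x)\big) \,.
\end{equation*}
The first term tends to zero deterministically by the assumption $\sqrt{t_n}/|\rmD_n|\to 0$. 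It remains to show that the second term tends to zero in probability.

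For this, $h_n$ and $h'_n$ have the same law, so the quantity $A_n:=|\rmD_n|^{-1}\sum_x (h'^2_n(x)-h^2_n(x))$ has mean zero. We cannot use independence between $h_n$ and $h'_n$, since the coupling need not provide it. Instead we bound crudely. By Minkowski, $\|A_n\|_{L^2}\le 2\,\|\,|\rmD_n|^{-1}\sum_x h_n^2(x)\|_{L^2}$. In turn,
\begin{equation*}
\bbE\Big(|\rmD_n|^{-1}\textstyle\sum_x h_n^2(x)\Big)^2 = |\rmD_n|^{-2}\sum_{x,y}\big(G_n(x,x)G_n(y,y)+2G_n(x,y)^2\big) \,,
\end{equation*}
and since $G_n(x,y)\le G_n(x,x)\le C\log(\rme^n)=Cn$ uniformly (Green function bound for the wired domain), this is at most $Cn^2$. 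Hence $A_n=O_{\bbP}(n)$. The second term is then $O_{\bbP}(n/\sqrt{t_n})$.

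This is the main obstacle: mere $t_n\to\infty$ is not enough with this crude bound. Two fixes are available. First, in our applications $t_n=t_n^A\asymp n^2$ or $t_n^B+sn\asymp n\log n$, so $n/\sqrt{t_n}\to 0$ in both. Second, and better, one can center: $|\rmD_n|^{-1}\sum_x (h_n^2(x)-G_n(x,x))$ has variance $2|\rmD_n|^{-2}\sum_{x,y}G_n(x,y)^2$. Using $G_n(x,y)\le C(n-\log\|x-y\|+1)$ from the log asymptotics of the Green function, the double sum is $O(|\rmD_n|^2)$, since the typical $\|x-y\|\asymp \rme^n$ makes $G_n=O(1)$ off a vanishing fraction. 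So the centered average is $O_{\bbP}(1)$. The means $|\rmD_n|^{-1}\sum_x G_n(x,x)$ coincide for $h_n$ and $h'_n$ and cancel in $A_n$, giving $A_n=O_{\bbP}(1)$. The second term is then $O_{\bbP}(1/\sqrt{t_n})\to0$ under the stated hypothesis $t_n\to\infty$. I would present this centered version, with the Green function estimate (e.g.\ \cite[Lemma~3.1]{Tightness}) as the only input, which completes~\eqref{e:1103.19}.
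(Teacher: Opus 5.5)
Your centered argument is the paper's proof. You sum \eqref{e:3.1n} over $\wh{\rmD}_n$ with $L_{t_n}(\partial)=t_n$, split off the term $\sqrt{t_n}/(2|\rmD_n|)$, and make $\frac{1}{2\sqrt{t_n}}A_n$ vanish in $L^2$: the means of $\sum_x h_n'^2(x)$ and $\sum_x h_n^2(x)$ cancel, and each has variance $O(|\rmD_n|^2)$ by Lemma~\ref{l:1103.17}. That lemma is your Green-function estimate made quantitative by a dyadic-shell count, $|\rmD_n|\sum_{k\le n}\rme^{2k}(n-k+1)^2=O(|\rmD_n|^2)$, rather than a ``vanishing fraction'' heuristic.

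Do drop your ``first fix'', because it is false: $n/\sqrt{t^A_n}\to 1/\sqrt{2}$ and $n/\sqrt{t^B_n+sn}\to\infty$. The uncentered bound therefore fails in exactly the Phase~A application where the proposition is needed.
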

\noindent
Thus, to include $\wh{\bfT}_{n,t}$ in the convergence statement of the theorem, we need to keep track of $h'_n$ in the argument. This explains its inclusion in the limiting statement of Proposition~\ref{p:300.4} as well as the addition of the extra variable $\xi_n$ in Proposition~\ref{p:300.2}. The proof of Proposition~\ref{p:1103.6} can be found in Section~\ref{sec:conseq}.

With all the ingredients ready, we can now easily present the top-level proof of Theorem~\ref{t:2.1}.
\begin{proof}[Proof of Theorem~\ref{t:2.1}]
Let $f : \rmD \to \bbR_+$ be a compactly supported non-negative continuous~function and $\mu \in \bbR$.  
 Upon combining Proposition~\ref{p:300.1a} and Proposition~\ref{p:300.2} with $\xi_n = \rme^{i\mu \wh{\bfT}_{n,t_n^A}}$, we see that the difference
\begin{multline}
\label{e:3.27m}
\Big|\bbE \exp \Big(-\int f(x) (\bfXi_{n,u}^k \cap
\bfGamma_{n,u})(\rmd x) + i \mu \wh{\bfT}_{n,t_n^A}  \Big) \\
 - \bbE \exp \Big(-\tfrac{1}{\sqrt{n}}\int \varphi^k_u(\omega) \rmP^k_u(\rmd \omega) \int \big(1-\rme^{-f(x)}\big) \bfXi_{n,u}^k (\rmd x)
 + i \mu \wh{\bfT}_{n,t_n^A} \Big) \Big|  \,,
\end{multline}
tends to $0$ as $n \to \infty$. Thanks to Theorem~\ref{p:300.3}, Proposition~\ref{p:300.5} and Proposition~\ref{p:1103.6} and since the exponents are bounded from above, we may drop the intersection with $\bfXi_{n,u}^k$ and 
replace $\wh{\bfT}_{n,t_n^A}$ by $\ol{h}'_n$ in the first exponent, and also replace $\bfXi_{n,u}^k$ by $(C_u')^{-1} \bfXi_{n,0}$ in the second exponent with the difference still going to $0$, provided that now we also take $k \to \infty$ after $n \to \infty$. 

But then, by Proposition~\ref{p:300.4}, if we take $u$ sufficiently large the first expectation 
in~\eqref{e:3.27m} tends as $n \to \infty$ to
\begin{equation}
\bbE \exp \Big(- \int \big(1-\rme^{-f(x)}\big) C_u \cZ_\rmD(\rmd x) + i\mu \ol{h}_\rmD  \Big) 
\end{equation}
As the convergence of joint Laplace-Fourier Transform implies joint weak convergence (c.f.~\cite{Fitz2010}), we thus get
\begin{equation}
\label{e:1103.25}
\Big((C_u C_u')^{-1}
\int \varphi^k_u(\omega) \rmP^k_{u}(\rmd \omega)
\frac{1}{\sqrt{n}} \bfXi_{n,0} \,,\,\, \wh{\bfT}_{n,t_n^A} \Big)
\Longrightarrow
\big(\cZ_\rmD, \ol{h}_\rmD \big) \,,
\end{equation}
as $n \to \infty$ followed by $k \to \infty$. 

Since the first coordinate on the left-hand side is the product of two sequences which are indexed by different parameters, the integral must converge. Setting
\begin{equation}
	C_\star := C_u C'_u \bigg(\lim_{k \to \infty} \int \varphi^{k}_u(
	\omega) \rmP^{k}_{u}(\rmd \omega)\bigg)^{-1} \,,
\end{equation}
we thus get
\begin{equation}
\label{e:3.30n}
	\Big(\frac{1}{\sqrt{n}}\bfXi_{n,0} \,,\,\, \wh{\bfT}_{n,t_n^A}\Big)
	\underset{n \to \infty}\Longrightarrow	\big(C_\star \cZ_\rmD(D) ,\,\ol{h}_\rmD \big) \,.
\end{equation}

To obtain the desired convergence in the theorem, we define
\begin{equation}
\label{e:3.32nn}
	\wt{\rmF}_{n,t_n^A}(0) := \rmW_n(0) \setminus \rmW_n^{[r_n, n-r_n]}(0) \,.
\end{equation}
By definition, $\wt{\rmF}_{n,t_n^A}(0)$ is a $(r_n, n-r_n)$-clustered and, in addition, by the third part of Theorem~\ref{p:300.3} and Proposition~\ref{p:3.9nn}, we get the convergence of the first component in~\eqref{e:303.21}. In view of~\eqref{e:3.30n}, it thus remains to show that  
\begin{equation}
\label{e:3.33n}
\rmd \Big(\frac{1}{\sqrt{n}} \bfXi_{\wt{\rmF}_{n,t_n^A}(0), r_n},\,\frac{1}{\sqrt{n}} \bfXi_{n,0}\Big) \overset{\bbP}{\underset{n \to \infty}\longrightarrow} 0 \,,
\end{equation}
under the Vague metric.

To this end, let $0 < \eta' < \eta_0 < \eta'' < 1/2$, and let $r_n'$, $r_n''$ be defined as $r_n$ from~\eqref{e:2.3} only with $\eta'$, $\eta''$ respectively in place of $\eta_0$. Define also $\wt{\rmF}_{n,t_n^A}'(0)$, $\wt{\rmF}_{n,t_n^A}''(0)$ as in~\eqref{e:3.32nn} only with $r_n'$, $r_n''$ respectively.
Clearly, $\wt{\rmF}''_{n,t_n^A}(0) \subseteq \wt{\rmF}_{n,t_n^A}(0) \subseteq \wt{\rmF}'_{n,t_n^A}(0)$. It is also not difficult to see that we may take minimal $r_n$, $r_n'$, $r_n''$-covers, $\bfF$, $\bfF'$, $\bfF''$ of $\wt{\rmF}_{n,t_n^A}(0)$, $\wt{\rmF}'_{n,t_n^A}(0)$, $\wt{\rmF}''_{n,t_n^A}(0)$ respectively, such that
\begin{equation}
	\bfF \subseteq \bfF' \subseteq \bfF \cup \rmW_n^{[r_n, n-r_n]}(0) 
	\quad ;\qquad
	\bfF'' \subseteq \bfF \subseteq \bfF \cup \rmW_n^{[r_n'', n-r_n'']}(0). 
\end{equation}
It thus follows by the third part of Theorem~\ref{p:300.3} (which holds for all choice of $\eta$) that 
\begin{equation}
\label{e:3.34n}
\frac{1}{\sqrt{n}}\rmd\Big(\bfXi_{\wt{\rmF}_{n,t_n^A}'(0), r'_n}\, ,\,\, \bfXi_{\wt{\rmF}_{n,t_n^A}(0), r_n}\Big) \overset{\bbP}{\underset{n \to \infty}\longrightarrow} 0 
\quad , \qquad 
\frac{1}{\sqrt{n}}\rmd\Big(\bfXi_{\wt{\rmF}_{n,t_n^A}''(0), r''_n}\, ,\,\, \bfXi_{\wt{\rmF}_{n,t_n^A}(0), r_n}\Big) \overset{\bbP}{\underset{n \to \infty}\longrightarrow} 0 \,. 
\end{equation}
in the Vague distance.

Now fix a positive continuous function $f:\rmD \to \bbR_+$ with compact support. If $\bfF''$ is a minimal $r''_n$-cover of $\wt{\rmF}''_{n,t_n^A}(0)$ (in the sense of the definition above~\eqref{e:2.9h}), then for any $x \in \bfF''$ there is at least one $z \in \bfW^{r_n}_n(0)$ such that $\rmQ(z;n-r_n) \cap \rmB(x;r''_n) \neq \emptyset$. Moreover, if $n$ is large enough then $\rmQ(z;n-r_n) \cap \rmB(y;r''_n)=\emptyset$ for all $y \in \bfF'' \setminus \{x\}$. It follows from the uniform continuity and positivity of $f$ that for any $\epsilon > 0$,
\begin{equation}
	\int f(x) \bfXi_{\wt{\rmF}''_{n,t_n^A}(0), r_n''}(\rmd x) \leq 	\int f(x) \bfXi_{n,0}(\rmd x) + \epsilon |\bfW_n(0)| \,,
\end{equation}
provided $n$ is large enough.

Similarly, let $\bfF'$ be a minimal $r'_n$ cover of $\wt{\rmF}'_{n,t_n^A}(0)$. Then, for any $\delta > 0$, $k > 0$, if $z \in \bfW^k_{n,\delta}(0)$ then there exists a unique $x \in \bfF' $ such that $\rmB(x;r'_n) \cap \rmQ(z;n-r_n) \neq \emptyset$ if $n$ is large enough. Thus for any $\epsilon > 0$, if $n$ is large enough,
\begin{equation}
\int f(x) \bfXi_{n,0,\delta}^k(\rmd x) \leq \int f(x) \bfXi_{\wt{\rmF}'_{n,t_n^A}(0)), r_n'}(\rmd x)  + \epsilon |\bfW_n(0)| \,,
\end{equation}
where $\bfXi^k_{n,u,\delta}$ is defined as $\bfXi^k_{n,u}$ in~\eqref{e:3.13l} only with $\bfW^k_{n,\delta}(u)$ in place of $\bfW^k_{n}(u)$.
Then~\eqref{e:3.33n} follows from the second part of Theorem~\ref{p:300.3} combined with Proposition~\ref{p:3.8nn} and finally also~\eqref{e:3.34n}.
\end{proof}

\section{Consequences of The Isomorphism}\label{sec:conseq}
In this section we show how the Isomorphism Theorem can be used to deduce a relation in law between relevant observables for the local time field and the DGFF.

\subsection{DGFF min-extrema as Poissonization of low local time vertices}

First, we use the isomorphism to prove Proposition~\ref{p:300.2}. To this end, we shall need the following auxiliary lemma.

\begin{lem}
\label{l:300.3.7}
For each $k \geq 1$ there exists a measure $\nu_k$ on $\bbR^{\rmQ(k)}$ with Radon marginals such that, for any $M \in (0,\infty)$,  
\begin{multline}
\label{e:300.3.25}
\lim_{n \to \infty} \sup_{\rmX_{n,k}}
 \sup_{(f_{k,x})_{x \in \rmX_{n,k}}} \bigg| \bbE \exp \Big(-\sum_{x \in \rmX_{n,k}} f_{k,x} \big(h_n(\rmQ(x;k))\big) \Big)  \\
- \exp \Big(- \tfrac{1}{\sqrt{n}} \sum_{x \in \rmX_{n,k}} \int_{\bbR^{\rmQ(k)}} \big(1-\rme^{-f_{k,x}(\sigma)}\big) \nu_k(\rmd \sigma) \Big) \bigg| = 0 \,,
\end{multline} 
where the first supremum is taken over all sets $\rmX_{n,k} \subset \rmD_n$ such that
$\log \|x-x'\| > n-r_n-M$ for all $x \neq x' \in \rmX_{n,k}$ and the second is over all families $(f_{k,x}: x \in \rmX_{n,k})$ of measurable positive functions on $\bbR^{\rmQ(k)}$ satisfying
\begin{equation}
\label{e:300.3.26}
\|f_{k,x}\|_\infty \leq M 
\quad ; \quad
\supp\big(f_{k,x}\big) \subseteq \big\{ \sigma \in \bbR^{\rmQ(k)} :\:
\min_{x \in \rmQ(k)} |\sigma(x)| \leq M \big\} \,.
\end{equation}
\end{lem}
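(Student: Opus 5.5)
The idea is to treat the DGFF restricted to the well-separated boxes $\rmQ(x;k)$, $x\in\rmX_{n,k}$, as essentially independent, and then to compute each box's contribution by decomposing the field into a coarse part and a local fluctuation. We will build $\nu_k$ as the limit of $\sqrt n$ times the law of the box field on the window $\{\min_{\rmQ(k)}|\sigma|\le M\}$.

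We start with the Gibbs–Markov (domain Markov) decomposition. For each $x\in\rmX_{n,k}$ take a ball $\rmB(x; n-r_n-M-2)$. Since the centers are $\rme^{n-r_n-M}$-separated, these balls are pairwise disjoint, and
\[
h_n = \varphi_n \oplus \bigoplus_{x} h^{x}_n ,
\]
where the $h^x_n$ are independent DGFFs on the balls and $\varphi_n$ is the harmonic extension. Two facts about $\varphi_n$ matter here:
\begin{itemize}
\item on $\rmQ(x;k)$ it is constant up to an error $O(\rme^{k-(n-r_n)})\to0$, by harmonic gradient bounds with Gaussian tails;
\item its values $\varphi_n(x)$ are centered Gaussians with variance $r_n+O(M)=o(n)$. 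Because the balls have macroscopic-in-$r_n$ size, the cross covariances $\mathrm{Cov}(\varphi_n(x),\varphi_n(x'))$ for $x\neq x'$ are $O(r_n)$.
\end{itemize}
By contrast, $h^x_n(x)$ has variance $n-r_n+O(1)$, and $h^x_n$ restricted to $\rmQ(x;k)$ converges, after subtracting its centre value, to a pinned field $\phi_k$ (the DGFF on $\bbZ^2$ pinned at $x$), uniformly in the position of $x$ in the bulk.

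Next we compute the single-box contribution. Write $h_n(\rmQ(x;k)) = g_x\mathbf 1 + \phi_{k,x} + o(1)$, where $g_x$ is Gaussian with variance $\sigma_n^2 = n+O(1)$ (depending on $x$ only through $O(1)$ terms, since $x\in\rmD_n$ is at distance $\rme^{n-O(\log n)}$ from the boundary). The condition $\min|\sigma|\le M$ on the support of $f$, together with the tightness of $\phi_k$ on a box of side $\rme^k$, forces $|g_x|=O(1)$. The Gaussian density of $g_x$ at $O(1)$ values is $(2\pi\sigma_n^2)^{-1/2}(1+o(1))$, uniformly, so
\[
\bbE\big(1-\rme^{-f_{k,x}(h_n(\rmQ(x;k)))}\big)=\frac1{\sqrt n}\int(1-\rme^{-f_{k,x}})\,\rmd\nu_k+o(n^{-1/2}),
\]
where $\nu_k$ is the law of $a\mathbf1+\phi_k$ with $a$ distributed as Lebesgue measure times $(2\pi)^{-1/2}$. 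This measure has Radon marginals. The error is uniform over $f$ satisfying \eqref{e:300.3.26}, because $\|f\|_\infty\le M$ and the weak convergence of $\phi_k$ holds in total variation on the finite box. Possible gradients of $f$ do not cause trouble, since we work with densities: the law of the box field converges in total variation after we condition on its center.

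Now we assemble the boxes. Conditionally on $\varphi_n$, the box variables are independent, so
\[
\bbE\prod_x \rme^{-f_{k,x}}=\bbE\prod_x\big(1-p_x(\varphi_n(x))\big),
\]
where $p_x(v)=\bbE(1-\rme^{-f_{k,x}})$ evaluated with the centre shifted by $v$. The main obstacle is that $\varphi_n(x)=O(\sqrt{r_n})$ is not negligible on the $O(1)$ window. The point is that it is negligible relative to the Gaussian density of width $\sqrt n$: we have $p_x(v)=n^{-1/2}c_x(1+O(v^2/n))$, and $r_n/n\to0$. So it suffices to show that
\[
\sum_x p_x(\varphi_n(x))-n^{-1/2}\sum_x c_x\to0
\]
in probability, uniformly. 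The number of boxes may be as large as $\rme^{2r_n}$, which is why a density computation is needed rather than a union bound. If $\sum_x c_x/\sqrt n$ is large, both exponentials are small, so by truncation we may assume $\sum c_x\le C\sqrt n$. The relative error is then $\bbE\varphi_n^2/n=O(r_n/n)\to0$, applied after we replace the first-order independence approximation $\prod(1-p_x)=\exp(-\sum p_x+O(\sum p_x^2))$, where $\sum p_x^2\le \max p_x\cdot\sum p_x=O(n^{-1/2})$. Combining these gives \eqref{e:300.3.25}. We expect the uniform control of the Gaussian-density ratio at shifts $v$ of order $\sqrt{r_n}$, and of its tails, to be the delicate step. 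For that step we will first try a second-moment bound on $\sum_x c_x(\varphi_n(x)^2/n)$.
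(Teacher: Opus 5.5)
Your architecture is the paper's:
\begin{itemize}
\item a Gibbs--Markov split of $h_n$ into a harmonic part $\varphi_n$ plus independent DGFFs on disjoint neighbourhoods of the points of $\rmX_{n,k}$ (the paper uses the boxes $\rmQ(x;n-r_n-2M)$);
\item a single-box estimate obtained by conditioning on the centre value, whose density is a constant times $(1+o(1))\,n^{-1/2}$ on $O(1)$ windows, while the rest of the box converges to the pinned field;
\item finally $\log(1-p)\approx -p$.
\end{itemize}
You deliberately diverge only in controlling the shifts $\varphi_n(x)$, and there your premise is mistaken. A union bound does work, and it is what the paper uses. Since $\bbE\varphi_n(x)^2\lesssim r_n$ and $|\rmX_{n,k}|\le \rme^{3r_n}$, with high probability $\max_x|\varphi_n(x)|\le r_n\log r_n$, and the oscillation of $\varphi_n$ on every $\rmQ(x;k)$ is at most $\rme^{-n/4}$. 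You do not need $\varphi_n(x)=O(\sqrt{r_n})$, only $o(\sqrt n)$. Because $\eta_0>0$, $r_n\log r_n=n^{1/2-\eta_0}\log n=o(\sqrt n)$, which gives a factor $1+o(1)$ on all boxes simultaneously. Your moment route is also valid: a first moment plus Markov's inequality already suffices, and the cross-covariances play no role. So the step you flag as delicate is routine.

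The step that does not go through as written is the single-box estimate. You state it additively, $\bbE\big(1-\rme^{-f_{k,x}}\big)=n^{-1/2}\int(1-\rme^{-f_{k,x}})\,\rmd\nu_k+o(n^{-1/2})$, made uniform in $f$ via total variation. Summed over as many as $\rme^{2r_n+2M}$ boxes, such errors need not be $o(1)$. Your truncation to $\sum_x c_x\le C\sqrt n$ does not limit the number of boxes, since $f_{k,x}$ may have tiny $\nu_k$-mass on exponentially many boxes.

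What is needed, and what the paper asserts for $\Psi_{n,f}(\varphi)$, is the relative form $(1+o(1))\,n^{-1/2}\int f\,\rmd\nu_k$, uniformly over admissible $f$ and shifts. Then the two exponents agree up to a factor $1+o(1)$, and since $\sup_{a\ge 0}a\,\rme^{-a/2}<\infty$, this becomes an additive $o(1)$. Total variation cannot give a relative bound for $f$ supported on small sets, so use density ratios instead:
\begin{itemize}
\item On $\{\max_{\rmQ(k)}|\sigma|\le n^{1/4}\}$, two densities are each within a factor $1+o(1)$ of their limits: that of the centre value shifted by $|v|\le r_n\log r_n$, and that of the pinned remainder, whose covariance differs from the limit by $O(k^2/n)$.
\item On $\{\min|\sigma|\le M,\ \max|\sigma|>n^{1/4}\}$, both the true law and $\nu_k$ give mass at most $\rme^{-c_k\sqrt n}$ per box. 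This beats the box count, again because $\eta_0>0$.
\end{itemize}

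Finally, you, like the paper, tacitly need $\rmX_{n,k}$ to stay away from $\partial\rmD_n$, e.g.\ $\rmX_{n,k}\subset\rmD_n^\circ$ as in the application. Your assertion that every $x\in\rmD_n$ is at distance $\rme^{n-O(\log n)}$ from the boundary is false in general. Near the boundary, neither the variance $n+O(1)$ nor disjoint neighbourhoods inside $\rmD_n$ are available.
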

\begin{proof}
Fix $k$ and $M$ as above and note that all constants and asymptotic terms will depend only on these quantities. Let $n \geq k$. For a measurable positive function $f$ on $\bbR^{\rmQ(k)}$ satisfying~\eqref{e:300.3.26} and $\varphi \in \bbR^{\rmQ(k)}$, define
\begin{equation}
\label{e:300.3.27}
	\Psi_{n,f}(\varphi) := \bbE f\big(h_{n}(\rmQ(k)) + \varphi\big)
\end{equation}
where $h_{n}$ is a DGFF on $\rmD_{n}$. 
We first claim that, if $f$ satisfies~\eqref{e:300.3.26}, and $\varphi$ satisfies $|\varphi(0)| \leq n^{1/4}$ and 
	$\sup_{\rmQ(k)} |\varphi-\varphi(0)| \leq n^{-1}$, then as $n \to \infty$,
\begin{equation}
\label{e:300.3.27n}
	\Psi_{n,f}(\varphi) = (1+o(1)) n^{-{1/2}} \int f(\sigma) \nu_k(\rmd \sigma)  \,,
\end{equation}
where $\nu_k$ is the measure
\begin{equation}
	\nu_k(\rmd w) = \frac{1}{\sqrt{\pi}} \int_{s=-\infty}^\infty \bbP \big (h^0(\rmQ(k)) + s \in \rmd w) \rmd s \,,
\end{equation}
and $h^0$ is the pinned DGFF on $\bbZ^2$ at zero tilt (i.e. a Gaussian field on $\bbZ^2$ with mean $0$ and covariance given by the Green Function on $\bbZ^2\setminus \{0\}$ under our usual normalization). In particular, that $\nu_k$ has Radon marginals follows since $h^0(y)$ is a centered Gaussian with bounded variance for all $y \in \rmQ(k)$.

Indeed, by conditioning on $h_n(0)$ and using the Gibbs-Gaussian decomposition for $h_n$ (see \cite[Section~3.2.1]{Tightness}), we may write~\eqref{e:300.3.27} as
\begin{equation}
\label{e:300.3.30}
	\int_{s=-\infty}^\infty \bbP\big(h_n(0) + \varphi(0) \in \rmd s\big)
		\bbE f\Big(h_n^0(\rmQ(k)) + s + (s-\varphi(0))\psi + O(n^{-1})\Big)  \,,
\end{equation}
where $h_n^0$ is the DGFF on $\rmQ(n) \setminus \{0\}$ (with zero boundary conditions), and $\psi :=\frac{1}{h_n(0)} \bbE(h_n - h_n(0) \,|\, h_n(0) )$ satisfies
\begin{equation}
	0 \geq \psi(y) = \frac{G_{n}(y,0) - G_{n}(0,0)}{G_{n}(0,0)} \geq -C \frac{k}{n}
	\quad ; \qquad 
	y \in \rmQ(k) \,
\end{equation} by standard Green's function asymptotics, see for example \cite[Lemma~3.1]{Tightness}.
Now, from the condition on the support of $f$, the restriction on $\varphi(0)$, the above bound on $\psi$, and the union bound, the probability that the quantity inside the expectation 
in \eqref{e:300.3.30} is non zero, is bounded from above by
\begin{equation}
\label{e:4.8n}
	\sum_{y \in \rmQ(k)} \bbP\big(|h_n^0(y)| > |s|/2 - M\big)
	\leq C\rme^{-c s^2} \,,
\end{equation}
with $C,c \in (0,\infty)$, since $h_n^0(y)$ is a centered Gaussian with bounded variance. Since $f$ is also bounded, the last right hand side is also a bound on the expectation in~\eqref{e:300.3.30} itself, up to different $(k,M)$-dependent constants. 

Also, by standard computation for all $v \in \bbR^{\rmQ(k)}$,
\begin{equation}
\lim_{\substack{n \to \infty\\ \epsilon \to 0}}
	\frac{\rmd \bbP(h_n^0|_{\rmQ(k)} + \varepsilon \in \rmd v)}{\rmd v} = \frac{\rmd \bbP(h^0|_{\rmQ(k)} \in \rmd v)}{\rmd v} \,,
\end{equation}
with the density on the left hand side decaying uniformly ``Gaussianly'' for all $n \geq 1$ and $\|\epsilon\|_2 \leq 1$.
Together with the bound on $f$, it follows from the Dominated Convergence Theorem, that for fixed $s$ the expectation in~\eqref{e:300.3.30} tends to
\begin{equation}
	\bbE \Big(f\big(h^0(\rmQ(k)) + s \big) \Big) \,,
\end{equation}
as $n \to \infty$. Using also that $h_n(0)$ is a centered Gaussian with variance $n/2 + O(1)$, it follows from the Dominated Convergence Theorem again, that multiplied by $\sqrt{n}$ the integral in~\eqref{e:300.3.30} converges to
\begin{equation}
	\frac{1}{\sqrt \pi} \int_{s=-\infty}^\infty \bbE \Big(f\big(h^0(\rmQ(k)) + s \big) \Big) \rmd s \,,
\end{equation}
which by definition of $\nu_k$ is equal to the integral on the right hand side of~\eqref{e:300.3.27n} and bounded uniformly by an $(k,M)$-dependent quantity thanks to~\eqref{e:4.8n}.

Next, setting $n' := n-r_n - 2M$, by the Gibbs-Gaussian property, we may decompose $h_n$ as the independent sum 
\begin{equation}
	\varphi_n + \sum_{x \in \rmX_{n,k}} h_{n',x}(x+\cdot ) \,,
\end{equation}
where $\varphi_n$ is the conditional expectation of $h_n$ given its values on $\cup_{x \in \rmX_{n,k}} \partial \rmQ(x;n')$ (which is a disjoint union by choice of $n'$) and 
$h_{n',x}$ is distributed as a DGFFs on $\rmQ(n')$ with zero boundary conditions outside.

Using this decomposition, we write the expectation in~\eqref{e:300.3.25} as
\begin{equation}
\label{e:300.3.37}
\bbE \exp \bigg(\sum_{x \in \rmX_{n,k}} \log \Big(1-\Psi_{n', F_x}\big(\varphi_n (x+\cdot)\big)\Big)\bigg) \,,
\end{equation}
where $\Psi$ is as in~\eqref{e:300.3.27} with $n'$ and $F_x$ in place of $n$ and $f$, where 
\begin{equation}
	F_x(\sigma) = 1-\rme^{-\lambda f_{x,k}(\sigma)} \,.
\end{equation}
Observe that $F_x$ also satisfies~\eqref{e:300.3.26} with the same $k$ and $M$ for all $x$ and thus that the corresponding integral in~\eqref{e:300.3.27n} can be bounded by a uniform $(k,M)$-dependent constant.

Now, by direct computation, $\bbE \varphi^2_n(x) \lesssim r_n $. Also, by Fernique Majorization Lemma and the Borell-TIS inequality, $\max_{y \in \rmQ(x;k)} |\varphi_n(y) - \varphi_n(x)|$ has mean $O(k\rme^{-n'})$ and Gaussian tails of the same order. Since $|\rmX_{n,k}|$ is at most $\rme^{3r_n}$ by the sparsity requirement for $\rmX_{n,k}$, it follows from the union bound that with probability tending to $1$ with $n$,
\begin{equation}
\max_{x \in \rmX_{n,k}} \big|\varphi_n(x)\big| \leq r_n \log r_n 
\quad ; \qquad
\max_{x \in \rmX_{n,k}} \max_{y \in \rmQ(x;k)}
	\big|\varphi_n(y) - \varphi_n(x)\big| \leq \rme^{-n/4} \,.
\end{equation}

When these two inequalities hold,~\eqref{e:300.3.27n} is in force, and then by Taylor approximation, the exponent in~\eqref{e:300.3.37} is equal to the second term in~\eqref{e:300.3.25}, up to multiplicative error in the exponent which tends to $1$ as $n \to \infty$. Since the quantity inside the exponential is negative, this can be turned into an additive error which tends to $0$ with $n$.
\end{proof}

We are now ready to give the proof of Proposition~\ref{p:300.2}.

\begin{proof}[Proof of Proposition~\ref{p:300.2}]
Fix $u \geq 0$ and a measurable function $f$ as in the statement, and let us first assume that $\xi_n \equiv 1$. Take $\delta > 0$ and recall the definition of $\bfW^k_{n,\delta}(u)$ from~\eqref{e:3.19n}.  
Define also $\bfLambda^{k,\circ}_{n,u, \delta}$ as in~\eqref{e:300.312n} only with 
$\bfW_{n,\delta}^k$ in place of $\bfW_n^{k}$. It follows from the isomorphism that
\begin{equation}
\label{e:300.3.41}
\bfW_{n,\delta}^k \cap \bfG_n(u) 
= \Big\{ z \in \bfW_{n,\delta}^k(u) :\:
\min \Big(L_{t_n^A}(\rmQ(x(z);k) + h^2_n(\rmQ(x(z);k) \big) \Big) \leq u \Big\} \,,
\end{equation}
where $x(z)$ is as in Definition~\ref{e:3.30.1}.
Thus, the quantity in the first expectation ~in~\eqref{e:300.3.16}, with 
$\bfXi_{n,u,\delta}^k$ in place of $\bfXi_{n,u}^k$ is equal to
\begin{equation}
\label{e:300.3.40}
\exp \Big(- \sum_{z \in \bfW_{n,\delta}^k(u)} f\big(\rme^{-n}z\big)
1_{\cA_{k,u}(L_{t_n^A}(\rmQ(x(z);k))}\big(h_n(\rmQ(x(z);k)\big) \Big)\,,
\end{equation}
where the indicator function in the sum above is for the event
\begin{equation}
\label{e:300.3.43}
\cA_{k,u}(\omega) := \Big\{\sigma \in \bbR^{\rmQ(k)} :\: \min \big(\omega + \sigma^2\big) \leq u \Big\} \,,
\end{equation}
with $\omega = L_{t_n^A}(\rmQ(x(z);k))$.

Noting that these indicator functions satisfy the conditions of Lemma~\ref{l:300.3.7} with $M=\sqrt{u} \vee 1$, we may use the latter to equate the expectation of~\eqref{e:300.3.40}, conditional on $\cF_{t_n^A}$, with that of
\begin{equation}
\exp \Big(- \tfrac{1}{\sqrt{n}} \sum_{x \in \bfW_{n,\delta}^k(u)} 
\big(1-\rme^{-f(\rme^{-n}z)}\big)
\nu_k\big(\cA_{k,u}\big(L_{t_n^A}(\rmQ(x(z);k))\big)\Big) + o(1) \,,
\end{equation}
with $\nu_k$ as in Lemma~\ref{l:300.3.7}, and $o(1) \to 0$ with $n \to \infty$. Taking expectation, and using the Bounded Convergence Theorem, we recover~\eqref{e:300.3.16} with $\varphi_u^k(\omega) := \nu_k\big(\cA_{k,u}(\omega)\big)$, but with $\bfW_{n,\delta}^k(u)$ and $\wh{\bfGamma}_{n,u, \delta}^k$ in place of $\bfW_n^k(u)$ and $\wh{\bfGamma}_{n,u}^k$. 

To get rid of the restriction to the $\delta$-bulk of the $n-r_n$ boxes, we use Proposition~\ref{p:3.8nn}. For the second expectation in~\eqref{e:300.3.16} this is sufficient as $\varphi_u^k$ is uniformly bounded in $\omega$, thanks to the Radon marginals of $\nu_k$. 
For the first expectation, since~\eqref{e:300.3.41} holds also 
with $\bfW^k_n(u) \setminus \bfW_{n,\delta}^k(u)$ in place of $\bfW^k_n(u)$, conditional on the local time field, the mean of $|(\bfW^k_n(u) \setminus \bfW_{n,\delta}^k(u)) \cap \bfG_n(u)|$ is at most $|\bfW^k_n(u) \setminus \bfW_{n,\delta}^k(u)|$ times 
\begin{equation}
\label{e:300.3.45}
	\sup_{x \in \bbX_k} \bbP \big(h_n(\rmQ(x;k) \in \cA_{k,u}(0)) \leq 
|\rmQ(k)| \sup_{x \in \rmD_n} \bbP(h_n(x)^2 \leq u) \leq C_{k,u} n^{-1/2} \,,
\end{equation}
by the Union Bound and as the variance of $h_n(x)$ is bounded from below for all $n$ large enough by $cn$ for some constant $c>0$ and all $x \in \rmD_n$. Then for any $\epsilon > 0$, $\eta > 0$, 
\begin{multline}
\bbP \Big(\big|\big(\bfW^k_n(u) \setminus \bfW_{n,\delta}^k(u)\big) \cap \bfG_n(u)\big| > \epsilon\Big)
\\ \leq 
\bbP \Big(\big|\bfW^k_n(u) \setminus \bfW_{n,\delta}^k(u)\big| > \eta \sqrt{n}\Big) + 
\bbP \Big(\big|\big(\bfW^k_n(u) \setminus \bfW_{n,\delta}^k(u)\big)\cap \bfG_n(u)\big| > 
|\bfW^k_n(u)\setminus \bfW_{n,\delta}^k(u)| \frac{\epsilon}{\eta} n^{-1/2} \Big) \,.
\end{multline}
The second probability can be made arbitrarily small for all $n$ large enough, by choosing $\eta > 0$ small enough, thanks to the (conditional) Markov inequality and~\eqref{e:300.3.45}. For all such $\eta$, the first probability can then be made arbitrarily small for all $n$ large enough, by choosing $\delta > 0$ small enough. This shows that
\begin{equation}
\big|\big(\bfW^k_n \setminus \bfW_{n,\delta}^k) \cap \bfG_n(u)\big| \overset{\bbP} \longrightarrow 0
\end{equation}
as $n \to \infty$ followed by $\delta > 0$, which implies that we may remove the $\delta$ restriction in the first expectation of~\eqref{e:300.3.16} as well.

Finally, we observe that the proof works essentially verbatim for general $\xi_n$, with the only difference is the introduction of this random variable as a multiplicative factor in all expressions above. 
\end{proof}

\subsection{Running time fluctuations as the DGFF average}

In this subsection, we give the proof of Proposition~\ref{p:1103.6}. To this end, we shall need the following auxiliary lemma.
\begin{lem}
\label{l:1103.17}
There exists $C < \infty$ such that for all $n \geq 1$,
\begin{equation}
	\Var \bigg( \sum_{x \in \rmD_n} h^2_n(x)\bigg)  \leq C |\rmD_n|^2 \,.
\end{equation}
\end{lem}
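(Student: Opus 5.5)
The plan is to compute the variance exactly with Wick's formula and then bound it with Green function estimates. Since $h_n$ is a centered Gaussian field with covariance $G_n$ (the rescaled Green function, i.e. $G_n(x,y)=\frac{2\pi}{2}\bbG^{\bfX}_{\rmD_n}(x,y)$ up to the normalization fixed after the time retuning), Isserlis' theorem gives $\mathrm{Cov}(h_n^2(x),h_n^2(y)) = 2G_n(x,y)^2$. Therefore
\begin{equation*}
\Var\Big(\sum_{x\in\rmD_n} h_n^2(x)\Big) = 2\sum_{x,y\in\rmD_n} G_n(x,y)^2 \,,
\end{equation*}
and it suffices to show that $\sum_{x,y} G_n(x,y)^2 \le C|\rmD_n|^2$ uniformly in $n$.

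For this we use the standard logarithmic bound on the Green function. Since $\rmD$ is bounded, $\rmD_n$ is contained in a box of side $C\rme^n$. By monotonicity of the Green function in the domain, $G_n(x,y)\le G_{\rm box}(x,y)\le C\big(n-\log(\|x-y\|\vee1)+1\big)$; this is the bound recorded as \cite[Lemma~3.1]{Tightness} and quoted in the introduction. Fix $x$ and split the sum over $y$ dyadically according to $\|x-y\|\approx \rme^{n-j}$ for $j=0,1,\dots,n$. The shell at level $j$ contains at most $C\rme^{2(n-j)}$ points, and on it $G_n(x,y)\le C(j+1)$. Hence
\begin{equation*}
\sum_{y} G_n(x,y)^2 \le C\sum_{j\ge0}(j+1)^2\rme^{2(n-j)} \le C'\rme^{2n}.
\end{equation*}
Summing over $x$ gives $\sum_{x,y}G_n(x,y)^2\le C\,|\rmD_n|\,\rme^{2n}$.

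It remains to compare $\rme^{2n}$ with $|\rmD_n|$. Since $\rmD$ is a nonempty open set, $|\rmD_n|\ge c\,\rme^{2n}$ for all large $n$, which yields the bound $C|\rmD_n|^2$. Small $n$ are absorbed into the constant, since $\rmD_n$ is finite and the variance is finite there.

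There is no genuine obstacle. The only point requiring care is that the Green function bound is uniform up to the boundary. This follows from domain monotonicity, because the Green function is pointwise increasing in the domain, so it is enough to use the bound for an enclosing box and no boundary regularity is needed.
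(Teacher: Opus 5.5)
Your proposal is correct and follows the paper's proof. The paper also uses Wick's formula to reduce the variance to $2\sum_{x,y}G_n(x,y)^2$, bounds $G_n(x,y)$ by $n-\log(\|x-y\|\vee 1)+C$ (citing \cite[Lemma~3.1]{Tightness}), and partitions by log-distance to get $|\rmD_n|\,\rme^{2n}\lesssim|\rmD_n|^2$. Your domain-monotonicity remark and your handling of small $n$ only make explicit details the paper leaves implicit.
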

\begin{proof}
By Wick's Formula or standard computation, for $x,y \in \rmD_n$,
\begin{equation}
\bbE h_n(x)^2 h_n(y)^2 = \bbE h^2_n(x) \bbE h^2_n(y) + 2\big(\bbE h_n(x) h_n(y)\big)^2 \,.
\end{equation}
It follows that the desired variance is equal to twice the sum over all $x,y \in \rmD_n$ of the square of $\bbE h_n(x) h_n(y)$. Recalling that the latter mean is at most $G_{n}(x,y) \leq n-\log |x-y| \vee 1 + C$ (see \cite[Lemma~3.1]{Tightness})
 and partitioning the sum according to the log of the distance between $x$ and $y$, we thus obtain the bound,
\begin{equation}
	2\sum_{k=0}^\infty \sum_{\substack{x,y \in \rmD_n\\ |x-y| \leq \rme^k}} \big(n-\log (|x-y|\vee 1) + C\big)^2
	\lesssim |\rmD_n| \sum_{k=0}^{n} \rme^{2k} (n-k+1)^2 \lesssim |\rmD_n| \rme^{2n} \lesssim |\rmD_n|^2 \,.
\end{equation}
\end{proof}

We are now ready to give the proof of Proposition~\ref{p:1103.6}.

\begin{proof}[Proof of Proposition~\ref{p:1103.6}]
Summing both sides of~\eqref{e:3.1n} over all $x \in \wh{\rmD}_n$, dividing by
$2\sqrt{t_n} |\rmD_n|$, rearranging the terms and finally noting that $\sum_{x \in \wh{\rmD}_n} L_{t_n}(x) = \bfT_{n,t_n}$, the left-hand side of~\eqref{e:1103.19} can be equated to
\begin{equation}
	\frac{1}{2\sqrt{t_n}} \frac{1}{|\rmD_n|} \Big(\sum_{x \in \rmD_n} h'^2_n(x) - \sum_{x \in \rmD_n} h^2_n(x)\Big) + \frac{\sqrt{t_n}}{2|\rmD_n|}
\end{equation}
The second moment of the first term tends to $0$, by virtue of Lemma~\ref{l:1103.17} and the fact that $t_n \to \infty$ with $n$.
The second term tends to $0$ by assumption. 
\end{proof}

\section{I.I.D. structure of the law of local time clusters}\label{sec:iid}

In this section we show that the set $\rmW_n(u)$ is cluster-i.i.d., that is, that the law of the local time on each cluster is asymptotically identical and that, between different clusters, local-time values are independent. The product of this section is Proposition~\ref{p:300.1a} and, as a direct consequence, also Proposition~\ref{p:300.5}.
The main idea is the following. First, we switch to considering downcrossings of concentric balls, in place of local time. This approach, which as pioneered in~\cite{dembo2004cover}, recovers the Spatial Markov property, which is otherwise absent when local time is considered. This will give the independence between the clusters, once we consider the number of downcrossings around them and the entry and exit points at each. 

Then, to get the identical distribution of the local time at each of the clusters, we consider not one set of downcrossings, but rather a full sequence of these, leading from some large scale $l$ to the scale of the cluster $k$. We then show that the (square root of the normalized) downcrossing trajectory along this sequence is repelled: that is, instead of decreasing linearly to $O(1)$, it stays above the linear interpolation following the square-root function. This repulsion then allows us to show that the law of the downcrossing information at scale $k$ (i.e., the number of downcrossings at scale $k$ together with their corresponding entry and exit points) is asymptotically insensitive to the downcrossing information at scale $l$.Thus, even if the law of the latter is not identical for different clusters, the law of the former will be. 

We begin by introducing the downcrossing terminology. We follow a similar notation to that in~\cite{Tightness}.

\subsection{Downcrossing terminology}\label{sec:downprelim}

We first recall the general definition of downcrossing from \cite{Tightness}. Throughout the sequel, the inner boundary of a set $A \subseteq \bbZ^2$ is defined as $\partial_i A := \partial A^\rmc$. 

To define the latter precisely, given any $x \in \rmD_n$ and $l > k \geq 0$ with $\ol{\rmB(x;l)} \subseteq \rmD_n$, we introduce the sequence of stopping times $(\sigma_m)_{m \geq 0}$ and $(\sigma'_m)_{m \geq 1}$ defined recursively by first setting $\sigma_0 := 0$ and then, for $m \geq 1$, letting 
\begin{equation}
	\sigma'_m := \inf \{ t  \geq \sigma_{m-1} :\: X_t \in \rmB(x;k) \}
 \ , \quad \sigma_m := \inf \{ t  \geq \sigma'_{m} :\: X_t \in \rmB(x;l)^\rmc \}\,,
\end{equation} where we recall that, for each $r \geq 0$, $\rmB(x;r):=\{ y \in \bbZ^2 : \|x-y\| \leq \lfloor \rme^r \rfloor\}$ as defined in Section~\ref{s:2}. We shall call any excursion of the walk $\bfX$ from $\rmB(x;k)$ to $\rmB(x;l)^\rmc$ an $(x;k,l)$-\textit{excursion} and any excursion from $\rmB(x;l)^\rmc$ to~$\rmB(x;k)$ an $(x;k,l)$-\textit{downcrossing}, often calling these simply excursions or downcrossings whenever the choice of $x,k$ and $l$ is clear. Having in mind the stopping~times above and recalling that our walk $\bfX$ starts from $\partial$,  $(x;k,l)$-excursions correspond to the paths $(\bfX_t : t \in [\sigma'_m,\sigma_{m}])$ and $(x;k,l)$-downcrossings to the paths $(\bfX_t : t \in [\sigma_{m-1},\sigma'_m])$ for each $m \geq 1$. 
The number of $(x;k,l)$-{\em downcrossings} made by the walk until the total running time is $t$ is then
\begin{equation}\label{eq:defntz}
\bfN_t(x;k,l) := \sup\, \{m \geq 1 :\: \sigma'_m \leq t \} \,.
\end{equation} The number of $(x;k,l)$-downcrossings until the walk accumulates local time $t$ at the boundary~$\partial$ is then given by $N_t(x;k,l) := \bfN_{\bfL^{-1}_t(\partial)}(x;k,l)$. Note that $N_t(x;k,l)$ coincides with the number of $(x;k,l)$-excursions until such time, since each downcrossing accounted for in $N_t(x;k,l)$ must be followed by an $(x;k,l)$-excursion prior to the next downcrossing and/or return to the boundary. We also introduce a {\em normalized} version of $N_t(x;k,l)$ given by
\begin{equation}\label{eq:defntz2}
	\wh{N}_t(x;k,l) := (l-k) N_t(x;k,l) \,.
\end{equation} 
For $m \geq 1$, the \textit{entry} and \textit{exit} points of the $m$-th excursion will be defined as $\bfX_{\sigma'_m}$ and $\bfX_{\sigma_m}$, respectively. 
We also let 
\begin{equation}\label{eq:defsigma}
	\cF(x;k,l) := \sigma \big( X_t :\: t \in \cup_{m\geq 1} \, [\sigma_{m-1}, \sigma'_m] \big)
\end{equation}
 be the $\sigma$-algebra generated by the random walk $\bfX$ when ``observed only during downcrossings'', and notice that both $N_t(x;k,l)$ and $\wh{N}_t(x;k,l)$ for any $t \geq 0$ as well as the entry and exit points of any $(x;k,l)$-excursion are all measurable with respect to $\cF(x;k,l)$. 

In our work, we will often consider a whole sequence of concentric balls around a given~point, and be interested in downcrossings between any two concentric balls this sequence. To handle this setting, we introduce some additional notation which is more convenient. 

Given any $k\geq 1$ and $T \in \N$, let us fix any $n \geq k$ and $x \in \rmD_n$ such that $\overline{\rmB(x; k+Tk^\gamma)} \subseteq \rmD_n$. 
For each $i=1,\dots,T$, let us define
\begin{equation}
	\rmB^-[x;i]:=\rmB(x;k+(i-1)k^\gamma) \qquad\text{ and }\qquad \rmB^+[x;i]:=\rmB(x;k+ik^\gamma-4\rme^{-k^\gamma}).
\end{equation} 
and call any $(x;k+(i-1)k^\gamma,k+ik^\gamma-4\rme^{-k^\gamma})$-excursion of the walk $\bfX$ an $[x;i]$-\textit{excursion} and any $(x;k+(i-1)k^\gamma,k+ik^\gamma-4\rme^{-k^\gamma})$-downcrossing an $[x;i]$-\textit{downcrossing}. We point out that, if $k$ is large enough (depending only on $\gamma$), then, for all $i=1,\dots,T-1$, we have
\begin{equation}\label{eq:incballs}
	\ol{\rmB^+[x;i]} \subseteq \rmB^-[x;i+1] \qquad \text{ and }\qquad \rme^{k+(i-1)k^\gamma} \leq \rmd(\ol{\rmB^+[x;i]}, \partial_i \rmB^-[x;i+1]) \leq 5\rme^{k+(i-1)k^\gamma},
\end{equation} so that the annuli $\ol{\ol{\rmB^+[x;i]}\setminus \rmB^-[x;i]}$ for $i=1,\dots,T$ are all at a positive distance from each~other (and concentric) but the two  boundaries $\partial \rmB^+[x;i]$ and $\partial_i \rmB^-[x;i+1]$ become indistinguishable at log-scale $k+ik^\gamma$. This  feature will be convenient when studying downcrossing trajectories, i.e. the number of $[x;i]$-downcrossings as a function of $i$, so that, in the following, we will assume $k$ to be always large enough so that \eqref{eq:incballs} holds. We also define the $\sigma$-algebra 
\begin{equation}
\cF[x;i]:=\cF(x;k+(i-1)k^\gamma,k+ik^\gamma -4\rme^{-k^\gamma}),
\end{equation} where for $l \geq k$ the $\sigma$-algebra $\cF(x;k,l)$ is as in \eqref{eq:defsigma}. \textcolor{black}{Given a random variable $U$ and $A \subseteq\cF[x;i]$, in the sequel we will  write $\bbE(U| A\,; \cF[x;i])$ to denote the restriction of the conditional expectation $\bbE(U|\cF[x;i])$ to the event $A$.}
In addition, for any $t > 0$ let us set
\begin{equation}
	N_t[x;i]:=N_t(x;k+(i-1)k^\gamma,k+ik^\gamma-4\rme^{-k^\gamma})\text{ and }\wh{N}_t[x;i]:=k^\gamma N_t(x;k+(i-1)k^\gamma,k+ik^\gamma-4\rme^{-k^\gamma}) 	
\end{equation} as the number (resp. normalized number) of $[x;i]$-downcrossings until $\partial$-time $t$ and, given $j \in \N$, let $Y_j^{\rm in}[x;i]$ and $Y_j^{\rm out}[x;i]$ denote respectively the entry and exit points of the $j$-th $[x;i]$-excursion and write $\ol{Y}_j[x;i]:=(Y^{\rm in}_j[x;i], Y^{\rm out}_j[x;i])-x$ for the centered $j$-th pair of such entry/exit points, $\ol{Y}_{(j)}(x;k):=(\ol{Y}_1[x;i],\dots,\ol{Y}_j[x;i])$ for the vector of the first $j$ centered pairs of entry/exit points and, finally,
\begin{equation}\label{eq:defz}
	Z_t[x;i]:=\Big( \sqrt{\wh{N}_t[x;i]} \, , \ol{Y}_{(N_t[x;i])}[x;i]\Big)	
\end{equation} for the vector listing the (square root of the normalized) number of $[x;i]$-excursions until $\partial$-time~$t$ together with their corresponding entry and exit points (with the convention that $\ol{Y}_{(0)}[x;i]=\emptyset$ and $Z_t[x;i]=(0,\emptyset):=0$, used whenever $\wh{N}_t[x;i]=0$). For future reference, we observe that the random variable $ \sqrt{\wh{N}_t[x;i]}$ takes values on the set 
\begin{equation}
\mathcal{N}_k:=\{ \sqrt{ k^\gamma m} : m \in \N_0\},
\end{equation} while $Z_t[x;i]$ takes values on the set 
\begin{equation}\label{eq:defZcal}
\mathcal{Z}_k[i]:=\{ (u,\ol{y}) : u \in \cN_k\,,\, \ol{y} \in \textrm{E}_{\wt{u}_k}[i] \},
\end{equation}
where, given $u \in \cN_k$, we write $\wt{u}_k:=\frac{s^2}{k^\gamma}$ (so that $\wt{u}_k$ is the unique integer $m$ such that  $u= \sqrt{ k^\gamma m}$) and, for each $m \in \N$, we define
\begin{equation}
	E_m[i]:=\{ \overline{y}= ((y_1,y_1'),\dots,(y_{m},y'_{m})) : (y_r,y_r') \in \partial_i \rmB^-[0;i] \times \partial \rmB^+[0;i] \text{ for all $r=1,\dots,m$}\}
\end{equation} and set $\textrm{E}_0[i]:=\emptyset$ by convention.  In addition, given $\eta \in (0,\tfrac{1}{2})$, we introduce the corresponding repulsion interval 
\begin{equation}\label{eq:defirep}
		\mathfrak{I}_k^\eta(i):=[(k+ik^\gamma)^{1/2-\eta},(k+ik^\gamma)^{1/2+\eta}]\,,
	\end{equation} and consider the set $\cR^\eta_k(i)$ of all $u \in \cN_k$ which lie on the interval $\sqrt{2}(k+(i-1)k^\gamma) + \mathfrak{I}_k^\eta(i)$, i.e.	
	\begin{equation}
		\mathcal{R}_k^\eta(i):=\Big\{ u \in \cN_k : u - \sqrt{2}(k+(i-1)k^\gamma) \in \mathfrak{I}_k^\eta(i)\Big\}.	
	\end{equation} Given $u \in \cR_k^\eta(i)$, we define the \textit{recentering} of $u$ as 
	\begin{equation}\label{eq:defudc}
	\wh{u}:=u - \sqrt{2}(k+(i-1)k^\gamma).
	\end{equation}
\textcolor{black}{Notice that $\wh{u}$ depends on $i$ but we are omitting this dependence from the notation for simplicity. Nevertheless, this will not cause any confusion in the sequel, as we will always clarify beforehand that $u \in \cR^\eta_k(i)$ for some specific value of $i$, which is the precise value to be used when recentering.} Next, we define the corresponding extension of $\cR^\eta_k(i)$ to $\cZ_k[i]$ given by
	\begin{equation}
\cZ^{\eta}_k[i]:=\{ (u,\ol{z}) \in \cZ_k[i] : u \in \cR^{\eta}_k(i)\}.	
\end{equation} Finally, we define define the event 
	\begin{equation}
	\mathrm{NR}^{\eta,i}_{t}(x):= \Big\{ \sqrt{\wh{N}_t[x;i]}  \notin \mathcal{R}^\eta_k(i)\Big\}  
\end{equation} 
and extend this definition to subsets $K \subseteq [1,T]$ in place of $i$ as usual by union over all $i \in K \cap \N_0$.

\subsection{General outline of the proof of Proposition~\ref{p:300.1a}}

We now give a general outline of the proof of Proposition~\ref{p:300.1a},  expanding on the ideas discussed at the beginning of the section.

The first step of the proof of Proposition~\ref{p:300.1a} is to show that, to establish the convergence of the empirical distribution $\bfLambda^k_{n,u}$, it suffices to consider only clusters whose downcrossing trajectory is properly repelled. This is due to the fact that, as shown in~\cite{Tightness}, the downcrossing trajectory corresponding to a low local time vertex in $\rmD_n$ is typically repelled. To make an analogous claim, albeit with a slightly different formulation,  we set  
\begin{equation}\label{eq:deflt}
l=l(k):=k^2 \qquad , \qquad \cT_k:= \left\lfloor \frac{l - k}{k^\gamma}\right\rfloor
\end{equation} and, for $n \geq k$, $u \geq 0$ and $0 < \eta' < \eta < \frac{1}{2}$, define
\begin{equation}
\label{e:defR0}
\begin{split}
\bfR_n^{k}(u) := \Big\{ z \in \bfW_n^{k}(u) :\: &  \sqrt{\wh{N}_{t^A_n}[x(z);i]} \in \cR_k^{\eta'}(i) \text{ for all  }i \in \{1,\cT_k\}, \\ &
\sqrt{\wh{N}_{t^A_n}[x(z);i]} \in \cR_k^\eta(i) \text{ for all  }i \in [2,\cT_k+5]\setminus \{ \cT_k\}\Big\},
\end{split}
\end{equation} where $x(z) \in \bbX_k$ is that from \eqref{e:3.30.1} and we chose to suppress the dependence on the parameters $\eta',\eta$ from the notation for simplicity.

\begin{rem} In the definition of $\bfR^k_n(u)$, and also in all similar definitions to come in Section~\ref{sec:iid}, we~implicitly assume that $z$ in addition satisfies $\ol{\rmB^+[x(z);\cT_k+5]} \subseteq \rmD_n$, so that $\wh{N}_{t^A_n}[x(z);\cT_k+t]$ is well-defined. We choose not to state this condition explicitly to avoid overloading the definition. 
\end{rem}

The first part of the proof consists then of establishing the following analogue of \cite[Lemma~7.2]{Tightness}, which states that the number of clusters without a repulsed downcrossing trajectory is negligible.
\begin{prop}
\label{prop:rest} If $\gamma$ is chosen small enough, then, given any $0 < \eta' < \eta < \frac{1}{2}$ small enough \mbox{(depending only on $\gamma$),} for any fixed $u \geq 0$ we have  
\begin{equation}
\frac{\big|\bfW^k_n(u) \setminus \bfR^{k}_n(u)\big|}{\sqrt{n}} \overset{\bbP}\longrightarrow 0 \,,
\end{equation}
in the limit as $n \to \infty$, followed by $k \to \infty$.
\end{prop}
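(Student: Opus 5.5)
We reduce the statement to a first-moment bound on the expected number of clusters whose downcrossing trajectory fails to be repelled at some scale $i \in [1,\cT_k+5]$. Markov's inequality then turns this into the required convergence in probability. Concretely, we write
\begin{equation*}
\bfW^k_n(u) \setminus \bfR^k_n(u) \subseteq \bigcup_{i \in \{1,\cT_k\}} \big\{z :\: \mathrm{NR}^{\eta',i}_{t_n^A}(x(z))\big\} \cup \bigcup_{i \in [2,\cT_k+5]\setminus\{\cT_k\}} \big\{z :\: \mathrm{NR}^{\eta,i}_{t_n^A}(x(z))\big\},
\end{equation*}
restricted to $z \in \bfW^k_n(u)$. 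Since $z \in \bfW^k_n(u)$ forces some $y \in \rmQ(x(z);k)$ with $L_{t_n^A}(y) \le u$, it suffices to bound
\begin{equation*}
\sum_{x \in \bbX_k \cap \rmD_n} \bbP\big( \min_{\rmQ(x;k)} L_{t_n^A} \le u ,\ \mathrm{NR}^{\eta,[1,\cT_k+5]}_{t_n^A}(x)\big) .
\end{equation*}
The target is that this sum is at most $\epsilon_k \sqrt{n}$, with $\epsilon_k \to 0$ as $k \to \infty$ uniformly in large $n$. The sets in the bound may overlap, but only boundedly often per $(n-r_n)$-box, since $x(z)$ is unique.

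The main input is the downcrossing machinery of \cite{Tightness}, namely the estimates behind Lemma~7.2 there. For fixed $x$ we condition on the number of downcrossings at the outer scale $n-r_n$, or more simply at the largest scale, where it is concentrated near $\sqrt{2}\cdot$(scale) by the choice of $t_n^A$. The sequence $\sqrt{\wh N_{t_n^A}[x;i]}$ then behaves like a Bessel-type (Galton--Watson with geometric offspring) chain across scales, ending at $O(1)$ at scale $k$ because of the low local time. The \cite{Tightness} bounds give two estimates.
\begin{itemize}
\item Ballot bound: the probability that the chain ends low is of order $n^{-1}\cdot \rme^{-2(n-k)}$ times polynomial factors. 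Summing over $x$ yields the $\sqrt{n}$ normalization, via the inverse relation with the thinning rate $\Theta(n^{-1/2})$ of the isomorphism. Alternatively, we can obtain the normalization from the first-moment computation for $|\bfW_n(u)|$ in \cite[Lemma~4.2]{Tightness}.
\item Entropic repulsion: conditional on ending low, the chain at scale $j$ from the bottom lies above the linear interpolation by an amount in $[j^{1/2-\eta}, j^{1/2+\eta}]$, except on an event whose probability is $o(1)$ as $j\to\infty$.
\end{itemize}
Since our scales $k+ik^\gamma$ all satisfy $k+ik^\gamma \ge k \to \infty$, a union bound over $i \le \cT_k+5 \le k^{2-\gamma}$ is needed. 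For the bulk indices $i \in [2,\cT_k+5]$ we use the stretched-exponential tail of the barrier event, which is of order $\exp(-c(k+ik^\gamma)^{c\eta})$ per scale for Bessel-type bridges, so the sum over $i$ is $o(1)$. The endpoints $i=1$ and $i=\cT_k$ use the tighter window $\eta'<\eta$. Here the repulsion is needed only at one scale each, and the probability of violating it is $o_k(1)$ by the same Bessel-bridge estimates.

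The main obstacle is transferring these \cite{Tightness} estimates to the present parametrization. The annuli $[x;i]$ have logarithmic width $k^\gamma$ (minus $4\rme^{-k^\gamma}$) rather than width $1$, and the repulsion windows $\mathfrak{I}^\eta_k(i)$ are measured from scale $k+(i-1)k^\gamma$ in absolute terms, not from the bottom. We would first try to apply the \cite{Tightness} estimates directly on the unit-width scale sequence and then subsample at the scales $k+ik^\gamma$. This requires comparing $N[x;i]$ across width-$k^\gamma$ annuli with the unit-annulus counts. We would do this with the concentration of excursion counts given the counts at neighbouring scales: given $m$ downcrossings at the outer scale, the inner count is a sum of $m$ geometric variables. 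If that comparison fails, we would instead reprove the barrier estimate directly for the coarse chain, which is again a Markov chain with geometric-sum transitions, using a Gaussian approximation of $\sqrt{\cdot}$ increments and standard ballot theorems. Choosing $\gamma$ small keeps the errors, of order $k^{-\gamma/2}$ per step, summable against the repulsion gap.
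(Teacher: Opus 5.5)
Your argument rests on a plain first-moment bound followed by Markov's inequality, and that step fails. The mean you want to control is not $O(\sqrt n)$, even before restricting to $\mathrm{NR}$ events.

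For $x$ at macroscopic distance from $\partial\rmD_n$ one has $G_{\rmD_n}(x,x)=n+O(1)$. Since $t_n^A=2n^2-\tfrac32 n\log n+O(\log^2 n)$, Lemma~\ref{lem:nhprob} gives $\bbP(L_{t_n^A}(x)=0)\asymp n^{3/2}\rme^{-2n}$. Hence the expected number of unvisited vertices is $\asymp n^{3/2}$, and your sum over $k$-boxes is $\gtrsim \rme^{-2k}n^{3/2}$.

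The typical size $\Theta(\sqrt n)$ (Theorem~\ref{t:2.1o}) lies far below this mean. The mean is carried by rare configurations in which the downcrossing counts dip below the line $\sqrt2\cdot(\text{scale})$ at scales between $\approx k^2$ and $n$. In particular, the count at scale $n-r_n$ is not concentrated; it fluctuates by $O(\sqrt{r_n})$.

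Intersecting with your $\mathrm{NR}$ events does not cure this, for two reasons:
\begin{itemize}
\item They only involve scales $\le k+(\cT_k+5)k^\gamma$, which stay fixed as $n\to\infty$.
\item Under the unconditioned law, a low endpoint produces no repulsion from the line. The square-root count is a bridge from $\approx\sqrt2 n$ to $0$ whose positivity constraint is felt only at $O(1)$ scales. So $\sqrt{\wh N_{t_n^A}[x;1]}\notin\cR^{\eta'}_k(1)$ with conditional probability bounded away from $0$, and the restricted mean is still of order $c_k n^{3/2}$.
\end{itemize}
The $n^{-1}$ ballot factor and the entropic repulsion you invoke hold only for the law with a barrier imposed at \emph{all} scales up to $n$. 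Compare the factor $\wh u\wh v/(Tk^\gamma)$ in Proposition~\ref{prop:coupling2}, which appears precisely because the repulsion event sits inside the probability. Using these estimates would require inserting such a barrier and proving that it holds with high probability. That is a global input your proposal does not supply.

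Likewise, \cite[Lemma~4.2]{Tightness} is not a first-moment bound; it is an upper-tightness statement obtained through the isomorphism.

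A secondary point: even under a barrier, the lower edge $s^{1/2-\eta}$ of the window is violated with only polynomially small probability, of order $s^{-3\eta}$ for a Bessel-3 bridge, not stretched-exponentially. So your per-scale union bound over $\asymp k^{2-\gamma}$ scales does not close as written.

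The paper never takes an unconditional mean. The steps are:
\begin{enumerate}
\item It discards $\bfW_n(u)\setminus\bfW^{k,\epsilon}_n(u)$ via Lemma~\ref{l:1103.7}, which uses thinning together with Lemma~\ref{lem:coverdgff}.
\item It uses Lemma~\ref{lem:incball.1} to pass to coarse-grid centres $y\in\bbX_{k+(i-2)k^\gamma}$ satisfying $\wh N^{\rm out}_t[y;i]\le \wh N_t[x(z);i]\le \wh N^{\rm in}_t[y;i]$.
\item It bounds the bad sets $\mathbf M^{k,*}_n(u)$ and $\mathbf M^{k,\pm,i}_n(u)$ with the Resampling Lemma~\ref{lem:resampling}, combined with the Thinning Lemma for the $\pm$ sets as in \cite[Lemma~7.2]{Tightness}.
\end{enumerate}
The Resampling Lemma controls such counts by $|\bfW_n(u)|$, which is already $O_\bbP(\sqrt n)$, times a conditional expectation. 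That expectation is taken given $Z_{t_n^A}[y;i]$ and the existence of a low point in $\rmB^-[y;i-1]$. It is a purely local quantity, estimated by Propositions~\ref{prop:GR-1} and~\ref{prop:LT-1}; for example, $b^{(2)}_{k,n}(u)\le \rme^{-\frac14 k^{1-2\eta'-\gamma}}$. Normalizing by $|\bfW_n(u)|$ instead of taking an expectation is exactly what replaces the global barrier your argument is missing.
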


Proposition~\ref{prop:rest} will be an immediate consequence of the (slightly) stronger Proposition~\ref{prop:rest3} below, which will be proved in Section~\ref{sec:prest}.

Next, we show that, under a repelled trajectory, the law of the downcrossing ``output'' data at the terminal scale $k$ is insensitive to the downcrossing input data at the initial scale $l \gg k$ which is conditioned upon. This is a consequence of the decoupling between the input and output data given by the following ballot-type estimate for the downcrossing \mbox{trajectory of the random walk.} In the statement of Theorem~\ref{prop:coupling0} below, given a nonempty set $A \subseteq \bbZ^2$, $\Pi_A$ and $\amalg_A$ respectively denote the Poisson kernel and the harmonic measure (from infinity) associated with $A \subseteq \bbZ^2$, see Section~\ref{sec:prelimballot} for a precise definition.

\begin{thm}\label{prop:coupling0} If $\gamma$ chosen small enough, then, given any $0 < \eta' < \eta < \tfrac{1}{2}$ sufficiently small (depending only on $\gamma$) there exists a constant $c=c(\eta,\eta') > 0$ such that, if $k$ is sufficiently large, for any $2k \leq T \leq \rme^{ck^\gamma}$, $n \geq k$ and $x,\wt{x} \in \rmD_n$ satisfying that $x \in \rmB^-[\wt{x};T-1]$ and $\overline{\rmB^+[\wt{x};T]} \subseteq \rmD_n$, the asymptotics	\begin{equation}\label{eq:lemacoup}
		\bbP \Big( \big\{ Z_t[x;1] = (u,\ol{y})\big\} \setminus \mathrm{NR}^{\eta,[2,T-1]}_{n,t}(x) \,\Big|\,Z_t[\wt{x};T] = (v,\ol{z}) \Big)=r_{k,T}(\wh{u},\wh{v})h_k(\ol{y})(1+o_k(1))
	\end{equation} hold uniformly over all $t > 0$, $(u,\ol{y}) \in \cZ_k^{\eta'}[1]$ and $(v,\ol{z}) \in \cZ_k^{\eta'}[T]$, where the functions $r_{k,T}$, $h_k$ are respectively given by
	\begin{equation}\label{eq:funcfg}
		r_{k,T}(\wh{u},\wh{v}):=	 \sqrt{\frac{2}{\pi}}(Tk^{\frac{3}{2}})^{-1}\rme^{-2(T-1)k^\gamma}\left(\wh{u}\rme^{2\sqrt{2}\wh{u}}\right)\left(\wh{v}\rme^{-2\sqrt{2}\wh{v}-\frac{\wh{v}^2}{Tk^\gamma}}\right)
	\end{equation}and
	\begin{equation}\label{eq:defhk1}
		h_k(\ol{y}):=\begin{cases}1 & \text{ if $\ol{y}=\emptyset$}\\ \\\displaystyle{\prod_{i=1}^m \amalg_{\rmB^-[0;1]}(y_i) \Pi_{\rmB^+[0;1]}(0,y_i')} &\text{ if $\ol{y}=((y_1,y_1'),\dots,(y_{m},y'_{m}))$ for some $m \in \N$}\,.
		\end{cases}	
	\end{equation}
\end{thm}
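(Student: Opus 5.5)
We will prove the theorem by reducing the downcrossing trajectory across the nested annuli $[x;i]$, $i=1,\dots,T$, to a Markov chain. We then run a ballot argument for that chain, using the entry/exit point structure to factor out $h_k$.

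\emph{Step 1 (Markov structure).} Conditionally on the number of $[\cdot;i+1]$-excursions and their entry/exit points, the walk's excursions inside $\rmB^+[x;i+1]$ are independent. Each such excursion, started at an entry point on $\partial_i\rmB^-[x;i+1]$, performs a Galton--Watson-type (geometric) number of $[x;i]$-downcrossings before exiting. By \eqref{eq:incballs} the boundaries $\partial\rmB^+[x;i]$ and $\partial_i\rmB^-[x;i+1]$ are separated by $\asymp \rme^{k+(i-1)k^\gamma}$ while the scale gap is $k^\gamma$. Harnack/mixing estimates for the Poisson kernel then give an approximate Markov property. Given $N_t[x;i+1]=m$ (and its entry/exit points), the law of $N_t[x;i]$ equals, up to a multiplicative $1+O(\rme^{-ck^\gamma})$ error, the law of the sum of $m$ i.i.d.\ geometric-type variables with success probability $\approx 1/k^\gamma\cdot$(ratio of log-radii). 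This holds independently of the precise points, and the entry/exit points at scale $i$ are asymptotically i.i.d.\ with law $\amalg_{\rmB^-}\otimes \Pi_{\rmB^+}(0,\cdot)$ up to the same error. Chaining these over $i$ and summing the errors over $T\le \rme^{ck^\gamma}$ steps (with $c$ small) keeps the total error $o_k(1)$. At the last step $i=1$, the conditional law of $\ol{y}$ given the count factorizes as $h_k(\ol y)$, after translation; here the small offset $x\in \rmB^-[\wt x;T-1]$ only matters at the top scale and is absorbed by the same Harnack bound.

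\emph{Step 2 (Gaussian/Bessel approximation and ballot).} The key fact is that $\sqrt{\wh N[\cdot;i]}$ behaves as a Bessel-like random walk. In the square-root scale, a sum of $m$ geometrics with mean $k^\gamma$ has a local CLT with variance $\approx \wh N/2$ per step (the normalization chosen so that the drift is $\sqrt{2}k^\gamma$ per scale). Concretely, the transition density of $\sqrt{\wh N[i]}$ given $\sqrt{\wh N[i+1]}=v$ is, after the recentering \eqref{eq:defudc}, approximately Gaussian with mean $v-\sqrt2 k^\gamma$ and variance $k^\gamma/2$, times a Jacobian tilt. This tilt is what produces the factors $\rme^{2\sqrt2\wh u}$, $\rme^{-2\sqrt2\wh v}$ and $\rme^{-2(T-1)k^\gamma}$ in $r_{k,T}$. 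I would use a local limit theorem for these transitions (via Laplace/characteristic function estimates for the sum of geometrics, valid uniformly in the repulsion window $\wh u,\wh v\in \mathfrak I^\eta_k$ where $\wh N\asymp k^2 T$ is large). This reduces the problem to a random walk bridge of $T-1$ Gaussian steps from $\wh v$ to $\wh u$. The repulsion event $\{\text{not }\mathrm{NR}^{\eta,[2,T-1]}\}$ then becomes the event that the bridge stays in the window $\mathfrak I^\eta_k(i)$. The standard ballot estimate for random walk bridges staying positive gives $\sim \frac{2\wh u\wh v}{\sigma^2 T}\cdot$(bridge density). The upper constraint $(k+ik^\gamma)^{1/2+\eta}$ and lower $(k+ik^\gamma)^{1/2-\eta}$ are curves that cost only $1+o_k(1)$ because the endpoints sit at height $\ge k^{1/2-\eta'}$ with $\eta'<\eta$ (entropic repulsion; cf.\ the ballot theorems with curved barriers used in~\cite{Tightness}). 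Multiplying the Gaussian bridge density $\frac{1}{\sqrt{\pi T k^\gamma}}\rme^{-\wh v^2/(Tk^\gamma)}$-type term with the ballot factor, and converting from densities to lattice points of $\cN_k$, should reproduce $\sqrt{2/\pi}(Tk^{3/2})^{-1}$ (the $k^{3/2}$ coming from the spacing of $\cN_k$ and the variance scale).

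\emph{Main obstacle.} The hardest part is Step 2's uniformity: obtaining a \emph{sharp} $(1+o_k(1))$ ballot asymptotic for a non-Gaussian, state-dependent (Bessel-like) Markov chain with curved barriers, uniformly over $T$ up to $\rme^{ck^\gamma}$ and endpoints across the whole window. I would first try coupling each transition to a Gaussian step (KMT/Sakhanenko-type, per step error $O(k^{-\gamma/2})$ in position). I would then control the accumulated discrepancy using the fact that the barrier has room $\gg$ error, and finally apply known ballot theorems for Gaussian bridges with moving barriers of order $t^{1/2\pm\eta}$ to conclude.
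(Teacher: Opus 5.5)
Your Step~1 is essentially the paper's reduction (Lemma~\ref{lem:rep1} and Proposition~\ref{lem:transfer}). Per scale, the downcrossing data agree with those of a one-dimensional CTSRW up to a factor $1+O(\rme^{-ck^\gamma})$. The entry/exit points factor out as $h_k(\ol y)$, and $T\le \rme^{ck^\gamma}$ keeps the accumulated error at $1+o_k(1)$. (For adjacent scales the geometric parameter is $1/2$, from gambler's ruin in log-radius with equal gaps, not $\approx 1/k^\gamma$ times a ratio.)

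\textbf{The gap is in Step~2.} It carries all the content of the sharp ballot estimate, and the mechanism you propose cannot deliver it.
\begin{itemize}
\item \emph{Large deviation.} The square-root chain $s_i=\sqrt{\wh N[i]}$ is nearly a martingale with one-step variance $k^\gamma/2$. Its untilted transition has mean $\approx s_{i+1}$, not $s_{i+1}-\sqrt2 k^\gamma$; that drift only appears under the bridge conditioning. So going from $v\approx\sqrt2(k+Tk^\gamma)$ to $u\approx\sqrt2 k$ costs about $2k^{\gamma/2}$ standard deviations per step, and the target probability is of order $\rme^{-2(T-1)k^\gamma}$.
\item \emph{Positional coupling is too weak.} A KMT/Sakhanenko coupling controls the untilted chain only off events of probability $\rme^{-O(x)}$ for an error $x$, which is useless at this scale. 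Even for a tilted chain, the claim is a point probability on $\cN_k$ (spacing $\asymp k^{\gamma-1}$ near $u$) weighted by $\rme^{2\sqrt2\wh u}$. An endpoint position error of order one, or even of the order of the lattice spacing, already destroys the $1+o_k(1)$.
\item \emph{What is needed instead} is a density-level comparison of the $T$-step kernel with a Gaussian one, with per-step relative errors summable over up to $\rme^{ck^\gamma}$ steps.
\item \emph{Missing prefactor.} That comparison must also produce a factor your bookkeeping omits. The square-root chain has a Bessel drift $\approx -k^\gamma/(4s_i)$ per step, which accumulates into the prefactor $\sqrt{v/u}\asymp\sqrt{Tk^{\gamma-1}}$. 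Without it, ``Gaussian bridge density $\times\,4\wh u\wh v/(Tk^\gamma)\,\times$ lattice spacing'' scales like $T^{-3/2}$, not like the $T^{-1}$ in $r_{k,T}$. So the constant would not come out as claimed.
\end{itemize}

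\textbf{How the paper gets these ingredients exactly.}
\begin{itemize}
\item It passes from downcrossings to the local time profile of the one-dimensional walk. By Lemma~\ref{lem:law} (Ray--Knight), this is exactly a squared Bessel process of dimension $0$ sampled at integer times.
\item That process has explicit density $\sqrt{\sqrt t/B_T}\,\exp\bigl(-\tfrac{3}{16}\int_0^T B_s^{-2}\,\mathrm{d}s\bigr)$ relative to squared Brownian motion. The first factor is precisely $\sqrt{v/u}$, and the second is $1+o_k(1)$ on the barrier event.
\item The path law is therefore an honest Brownian bridge, and the curved-barrier ballot estimate applies directly (Proposition~\ref{prop:coupling2}).
\item Endpoint local times are converted into downcrossing counts through the exact Poisson--Geometric conditional law and Stirling's formula (Lemma~\ref{lem:asympforl}).
\item Local-time repulsion is shown equivalent to downcrossing repulsion via compound-distribution tail bounds (Lemma~\ref{lem:asympdisc}).
\end{itemize}

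\textbf{If you want to stay with the discrete chain,} the workable form of Step~2 is a density comparison, not a coupling. Use the explicit negative-binomial one-step kernel and a saddle-point/Stirling expansion to show that its density in the square-root variable is $\frac{1}{\sqrt{\pi k^\gamma}}\rme^{-(s'-s)^2/k^\gamma}\sqrt{s/s'}\,(1+\epsilon_i)$, with $\sum_i\sup|\epsilon_i|=o_k(1)$ on the repulsion window. This hinges on the Cram\'er rate function of a geometric sum agreeing with $(\sqrt{N'}-\sqrt N)^2$ up to third order; in the $N$-scale the cubic corrections would sum to $\asymp k^\gamma\log(Tk^{\gamma-1})$. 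You would then add tail bounds off the window and invoke a Gaussian-bridge ballot theorem with curved barriers.
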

\noindent
This theorem, which constitutes one of the main efforts in the manuscript, is proved in Section~\ref{s:barrier}.

Using the independence induced by considering downcrossings and the law-invariance given by the preceding theorem, the next step of the proof is to show the convergence of the empirical measure of the local-time fields corresponding to repelled clusters. To make a precise statement,  given $n \geq k$, $u \geq 0$ and $0 < \eta' < \eta < \frac{1}{2}$,  we introduce 
the empirical measure of local time clusters in $\bfR^{k,}_n(u)$ via
\begin{equation}
\label{e:300.312bb}
\mathcal{O}^{k}_{n,u} :=	\frac{1}{\big|\bfR_n^{k}(u)\big|}
	\sum_{z \in \bfR_n^{k}(u)} \delta_{\rme^{-n}z} \otimes 
	\delta_{L_{t_n^A}(\rmQ(x(z);k))} \,, 
\end{equation}
with space marginal $\wh{\Theta}^k_{n,u}$ given by 
\begin{equation}
\wh{\Theta}^k_{n,u}:=\frac{1}{|\bfR^k_n(u)|}\sum_{z \in \bfR^k_n(u)} \delta_{\rme^{-n}z}.	
\end{equation}
These are analogues of the measures $\bfLambda_{n,u}^{k}$ and $\wh{\bfXi}^k_{n,u}$ from~\eqref{e:300.312n}. The next step of the proof is then to show the following analogue of Proposition~\ref{p:300.1a} for $\mathcal{O}^{k}_{n,u}$.
\begin{prop}
\label{p:300.1} If $\gamma$ is chosen small enough, then, given any $0 < \eta' < \eta < \frac{1}{2}$ small enough (depending only on $\gamma$), for each $u,k \geq 0$ there exists a probability measure $\rmO^{k}_u$ on $\bbR^{\rmQ(k)}$ such~that 
\begin{equation}\label{eq:convos}
\frac{\big|\bfR_n^{k}(u)\big|}{\sqrt{n}}
\bigg|\int f_k\,\rmd \mathcal{O}^{k}_{n,u} - \int f_k\, \rmd (\wh{\Theta}^k_{n,u} \otimes \rmO^{k}_u)\bigg| \overset{\bbP} \longrightarrow \, 0 \,,
\end{equation}
as $n \to \infty$ followed by $k \to \infty$, uniformly over all measurable $f_k: \rmD \times \bbR^{\rmQ(k)} \to \bbR$ with $\|f_k\|_\infty \leq 1$.
\end{prop}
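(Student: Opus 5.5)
The plan is a conditional law-of-large-numbers argument that combines the spatial Markov structure of downcrossings with the decoupling estimate of Theorem~\ref{prop:coupling0}.

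\emph{Step 1 (conditioning on the coarse scale).} Fix $k$ and set $l=k^2$ and $T=\cT_k$. Partition the clusters $z\in\bfW_n^k(u)$ by the data $Z_{t_n^A}[x(z);T']$ at an outer scale $T'\approx\cT_k+5$. For distinct clusters these outer balls are macroscopically separated, since $n-r_n\gg l$. The key structural fact is a Markov property: conditionally on the $\sigma$-algebra $\cG$ generated by the walk observed outside $\bigcup_z\rmB^+[x(z);T']$, together with all entry and exit points at that scale, the excursions inside the distinct balls are independent. Each such excursion is a walk started at its entry point and stopped on exiting. Hence the local-time configurations $L_{t_n^A}(\rmQ(x(z);k))$ for different $z$ are conditionally independent given $\cG$.

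\emph{Step 2 (identical conditional law on repelled clusters).} For each $z$, the conditional law of the inner data $Z_{t_n^A}[x(z);1]$, restricted to the repulsion event, is given by Theorem~\ref{prop:coupling0}. It factorizes as $r_{k,T}(\wh u,\wh v)\,h_k(\ol y)(1+o_k(1))$, and the only dependence on the outer input $(v,\ol z)$ sits in a multiplicative factor that does not involve $(u,\ol y)$. The local time on $\rmQ(x(z);k)$ is a functional of the inner data plus independent inner excursions. Therefore, after normalizing, its conditional law given $\cG$, jointly with the event $z\in\bfR^k_n(u)$, equals a fixed probability measure $\rmO^k_u$ up to a $1+o_k(1)$ factor, uniformly in the outer data. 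This yields the definition
\[
\rmO^k_u(\cdot)\propto \sum_{(u',\ol y)} \wh{u'}\,\rme^{2\sqrt2\wh{u'}}\,h_k(\ol y)\,\bbP\big(L(\rmQ(k))\in\cdot,\ \min L(\rmQ(k))\le u \,\big|\, Z[0;1]=(u',\ol y)\big).
\]
Here the sum runs over $\cZ_k^{\eta'}[1]$, and the probability refers to the walk started from the given entry points and run through the given excursions. I expect the summability and the passage from the downcrossing count at scale $k$ to the local time in $\rmQ(k)$ to be routine.

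\emph{Step 3 (concentration).} For a test function $f_k$ with $\|f_k\|_\infty\le1$, write
\[
\sum_{z\in\bfR^k_n(u)}\Big(f_k(\rme^{-n}z,L(\rmQ(x(z);k)))-\int f_k(\rme^{-n}z,\omega)\,\rmO^k_u(\rmd\omega)\Big).
\]
Conditionally on $\cG$, the membership events and local-time configurations are independent across candidate $z$. The centred sum therefore has conditional mean $o_k(1)\cdot\#\{\text{candidates}\}$ and conditional variance bounded by the number of candidates. The candidates are the clusters whose outer data is repelled, and their number is $O(\sqrt n)$ in probability by Theorem~\ref{t:2.1o} and \cite[Lemma~4.2]{Tightness}. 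Dividing by $\sqrt n$, the variance contribution is $O(n^{-1/2})$ and vanishes. The mean contribution vanishes as $k\to\infty$. The normalization $|\bfR_n^k(u)|/\sqrt n$ in \eqref{eq:convos} exactly cancels the $1/|\bfR^k_n(u)|$ in $\mathcal O^k_{n,u}$ and $\wh\Theta^k_{n,u}$, so this is precisely the claim.

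\emph{Step 4 (uniformity in $f_k$).} The class of test functions is not countable, so uniformity needs an extra argument. It follows from the total-variation form of Step 2: conditional laws within $1+o_k(1)$ multiplicatively. I would discretize space into finitely many cells of small mesh and use the TV bound for the mark coordinate.

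The main obstacle is Step 1/2: making the conditional independence rigorous when candidate membership itself depends on the inner data. The point is that $z\in\bfR^k_n(u)$ involves both the scale-$1$ and scale-$T$ conditions, as well as $\partial$-time $t_n^A$, which couples all balls through the total excursion count. I would first handle this by conditioning on the full outer excursion counts, so that $t$ becomes fixed and deterministic for each ball. Uniformity in $t$ is available in Theorem~\ref{prop:coupling0}.
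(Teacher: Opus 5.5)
There is a genuine gap in Step~1, and it is exactly where the paper does most of its work.

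Your $\sigma$-algebra $\mathcal G$ is generated by the walk outside $\bigcup_z \rmB^+[x(z);T']$. But the centres $x(z)$ are selected by the local time \emph{inside} those balls: a ball is excised because its interior contains a low-local-time cluster, and a nearby ball is not excised because its interior contains none. The spatial Markov property says that, given the outside trajectory and the entry/exit points, the excursions are independent walks from entry point to exit point. It holds for balls fixed independently of the inner excursions. For a data-dependent family of balls, conditioning on $\mathcal G$ biases the inner excursions. So neither the conditional independence nor the identification of the conditional law via Theorem~\ref{prop:coupling0} follows.

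Conditioning on the outer excursion counts to freeze the $\partial$-time does not address this; that part is automatic once one conditions on $\cF[x;\cT_k]$. Step~3 inherits the same problem, since its ``candidates'' are again the clusters themselves. Separately, $n-r_n\gg l$ does not by itself separate the outer balls: clusters in adjacent $(n-r_n)$-boxes can sit on either side of a common face. This is why the paper imposes $\epsilon$-room inside the $(n-r_n)$-box.

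The paper's remedy is structural. It partitions $\bbX_k$ into $|J_{k,l}|\lesssim\rme^{2(l-k)}$ \emph{deterministic} subgrids $\bbX_{k,l}(j)$ whose $l$-balls are pairwise disjoint. It then replaces $\bfR^k_n(u)$ by $\bfR^{k,\epsilon}_n(u;j)$, defined by: uniqueness only within the subgrid, the whole cluster inside $\rmB(x_j(z);l)$, a low point in the core $\rmQ(x_j(z);k-\epsilon)$, and the repulsion conditions. This yields the factorization $\{\mathfrak X^{k,\epsilon}_n(u;j)=C\}=\Lambda_C\cap\bigcap_{x\in C}\cE^{k,\epsilon,\eta',\eta}_{t^A_n,u}(x)$. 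Here $\Lambda_C$ is outer-measurable and each $\cE(x)$ depends only on the inner data at $x$. Only then do Lemma~\ref{l:4.10} and a conditional Hoeffding bound apply, giving Proposition~\ref{p:300.31}. Lemma~\ref{l:4.10} is your Step~2, and your formula for $\rmO^k_u$ is its $\epsilon=0$ analogue.

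Your membership condition is uniqueness among \emph{all} $k$-boxes of $\rmQ(z;n-r_n)$. It involves the local time in the rest of $\rmB(x(z);l)$, in particular in the neighbouring $k$-boxes, so it is not of this product form. Hence your $\rmO^k_u$, which conditions only on $\min L(\rmQ(k))\le u$, is not justified as the law given membership.

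The subgrid device requires two further estimates that your proposal lacks:
\begin{itemize}
\item The $\epsilon$-restriction loses only $o(\sqrt n)$ clusters (Proposition~\ref{prop:rest2}, from Proposition~\ref{prop:rest3}).
\item The subgrids do not overcount, i.e.\ $\sum_j|\bfR^{k,\epsilon}_n(u;j)|/|\bfR^{k,\epsilon}_n(u)|\to1$ (Proposition~\ref{p:300.32}). This rests on the two-point bound of Proposition~\ref{prop:LT-2}, the one-point lower bound of Proposition~\ref{prop:LT-1} and the Resampling Lemma~\ref{lem:resampling}.
\end{itemize}
Finally, $\rmO^k_u$ is obtained from $\rmO^{k,\epsilon}_u$ by letting $\epsilon\to0$, as in the proof of Proposition~\ref{p:300.1a}.

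Your Step~4 is not needed. ``Uniformly in $f_k$'' means the bounds in probability do not depend on $f_k$, which Steps~2--3 already give. A supremum over $f_k$ inside the probability cannot tend to zero: $\mathcal O^k_{n,u}$ is finitely atomic in $\omega$, while $\rmO^k_u$ is not purely atomic.
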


The proof of this result (for which Theorem~\ref{prop:coupling0} is essential) is deferred to the next subsection. With the aid of Proposition~\ref{prop:rest} and Proposition~\ref{p:300.1}, we can now give the proofs of Proposition~\ref{p:300.1a} and Proposition~\ref{p:300.5}. 

\begin{proof}[Proof of Proposition~\ref{p:300.1a}]
By Proposition~\ref{prop:rest}, we have that the difference between 
$|\bfW_n^k(u)|$ and $|\bfR_n^{k}(u)|$ divided by $\sqrt{n}$ tends to zero in probability in the limit as $n \to \infty$ followed by $k \to \infty$. Using this together with Proposition~\ref{p:300.1} gives
\begin{equation}
\label{e:300.3.53}
\frac{\big|\bfW_n^k(u)\big|}{\sqrt{n}}
\bigg|\int f_k\,\rmd \bfLambda^{k}_{n,u} - \int f_k \,\rmd (\wh{\Theta}^k_{n,u} \otimes \rmO^{k}_u)\bigg|
\overset{\bbP} \longrightarrow \, 0 \,,
\end{equation}
as $n \to \infty$ followed by $k \to \infty$, 
uniformly over all $f_k$ as in the statement of Proposition~\ref{p:300.1}. Above $\rmO^{k}_u$ is as in Proposition~\ref{p:300.1} and $\bfLambda_{n,u}^k$ is as in~\eqref{e:300.312n}. 

Now, given $r > 0$ and a measurable function $f$ on $\rmD \times \bbR^{\rmQ(r)}$ with $\|f\|_\infty \leq 1$, for each $j \in \bbN$ take $k_j \geq \bbN$ so that $\lfloor \rme^{k_j}\rfloor = \lfloor \rme^r \rfloor^j$ and define $\varphi_j^{f}$ on $\rmD \times \bbR^{\rmQ(k_j)}$ via
\begin{equation}
\varphi_j^{f}(x,\omega) := \sum_{\substack{y \in \bbX_r:\:\\\rmQ(y;r) \subseteq \rmQ(k_j)}}
1_{\cA_{k_j,r}(y)}(\omega) f \big(x,\omega\big(\rmQ(y;r)\big)\big) \,,
\end{equation}
where 
\begin{equation}
\cA_{k_j,r}(y) :=	\Big\{\omega \in \bbR^{\rmQ(k_j)} :\: 
	\min_{\rmQ(y;r)} \omega \leq u \,,\,\,
	\min_{\rmQ(k_j) \setminus \rmQ(y;r)} > u \Big\} \,.
\end{equation}
Then, since each box $\rmQ(y;r)$ with $y \in \bbX_r$ is contained in exactly one box $\rmQ(x;k_j)$ with $x \in \bbX_{k_j}$, for all $n$ large enough (depending only on $k_j$) we have 
\begin{equation}
	\left| \big|\bfW_n^r(u)\big| \int f \,\rmd \bfLambda_{n,u}^r -
	\big|\bfW_n^{k_j}(u)\big| \int \varphi_j^{f}\,\rmd \bfLambda_{n,u}^{k_j} \right| \leq |\bfW^{\wt{\rmD}_n}_n(u)|, 
\end{equation} where $\bfW^{\wt{\rmD}_n}_n(u)$ is defined as in Lemma~\ref{l:1103.7} for $\wt{\rmD}_n$ given by  
\begin{equation}
\wt{\rmD}_n:= \bigcup_{z \in \bbX_{n-r_n}} \rmQ(z;n-r_n) \setminus \rmQ(z;n-r_n-\rme^{-r_n}).
\end{equation}
Plugging this in~\eqref{e:300.3.53} and using Lemma~\ref{l:1103.7} together with the lower tightness of $|\bfW_n(u)|/\sqrt{n}$, since $\| \varphi_j^f\|_\infty \leq 1$ and $|\wt{\rmD}_n \cap \rmD_n| \leq o_n(1)|\rmD_n|$ for some $o_n(1) \to 1$, we get
\begin{equation}\label{eq:conv2variable}
\Bigg|
\frac{\big|\bfW^r_n(u)\big|}{\big|\bfW_n(u)\big|}
\int f\,\rmd \bfLambda^{r}_{n,u} - 
\int \varphi_j^{f}\, \rmd (\wh{\Theta}^{k_j}_{n,u} \otimes \rmO^{k_j}_u)\Bigg|\overset{\bbP} \longrightarrow \, 0 \,,
\end{equation}
in the limit as $n \to \infty$ followed by $j \to \infty$, uniformly in all such $f$. 

Now, if we consider functions of the form $f(x,\omega):=\wt{f}(\omega)$ for some measurable $\wt{f}$ on $\bbR^{\rmQ(k_j)}$ with $\|\wt{f}\|_\infty \leq 1$, then the former gives
\begin{equation}\label{eq:diff1v}
\Bigg|
\frac{\big|\bfW^r_n(u)\big|}{\big|\bfW_n(u)\big|}
\int f\,\rmd \bfLambda^{r}_{n,u} - 
\int \varphi_j^{\wt{f}}\, \rmd  \rmO^{k_j}_u\Bigg|\overset{\bbP} \longrightarrow \, 0 \,,
\end{equation}
in the limit as $n \to \infty$ followed by $j \to \infty$, uniformly in all such $\wt{f}$, where $\varphi_j^{\wt{f}}$ is defined analogously to $\varphi^f_j$, but with $\wt{f}$ in place of $f$. But, in this case, since the two terms in the difference in \eqref{eq:diff1v} depend on different limiting parameters, it follows by a standard argument that
\begin{equation}
\label{e:300.3.58}
\frac{\big|\bfW^r_n(u)\big|}{\big|\bfW_n(u)\big|}
\int f\,\rmd \bfLambda^{r}_{n,u} 
\ \underset{n \to\infty}{\overset{\bbP}\longrightarrow}\ 
\lim_{j \to \infty}
\int \varphi_j^{\wt{f}} \rmd \rmO^{k_j}_u  \,,
\end{equation}
uniformly in all $\wt{f}$ with $\| \wt{f} \|_\infty \leq 1$.

In particular, plugging in $\wt{f} \equiv 1$ in this last display yields the convergence of
$|\bfW^r_n(u)\big|/\big|\bfW_n(u)\big|$ as $n \to \infty$ to the 
limit $\rmO_u(r):=\lim_{j\to\infty}
\rmO_u^{k_j} \big(\cA_{k_j,r})$, where 
\begin{equation}\label{eq:conv1variable}
\cA_{k_j,r} := \bigcup_{\substack{y \in \bbX_r:\:\\\rmQ(y;r) \subseteq \rmQ(k_j)}} \cA_{k_j,r}(y)
\end{equation}
is the event that exactly one box $\rmQ(y;r)$ in $\rmQ(k_j)$ with $y \in \bbX_r$ contains a vertex with local time at most $u$. Notice that this limit cannot be zero, by the tightness below and above of
$|\bfW^r_n(u)\big|/\sqrt{n}$ and $|\bfW_n(u)\big|/\sqrt{n}$, respectively. Using this in~\eqref{e:300.3.58} we thus get
\begin{equation}\label{eq:limit}
\int f\,\rmd \bfLambda^{r}_{n,u} 
\ \underset{n \to\infty}{\overset{\bbP}\longrightarrow}\ 
\lim_{j \to \infty}
\int \varphi_j^{\wt{f}} \rmd \rmO^{k_j}_u(\cdot|\cA_{k_j,r})  \,,
\end{equation}
uniformly in all $\wt{f}$ as above.

Lastly, the limit on the right-hand side of \eqref{eq:limit} is clearly a bounded linear functional on the space of bounded functions on $\bbR^{\rmQ(r)}$ and, as such, is a finitely additive measure on $\bbR^{\rmQ(r)}$. It follows from the uniformity in the convergence that this measure must be also countably~additive. Denoting this measure by $\rmP_u^r$, in light  of \eqref{eq:conv2variable} and the fact that $|\bfW^r_n(u)\big|/\big|\bfW_n(u)\big|$ converges to $\rmO_u(r) > 0$ as $n \to \infty$, in order to complete the proof it suffices to show that
\begin{equation}
\left|\int \varphi^f_j\,\rmd \big(\wh{\Theta}^{k_j}_{n,u} \otimes \rmO^{k_j}_u(\cdot| \cA_{k_j,r}) \big)- \int f\,\rmd (\wh{\Theta}^{r}_{n,u} \otimes \rmP^r_u)\right|	\overset{\bbP}{\longrightarrow} 0
\end{equation} as $n \to \infty$ followed by $j \to \infty$ uniformly over all measurable functions $f$ (now of the two variables $(x,\omega)$) on $\rmD \times \bbR^{\rmQ(r)}$ such that $\|f\|_\infty \leq 1$. 

To this end, observe that, by considering functions of the form $f(x,\omega):=\wh{f}(x)$ for some measurable $\wh{f}:\rmD \to \bbR$, \eqref{eq:conv2variable} implies that
\begin{equation}
\bigg|\int \wh{f} \rmd \wh{\bfXi}^{r}_{n.u} - \int \wh{f} \rmd \wh{\Theta}^{k}_{n,u}\bigg| \overset{\bbP}{\longrightarrow} 0	
\end{equation} as $n \to \infty$ followed by $k \to \infty$, uniformly over all such $\wh{f}$.

\end{proof}

As a consequence of Proposition~\ref{p:300.1a}, we immediately obtain Proposition~\ref{p:300.5}.

\begin{proof}[Proof of Proposition~\ref{p:300.5}] 
For fixed $k$, by Proposition~\ref{p:300.1a} we have that
\begin{equation}\label{eq:firstconv}
\frac{|\bfW^k_n(0)|}{|\bfW^k_n(u)|}\int f(x) \,\wh{\bfXi}^k_{n,0}(\rmd x)  - C_{u,k} \int f(x)\,\wh{\bfXi}^k_{n,u}(\rmd x)  \ \underset{n \to\infty}{\overset{\bbP}\longrightarrow} 0,
\end{equation}
as $n \to \infty$ uniformly over all measurable functions $f: \rmD \to \bbR$ with $\|f\|_\infty \leq 1$, where
\begin{equation}
C_{u,k}:=\rmP_u^k\Big(\big\{ \omega \in \bbR^{\rmQ(k)} : \min_{\rmQ(k)} \omega = 0\big\}\Big).
\end{equation}
On the other hand, by the lower tightness of $|\bfW_n(u)|/\sqrt{n}$ and Theorem~\ref{p:300.3} (used for $u' \in \{0,u\}$ in place of $u$), 
\begin{equation}
\frac{|\bfW_n(0)|}{|\bfW_n(u)|}-\frac{|\bfW^k_n(0)|}{|\bfW^k_n(u)|} \overset{\bbP}{\longrightarrow} 0
\end{equation} in the limit as $n \to \infty$ followed by $k \to \infty$. Taking $f \equiv 1$ in \eqref{eq:firstconv}, it then follows that
\begin{equation}
	\frac{|\bfW_n(0)|}{|\bfW_n(u)|}-C_{u,k} \overset{\bbP}{\longrightarrow} 0
\end{equation} in the same limits. As the two terms in this difference depend on different parameters, it follows from a standard argument that the sequence $(C_{u,k})_{k \in \N}$ converges and, moreover, that, as $n \to \infty$,
\begin{equation}
	\frac{|\bfW_n(0)|}{|\bfW_n(u)|} \overset{\bbP}{\longrightarrow} C_u:=\lim_{k \to \infty} C_{u,k}.	
\end{equation} The fact that $C_u$ is less or equal than one is immediate. The fact that it is positive follows from the lower tightness of $|\bfW_n(0)|/\sqrt{n}$ and the upper tightness of $|\bfW_n(u)|/\sqrt{n}$.
\end{proof}

Thus, the remainder of Section~\ref{sec:iid} is devoted to the proof of Proposition~\ref{p:300.1}.

\subsection{Convergence of the empirical measure of repelled low-local time clusters}\label{sec:exist0}

\subsubsection{General outline of the proof}\label{sec:exist1}
In this subsection we prove Proposition~\ref{p:300.1}. The general strategy of the proof can be summarized as follows. We shall consider versions of the set $\bfR^k_n(u)$ in which, to determine whether a given $z \in \bfW_n(u)$ belongs to this version, one does not need to look at all $k$-boxes with centers in~$\bbX_k$, but only at boxes belonging to a specific subset of these which are at least $l$ log-distance apart, where $l$ is as in \eqref{eq:deflt}. Doing so will provide enough room to consider downcrossing trajectories up to scale $l \gg k$ around each of these separated boxes, in such a way that the trajectory around any specific box will never give any additional information about the trajectories of the others. By the spatial Markov property, upon conditioning on the  downcrossings information at scale $l$ around each box, we will obtain that the local time fields inside each of these separated $k$-boxes are independent. Furthermore, since we are working only with repelled low-local time clusters, by Theorem~\ref{prop:coupling0}, the law of these fields will turn out to be insensitive to the precise number of downcrossings at scale $l$ around the corresponding box, which will then imply that these fields are also identically distributed. The convergence of the associated empirical measure will then be a straightforward consequence of the law of large numbers. Finally, to conclude the proof of Proposition~\ref{p:300.1}, we will need to show that the existence of limit for the empirical measures associated with each of these versions of $\bfR^k_n(u)$ implies the same for the original set $\bfR^k_n(u)$.

 To make this strategy rigorous, we will need to perform several auxiliary constructions. First, for each $m_1 \geq m_2 \geq 1$, let us fix a partition $(\bbX_{m_1,m_2}(j) : j \in J_{m_1,m_2})$ of $\bbX_{m_1}$ into subgrids $\bbX_{k,l}(j)$ such that, for any $j \in J_{k,l}$ and  $x,y \in \bbX_{m_1,m_2}(j)$ with $x\neq y$, we have $\rmB(x;m_2)\cap \rmB(y;,m_2) = \emptyset$. Note that it is possible to choose this partition so that $|J_{m_1,m_2}|=\Big(\Big\lfloor 2\frac{\lfloor \rme^{m_2}\rfloor}{\lfloor \rme^{m_1}\rfloor}\Big\rfloor+1\Big)^2\leq C \rme^{2(m_2-m_1)}$ for some universal constant $C > 0$ and all $m_2 \geq m_1$ with $m_1$ large enough. The collections $(\rmQ(x;k) : x \in \bbX_{k,l}(j))$ with $j \in J_{k,l}$ are precisely the specific subsets of separated $k$-boxes alluded to in the preceding paragraph. Next, to introduce the version of $\bfR^k_n(u)$ looking only at $k$-boxes with centers in $\bbX_{k,l}(j)$, we first consider the subset of $z \in \bfR^k_n(u)$ whose associated low-local time cluster lies far away from the boundary of $\rmQ(z;n-r_n)$ and also deep within the unique $k$-box containing it. More precisely, for $\epsilon \in [0,1)$, $1 \leq k \leq  n -r_n $ and $u \geq 0$, define
\begin{equation}
\begin{split}
	\bfW^{k, \epsilon}_n(u) := \Big \{ z \in \bfW^k_n(u) :\:
	&\rmW_n(u) \cap \rmQ(x(z); k-\epsilon) \neq \emptyset ,\\
	& \rmW_n(u) \cap \rmQ(z;n - r_n ) \subseteq \rmQ(z; n-r_n - \epsilon)
	\Big\} \,,
\end{split}
\end{equation} where $x(z)$ above denotes the unique $x(z) \in \bbX_k$ from \eqref{e:3.30.1}, together with 
\begin{equation}
	\bfR^{k,\epsilon}_n(u):=\bfW^{k, \epsilon}_n(u) \cap \bfR^k_n(u).
\end{equation}
Observe that $\bfW_n^k(u)=\bfW_n^{k,0}(u)$ and, as such, $\bfR^{k}_n(u)=\bfR^{k,0}_n(u)$. The advantage of working with $\bfR^{k,\epsilon}_n(u)$ instead of $\bfR^k_n(u)$ is that the extra $\epsilon$-room will be more convenient for our computations. Furthermore, restricting ourselves to the set $\bfR^{k,\epsilon}_n(u)$ will turn out to be asymptotically harmless, as the next result shows.

\begin{prop}\label{prop:rest2} If $\gamma$ is chosen small enough, then, given any $0 < \eta' < \eta < \frac{1}{2}$ small enough \mbox{(depending only on $\gamma$),} for any fixed $u \geq 0$ we have
\begin{equation}
\frac{|\bfR^k_n(u) \setminus \bfR^{k,\epsilon}_n(u)|}{\sqrt{n}}\overset{\bbP}{\longrightarrow} 0	
\end{equation} in the limit as $n \to \infty$, followed by $k \to \infty$ and finally $\epsilon \to 0$.
\end{prop}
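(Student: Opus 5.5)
Since $\bfR^{k,\epsilon}_n(u) = \bfW^{k,\epsilon}_n(u) \cap \bfR^k_n(u)$, we have the inclusion
\[
\bfR^k_n(u) \setminus \bfR^{k,\epsilon}_n(u) \subseteq \bfW^k_n(u) \setminus \bfW^{k,\epsilon}_n(u),
\]
so the repulsion constraint plays no role and it suffices to bound the right-hand side. A point $z \in \bfW^k_n(u)$ fails to lie in $\bfW^{k,\epsilon}_n(u)$ for one of two reasons. Either (a) the cluster $\rmW_n(u) \cap \rmQ(z;n-r_n)$ is not contained in $\rmQ(z;n-r_n-\epsilon)$, or (b) the cluster avoids the inner box $\rmQ(x(z);k-\epsilon)$, so all low local time vertices of the cluster lie in the frame $\rmQ(x(z);k)\setminus \rmQ(x(z);k-\epsilon)$. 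We bound the number of points of each type separately, divided by $\sqrt{n}$.

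\textbf{Type (a).} This is exactly Proposition~\ref{p:3.8nn}: its count is at most $|\bfW_n(u) \setminus \bfW^k_{n,\epsilon}(u)|$, which is $o(\sqrt{n})$ in probability as $n \to \infty$, then $k \to \infty$, then $\epsilon \to 0$. Equivalently, one can apply Lemma~\ref{l:1103.7}, which says that the clusters meeting a subset of $\rmD_n$ of vanishing density proportion are negligible, to the union of the frames $\rmQ(z;n-r_n)\setminus\rmQ(z;n-r_n-\epsilon)$. These frames have density $O(\epsilon)$, and that is the form of the lemma we would quote.

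\textbf{Type (b).} Take the set
\[
\wt{\rmD}^{k,\epsilon}_n := \bigcup_{x \in \bbX_k} \rmQ(x;k)\setminus\rmQ(x;k-\epsilon).
\]
Its proportion of $\rmD_n$ is at most $C\epsilon$, uniformly in $k$ and $n$. Every type (b) point $z$ has a cluster contained in $\wt{\rmD}^{k,\epsilon}_n$, so in particular $\rmW_n(u)\cap\rmQ(z;n-r_n)$ meets $\wt{\rmD}^{k,\epsilon}_n$. Lemma~\ref{l:1103.7} then bounds the number of such $z$, divided by $\sqrt{n}$, by a quantity that tends to zero in probability as $n\to\infty$ followed by $\epsilon\to0$.

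We expect the main obstacle to be checking that Lemma~\ref{l:1103.7} gives a quantitative bound in terms of the density of the removed set, uniform in the geometry. This is needed because the frames depend on $k$, and the limit in $k$ is taken before the limit in $\epsilon$. If the lemma only applies to sets of vanishing proportion, we would instead argue directly by a first moment computation. Using the isomorphism as in the upper tightness argument of \cite[Lemma~4.2]{Tightness}, the expected number of $n-r_n$ boxes whose cluster meets a fixed set $\rmU$ is at most $C\sqrt{n}\,|\rmU|/|\rmD_n|$, up to errors that vanish as $n\to\infty$. This follows from the first moment of the DGFF extremal process with intensity $\cZ_\rmD$ (Proposition~\ref{p:300.4}) together with the thinning factor $\Theta(1/\sqrt{n})$. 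Combined with Markov's inequality, this yields the type (b) bound, and with Proposition~\ref{p:3.8nn} for type (a) the claim follows.
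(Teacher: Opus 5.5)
Your argument is correct and is essentially the paper's. The paper deduces this proposition from the stronger Proposition~\ref{prop:rest3}, whose first step (Lemma~\ref{l:300.3.10}) is exactly your bound on $\bfW^k_n(u)\setminus\bfW^{k,\epsilon}_n(u)$: Lemma~\ref{l:1103.7} applied to the $\epsilon$-frames at scales $k$ and $n-r_n$. You are right that the repulsion part is not needed here. Your uniformity concern is settled by the proof of Lemma~\ref{l:1103.7}: combining Lemma~\ref{lem:thinning} with Lemma~\ref{lem:coverdgff} gives $\bbP\big(|\bfW^{\wt{\rmD}_n}_n(u)|>\kappa\sqrt n\big)\le C_u(1+\kappa^{-1})\sqrt{|\rmD_n^\circ\cap\wt{\rmD}_n|/|\rmD_n|}$ for all large $n$, uniformly in $\wt{\rmD}_n$.

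Discard the first-moment fallback, though, because its justification fails. Weak convergence to a Poisson process with intensity $C_u\cZ_\rmD$ gives no first-moment control, and in fact $\bbE\|\cZ_\rmD\|=\infty$ for the critical measure. This is why the paper relies only on the probability bound of order $\sqrt{|\rmV|/|\rmD_n|}$ from Lemma~\ref{lem:coverdgff}.
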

Proposition~\ref{prop:rest2} will be a consequence of the (slightly) stronger Proposition~\ref{prop:rest3} below, which will be proved in Section~\ref{sec:prest}.

Finally, we introduce the versions of $\bfR^{k}_n(u)$ we mentioned in the beginning of the subsection. These will be, in fact, versions of $\bfR^{k,\epsilon}_n(u)$ in which we only look at $k$-boxes with centers in $\bbX_{k,l}(j)$ for some specific $j \in J_{k,l}$. To this end, fix $k \geq 1$ and, recalling the definition of $l$ and $\cT_k$ in~\eqref{eq:deflt}, for $n \geq l$, $j \in J_{k,l}$ and all parameters as before, define the set
\begin{equation}
\label{e:2.20a}
\begin{split}
	\bfW^{k,0}_n(u; j) := \Big \{ z \in \bfW_n(u)  :\:  \exists!\; x_j(z)\in \bbX_{k,l}(j) \text{ s.t. } & \rmQ(x_j(z)) \subseteq \rmQ(z;n-r_n) \\ & \rmW_n(u) \cap \rmQ(x_j(z);k)\neq \emptyset \Big\} 
\end{split}
\end{equation}  
together with, for $\epsilon \in (0,1)$,
\begin{equation}
\begin{split}
	\bfW^{k, \epsilon}_n(u; j) := \Big \{ z \in \bfW^{k,0}_n(u;j)  :\:  & \rmW_n(u) \cap \rmQ(x_j(z);k-\epsilon) \neq \emptyset \\ &
\rmW_n(u) \cap \rmQ(z;n-r_n) \subseteq \rmB(x_j(z);l) \\ & \rmW_n(u) \cap \rmQ(z;n-r_n) \subseteq \rmQ(z;n-r_n-\epsilon) \Big\} 
\end{split}
\end{equation}  
and 
\begin{equation}\begin{split}
\bfR_n^{k,\epsilon}(u; j) := \Big\{ z \in \bfW_n^{k,\epsilon}(u; j)  :\: 
& \sqrt{\wh{N}_t[x_j(z);i]} \in \cR_k^{\eta'}(i) \text{ for all }i \in \{1,\cT_k\} \\
& \sqrt{\wh{N}_t[x_j(z);i]} \in \cR_k^\eta(i) \text{ for all }i \in [2,\cT_k+5] \setminus \{\cT_k\}\Big\}.
\end{split}
\end{equation}

Note that $\bfW^{k,\epsilon}_n(u) \subseteq \bigcup_{j \in J_{k,l}} \bfW^{k,\epsilon}_n(u;j)$ by definition (and hence $\bfR^{k,\epsilon}_n(u) \subseteq \bigcup_{j \in J_{k,l}} \bfR^{k,\epsilon}_n(u;j)$) if $n$ is sufficiently large (depending only on $k$ and $\epsilon$), but that the latter set could be strictly~larger if a given cluster intersects more than one $k$-box but their centers belong to $\bbX_{k,l}(j)$ for different values of $j \in J_{k,l}$. 
 
The corresponding analog of $\mathcal{O}_{n,u}^{k}$ in \eqref{e:300.312bb} for $\bfR^{k,\epsilon}_n(u;j)$ is the empirical measure $\cO_{n,u}^{k,\epsilon}[j]$, defined as in~\eqref{e:300.312bb}, but only with $\bfR_n^{k,\epsilon}(u;j)$ used in place of $\bfR_n^{k}(u)$, i.e.
\begin{equation}
\cO^{k,\epsilon}_{n,u}[j] :=	\frac{1}{\big|\bfR_n^{k,\epsilon}(u;j)\big|}
	\sum_{z \in \bfR_n^{k,\epsilon}(u;j)}
	\delta_{\rme^{-n}z} \otimes \delta_{\cL_{t_n^A}(\rmQ(x_j(z);k))} \,.
\end{equation} These empirical measures have space marginals
\begin{equation}
\wh{\Theta}^{k,\epsilon}_{n,u}[j] :=	\frac{1}{\big|\bfR_n^{k,\epsilon}(u;j)\big|}
	\sum_{z \in \bfR_n^{k,\epsilon}(u;j)}
	\delta_{\rme^{-n}z} \,.
\end{equation}

Having defined these empirical measures, our first task will be to prove the following analog of Proposition~\ref{p:300.1} for each $\cO_{n,u}^{k,\epsilon}[j]$.

\begin{prop}
\label{p:300.31}
If $\gamma$ is small enough, then, given any $0 < \eta' < \eta< \tfrac{1}{2}$ small enough (depending only on $\gamma$), there exists for each $\epsilon \in (0,1)$, $u \geq 0$ and $k \geq 1$ a probability measure $\rmO^{k,\epsilon}_u$ on $\R^{\rmQ(k)}$ (depending on $\eta',\eta$) such that, for any fixed $\delta > 0$, we have that,  as $n \to \infty$ followed by $k \to \infty$, 
\begin{equation}
S_{n,u}^{k,\epsilon}(f_k;\delta):=\sup_{j \in J_{k,l}}\left[ \mathbf{1}_{\{ |J_{k,l}||\bfR^{k,\epsilon}_n(u;j)| > \delta \sqrt{n}\}}\left| \int f_k \, \rmd  \cO^{k,\epsilon}_{n,u}[j] - \int f_k \,\rmd \big( \wh{\Theta}^{k,\epsilon}_{n,u}[j] \otimes \rmO^{k,\epsilon}_u\big) \right| \right]\overset{\bbP}{\longrightarrow} 0
\end{equation} uniformly over all choices of measurable functions $f_k: \rmD \otimes \bbR^{\rmQ(k)} \to \R$ with $\|f_k\|_\infty\leq1$.
\end{prop}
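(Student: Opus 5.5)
The plan is to condition on the downcrossing information at the top scale $l=k^2$ around every box centre in $\bbX_{k,l}(j)$, and then apply a conditional law of large numbers. Fix $j \in J_{k,l}$. The balls $\rmB(x;l)$, $x \in \bbX_{k,l}(j)$, are pairwise disjoint. Let $\cG_j$ be the $\sigma$-algebra generated by the walk observed outside $\bigcup_{x} \rmB^-[x;\cT_k+5]$, together with the pairs $Z_{t_n^A}[x;\cT_k+5]$ (number of downcrossings and entry/exit points) for every such $x$. By the strong Markov property, conditionally on $\cG_j$ the excursions inside the different balls are independent. Each excursion is a random walk path from its prescribed entry point, conditioned to exit at its prescribed exit point. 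Hence, given $\cG_j$, the collection
\[
\Big(L_{t_n^A}(\rmQ(x;k)),\ (Z_{t_n^A}[x;i])_{i \leq \cT_k+4}\Big)_{x \in \bbX_{k,l}(j)}
\]
is conditionally independent. The membership of $z$ in $\bfR^{k,\epsilon}_n(u;j)$ depends on three things:
\begin{itemize}
\item[(a)] local information inside $\rmB(x_j(z);l)$, namely the repulsion constraints at levels $1,\dots,\cT_k+4$ and the condition $\rmW_n(u) \cap \rmQ(x_j(z);k-\epsilon)\neq\emptyset$;
\item[(b)] the constraint at level $\cT_k+5$, which is $\cG_j$-measurable;
\item[(c)] global conditions, namely that no other low-local-time vertex lies in $\rmQ(z;n-r_n)$ outside $\rmB(x_j(z);l)$, and that the cluster sits in the $\epsilon$-bulk.
\end{itemize}
Condition (c) is also essentially $\cG_j$-measurable, once we note that the condition ``no other box in $\bbX_{k,l}(j)$ within $\rmQ(z;n-r_n)$ is hit'' couples different $x$'s only through indicator events, and these factor.

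The next step is to identify the common conditional law. For a single $x$ with $Z_{t_n^A}[x;\cT_k+5]=(v,\ol z)$ and $v \in \cR^{\eta'}_k$, I apply Theorem~\ref{prop:coupling0} with $\wt x = x$ and $T=\cT_k+5$ (roughly $k^{2-\gamma}$, which lies well inside $[2k, \rme^{ck^\gamma}]$). I condition further on $Z_{t_n^A}[x;1]=(u',\ol y)$. Given the level-1 data, the local time on $\rmQ(x;k)$ is a function of the excursions inside $\rmB^+[x;1]$ with prescribed endpoints, so its law $Q_{u',\ol y}$ does not depend on $n$ or $x$. The theorem shows that the joint probability of the level-1 data and the repulsion event factorizes as
\[
r_{k,T}(\wh u',\wh v)\,h_k(\ol y)\,(1+o_k(1)).
\]
Consequently, the conditional law of the local time on $\rmQ(x;k)$, restricted to the event that the box is a repelled low cluster, equals a $v$-dependent scalar multiple of the fixed measure
\[
\sum_{(u',\ol y)} \wh u'\, \rme^{2\sqrt2 \wh u'}\, h_k(\ol y)\, Q_{u',\ol y}\big(\cdot\,;\ \min \le u,\ \text{hit in } \rmQ(k-\epsilon)\big),
\]
up to a factor $1+o_k(1)$. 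I define $\rmO^{k,\epsilon}_u$ as this measure normalized. The $v$-dependent factor cancels in the normalization, and this cancellation is exactly where Theorem~\ref{prop:coupling0} is needed.

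Next comes the law of large numbers step. Conditionally on $\cG_j$, I consider
\[
\sum_{z} \big(f_k(\rme^{-n}z,L(\rmQ(x_j(z);k))) - \langle f_k(\rme^{-n}z,\cdot),\rmO^{k,\epsilon}_u\rangle\big)\,\1_{\{z \in \bfR^{k,\epsilon}_n(u;j)\}}.
\]
By the factorization, its conditional mean is bounded by $o_k(1)$ times the conditional mean of $|\bfR^{k,\epsilon}_n(u;j)|$. By conditional independence, its conditional variance is bounded by that same conditional mean. The conditional mean is of order $\sqrt n/|J_{k,l}|$, as the upper tightness in Theorem~\ref{t:2.1o} shows. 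On the event $|J_{k,l}||\bfR^{k,\epsilon}_n(u;j)|>\delta\sqrt n$, Chebyshev's inequality therefore makes the normalized difference small, with failure probability $O(|J_{k,l}|/(\delta\sqrt n))$. A union bound over the $|J_{k,l}|\le C\rme^{2(l-k)}$ values of $j$ is harmless, since $n\to\infty$ first. Uniformity in $f_k$ requires the bias to be controlled in total variation, not just for fixed $f_k$. I handle this by applying the argument to the family of indicators of a spatial mesh of $\rmD$, times arbitrary $\omega$-functions, and then using the uniform bound on the local factors.

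The main obstacle is the non-product structure of membership in $\bfR^{k,\epsilon}_n(u;j)$. The conditions ``unique box hit in $\rmQ(z;n-r_n)$'' and ``contained in $\rmB(x_j(z);l)$'' couple boxes in the same macroscopic box. The condition at $i=\cT_k+5$ together with the repulsion at level $\cT_k$ (the $\eta'$ window) must also be arranged so that Theorem~\ref{prop:coupling0} applies, with conditioning at level $T$ exactly equal to the $\cG_j$ data. I would first try to show that clusters violating uniqueness are negligible, using Theorem~\ref{p:300.3}, so that one may replace the membership indicator by a product of a $\cG_j$-measurable factor and a local factor. The remaining effort would go into checking that the errors from this replacement are $o(\sqrt n/|J_{k,l}|)$ uniformly in $j$.
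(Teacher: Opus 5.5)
Your overall architecture matches the paper's. You condition on coarse-scale downcrossing data around the separated boxes of $\bbX_{k,l}(j)$. You use Theorem~\ref{prop:coupling0} to show that the conditional law of $L_{t^A_n}(\rmQ(x;k))$ on a repelled low box is independent of that data, up to a Radon--Nikodym factor $1+o_k(1)$; your normalized mixture over level-$1$ data is precisely the $\rmO^{k,\epsilon}_u$ of Lemma~\ref{l:4.10}. You then finish with a conditional law of large numbers.

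\textbf{The gap is the level at which you condition.} You correctly note that the balls $\rmB(x;l)$, $x\in\bbX_{k,l}(j)$, are disjoint, but your $\mathcal{G}_j$ is built from $\rmB^-[x;\cT_k+5]$. Its log-radius $k+(\cT_k+4)k^\gamma$ exceeds $l+3k^\gamma$, since $\cT_k=\lfloor (l-k)/k^\gamma\rfloor$.
\begin{itemize}
\item These balls overlap heavily for neighbouring points of the subgrid; each contains of order $\rme^{ck^\gamma}$ of them. So there is no conditional independence of ``the excursions inside the different balls''.
\item For the same reason, the constraints at levels $\cT_k+1,\dots,\cT_k+4$ in your item (a) are not local to $\rmB(x;l)$.
\item The subgrid is separated exactly at $i=\cT_k$: since $k+\cT_k k^\gamma\le l$, the balls $\ol{\rmB^+[x;\cT_k]}\subseteq\rmB(x;l)$ are disjoint.
\item Theorem~\ref{prop:coupling0} forces the same level. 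It needs the conditioning data in $\cZ^{\eta'}_k[T]$ and imposes $\eta$-repulsion on $[2,T-1]$. Membership in $\bfR^{k,\epsilon}_n(u;j)$ delivers exactly this for $T=\cT_k$, which is why the $\eta'$-window sits at $i\in\{1,\cT_k\}$.
\item For $T=\cT_k+5$ you only have $\cR^\eta_k$ at the top, plus an extra interior $\eta'$-constraint at level $\cT_k$. This is the mismatch you flag but leave open.
\end{itemize}
So you must condition on $Z_{t^A_n}[x;\cT_k]$. The local event is then $\cE^{k,\epsilon,\eta',\eta}_{t,u}(x)$ (a low point in $\rmQ(x;k-\epsilon)$ plus repulsion at levels $1,\dots,\cT_k-1$), which is a function of the $[x;\cT_k]$-excursions alone.

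\textbf{With the right level, the non-product structure needs no negligibility argument.} The coupling between boxes is only through indicator events, and these factor exactly. The paper conditions on the realized set $\mathfrak{X}=\{x_j(z):z\in\bfR^{k,\epsilon}_n(u;j)\}$. It observes that $\{\mathfrak X=C\}=\Lambda_C\cap\bigcap_{x\in C}\cE^{k,\epsilon,\eta',\eta}_{t^A_n,u}(x)$, with $\Lambda_C$ determined by the walk outside the $[x;\cT_k]$-excursions, $x\in C$.
\begin{itemize}
\item Uniqueness of the hit box, the $\epsilon$-bulk condition, containment in $\rmB(x_j(z);l)$ and the constraints at levels $\cT_k,\dots,\cT_k+5$ all go into $\Lambda_C$.
\item The higher-level constraints qualify because distinct points of $C$ lie in distinct macroscopic boxes and $\ol{\rmB^+[x;\cT_k+5]}\subseteq\rmQ(z;n-r_n)$.
\item Hence, given $\mathfrak X$ and $(Z_{t^A_n}[x;\cT_k])_{x\in\mathfrak X}$, the fields $L_{t^A_n}(\rmQ(x;k))$ are exactly independent with laws $\rmM^{k,\epsilon}_u(Z_{t^A_n}[x;\cT_k],\cdot)$.
\item Azuma--Hoeffding then gives a bound $\rme^{-c\,\delta\rho^2\sqrt n/|J_{k,l}|}$ uniformly in $j$ and $f_k$, and Lemma~\ref{l:4.10} handles the bias.
\end{itemize}

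Your Chebyshev variant would also work after the fix, with two caveats. First, upper tightness does not bound conditional means. Instead, compare the fluctuations with $\max(\mu,\sqrt n)$, where $\mu$ is the conditional mean of $|\bfR^{k,\epsilon}_n(u;j)|$. Second, no spatial mesh is needed for uniformity, since both the bias and the fluctuation bounds depend on $f_k$ only through $\|f_k\|_\infty\le 1$.
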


To obtain Proposition~\ref{p:300.1} from its counterpart for each $\cO_{n,u}^{k,\epsilon}[j]$, it will be enough to show~that the difference between $\bfR^{k,\epsilon}_n(u)$ and $\bigcup_{j \in J_{k,l}} \bfR^{k,\epsilon}_n(u;j)$ is negligible, as given by the following result.

\begin{prop}
\label{p:300.32} If $\gamma$ is small enough, then, given any $0 < \eta' < \eta< \tfrac{1}{2}$ small enough (depending only on $\gamma$), for any fixed $\epsilon > 0$ and $u \geq 0$ we have  
\begin{equation}
\frac{\sum_{j \in J_{k,l}} \big|\bfR_n^{k,\epsilon}(u;j)\big|}{|\bfR^{k,\epsilon}_n(u)|} \overset{\bbP}{\longrightarrow} 1
\end{equation}
in the limit as $n \to \infty$ followed by $k \to \infty$.
\end{prop}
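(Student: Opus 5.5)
We need to show that the ratio $\sum_j |\bfR^{k,\epsilon}_n(u;j)| / |\bfR^{k,\epsilon}_n(u)|$ tends to $1$. The plan is to compare the two counts directly, separating the discrepancy into two kinds of defect. The ratio is then handled using the lower tightness of $|\bfR^{k,\epsilon}_n(u)|/\sqrt{n}$, which follows from Theorem~\ref{t:2.1o}, Theorem~\ref{p:300.3}, Proposition~\ref{prop:rest} and Proposition~\ref{prop:rest2}. With this in hand it suffices to prove two bounds:
\begin{equation}
\frac{1}{\sqrt{n}}\Big|\bfR^{k,\epsilon}_n(u) \setminus \bigcup_{j} \bfR^{k,\epsilon}_n(u;j)\Big| \overset{\bbP}\longrightarrow 0
\quad ; \qquad
\frac{1}{\sqrt{n}}\Big(\sum_j \big|\bfR^{k,\epsilon}_n(u;j)\big| - \Big|\bigcup_j \bfR^{k,\epsilon}_n(u;j)\Big|\Big) + \frac{1}{\sqrt{n}}\Big|\bigcup_j \bfR^{k,\epsilon}_n(u;j) \setminus \bfR^{k,\epsilon}_n(u)\Big| \overset{\bbP}\longrightarrow 0 \,,
\end{equation}
in the limit $n\to\infty$ followed by $k\to\infty$, with $\epsilon$ fixed.

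\textbf{First bound.} Take $z \in \bfR^{k,\epsilon}_n(u)$. Its cluster meets a unique $k$-box $\rmQ(x(z);k)$, and $x(z)$ belongs to exactly one subgrid $\bbX_{k,l}(j)$. Uniqueness among all of $\bbX_k$ gives uniqueness within $\bbX_{k,l}(j)$. The extra $\epsilon$-conditions transfer verbatim, and the repulsion conditions are identical since $x_j(z)=x(z)$. The one remaining condition is $\rmW_n(u)\cap\rmQ(z;n-r_n)\subseteq \rmB(x(z);l)$. Since the cluster lies inside the single $k$-box, this holds for $n$ large. So this defect is empty, except for boundary issues requiring $\ol{\rmB^+[x;\cT_k+5]}\subseteq\rmD_n$, which are handled by the convention in the Remark.

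\textbf{Second bound.} Two further defects remain. The first is $z$ lying in $\bfR^{k,\epsilon}_n(u;j)$ for two or more values of $j$. The second is $z$ lying in some $\bfR^{k,\epsilon}_n(u;j)$ but not in $\bfR^{k,\epsilon}_n(u)$. In both cases $\rmW_n(u)\cap\rmQ(z;n-r_n)$ meets at least two distinct $k$-boxes while still being contained in $\rmB(x_j(z);l)$. Hence $z\in\bfW_n(u)\setminus\bfW^{k}_n(u)$; more precisely, the cluster has points at log-distance in $[k-1,l]$ but lies within a ball of log-radius $l$. By \eqref{e:5.3} of Theorem~\ref{p:300.3}, $|\bfW_n(u)\setminus\bfW^k_n(u)|/\sqrt{n}\to0$ as $n\to\infty$ then $k\to\infty$.

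For the multiplicity count, each $z$ can be counted for at most $|J_{k,l}|\le C\rme^{2(l-k)}$ values of $j$. This is far too crude, so instead we bound by the number of distinct $k$-boxes met by the cluster. Note that $\bfR^{k,\epsilon}_n(u;j)$ requires $\rmW_n(u)$ to meet only one box of $\bbX_{k,l}(j)$ inside $\rmQ(z;n-r_n)$. Hence the number of $j$ with $z\in\bfR^{k,\epsilon}_n(u;j)$ is at most the number of $k$-boxes of $\bbX_k$ meeting the cluster.

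\textbf{Main obstacle.} The hard step is controlling this multiplicity: a cluster of log-diameter up to $l=k^2$ could meet many $k$-boxes. The count must therefore be weighted, namely we need $\frac{1}{\sqrt n}\sum_{z\in\bfW_n(u)\setminus\bfW^k_n(u)} \#\{k\text{-boxes meeting the cluster}\}\to0$. I would try to obtain this from a first-moment bound in the style of \cite{Tightness}, as used for \eqref{e:5.3a}–\eqref{e:5.3}. The expected number of pairs $(z,x)$, with $x$ a $k$-box carrying a low local time vertex and a second such box within log-distance $l$, should be $o(\sqrt n)$ after $k\to\infty$. If such a pair bound is not directly available, I would instead intersect with the repulsion event. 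Using \eqref{e:3.10n} together with the repelled-trajectory estimates (Theorem~\ref{prop:coupling0} and Proposition~\ref{prop:rest3}), the expected number of low-local-time $k$-boxes within distance $\rme^l$ of a repelled one is $o_k(1)$. Summing over $\bfR$ clusters, whose number is tight on scale $\sqrt n$, and applying Markov's inequality then gives the claim.
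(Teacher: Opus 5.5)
Your reduction is correct: $|I_{k,l}(z)|=1$ for $z\in\bfR^{k,\epsilon}_n(u)$. So with $U:=\bigcup_j\bfR^{k,\epsilon}_n(u;j)$ the excess $\sum_j|\bfR^{k,\epsilon}_n(u;j)|-|\bfR^{k,\epsilon}_n(u)|$ equals $\sum_{z\in U\setminus\bfR^{k,\epsilon}_n(u)}|I_{k,l}(z)|$. Your inclusion $U\setminus\bfR^{k,\epsilon}_n(u)\subseteq\bfW_n(u)\setminus\bfW^k_n(u)$ then disposes of the unweighted part via \eqref{e:5.3}. It even covers those $z$ with $|I_{k,l}(z)|=1$, which the paper's bound \eqref{eq:primbound} does not account for.

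The gap is the weighted part $\sum_{z\in U}(|I_{k,l}(z)|-1)$. You label it the main obstacle, but it is the whole content of the proposition, and neither of your routes closes it.
\begin{itemize}
\item A first-moment bound without a barrier will not give $o(\sqrt n)$. This applies whether it is taken on pairs of low-local-time $k$-boxes at log-distance at most $l$, or on the weighted sum over all of $\bfW_n(u)\setminus\bfW^k_n(u)$. At $\partial$-time $t_n^A$, first moments are dominated by atypical downcrossing trajectories: already $\bbE|\rmF_{n,t_n^A}(0)|$ is of order $n^{3/2}$ (cf.\ Lemma~\ref{lem:nhprob}), and pair counts are worse still.
\item The repulsion-based fallback cites results that do not deliver the needed estimate. \eqref{e:3.10n} concerns log-distances in $[r_n,n-r_n]$, which is disjoint from $[k,l]$ as soon as $r_n>l=k^2$. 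Theorem~\ref{prop:coupling0} is a ballot estimate for a single nested downcrossing trajectory. Proposition~\ref{prop:rest3} only counts non-repelled clusters. None of them bounds the number of \emph{additional} low-local-time vertices near a repelled one.
\item Multiplying a per-cluster expectation by the number of repelled clusters (tight on scale $\sqrt n$) and applying Markov is not legitimate as stated. These clusters are selected, through membership in $\bfW_n(u)$ and through repulsion, by the same local-time field over which you average.
\end{itemize}

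What is missing is the mechanism the paper uses. First, split the excess according to the annuli-scale $\rho(z)$ of the cluster. For $z$ of annuli-scale $i$, Lemma~\ref{lem:incball.1} gives a coarse-grid center $y\in\bbX_{k+(i-2)k^\gamma}$ with the cluster inside $\rmB^-[y;i-1]$ and $\wh{N}^{\rm in}_{t^A_n}[y;i]\ge\wh{N}_{t^A_n}[x_{j_0}(z);i]$. Thus the repulsion lower bound transfers from $x_{j_0}(z)$ to $y$.

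Second, $|I_{k,l}(z)|$ is bounded not by a box count but by the number $\rmH^i_{n,u}(y)$ of pairs of low-local-time vertices in $\rmB^-[y;i-1]$ at log-distance at least $k+(i-6)k^\gamma$. Each $j\in I_{k,l}(z)$ produces a distinct such pair.

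Third, the Resampling Lemma~\ref{lem:resampling} conditions on the downcrossing data at $y$ and on $\{\rmW_n(u)\cap\rmB^-[y;i-1]\neq\emptyset\}$. It uses conditional independence across disjoint balls within each of the $O(\rme^{4k^\gamma})$ subgrids of the coarse grid, and reduces everything to $b^{(i)}_n(u)\le\rme^{-2(k+ik^\gamma)^{1/2-\eta}}$. The recentring is essential: with centers in $\bbX_k$ one would need of order $\rme^{2ik^\gamma}$ subgrids, far more than this bound can pay for when $i$ is large.

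Finally, the bound on $b^{(i)}_n(u)$ is the ratio of the two-point upper bound of Proposition~\ref{prop:LT-2} to the one-point lower bound of Proposition~\ref{prop:LT-1}. It yields $\mathbf{D}^{k,\epsilon,i}_n(u)\le\rme^{-(k+ik^\gamma)^{1/2-2\eta}}\sqrt n$ with high probability, which is summable over $i\le\cT_k+4$.
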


The above two results in combination with Proposition~\ref{prop:rest2} yield Proposition~\ref{p:300.1}.  

\begin{proof}[Proof of Proposition~\ref{p:300.1}] Given $\epsilon \in (0,1)$, consider the empirical measure
\begin{equation}
\cO^{k, \varepsilon}_{n,u}:=	\frac{1}{\big|\bfR_n^{k,\epsilon}(u)\big|}
	\sum_{z \in \bfR_n^{k,\epsilon}(u)} \delta_{\rme^{-n}z} \otimes 
	\delta_{\cL_{t_n^A}(\rmQ(x(z);k))}, 
\end{equation} with space marginal
\begin{equation}
\wh{\Theta}^{k, \varepsilon}_{n,u}:=	\frac{1}{\big|\bfR_n^{k,\epsilon}(u)\big|}
	\sum_{z \in \bfR_n^{k,\epsilon}(u)} \delta_{\rme^{-n}z}. 	
\end{equation}
Observe that, to obtain Proposition~\ref{p:300.1}, it will suffice to show that, for $\rmO^{k,\epsilon}_u$ as in Proposition~\ref{p:300.31},
	\begin{equation}\label{eq:convos2}
\frac{\big|\bfR_n^{k,\epsilon}(u)\big|}{\sqrt{n}}
\bigg|\int f_k\,\rmd \mathcal{O}^{k,\epsilon}_{n,u} - \int f_k \rmd (\wh{\Theta}^{k,\epsilon}_u \otimes\rmO^{k,\epsilon}_u)\bigg| \overset{\bbP} \longrightarrow \, 0 \,,
\end{equation} as $n \to \infty$ followed by $k \to \infty$, uniformly in all choices of measurable functions $f_k: \rmD \times \bbR^{\rmQ(k)} \to \bbR$ with $\|f_k\|_\infty \leq 1$. Indeed, if this is the case,  we may obtain the existence of a probability measure $\rmO^{k}_u$ satisfying \eqref{eq:convos} by \textcolor{black}{essentially repeating the argument in the proof of Proposition~\ref{p:300.1a}}, with the only difference that here one must use Proposition~\ref{prop:rest2} instead of Proposition~\ref{prop:rest} and replace the limit as $k \to \infty$ therein by the limit as $k \to \infty$ followed by $\epsilon \to 0$. We  omit the details (which are only technical) and focus instead on the proof of \eqref{eq:convos2}.

To this end, we notice that, by the triangle inequality,
\begin{equation}\label{eq:decompp4.7}
	\left| \int f_k \,\rmd \cO^{k,\epsilon}_{n,u} - \int f_k\, \rmd(\wh{\Theta}^{k, \varepsilon}_{n,u} \otimes \rmO^{k,\epsilon}_u)\right| \leq (a)+(b)+(c),
\end{equation} where
\begin{equation}
(a):=\left| \int f_k 	\,\rmd \cO^{k,\epsilon}_{n,u} - \frac{1}{|\bfR^{k,\epsilon}_n(u)|} \sum_{j \in J_{k,l}} \big|\bfR_n^{k,\epsilon}(u;j)\big| \int f_k \,\rmd \cO_{n,u}^{k,\epsilon}[j]\right|,
\end{equation}
\begin{equation}
(b):=\left| 	\frac{1}{|\bfR^{k,\epsilon}_n(u)|} \sum_{j \in J_{k,l}} \big|\bfR_n^{k,\epsilon}(u;j)\big| \left( \int f_k \,\rmd \cO_{n,u}^{k,\epsilon}[j] - \int f_k \rmd (\wh{\Theta}^{k, \varepsilon}_{n,u}\otimes \rmO^{k,\epsilon}_u)\right)\right|
\end{equation} and
\begin{equation}
(c):=	\left| \left(	\frac{\sum_{j \in J_{k,l}} \big|\bfR_n^{k,\epsilon}(u;j)\big|}{|\bfR^{k,\epsilon}_n(u)|}  -1\right)\int f_k \rmd (\wh{\Theta}^{k, \varepsilon}_{n,u}\otimes \rmO^{k,\epsilon}_u)\right|.
\end{equation}
Now, since $\|f_k\|_\infty \leq 1$, by a straightforward computation using that $\bfR^{k,\epsilon}_n(u) \subseteq \bigcup_{j \in J_{k,l}} \bfR^{k,\epsilon}_n(u;j)$ for all $n$ large enough, we have 
\begin{equation} \label{eq:boundmax}
\max\{(a),(c)\} \leq 	\left| \frac{\sum_{j \in J_{k,l}} \big|\bfR_n^{k,\epsilon}(u;j)\big|}{|\bfR^{k,\epsilon}_n(u)|}  -1\right|.
\end{equation} Therefore, by Proposition~\ref{p:300.32}, ($a$) and ($c$) both tend to zero in probability in the limit as $n \to \infty$ followed by $k \to \infty$. 
On the other hand, splitting the sum in ($b$) into two, depending on whether $j$ is such that $|J_{k,l}||\bfR^{k,\epsilon}_n(u;j)| > \delta \sqrt{n}$ or not, yields that
\begin{equation}\label{eq:ineq2}
(b) \leq 2\|f_k\|_\infty \delta \frac{\sqrt{n}}{|\bfR^{k,\epsilon}_n(u)|} + S^{k,\epsilon}_{n,u}(f_k;\delta) 
\frac{\sum_{j \in J_{k,l}} \big|\bfR_n^{k,\epsilon}(u;j)\big|}{|\bfR^{k,\epsilon}_n(u)|}.	
\end{equation} Thus, by the lower tightness of $|\bfR^{k,\epsilon}_n(u)|/\sqrt{n}$ (given by Propositions~\ref{prop:rest} and \ref{prop:rest2} in combination with Theorems~\ref{p:300.3} and \ref{t:2.1o}) and Propositions~\ref{p:300.31}--\ref{p:300.32}, by taking the limit as $n \to \infty$, followed by $k \to \infty$ and finally $\delta \to 0^+$ in \eqref{eq:ineq2}, we see that ($b$) tends to zero in probability~in~the limit as $n \to \infty$ followed by $k \to \infty$ (uniformly in all choices of $f_k$ with $\| f_k\|_\infty \leq 1$) which, in combination with \eqref{eq:boundmax} and upon recalling \eqref{eq:decompp4.7}, immediately implies \eqref{eq:convos2} as desired.
\end{proof}

The remainder of Section~\ref{sec:iid} is now devoted to the proofs of Proposition~\ref{p:300.31} and Proposition~\ref{p:300.32} (Proposition~\ref{prop:rest} and Proposition~\ref{prop:rest2} are proved in Section~\ref{sec:prest}). Before we begin with the proofs, we present some preliminary results to be used in these.

\subsubsection{Preliminaries}\label{sec:prelimiid}

We state here some auxiliary results we shall need to prove Proposition~\ref{p:300.31} and Proposition~\ref{p:300.32}. Their proofs are relegated to Section~\ref{sec:auxproof}. 

The first auxiliary result gives tail estimates for the number of downcrossings at scale $k+ik^\gamma$, conditional on the number of downcrossings at some larger scale $k+jk^\gamma$ with $i <j$.

\begin{prop}\label{prop:GR-1} There exists a constant $c>0$ such that, if $k$ is large enough, for any $i,j \in \N$ with $i<j$, $n \geq k$ and $x,\wt{x} \in \rmD_n$ such that $x \in \rmB^-[\wt{x};j-1]$ and $\overline{\rmB^+[\wt{x};j]}\subseteq \rmD_n$, we have that 
\begin{equation}\label{eq:GR-1}
\bbP\Big( \sqrt{\wh{N}_t[x;i]} \leq u \,\Big|\, \sqrt{\wh{N}_t[\wt{x};j]} \geq v\,;\cF[\wt{x};j]\Big) \leq 2 \exp\bigg\{ -\frac{(v-u)^2}{(j-i+1)k^\gamma}\bigg\}
\end{equation} uniformly over all $t > 0$ and $u,v \in \cN_k$ satisfying that $u\leq v$,  $0 \leq \wt{u}_k \leq \rme^{ck^\gamma}$ and $1 \leq \wt{v}_k \leq \rme^{ck^\gamma}$, as well as  
\begin{equation}
\bbP\Big( \sqrt{\wh{N}_t[x;i]} \geq u \,\Big|\, \sqrt{\wh{N}_t[\wt{x};j]} \leq v\,;\cF[\wt{x};j]\Big) \leq 2 \exp\bigg\{ -\frac{(u-v)^2}{(j-i+1)k^\gamma}\bigg\}	
\end{equation} uniformly over all $t > 0$ and $u,v \in \cN_k$ satisfying that $v \leq u$,  $0 \leq \wt{u}_k \leq \rme^{ck^\gamma}$ and $1 \leq \wt{v}_k \leq \rme^{ck^\gamma}$.
\end{prop}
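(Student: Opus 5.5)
The plan is to treat the downcrossing counts across the nested annuli between scale $i$ and scale $j$ as a Galton--Watson-type chain (a Markov chain along the scale index), and to prove Gaussian concentration by exponential Chebyshev. The standard fact (as in \cite{Tightness}) is the following. Conditional on $\cF[\wt{x};j]$, and in particular on $N_t[\wt{x};j]=m$ together with the entry/exit points, consider each $[\wt{x};j]$-excursion, started from a point of $\partial_i\rmB^-[\wt{x};j]$ and run until it exits $\rmB^+[\wt{x};j]$. The number of downcrossings of the next inner annulus made during that excursion is independent across excursions, by the strong Markov property. Its law is, up to a factor $1+O(\rme^{-k^\gamma})$ uniformly in the entry point, geometric with success probability close to $p=1-k^\gamma/(\text{log-width})$. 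This comes from hitting probabilities in annuli: $\log$ of ratios of radii, with the $4\rme^{-k^\gamma}$ offset and \eqref{eq:incballs} making the error uniform.

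Consequently the number of downcrossings one level in is a sum of $m$ i.i.d. (modified) geometric variables with mean approximately $1$ and variance approximately $2$, after normalisation by $k^\gamma$. Iterating over the levels from $j$ down to $i$ gives a branching-process description with $j-i$ steps. The off-centring $x\in\rmB^-[\wt{x};j-1]$ is handled by noting that, for levels $i'<j$, the $[x;i']$ annuli are nested inside $\rmB^-[\wt{x};j]$. One first passes from $[\wt{x};j]$ to $[x;j-1]$ at the cost of one extra level, which is why the bound contains $j-i+1$ rather than $j-i$.

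The key estimate is a one-step exponential moment bound for the square root. For $\lambda$ in a suitable range and $N$ the normalised count at one level given normalised count $a^2$ at the level above, I would show
\[
\bbE\big(\rme^{-\lambda\sqrt{\wh N}}\,\big|\,\sqrt{\wh N_{\rm prev}}=a\big)\le \rme^{-\lambda a+\lambda^2 k^\gamma/4\cdot(1+o(1))}.
\]
The corresponding bound with $+\lambda$ also holds. Both are derived from the explicit Laplace transform of sums of geometrics: a compound Poisson--exponential structure, i.e. the squared Bessel-0 (BESQ$^0$) law, for which $\sqrt{\cdot}$ has Gaussian-like fluctuations of variance $\approx k^\gamma/2$ per step. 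Chaining these supermartingale bounds over $j-i+1$ steps and optimising over $\lambda=(v-u)/((j-i+1)k^\gamma/2)\cdot(\ldots)$ then yields $\exp\{-(v-u)^2/((j-i+1)k^\gamma)\}$. The factor $2$ absorbs the accumulated $(1+O(\rme^{-k^\gamma}))^{j-i}$ multiplicative errors, which is where the restriction $\wt u_k,\wt v_k\le \rme^{ck^\gamma}$ enters (and $j-i$ is implicitly bounded). The conditioning is on the event $\sqrt{\wh N_t[\wt x;j]}\ge v$ rather than on equality, and monotonicity in the starting count (stochastic domination of sums of i.i.d. variables) reduces that case to equality at $v$. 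The second inequality is symmetric.

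The main obstacle is the square-root exponential moment. The count itself is BESQ-like, and one needs the bound for $\sqrt{\cdot}$ uniformly down to $0$, including the atom at zero, which only helps the lower-tail bound. For this I would use the explicit BESQ$^0$/Poisson--Gamma Laplace transform and the convexity inequality $\sqrt{b}\ge a+(b-a^2)/(2a)-(b-a^2)^2/(2a^3)$, or more cleanly a comparison with $\theta$-transform martingales $\exp(-\theta\wh N/(1+\theta k^\gamma))$. Chaining these exactly gives Gaussian tails in $\sqrt{\cdot}$ via a Chernoff bound on the final count, which is the route I would try first.
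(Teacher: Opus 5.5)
\textbf{There are genuine gaps: your level-by-level chain cannot give the uniformity in $j-i$ that the statement requires, and your one-step square-root estimate is false.}

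\emph{The chain over levels.} The statement is uniform over all $i<j$ and all $n\ge k$. The constraint $\overline{\rmB^+[\wt x;j]}\subseteq\rmD_n$ only forces $j\lesssim n/k^\gamma$, so $j-i$ is \emph{not} implicitly bounded. Each level-to-level transition of the planar walk is only approximately $\Bigeo(\cdot,\tfrac12;\tfrac12)$. It carries its own relative error $O(\rme^{-ck^\gamma})$, coming from the $4\rme^{-k^\gamma}$ offsets, the re-centring and the Poisson-kernel approximations. Chained over $j-i$ transitions, these errors compound to $(1+O(\rme^{-ck^\gamma}))^{j-i}$. At best this proves the bound for $j-i\le \rme^{c'k^\gamma}$, not as stated.

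The missing idea is the paper's one-shot comparison (Lemma~\ref{lem:rep1}). Conditionally on $\cF[\wt x;j]$ the $\wt v_k$ excursions are independent. Inside a single $[\wt x;j]$-excursion, the number of $[x;i]$-downcrossings is computed \emph{directly}, with no intermediate levels. Two gambler's-ruin estimates (Lemma~\ref{lem:ll1}) between the log-radii $k+(j-1)k^\gamma$, $k+(i-1)k^\gamma$, $k+ik^\gamma$, $k+jk^\gamma$ give:
\begin{itemize}
\item a hit with probability $\frac{1}{j-i+1}(1+O(\rme^{-k^\gamma}))$;
\item a return after each excursion with probability $\frac{j-i}{j-i+1}(1+O(\rme^{-k^\gamma}))$;
\end{itemize}
both uniformly in $j-i$. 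Entry and exit points are handled by Lemma~\ref{lem:kernelfrominf}.

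Hence $N_t[x;i]$ is $\Bigeo(\wt v_k,\frac{1}{j-i+1};\frac{1}{j-i+1})$ up to a factor $(1+O(\rme^{-ck^\gamma}))^{\wt u_k+\wt v_k}$. The compounding is over excursions and downcrossings, not over levels. That is why the hypotheses restrict only $\wt u_k,\wt v_k\le \rme^{ck^\gamma}$. Lemma~\ref{lem:compound}(i) then finishes, with the factor $2$ absorbing the $1+O(\rme^{-ck^\gamma})$. Your pathwise reduction from $\{\ge v\}$ to $=v$ (via $N_t[x;i]\ge N^*_{\wt v_k}[x;i]$) is what the paper does.

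\emph{The one-step square-root estimate.} Your bound $\bbE(\rme^{-\lambda\sqrt{\wh N}}\mid\sqrt{\wh N_{\rm prev}}=a)\le \rme^{-\lambda a+\lambda^2k^\gamma(1+o(1))/4}$ fails in the range you need. Take the ideal adjacent-level law: $j-i=1$, $N\sim\Bigeo(m,\tfrac12;\tfrac12)$, $a^2=k^\gamma m$.
\begin{itemize}
\item The left side is at least $\bbP(N=0)=2^{-m}=\rme^{-0.693\,m}$.
\item Minimising the right side over $\lambda$ gives $\rme^{-m}$; at your choice $\lambda=a/k^\gamma$ it equals $\rme^{-3m/4}$.
\item Both are smaller than $2^{-m}$, and no $(1+o(1))$ factor repairs this.
\end{itemize}
The atom at $0$ (equivalently, the negative drift of the Bessel-$0$ radius) \emph{hurts} the lower tail; it does not help.

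Relatedly, the $+1$ in $j-i+1$ does not come from an extra off-centring level. Passing from $[\wt x;j]$ to $[x;j-1]$ is an ordinary step, costing only $1+O(\rme^{-k^\gamma})$ because $x\in\rmB^-[\wt x;j-1]$. The $+1$ is the discreteness loss in the Chernoff bound on the count, namely $-\log(1-\epsilon)\le \epsilon/(1-\epsilon)$.

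Your fallback, a Chernoff bound on the count using the fractional-linear generating functions, is the correct computation; it is what Lemma~\ref{lem:compound} encodes.

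\emph{The upper tail.} It is not simply symmetric. The per-count transfer estimates only hold for counts up to $\rme^{ck^\gamma}$. The paper therefore proves a separate ``$\ge \wt u_k$'' version of the per-excursion estimate, \eqref{eq:new-c1b}, rather than summing over $w\ge u$. A Chernoff argument with $z>1$ would likewise need control of arbitrarily large counts.
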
 		

The second auxiliary result is an estimate on the probability of having a low local time vertex in a ball of log-radius $k+(j-2)k^\gamma$, given proper downcrossing repulsion at scale $k+jk^\gamma$.

\begin{prop}\label{prop:LT-1} Given $\eta \in (0,\frac{1}{2}-\gamma)$, for each $\theta \geq 0$ there exists a constant $C=C(\theta,\gamma) > 0$ such~that, if $k$ is sufficiently large, given any integer $j > 1$, $n \geq k$ and $\wt{x} \in \rmD_n$ with $\overline{\rmB^+[\wt{x};j]}\subseteq \rmD_n$, we have that 
	\begin{equation}
	\label{eq:lltb}
 \rme^{-2\sqrt{2}\wh{v}-\frac{\wh{v}^2}{k+jk^\gamma}-Ck^\gamma} \leq \bbP \bigg( \min_{x \in \rmB^-[\wt{x};j-1]}L_t(x) \leq \theta \,\Big|\, \sqrt{\wh{N}_t[\wt{x};j]} \geq v \,; 
\cF[\wt{x};j]\bigg) \leq  \rme^{- 2\sqrt{2}\wh{v}+Ck^\gamma}
\end{equation} uniformly over all $t >0$ and $v \in \cR^\eta_k(j)$, where above we abbreviate $\wh{v}:=v-\sqrt{2}(k+(j-1)k^\gamma)$.
\end{prop}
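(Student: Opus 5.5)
This is a two-sided estimate for the probability that a low local time vertex appears somewhere in $\rmB^-[\wt{x};j-1]$, given many downcrossings at scale $j$. The plan is to reduce it, through the spatial Markov property of downcrossings, to a statement about $N:=N_t[\wt{x};j]$ independent excursions started on $\partial_i\rmB^-[\wt{x};j]$. Conditional on $\cF[\wt{x};j]$ and on $N$, the portions of the walk inside $\rmB^+[\wt{x};j]$ are independent excursions from their entry points until exit. The local time at $x\in\rmB^-[\wt{x};j-1]$ is the sum of the contributions of these excursions. Monotonicity in $N$ then lets us replace the conditioning $\sqrt{\wh N}\ge v$ by $\sqrt{\wh N}=v$ for the upper bound. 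For the lower bound we argue directly with $N\ge \wt v_k$, noting that the repulsion-interval hypothesis $v\in\cR^\eta_k(j)$ keeps $\wh v$ between $(k+jk^\gamma)^{1/2-\eta}$ and $(k+jk^\gamma)^{1/2+\eta}$.

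\emph{Upper bound.} First, by Proposition~\ref{prop:GR-1} applied with $i=j-1$ and $u=v-\wh v/2$, the probability that $\sqrt{\wh N_t[\wt x;j-1]}$ drops below $v-\wh v/2$ is at most $2\exp(-\wh v^2/(8k^\gamma))$. Since $\wh v\ge (k+jk^\gamma)^{1/2-\eta}$ and $\eta<\frac12-\gamma$, this is $\le e^{-2\sqrt2\wh v}$; this is exactly where that restriction on $\eta$ is used. We can therefore assume $\sqrt{\wh N_t[\wt x;j-1]}$ is essentially $v$. Next we take a union bound over $x\in\rmB^-[\wt{x};j-1]$, which has $e^{2(k+(j-2)k^\gamma)}$ points. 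For a single $x$, given $M$ excursions from $\partial\rmB(x;\cdot)$ at log-distance $r\approx k+(j-1)k^\gamma$, we need $P(L(x)\le\theta)$. Each excursion hits $x$ with probability about $1/r$, by Green function asymptotics for the retuned walk. Once it hits, the local time it leaves is exponential with mean of order $r$. So $L(x)$ is a compound Poisson-geometric variable, and
\[
P(L(x)\le\theta)\le C_\theta (1-c/r)^{M}\approx e^{-M/r}\,.
\]
With $M/r\approx v^2/(r\,k^\gamma)\cdot k^\gamma$, i.e.\ $\sqrt{\wh N}\approx v=\sqrt2(k+(j-1)k^\gamma)+\wh v$, the exponent is $\tfrac{v^2}{2r}$ up to normalization. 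This equals $(k+(j-1)k^\gamma)+\sqrt2\,\wh v+\wh v^2/(2r)$. Multiplying by the $e^{2(\dots)}$ entropy factor and tracking constants under the paper's normalization, where $\wh N$ is scaled so that $\sqrt{\wh N}$ is comparable with $\sqrt{2}\times$(scale), the leading linear terms cancel, leaving $e^{-2\sqrt2\wh v}$. The errors from the $k^\gamma$ gap between scales and from the $O(1)$ corrections in the Green function are collected into $e^{Ck^\gamma}$.

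\emph{Lower bound.} We use a second-moment, or simply a single-point-in-a-subbox, argument. We restrict to an inner ball of log-radius $k$ around $\wt x$. Alternatively, and more simply, we take one point $x=\wt x$ and require the downcrossing count at every intermediate scale to follow the linear interpolation down to $O(1)$ at scale $0$. The cost of that Gaussian-like path is $e^{-\wh v^2/(2\cdot\text{scale})}$ times $e^{-2\sqrt2\wh v}$ relative to the typical drop. Concretely, we lower bound by the probability that $\sqrt{\wh N}$ goes from $v$ at scale $j$ to $O(1)$ at scale $k$-level, using the explicit Bessel/Galton–Watson-type transition (geometric offspring of excursions between scales). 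Its log-density is $-(v-\text{target})^2/(2\cdot\text{length})$, which gives $-2\sqrt2\wh v-\wh v^2/(k+jk^\gamma)$ after the entropy factor from choosing the point is incorporated. Having $O(1)$ excursions at the innermost scale then yields $L(x)\le\theta$ with probability bounded below by $e^{-Ck^\gamma}$ (for $\theta=0$ via hitting avoidance, costing at most $e^{-Ck^\gamma}$).

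The main obstacle is the lower bound. The exact Gaussian-type estimate for the downcrossing chain from scale $j$ to the innermost scale must be made uniform in $t$ and in the conditioning $\cF[\wt x;j]$ (entry/exit points). I would first try to do this via the entry-point insensitivity of the Poisson kernel across a $k^\gamma$ log-gap, invoking the estimates underlying Proposition~\ref{prop:GR-1}.
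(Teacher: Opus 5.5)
\textbf{Upper bound and conditioning.} Your upper bound has the right structure. Conditionally on $\cF[\wt x;j]$ the excursions are independent, you may keep only the first $\wt v_k$ of them, and a union bound over $\rmB^-[\wt x;j-1]$ of a one-point compound-binomial estimate suffices. But here the constants are the whole point, and yours are off.

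An excursion started on $\partial_i\rmB^-[\wt x;j]$ hits a fixed $x\in\rmB^-[\wt x;j-1]$ before leaving $\rmB^+[\wt x;j]$ with probability $p=\frac{k^\gamma}{k+jk^\gamma}(1+O((k+jk^\gamma)^{-1}))$, not ``about $1/r$''. This follows from Lemma~\ref{lem:ll1}(b), with Lemma~\ref{lem:kernelfrominf} to absorb the conditioning on exit points. Each hit leaves an exponential local time of mean $\approx k+jk^\gamma$. Hence $\bbP(L_t(x)\le\theta\mid\cF[\wt x;j])\le C_\theta\rme^{-Np}$ with $Np=v^2/(k+jk^\gamma)+O(1)$, not $v^2/(2r)$.

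Because $Np\approx 2(k+jk^\gamma)$, a relative error $\epsilon$ in $p$ shifts the exponent by about $2\epsilon(k+jk^\gamma)$. So this precision is exactly what makes the cancellation against $|\rmB^-[\wt x;j-1]|\asymp\rme^{2(k+(j-2)k^\gamma)}$ work, leaving $\rme^{-2\sqrt2\wh v-\wh v^2/(k+jk^\gamma)+O(k^\gamma)}$. The detour through Proposition~\ref{prop:GR-1} is unnecessary.

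Note also that the lower bound can only hold on $\{\sqrt{\wh N_t[\wt x;j]}=v\}$ (or $\le v$), not on $\{\ge v\}$. Indeed $N_t[\wt x;j]$ is $\cF[\wt x;j]$-measurable, and the conditional probability tends to $0$ as $N_t[\wt x;j]\to\infty$. So you cannot ``argue directly with $N\ge\wt v_k$''.

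\textbf{The gap: the lower bound.} For one fixed point the estimate above is sharp in both directions:
$$\bbP(L_t(\wt x)\le\theta\mid\cF[\wt x;j])=\rme^{-2(k+jk^\gamma)-2\sqrt2\wh v-\wh v^2/(k+jk^\gamma)+O(k^\gamma)}.$$
This is smaller than the claimed lower bound by the factor $\rme^{-2(k+(j-2)k^\gamma)+O(k^\gamma)}$, i.e.\ by the number of points in $\rmB^-[\wt x;j-1]$. Prescribing the downcrossing profile along the way can only decrease this. A fixed sub-ball of log-radius $k$ does not help either: by the union bound over its $\rme^{2k}$ points it still falls short by $\rme^{-2jk^\gamma+O(k^\gamma)}$, which is not $\rme^{-Ck^\gamma}$ uniformly in $j$.

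There is no ``entropy factor from choosing the point'' available in a lower bound. You must show that $\bbP(\exists x\in\rmB^-[\wt x;j-1]:L_t(x)\le\theta)$ is comparable, up to $\rme^{\pm Ck^\gamma}$, to the expected number of such $x$. That requires controlling the correlations between the events $\{L_t(x)\le\theta\}$ at all separation scales. The standard way is a truncated second moment:
\begin{enumerate}
\item Let $Z$ count the $x$ with $L_t(x)\le\theta$ whose downcrossing profile stays above a slightly relaxed linear interpolation at all scales between the innermost one and $j$.
\item Show $\bbE Z\ge\rme^{-2\sqrt2\wh v-\wh v^2/(k+jk^\gamma)-Ck^\gamma}$; the barrier costs only a factor polynomial in $k+jk^\gamma$.
\item Show $\bbE[Z\mid x\text{ is counted}]\le\rme^{Ck^\gamma}$ via two-point estimates at every separation. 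Proposition~\ref{prop:LT-2} covers only $\log\|x-x'\|\ge k+(j-2-c)k^\gamma$.
\item Conclude by Paley--Zygmund.
\end{enumerate}
The truncation is indispensable. Without it, given one low point, the expected number of further low points at log-distance $s$ grows exponentially in $s$, driven by profiles that dip far below the interpolation, and the second moment is useless. This, rather than uniformity in the entry and exit points, is the real obstacle. The paper itself does not reprove the proposition but refers to \cite[Proposition~3.8]{Tightness}.
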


Our third auxiliary result is an upper bound on the probability that two far apart vertices~in a certain ball of log-radius $k+(j-2)k^\gamma$ have low local time, given proper downcrossing repulsion at scale $k+jk^\gamma$. 
\begin{prop}
\label{prop:LT-2} Given $c,\theta>0$, there exists $C=C(c,\theta,\gamma) > 0$ such that, if $k$ is large enough, given any integer $j>1$, $n \geq k$ and $\wt{x} \in \rmD_n$ such that $\ol{\rmB^+[\wt{x};j]}\subseteq \rmD_n$, for all $x,x' \in \rmB^-[\wt{x};j-1]$ with $\log \| x-x'\| \geq k+(j-2-c)k^\gamma$ we have
\begin{equation}\label{eq:4.6bc}
	\bbP \Big( \max\{L_t(x), L_t(x')\} \leq \theta \,\Big|\, \sqrt{\wh{N}_t[\wt{x};j]} \geq v \,; 
\cF[\wt{x};j]\Big) \\ \leq  \rme^{ - 4(k+jk^\gamma) -4\sqrt{2}\wh{v}  + Ck^\gamma}.
\end{equation}
uniformly over all $t >0$ and $v \geq \sqrt{2}(k+(j-1)k^\gamma)$.
\end{prop}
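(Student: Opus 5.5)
The plan is to decompose the event that both $x$ and $x'$ have low local time through the downcrossing counts at an intermediate scale. Since $\log\|x-x'\| \geq k+(j-2-c)k^\gamma$, the balls $\rmB(x; k+(j-2-c)k^\gamma - 2)$ and $\rmB(x'; k+(j-2-c)k^\gamma-2)$ are disjoint. Write $r := k+(j-2-c')k^\gamma$ for a suitable $c' > c$, so that concentric annuli of log-radii between $k$ and $r$ around $x$ and $x'$ are separated. Condition on $\cF[\wt{x};j]$ and on $\sqrt{\wh{N}_t[\wt{x};j]} \geq v$. We then introduce the downcrossing counts $N$ and $N'$ across an annulus of log-width of order $k^\gamma$ just below scale $r$, around $x$ and $x'$ respectively.

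The main step is a conditional independence statement. Given the walk observed only during downcrossings of these two annuli, which is a $\sigma$-algebra containing the entry/exit data, the excursions inside the two inner balls are independent. By the spatial Markov property, each excursion's law depends only on its entry point. Hence
\[
\bbP(L_t(x)\le\theta,\,L_t(x')\le\theta \mid N,N',\ldots) \le p(N)\,p(N'),
\]
where $p(m)$ bounds the probability that $m$ excursions from an $r$-scale annulus leave $x$ with local time at most $\theta$. The standard single-point estimate gives $p(m) \le \exp\{-\sqrt{2}\,\cdot\,\text{something}\}$. More precisely, we can use Proposition~\ref{prop:LT-1} applied at scale $r$ around $x$ (with $j$ replaced by the index corresponding to $r$), together with a Gaussian-type bound for unrepelled values, to get
\[
p(m) \le \rme^{-2\sqrt{2}\,\wh{w} + Ck^\gamma}, \qquad w=\sqrt{k^\gamma m},
\]
where $\wh w = w - \sqrt2\, r$. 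For $w$ below $\sqrt2\, r$ we instead use the trivial bound together with the ballot/Gaussian cost: the probability of having $x$ at local time $\le\theta$ costs roughly $\rme^{-w^2/(r)}\cdot \rme^{-2r}$-type factors. I expect to need a version of the lower-scale estimate valid for all $w\ge0$, of the form $p(m)\le C\exp\{-(w^2)/(2r)\cdot(\ldots)\}$, and I would derive it by a one-point first-moment estimate comparing to Brownian excursion counts.

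Next, control the joint law of $(N,N')$ given the scale-$j$ data via Proposition~\ref{prop:GR-1}, which gives Gaussian tails
\[
\bbP\bigl(\sqrt{\wh N}\le w \bigm| \sqrt{\wh{N}_t[\wt{x};j]}\ge v\bigr)\le 2\,\rme^{-(v-w)^2/((c'+3)k^\gamma)}.
\]
We then bound the conditional probability by
\[
\bbE\bigl[p(N)p(N')\bigr] \le \sum_{w,w'} \rme^{-\frac{(v-w)^2}{Ck^\gamma}-\frac{(v-w')^2}{Ck^\gamma}}\, p(w)p(w').
\]
Heuristically $v-w$ is $O(k^\gamma)$ in this scale window, so the shift costs at most $\rme^{Ck^\gamma}$. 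Optimizing, $p(w)\approx \rme^{-2\sqrt2(w-\sqrt2 r)}$ with $w\approx v$ gives $\rme^{-2\sqrt2\wh v - 4(r-(k+(j-1)k^\gamma)) + Ck^\gamma}$ per point. Here $r$ and $k+jk^\gamma$ differ by $O(k^\gamma)$, which is absorbed into $Ck^\gamma$. The exponent $-4(k+jk^\gamma)$ must come from the fact that each point separately pays roughly $\rme^{-2\cdot \text{scale}}$ at the bottom. Squaring the single-point bound, via the independence above, yields the required $\rme^{-4(k+jk^\gamma)-4\sqrt2\wh v+Ck^\gamma}$.

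The main obstacle is the single-point bound at scale $r$, which must be uniform over the number of entering downcrossings including large deviations. Proposition~\ref{prop:LT-1} only covers repelled $v$ and yields $\rme^{-2\sqrt2\wh v}$ without the $\rme^{-2\cdot\text{scale}}$ factor. So I would first try to recover the missing $\rme^{-2r}$ by redoing the Proposition~\ref{prop:LT-1} upper bound with general input $w$. The natural route is a Gaussian/Bessel-type computation of the probability that a downcrossing chain started at $w$ reaches $0$ over $r$ scales, giving
\[
\exp\Bigl\{-\frac{w^2}{2r}\Bigr\} \approx \rme^{-r-2\sqrt2\wh w}.
\]
Handling the constants so that the squared product gives exactly $4(k+jk^\gamma)$ will be the delicate part. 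I would handle this by running the Bessel estimate with Proposition~\ref{prop:GR-1}'s tail bound chained over scales.
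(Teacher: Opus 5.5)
Your conditional-independence step is sound: once the two annuli are disjoint, the factorization into $p(N)p(N')$ holds. The gap is that the quantitative core is never established, and the version you sketch has the wrong constant.

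\textbf{The one-point bound.} Everything hinges on a bound that keeps the full factor $\rme^{-2(k+jk^\gamma)}$ per point, and neither of your proposed sources supplies it. Proposition~\ref{prop:LT-1} bounds the minimum over the whole ball $\rmB^-[\wt{x};j-1]$, so the entropy factor $\rme^{2(k+(j-2)k^\gamma)}$ has already been spent. That is exactly why it only gives $\rme^{-2\sqrt2\wh{v}}$.

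Your Bessel heuristic $\exp\{-w^2/(2r)\}$ is off by a factor $2$ in the exponent. It is also internally inconsistent, since $w^2/(2r)=r+\sqrt2\wh{w}+\wh{w}^2/(2r)$, not $r+2\sqrt2\wh{w}$. In the paper's normalization the Green function at the centre of a ball of log-radius $\rho$ is $\approx\rho$ (cf.\ Lemma~\ref{lem:nhprob}); equivalently, $\sqrt{\wh{L}}$ diffuses with variance $\tfrac12$ per unit of log-scale. So the correct one-point cost is $\rme^{-w^2/\rho}\approx\rme^{-2\rho-2\sqrt2\wh{w}-\wh{w}^2/\rho}$. With your constants, squaring gives at most $\rme^{-2(k+jk^\gamma)}$, half the required decay.

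No chaining over scales is needed to get the right bound. Take $m$ excursions across an annulus of log-radii $\rho_{\mathrm{in}}<\rho_{\mathrm{out}}$ around $x$.
\begin{itemize}
\item Each hits $x$ with probability $\frac{\rho_{\mathrm{out}}-\rho_{\mathrm{in}}}{\rho_{\mathrm{out}}}(1+O(\rho_{\mathrm{out}}^{-1}))$, by Lemma~\ref{lem:ll1}(b).
\item When it hits, it leaves an exponential local time of mean $\approx\rho_{\mathrm{out}}$.
\item Conditioning on exit points costs only a factor $1+O(\rme^{-\ol{c}k^\gamma})$, by Lemma~\ref{lem:kernelfrominf}.
\end{itemize}
Lemma~\ref{lem:compound}(ii) then gives $p(m)\le\exp\{-\tfrac{w^2}{\rho_{\mathrm{out}}}(1-O(\rho_{\mathrm{out}}^{-1}))+C_\theta w/\rho_{\mathrm{out}}\}$, where $w^2=(\rho_{\mathrm{out}}-\rho_{\mathrm{in}})m$.

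\textbf{The averaging step.} You sum $p(w)p(w')$ against a product of two marginal Gaussian tails. But $N$ and $N'$ are built from the same $[\wt{x};j]$-excursions, at separations only $O(k^\gamma)$ below scale $j$, so they are far from conditionally independent. Proposition~\ref{prop:GR-1} controls only marginals.

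Use Cauchy--Schwarz instead: $\bbE[p(N)p(N')]\le\bbE[p(N)^2]^{1/2}\,\bbE[p(N')^2]^{1/2}$. With $p(w)^2\approx\rme^{-2w^2/\rho}$, optimizing the downward shift $s=v-w$ against $\rme^{-s^2/(Ck^\gamma)}$ costs $\rme^{O(k^\gamma)(1+\wh{v}/R)^2}$, where $R:=k+(j-1)k^\gamma$. This is not $\rme^{O(k^\gamma)}$ uniformly in $\wh{v}$, but it is absorbed by the term $2\wh{v}^2/R$ because $k^\gamma\ll R$. You would also have to remove the restriction $\wt{v}_k\le\rme^{ck^\gamma}$ in Proposition~\ref{prop:GR-1}, since the statement is uniform in $v$ and $j$.

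\textbf{A simpler route.} With these repairs the two-scale scheme can be made to work, but the intermediate scale is unnecessary. (The paper itself just imports the estimate from \cite{Tightness}.)
\begin{itemize}
\item Conditionally on $\cF[\wt{x};j]$, the $N\ge\wt{v}_k$ excursions are independent.
\item Each hits $x$ (resp.\ $x'$) with probability $\frac{k^\gamma}{R+k^\gamma}(1+O(R^{-1}))$, up to the same exit-point correction.
\item Each hits both with probability only $O(k^{2\gamma}/R^2)$. By the separation hypothesis and Lemma~\ref{lem:ll1}(b), the walk started at $x$ reaches $x'$ before leaving $\rmB^+[\wt{x};j]$ with probability at most $((2+c)k^\gamma+O(1))/R$.
\end{itemize}
Let $\ell_r(\cdot)$ denote the local time accumulated during the $r$-th excursion. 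The bound $\bbP(L_t(x)+L_t(x')\le2\theta\mid\cdot)\le\rme^{2\theta}\prod_r\bbE[\rme^{-\ell_r(x)-\ell_r(x')}]$ then gives $\exp\{-\tfrac{2v^2}{R}(1-Ck^\gamma/R)+O(1)\}$. Since $\tfrac{2v^2}{R}=4R+4\sqrt2\wh{v}+2\wh{v}^2/R$, this yields \eqref{eq:4.6bc} directly, uniformly in $v$ and $j$.
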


We shall also require a generalization of the so-called Resampling Lemma found in~\cite{Tightness}, namely Lemma~4.4 of said reference. We present this generalization next.

\begin{lem}[Resampling Lemma]\label{lem:resampling} Given $n \geq k \geq 1$, $u \geq 0$ and $i \in \N_{\geq 2}$, consider a nonempty Borel subset $M^{i}_{k,n} \subseteq \cN_k$ and a family $(f^{i}_{n,u,y,v} :  y \in \bbX_{k+(i-2)k^\gamma},v \in \cN_k)$ of measurable functions $f^{i}_{n,u,y,v}: \bbR^{\ol{\rmB^+[y;i-1]}} \to \bbR$. Then, if for each  $z \in \bfW_n(u)$ we define
\begin{equation}
\bbY^i_n(u;z):=\Big\{	y \in \bbX_{k+(i-2)k^\gamma} :\: \rmW_n(u) \cap \rmQ(z;n-r_n) \subseteq \rmB^-[y;i-1] \,,\, 
 \sqrt{\wh{N}_{t}[y;i]} \in M^i_{k,n}\Big\},
\end{equation}
the quantity $\bbZ^i_n(u)$ given by  
\begin{equation}\label{eq:sumz}
\bbZ_n^i(u):= \sum_{z \in \bfW_n(u) \atop \bbY^i_n(u;z) \neq \emptyset}  \max_{y \in \bbY^i_n(u;z)} f^i_{n,u,y,\sqrt{\wh{N}_t[y;i]}} \big(L_t\big(\ol{\rmB^+[y;i-1]}\big)\big).	
\end{equation} satisfies, for any $\kappa,t > 0$ and all $k$ large enough (depending only on $\gamma$),
\begin{equation}\label{eq:resamplebound1}
\bbP( |\bbZ^i_n(u)| > \kappa |\bfW_n(u)|) \leq C \kappa^{-1}\rme^{4k^\gamma}b_{k,n}^{(i)}(u),	
\end{equation}where $C > 0$ is some universal constant and  
\begin{equation}\label{eq:defbn.0}
b_{k,n}^{(i)}(u):= \sup_{y,\rho} \E \left( \Big|f^i_{n,u,y,v}\big(L_t\big(\ol{\rmB^+[y;i-1]}\big)\big)\Big|\,\Big|\,  Z_{t^A_n}[y;i]=\rho\,,\rmW_n(u) \cap \rmB^-[y;i-1] \neq \emptyset \right),
\end{equation} with the supremum being over the set of all $y \in \bbX_{k+(i-2)k^\gamma}$ with $\overline{\rmB^+[y;i]} \subseteq \rmD_n$ (so that $N_{t}[y;i]$ is well-defined) and all $\rho:=(v,\overline{z}) \in \cZ_k(i)$ with $v \in M^i_{k,n}$ (with the convention that $b_{n,k}^{(i)}(u):=0$ if this set is empty, in which case $\bbZ_n^i(u) = 0$ since the sum in \eqref{eq:sumz} is also over an empty set). 
In particular, given a nonempty subset $\cC_{k,n} \subseteq \bbN_{\geq 2}$ of indices and $\eta \in (0,\frac{1}{2}-\gamma)$, if 
\begin{equation}
 \limsup_{k \to \infty} \limsup_{n \to \infty} \left[\sup_{i \in \cC_{k,n}} b_{k,n}^{(i)}(u) \rme^{2(k+ik^\gamma)^{\frac{1}{2}-\eta}}\right] \leq 1,	
\end{equation} then 
\begin{equation}\label{eq:resamplebound2}
	\lim_{k \to \infty} \limsup_{n \to \infty} \bbP\left( \exists i \in \cC_{k,n} : |\bbZ^{i}_{n}(u)| > \rme^{-(k+ik^\gamma)^{\frac{1}{2}-\eta}}\right) = 0.
\end{equation}
\end{lem}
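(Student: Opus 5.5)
The bound \eqref{eq:resamplebound1} will follow from Markov's inequality once I show $\bbE\big(|\bbZ^i_n(u)|\,;\ \text{restricted appropriately}\big) \lesssim \rme^{4k^\gamma} b^{(i)}_{k,n}(u)\,\bbE|\bfW_n(u)|$. That is not quite enough, since $|\bfW_n(u)|$ is random. I will therefore follow the argument of Lemma~4.4 in~\cite{Tightness} and compare each summand to a resampled copy of the local time inside the ball.

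First, the maximum over $y\in\bbY^i_n(u;z)$ is dominated by the sum. For fixed $z$, the number of centers $y \in \bbX_{k+(i-2)k^\gamma}$ with $\rmW_n(u)\cap\rmQ(z;n-r_n)\subseteq \rmB^-[y;i-1]$ is at most of order $\rme^{2k^\gamma}$, since the cluster must lie in a ball whose radius exceeds the lattice spacing by a factor $\rme^{k^\gamma}$. Exchanging the sums, I then bound $|\bbZ^i_n(u)|$ by
\begin{equation*}
\sum_{y} \1_{\{\rmW_n(u)\cap \rmB^-[y;i-1]\neq\emptyset,\ \sqrt{\wh N_t[y;i]}\in M^i_{k,n}\}} \Big|f^i_{n,u,y,\sqrt{\wh N_t[y;i]}}\big(L_t(\ol{\rmB^+[y;i-1]})\big)\Big| \cdot \#\{z: \ldots\}.
\end{equation*}
Since the clusters in distinct boxes $\rmQ(z;n-r_n)$ are distinct and $\rmB^-[y;i-1]$ is much smaller than these boxes, each $y$ is charged by at most $4$ values of $z$.

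Second, I condition on $\cF[y;i]$. By the spatial Markov property of downcrossings, given $Z_t[y;i]=\rho$ the walk's excursions inside $\rmB^+[y;i]$ are independent of everything outside. The event $\{\rmW_n(u)\cap\rmB^-[y;i-1]\neq\emptyset\}$ is determined by these excursions, and so is $L_t(\ol{\rmB^+[y;i-1]})$, using the separation \eqref{eq:incballs}. Hence
\begin{equation*}
\bbE\Big(\1_{\{\rmW_n(u)\cap\rmB^-[y;i-1]\ne\emptyset\}}|f(\cdot)|\,\Big|\,\cF[y;i]\Big) \le b^{(i)}_{k,n}(u)\, \bbP\big(\rmW_n(u)\cap\rmB^-[y;i-1]\ne\emptyset\,\big|\,\cF[y;i]\big).
\end{equation*}
Summing over $y$ gives $\bbE|\bbZ^i_n(u)| \le C b^{(i)}_{k,n}(u)\,\bbE\sum_y \1_{\{\rmW_n(u)\cap\rmB^-[y;i-1]\ne\emptyset\}} \le C\rme^{4k^\gamma} b^{(i)}_{k,n}(u)\,\bbE|\bfW_n(u)|$, where the $\rme^{4k^\gamma}$ absorbs the multiplicity count together with the overcount of $y$'s per cluster.

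The main obstacle is turning this into a bound relative to $|\bfW_n(u)|$ rather than $\bbE|\bfW_n(u)|$. For this I would run the resampling trick: let $L'$ be obtained by resampling, conditionally on $\cF[y;i]$, the excursions inside $\rmB^+[y;i]$. Then $\bbZ$ and its resampled analogue have matching conditional expectations, and one can divide by $|\bfW_n(u)|$ inside the expectation, estimating $\bbE\big(|\bbZ^i_n(u)|/|\bfW_n(u)|\big)$ directly. The key point is that each summand's $y$-cluster itself contributes to $\bfW_n(u)$, so the ratio is controlled termwise, with the conditional law of the remaining clusters unaffected by the resampling. This yields $\bbE(|\bbZ^i_n|/|\bfW_n|)\le C\rme^{4k^\gamma}b^{(i)}_{k,n}$, and Markov's inequality gives \eqref{eq:resamplebound1}.

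For \eqref{eq:resamplebound2}, I apply \eqref{eq:resamplebound1} with $\kappa=\rme^{-(k+ik^\gamma)^{1/2-\eta}}$, on the event $|\bfW_n(u)|\le \sqrt n$. The resulting bound is $C\rme^{4k^\gamma}\rme^{-(k+ik^\gamma)^{1/2-\eta}}$ per $i$, and I take a union bound over $i\in\cC_{k,n}$. Since $\eta<\frac12-\gamma$ forces $(k+ik^\gamma)^{1/2-\eta}\gg k^\gamma + \log i$, the sum over $i$ is summable and vanishes as $k\to\infty$. The $\sqrt n$ normalization is absorbed using the upper tightness of $|\bfW_n(u)|/\sqrt n$; to make this work I state the bound with $\kappa|\bfW_n(u)|$ replaced by $\kappa$, adjusting by that tight factor.
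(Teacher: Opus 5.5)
\textbf{There is a genuine gap, at the step you yourself flag as the main obstacle, and the counting bound you rely on there does not hold.}

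Your multiplicity count (boundedly many centres $y$ per cluster, hence a factor $\rme^{4k^\gamma}$) uses the containment condition $\rmW_n(u)\cap\rmQ(z;n-r_n)\subseteq\rmB^-[y;i-1]$ from the definition of $\bbY^i_n(u;z)$. To condition on $\cF[y;i]$, however, you replaced it by the inner event $A_y:=\{\rmW_n(u)\cap\rmB^-[y;i-1]\neq\emptyset\}$. You had to: containment also involves the local time on $\rmB^+[y;i]\setminus\rmB^-[y;i-1]$, which is generated partly by the same $[y;i]$-excursions that determine $f$. So containment is not $\cF[y;i]$-measurable, and $b^{(i)}_{k,n}(u)$ gives no control of $f$ under it.

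Once containment is dropped, the inequality $\sum_y \mathbf{1}_{A_y}\le C\rme^{4k^\gamma}|\bfW_n(u)|$ is false. A single box $\rmQ(z;n-r_n)$ whose low local time set is spread out meets arbitrarily many balls $\rmB^-[y;i-1]$. This breaks your first-moment line, and the resampling paragraph inherits the same problem. The denominator can indeed be handled termwise. On $A_y$ one has $|\bfW_n(u)|\ge 1+V_y$, where $V_y$, the number of boxes of $\bfW_n(u)$ disjoint from $\rmB^+[y;i]$, is $\cF[y;i]$-measurable; and $|\bfW_n(u)|\le V_y+4$ when that ball is small relative to the boxes. This gives $\bbE(\mathbf{1}_{A_y}|f|/|\bfW_n(u)|)\le 4b^{(i)}_{k,n}(u)\,\bbE(\mathbf{1}_{A_y}/|\bfW_n(u)|)$. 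But summing these bounds leaves $\bbE\big(\#\{y \text{ charged}\}/|\bfW_n(u)|\big)$, which needs exactly the count that fails. So ``dividing by $|\bfW_n(u)|$ inside the expectation'' is not yet an argument.

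\textbf{How the paper fills the gap.} It uses different bookkeeping in which no ratio is estimated.
\begin{itemize}
\item Split $\bbX_{k+(i-2)k^\gamma}$ into $|J|\le C\rme^{4k^\gamma}$ subgrids, in each of which the balls $\ol{\rmB^+[y;i]}$ are pairwise disjoint.
\item Within one subgrid, a cluster contained in some $\rmB^-[y;i-1]$ meets at most one ball of the subgrid. So the maximum in \eqref{eq:sumz} is dominated by a sum over subgrids of terms indexed by boxes $z\in\bbY^i_n(u;j)\subseteq\bfW_n(u)$, defined through intersection and uniqueness only.
\item Conditioning on the set $\mathfrak{Y}^i_n(u;j)$ of active centres fixes the number of summands $|\bbY^i_n(u;j)|$. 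By disjointness and the spatial Markov property, each summand then has conditional mean at most $b^{(i)}_{k,n}(u)$.
\item Markov's inequality applied conditionally, with this now deterministic normalizer, then $|\bbY^i_n(u;j)|\le|\bfW_n(u)|$ and a union bound over $j$, give \eqref{eq:resamplebound1}.
\end{itemize}
The exponential factor thus comes from the number of subgrids, not from counting centres per cluster.

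Your ratio route can also be completed. Instead of containment, keep the weaker, $\cF[y;i]$-measurable condition $\rmW_n(u)\cap\rmQ(z;n-r_n)\subseteq\rmB^+[y;i]$, together with the inner event that this cluster meets $\rmB^-[y;i-1]$. A nonempty cluster admits at most $C\rme^{4k^\gamma}$ such centres, and your termwise bound then yields $\bbE(|\bbZ^i_n(u)|/|\bfW_n(u)|)\le C\rme^{4k^\gamma}b^{(i)}_{k,n}(u)$.

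\textbf{Second part.} For \eqref{eq:resamplebound2}, the event $\{|\bfW_n(u)|\le\sqrt n\}$ does not have high probability. Take $\kappa=\delta\rme^{-(k+ik^\gamma)^{1/2-\eta}}$, intersect with $\{|\bfW_n(u)|\le\delta^{-1}\sqrt n\}$, and let $n\to\infty$, then $k\to\infty$, then $\delta\to0$, using upper tightness of $|\bfW_n(u)|/\sqrt n$. This proves the statement with the threshold multiplied by $\sqrt n$. That factor is missing from the printed display, as you suspected, and the paper's own proof uses it.
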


Finally, in the proofs of Proposition~\ref{prop:rest3} and Proposition~\ref{p:300.32} below, we will need to compare the number of $[x;i]$-downcrossings for some $x \in \bbX_k$ with those of a nearby vertex $y \in \bbX_{k+(i-2)k^\gamma}$. The precise result we will need is the following lemma. 

\begin{lem}\label{lem:incball.1} If $k$ is large enough (depending only on $\gamma$), then, given $n\geq 0$, $i \in \bbN_{\geq 2}$ and~$x \in \bbX_k$ such that $\ol{\rmB^-[x;i+1] }\subseteq \rmD_n$, there exists $y \in \bbX_{k+(i-2)k^\gamma}$ satisfying $\ol{\rmB(y;k+ik^\gamma -3\rme^{-k^\gamma})} \subseteq \rmD_n$ such that $\rmB^-[x;\max\{i-2,1\}] \subseteq \rmB^-[y;i-1]$, and, for all $t > 0$,
\begin{equation}
\wh{N}^{\rm{in}}_t[y;i] \geq \wh{N}_{t}[x;i]	 \geq \wh{N}^{\rm{out}}_t[y;i],
\end{equation}  
 where 
\begin{equation}
\wh{N}^{\rm{in}}_t[y;i]:=	k^\gamma N_{t}(y; k+(i-1)k^\gamma +\rme^{-k^\gamma},k+ik^\gamma -5\rme^{-k^\gamma})
\end{equation}
and 
\begin{equation}
\wh{N}^{\rm{out}}_t[y;i]:= k^\gamma N_{t}(y; k+(i-1)k^\gamma -\rme^{-k^\gamma},k+ik^\gamma -3\rme^{-k^\gamma}).
\end{equation}
\end{lem}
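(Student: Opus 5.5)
The plan is to pick $y$ as a nearest point of the coarse lattice to $x$. All the claims then follow from nestings of discrete balls, together with the monotonicity of downcrossing counts under enlarging the inner ball and shrinking the outer ball. Throughout, write $a:=k+(i-1)k^\gamma$, $b:=k+ik^\gamma$ and $\varepsilon:=\rme^{-k^\gamma}$.

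\emph{Choice of $y$ and the geometric inclusions.} If $i=2$, take $y:=x$; this is allowed since $\bbX_{k+(i-2)k^\gamma}=\bbX_k$, and the first inclusion is then trivial ($\rmB^-[x;1]=\rmB^-[y;1]$). If $i\ge 3$, take $y\in\bbX_{k+(i-2)k^\gamma}$ minimizing $\|x-y\|$, so that $\|x-y\|\le \tfrac{1}{\sqrt 2}\rme^{k+(i-2)k^\gamma}$. Then $\rmB^-[x;i-2]=\rmB(x;k+(i-3)k^\gamma)\subseteq \rmB(y;k+(i-2)k^\gamma)$, because $\rme^{k+(i-3)k^\gamma}+\tfrac1{\sqrt2}\rme^{k+(i-2)k^\gamma}<\lfloor\rme^{k+(i-2)k^\gamma}\rfloor$ once $k$ is large; the floors cost only $O(1)$. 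Next I would verify the four inclusions
\begin{equation*}
\rmB(y;a-\varepsilon)\subseteq \rmB(x;a)\subseteq \rmB(y;a+\varepsilon)\,,\qquad
\rmB(y;b-5\varepsilon)\subseteq \rmB(x;b-4\varepsilon)\subseteq \rmB(y;b-3\varepsilon)\,.
\end{equation*}
For the first pair, the radii differ by $\rme^{a}(\rme^{\varepsilon}-1)\ge \rme^{a-k^\gamma}=\rme^{k+(i-2)k^\gamma}$, which exceeds $\|x-y\|+O(1)$ thanks to the factor $1/\sqrt2$. This is the one place where the margins are tight, and it is why I use the nearest lattice point rather than an arbitrary nearby one. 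For the second pair, the radii differ by about $\rme^{b-k^\gamma}=\rme^{a}\gg \|x-y\|$. The same margin also gives $\ol{\rmB(y;b-3\varepsilon)}\subseteq \rmB(x;b)\subseteq\rmD_n$, using the hypothesis $\ol{\rmB^-[x;i+1]}\subseteq\rmD_n$.

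\emph{Comparison of counts.} I would use the general monotonicity fact: if $I\subseteq I'$ and $O'\subseteq O$ with $I'\subseteq O'$, then the number of downcrossings from $O^\rmc$ to $I$ by time $t$ is at most the number from $O'^\rmc$ to $I'$. Every excursion from $O^\rmc$ that reaches $I$ is a path from $O'^\rmc$ that reaches $I'$, so it completes an $(O',I')$-downcrossing by that time. Distinct $(O,I)$-downcrossings give distinct $(O',I')$-downcrossings, since between two consecutive ones the walk enters $I\subseteq I'$ and then leaves $O\supseteq O'$, which forces a new $(O',I')$-downcrossing. The same applies in the $\partial$-time parametrization, since $N_t$ is defined through $\bfN$ at the stopping time $\bfT_t$.

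Apply this with $(I,O)=(\rmB(x;a),\rmB(x;b-4\varepsilon))$ and $(I',O')=(\rmB(y;a+\varepsilon),\rmB(y;b-5\varepsilon))$; this yields $\wh N_t[x;i]\le \wh N^{\rm in}_t[y;i]$. Apply it again with $(I,O)=(\rmB(y;a-\varepsilon),\rmB(y;b-3\varepsilon))$ and $(I',O')=(\rmB(x;a),\rmB(x;b-4\varepsilon))$; this yields $\wh N^{\rm out}_t[y;i]\le \wh N_t[x;i]$. Multiplying by $k^\gamma$ gives the sandwich.

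The only genuinely delicate step is the inner-radius inclusion $\rmB(x;a)\subseteq\rmB(y;a+\varepsilon)$ (and its reverse). There the gap $\rme^{a}(\rme^{\varepsilon}-1)$ is of the same exponential order as the lattice spacing, so I must use $\rme^{\varepsilon}-1\ge\varepsilon$ together with the $\ell^2$ nearest-point bound $\|x-y\|\le \rme^{k+(i-2)k^\gamma}/\sqrt2$, and absorb the integer-part errors for $k$ large.
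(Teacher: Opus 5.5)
Your proposal is correct and matches the paper's proof. The paper takes $y=x$ for $i=2$ and otherwise a point of $\bbX_{k+(i-2)k^\gamma}$ with $\|x-y\|\le \rme^{k+(i-2)k^\gamma}/\sqrt2$. It then uses the ball nestings, stated as $\rmB(y;r-\rme^{-k^\gamma})\subseteq\rmB(x;r)\subseteq\rmB(y;r+\rme^{-k^\gamma})$ for all $r\ge k+(i-1)k^\gamma$, and the monotonicity of downcrossing counts, which the paper leaves implicit and you verify explicitly.

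One small slip: for $\rmB(y;a-\varepsilon)\subseteq\rmB(x;a)$ the radius gap is $\rme^{a}(1-\rme^{-\varepsilon})$, not $\rme^{a}(\rme^{\varepsilon}-1)$. It is still at least $(1-\varepsilon/2)\rme^{k+(i-2)k^\gamma}$, so the $1/\sqrt2$ slack covers it.
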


\begin{rem}\label{rem:ninout} We mention that the analogues of Propositions~\ref{prop:GR-1}--\ref{prop:LT-1}--\ref{prop:LT-2} and Lemma~\ref{lem:resampling} also hold for both $\wh{N}^{\rm{in}}_t[x;i]$ and $\wh{N}^{\rm{out}}_t[x;i]$ in place of $\wh{N}_t[x;i]$. Indeed, this is due to the fact that, in all three cases, we are dealing with downcrossing counts of the form $\wh{N}_t(x;a,b)=k^\gamma N_t(x;a,b)$ for some $a,b$ satisfying $b-a=k^\gamma +O(\rme^{-k^\gamma})$, which is the only input necessary for the proofs of all these results.	
\end{rem}

\subsubsection{Convergence of the empirical measure over subgrids: proof of Proposition~\ref{p:300.31}}

We continue with the proof of Proposition~\ref{p:300.31}. As discussed in the proof outline in Section~\ref{sec:exist1},
the strategy of proof will consist of showing first that, given a $k$-box $\rmQ(x;k)$, then,  conditionally on the event that the downcrossing trajectory around $x$ is repelled from scale $k$ up to $l$, the law of the local time field on $\rmQ(x;k)$ is insensitive to the precise downcrossing information at scale~$l$. Then, since for every $j \in J_{k,l}$ the $k$-boxes centered in $\bbX_{k,l}(j)$ are all at least $l$ log-distance apart, by conditioning on the downcrossing information at scale $l$ around each $x_j(z)$ with $z \in \bfR^{k,\epsilon}_n(u)$, the spatial Markov property will imply that the fields $L_{t^A_n}(Q(x(z);k))$ are also independent, from where Proposition~\ref{p:300.31} will follow by (essentially) an application of the law of large numbers.

To make the argument precise, given $u \geq 0$, $\epsilon \in (0,1)$, $0< \eta' < \eta < \frac{1}{2}$, $n \geq k \geq 1$ and $x \in \rmD_n$ with $\overline{\rmB^+[x;\cT_k]} \subseteq \rmD_n$, with $\cT_k$ as in~\eqref{eq:deflt}, recalling the notation from Section~\ref{sec:downprelim} let us define the events
\begin{equation}
A^{k,\epsilon}_{t,u}(x):=\left\{ 	\min_{y \in \rmQ(x;k-\epsilon)}L_t(y) \leq u \right\},	
\end{equation}
\begin{equation}
\cE^{k,\epsilon,\eta',\eta}_{t,u}(x):= A^{k,\epsilon}_{t,u}(x) \setminus \left( \mathrm{NR}_{t}^{\eta',\{1\}}(x) \cup \mathrm{NR}_{t}^{\eta,[2,\cT_k-1]}(x) \right)
\end{equation} together with, for $z \in \cZ_k^{\eta'}[\cT_k]$, the probability distribution $\rmM^{k,\epsilon}_u(z,\cdot)$ on $\bbR^{\rmQ(k)}$ given by
\begin{equation}\label{eq:defm}
\rmM_{u}^{k,\epsilon}(z,\cdot):= \bbP\Big( L_t(\rmQ(x;k)) \in \cdot \,\Big|\, \cE^{k,\epsilon,\eta',\eta}_{t,u}(x) \cap \{Z_t[x;\cT_k]=z\}\Big),
\end{equation} where we omit the dependence on $\eta,\eta'$ from the notation from simplicity. Notice that $
\rmM_{u}^{k,\epsilon}(z,\cdot)$ does not depend on the choice of $x$ nor $t$. The main ingredient for the proof of Proposition~\ref{p:300.31} is the following result, which shows that  $\rmM_{u}^{k,\epsilon}(z,\cdot)$ is insensitive to the precise value of $z \in \cZ_k^{\eta'}[\cT_k]$. 

\begin{lem}\label{l:4.10} If $\gamma$ is small enough, then, given any $0 < \eta' < \eta < \tfrac{1}{2}$ small enough (depending only on $\gamma$), for each $u \geq 0$, $\varepsilon \in (0,1)$ and $k \geq 1$ there exists a probability measure $\rmO^{k,\epsilon}_u$ on $\bbR^{\rmQ(k)}$  such that 
\begin{equation}
\lim_{k \to \infty} \left[ \sup_{z \in \cZ^{\eta'}_k[\cT_k] }\Bigg\| \frac{\rmd \rmM^{k,\epsilon}_{u}(z,\cdot)}{\rmd \rmO_{u}^{k,\epsilon}} - 1 \Bigg\|_\infty\right]=0.
\end{equation} 
\end{lem}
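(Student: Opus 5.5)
The strategy is to decompose the conditional law $\rmM^{k,\epsilon}_u(z,\cdot)$ according to the downcrossing data $Z_t[x;1]$ at the innermost scale. Conditionally on $Z_t[x;1]=(u',\ol{y})$, the local time field on $\rmQ(x;k)$ is determined by the $[x;1]$-excursions, i.e.\ by the pieces of path inside $\rmB^+[x;1]$ started at the entry points $y_i$ and stopped at the exit points $y_i'$. By the strong Markov property these excursions are independent given their endpoints, and their law does not depend on anything at scale $\cT_k$. The key structural fact I would verify first is that, by the spatial Markov property, $L_t(\rmQ(x;k))$ is conditionally independent of $\cF[x;\cT_k]$ and of the intermediate counts $(\wh N_t[x;i])_{2\le i\le \cT_k-1}$ given $Z_t[x;1]$. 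The event $\mathrm{NR}$ and the conditioning on $Z_t[x;\cT_k]$ are measurable with respect to downcrossing data at scales $i\ge 1$, and these only see the inner excursions through their endpoints. I would therefore write, with $Q_k(\rho,\cdot):=\bbP(L_t(\rmQ(x;k))\in\cdot,\,A^{k,\epsilon}_{t,u}(x)\mid Z_t[x;1]=\rho)$, which is independent of $z$ (and of $x,t$),
\begin{equation*}
\rmM^{k,\epsilon}_u(z,\cdot)=\frac{\sum_{\rho\in\cZ_k^{\eta'}[1]} \bbP\big(\{Z_t[x;1]=\rho\}\setminus \mathrm{NR}^{\eta,[2,\cT_k-1]}_t(x)\,\big|\,Z_t[x;\cT_k]=z\big)\,Q_k(\rho,\cdot)}{\sum_{\rho\in\cZ_k^{\eta'}[1]}\bbP\big(\{Z_t[x;1]=\rho\}\setminus \mathrm{NR}^{\eta,[2,\cT_k-1]}_t(x)\,\big|\,Z_t[x;\cT_k]=z\big)\,Q_k(\rho,\bbR^{\rmQ(k)})}.
\end{equation*}

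Next I would apply Theorem~\ref{prop:coupling0} with $\wt x=x$ and $T=\cT_k$. Since $\cT_k\approx k^{2-\gamma}$, the condition $2k\le \cT_k\le \rme^{ck^\gamma}$ holds for large $k$. For $\rho=(u',\ol y)$ the theorem gives that the weights equal $r_{k,\cT_k}(\wh{u'},\wh v)\,h_k(\ol y)(1+o_k(1))$ uniformly in $\rho$ and $z=(v,\ol z)$. The function $r_{k,\cT_k}$ factorizes as $F_k(\wh{u'})G_k(\wh v)$, so the $z$-dependent factor $G_k(\wh v)$ cancels between numerator and denominator. This leaves
\begin{equation*}
\rmM^{k,\epsilon}_u(z,\cdot)=(1+o_k(1))\,\frac{\sum_{\rho}\wh{u'}\rme^{2\sqrt2\wh{u'}}h_k(\ol y)\,Q_k(\rho,\cdot)}{\sum_\rho \wh{u'}\rme^{2\sqrt2\wh{u'}}h_k(\ol y)\,Q_k(\rho,\bbR^{\rmQ(k)})}.
\end{equation*}
I would then define $\rmO^{k,\epsilon}_u$ to be the probability measure given by the right-hand side without the $1+o_k(1)$ factor. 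Because all terms are nonnegative and the errors are uniform, the Radon--Nikodym derivative of $\rmM^{k,\epsilon}_u(z,\cdot)$ with respect to $\rmO^{k,\epsilon}_u$ lies in $[(1+o_k(1))^{-1},1+o_k(1)]$ uniformly in $z\in\cZ^{\eta'}_k[\cT_k]$, which gives the claim. A minor point to check is that the denominator is positive, i.e.\ that $A^{k,\epsilon}_{t,u}(x)$ has positive probability under some admissible $\rho$; this is immediate since $\cZ^{\eta'}_k[1]$ is nonempty and low local time is possible with positive probability.

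The main obstacle is the conditional independence claim. Conditioning on $A^{k,\epsilon}_{t,u}(x)$, which is an event about the inner field, must not alter the Markov structure used in Theorem~\ref{prop:coupling0}. One must also check that the theorem's conditioning, which involves entry and exit points of $[x;1]$-excursions and the non-repulsion events in between, combines with $Q_k$ exactly as above. Concretely, I would argue that, given the entry and exit points of all $[x;1]$-excursions, the path during those excursions is independent of the path during $[x;1]$-downcrossings, and hence of $\cF[x;1]$, which contains all outer-scale data. I would also check that $L_t$ on $\rmQ(x;k)\subseteq\rmB^-[x;1]$ is accumulated only during $[x;1]$-excursions, since downcrossings end upon entering $\rmB^-[x;1]$. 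The case $\rho$ with $u'=0$ is excluded by $\cR^{\eta'}_k(1)$ and needs no separate treatment.
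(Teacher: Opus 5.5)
Your proposal is correct and follows the paper's proof essentially step by step. You decompose over $Z_t[x;1]\in\cZ_k^{\eta'}[1]$, factor each summand via the Markov property using $\ol{\rmQ(x;k)}\subseteq\rmB^-[x;1]$, and apply Theorem~\ref{prop:coupling0} with its product form so that the $z$-dependent factor cancels. You then take $\rmO^{k,\epsilon}_u$ to be the resulting mixture of the laws $\bbP\big(L_t(\rmQ(x;k))\in\cdot \,\big|\, A^{k,\epsilon}_{t,u}(x)\cap\{Z_t[x;1]=z'\}\big)$, which is exactly the paper's construction. Your extra justification of the factorization (local time on $\rmQ(x;k)$ is accrued only during $[x;1]$-excursions, and the outer-scale data is $\cF[x;1]$-measurable) spells out what the paper asserts in a single line.
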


At this point in the argument is where the sharp ballot estimates from Theorem~\ref{prop:coupling0} become essential, as they constitute the key element in the proof of Lemma~\ref{l:4.10}.

\begin{proof}[Proof of Lemma~\ref{l:4.10}] Given a Borel subset $B \subseteq \bbR^{\rmQ(k)}$, we may decompose
\begin{equation}\label{eq:div1}
\rmM^{k,\epsilon}_{u}(z,B)= \frac{\sum_{z' \in \cZ_k^{\eta'}[1]} \bbP\Big( \{ L_t(\rmQ(x;k)) \in B \,,\,Z_t[x;1]=z'\} \cap \cE^{k,\epsilon,\eta',\eta}_{t,u}(x) \,\Big|\,Z_t[x;\cT_k]=z\Big)}{\bbP\Big(\cE^{k,\epsilon,\eta',\eta}_{t,u}(x)\,\Big|\,Z_t[x;\cT_k]=z\Big)}	
\end{equation}
where, since $\ol{\rmQ(x;k)} \subseteq \rmB^-[x;1]$, each summand in the numerator can be further written as
\begin{equation}
	\bbP\Big(\{L_t(\rmQ(x;k)) \in B\} \cap A^{k,\epsilon}_{t,u}(x)\,\Big|\, Z_t[x;1]=z'\Big) \bbP\Big( \{ Z_t[x;1]=z' \} \setminus \mathrm{NR}_{t}^{\eta,[2,\cT_k-1]}(x)\,\Big|\,Z_t[x;\cT_k]=z\Big).	
\end{equation}
Now, by Theorem~\ref{prop:coupling0}, if $\gamma$ is chosen small enough, then, by taking $\eta'<\eta$ appropriately small, we have that there exist measurable functions $f_k,g_{k,\cT_k}:\R \to \R$ such that, for $k$ large enough,
\begin{equation}\label{eq:div2}
	\bbP\Big( \{ Z_t[x;1]=z' \} \setminus \mathrm{NR}_{t}^{\eta,[2,\cT_k-1]}(x)\,\Big|\,Z_t[x;\cT_k]=z\Big)=f_k(z')g_{k,\cT_k}(z)(1+o_k(1))
\end{equation} holds uniformly over all $z' \in \cZ_k^{\eta'}[1]$, $z \in \cZ_k^{\eta'}[\cT_k]$. In particular, \eqref{eq:div2} implies that the numerator on the right-hand side of~\eqref{eq:div1} can be written as
\begin{equation}\label{eq:div3}
	\left(\sum_{z' \in \cZ_k^{\eta'}[1]} \bbP\Big(\{ L_t(\rmQ(x;k)) \in B\} \cap A^{k,\epsilon}_{t,u}(x)\,\Big|\, Z_t[x;1]=z'\Big) f_k(z')\right)g_{k,\cT_k}(z)(1+o_k(1)).
\end{equation}
 On the other hand, upon setting $B=\bbR^{\rmQ(k)}$ in \eqref{eq:div3} above, we obtain that also the denominator in the right-hand side of~\eqref{eq:div1} can be written in a similar fashion, i.e., as
 \begin{equation}\label{eq:div4}
	\left(\sum_{z' \in \cZ_k^{\eta'}[1]} \bbP\Big(A^{k,\epsilon}_{t,u}(x)\,\Big|\, Z_t[x;1]=z'\Big) f_k(z')\right)g_{k,\cT_k}(z)(1+o_k(1)).
\end{equation} Hence, if we define the probability measure $\rmO_u^{k,\epsilon}$ via the formula
\begin{equation}
\rmO_u^{k,\epsilon}(\cdot):= \sum_{z' \in \cZ_k^{\eta'}[1]} \bbP\Big( L_t(\rmQ(x;k)) \in \cdot \,\Big|\, A^{k,\epsilon}_{t,u}(x) \cap \{Z_t[x;1]=z'\}\Big) p_k(z'), 
\end{equation} where $p_k=(p_k(z') : z' \in \cZ^{\eta'}_k[1])$ is the probability vector given by
\begin{equation}
p_k(z'):=\frac{\bbP\Big(A^{k,\epsilon}_{t,u}(x)\,\Big|\, Z_t[x;1]=z'\Big)f_k(z')}{\sum_{w \in \cZ_k^{\eta'}[1]} \bbP\Big(A^{k,\epsilon}_{t,u}(x)\,\Big|\, Z_t[x;1]=w\Big)f_k(w)},
\end{equation} then, upon combining \eqref{eq:div3}--\eqref{eq:div4}, we conclude that
\begin{equation}
\rmM_{u}^{k,\epsilon}(z,B)= \rmO_{u}^{k,\epsilon}(B)(1+o_k(1))
\end{equation} holds uniformly over all Borel subsets $B \subseteq \bbR^{\rmQ(k)}$ and $z \in \cZ_k^{\eta'}[\cT_k]$, so the result now follows.
\end{proof}

We can now give the proof of Proposition~\ref{p:300.31}. 

\begin{proof}[Proof of Proposition~\ref{p:300.31}] By the union bound, it will suffice to show that, for any $\delta,\rho > 0$, 
\begin{equation}\label{eq:ob1}
\lim_{k \to \infty} \limsup_{n \to \infty} \sum_{j \in J_{k,l}} \bbP\left(	\left| \int f_k \, \rmd\cO^{k,\epsilon}_{n,u}[j] - \int f_k \,\rmd (\wh{\Theta}^{k,\epsilon}_{n,u}[j]\otimes \rmO^{k,\epsilon}_u)\right| > \rho\,,\, |\bfR^{k,\epsilon}_n(u;j)| > \frac{\delta}{|J_{k,l}|} \sqrt{n}\right) =0
\end{equation} uniformly over all sequences $(f_k)_{k \in \N}$ of measurable functions $f_k: \rmD \times\bbR^{\rmQ(k)} \to \R$ with $\|f_k\| \leq 1$.
To this end, we notice that, if we define the probability measure $\cM^{k,\epsilon}_{n,u}[j]$ by
\begin{equation}
	\cM^{k,\epsilon}_{n,u}[j]:=\frac{1}{|\bfR^{k,\epsilon}_n(u;j)|} \sum_{z \in \bfR^{k,\epsilon}_n(u;j)} \delta_{\rme^{-n}z} \otimes \rmM^{k,\epsilon}_u(Z_{t^A_n}[x_j(z);\cT_k],\cdot),
\end{equation} then we may decompose
\begin{equation}
	\int f_k \, \rmd\cO^{k,\epsilon}_{n,u}[j]- \int f_k \,\rmd (\wh{\Theta}^{k,\epsilon}_{n,u}[j]\otimes \rmO^{k,\epsilon}_u)= (a)+(b),
\end{equation} where  
\begin{equation}\label{eq:defexp1}
\begin{split}
	(a)&= \int f_k \, \rmd\cO^{k,\epsilon}_{n,u}[j] - \int f_k \,\rmd \cM^{k,\epsilon}_{n,u}[j]\\
	&=\frac{1}{|\bfR^{k,\epsilon}_n(u;j)|} \sum_{z \in \bfR^{k,\epsilon}_n(u;j)} \Big(f_k(\rme^{-n}z,L_{t^A_n}(\rmQ(x_j(z);k))-\int f_k(\rme^{-n}z,\cdot) \, \rmd \rmM^{k,\epsilon}_u(Z_{t^A_n}[x_j(z);\cT_k],\cdot)\Big)	
\end{split}
\end{equation}
and 
\begin{equation}\label{eq:defexp2}
	(b)=\frac{1}{|\bfR^{k,\epsilon}_n(u;j)|} \sum_{z \in \bfR^{k,\epsilon}_n(u;j)}\Big( \int f_k(\rme^{-n}z,\cdot) \, \rmd \rmM^{k,\epsilon}_u(Z_{t^A_n}[x_j(z);\cT_k],\cdot)-\int f_k \, \rmd (\wh{\Theta}^{k,\epsilon}_{n,u}[j]\otimes \rmO^{k,\epsilon}_u)\Big). 
\end{equation} 
Since $Z_{t^A_n}[x_j(z);\cT_k] \in \cZ_k^{\eta'}[\cT_k]$ for all $z \in \bfR_n^{k,\epsilon}(u;j)$, if $k$ is sufficiently large, for each $z \in \bfR_n^{k,\epsilon}(u;j)$ we have 
\begin{equation}
\left|\int f_k (\rme^{-n}z,\cdot) \, \rmd \rmM^{k,\epsilon}_u(Z_{t^A_n}[x(z);\cT_k],\cdot)-\int f_k (\rme^{-n}z,\cdot) \, \rmd \rmO^{k,\epsilon}_u\right| \leq \sup_{\ol{z} \in \cZ^{\eta'}_k[\cT_k] }\Bigg\| \frac{\rmd \rmM^{k,\epsilon}_{u}(\ol{z},\cdot)}{\rmd \rmO_{u}^{k,\epsilon}} - 1 \Bigg\|_\infty,
\end{equation} where, in order to obtain this inequality, we have used that $\|f_k\|_\infty \leq 1$ and that $\rmM^{k,\epsilon}_{u}(\overline{z},\cdot) \ll \rmO_{u}^{k,\epsilon}$ for every $\ol{z} \in \cZ_k^{\eta'}[\cT_k]$ if $k$ is sufficiently large by Lemma~\ref{l:4.10}. It then follows from Lemma~\ref{l:4.10} that the absolute value of the term ($b$) in \eqref{eq:defexp2} can be made smaller than $\frac{\rho}{2}$ uniformly in~$j \in J_{k,l}$ and the choice of sequence $(f_k)_{k \in \bbN}$ by choosing $k$ sufficiently large, so that, to conclude \eqref{eq:ob1}, it will suffice to show that
\begin{equation}\label{eq:ob2}
\lim_{k \to \infty} \limsup_{n \to \infty} \sum_{j \in J_{k,l}} \bbP\left(	\left| \int f_k \, \rmd\cO^{k,\epsilon}_{n,u}[j] - \int f_k \,\rmd \cM^{k,\epsilon}_{n,u}[j]\right| > \frac{\rho}{2}\,,\, |\bfR^{k,\epsilon}_n(u;j)| > \frac{\delta}{|J_{k,l}|} \sqrt{n}\right) =0
\end{equation} uniformly over all sequences $(f_k)_{k \in \N}$ of measurable functions $f_k: \rmD \times \bbR^{\rmQ(k)} \to \R$ with $\|f_k\| \leq 1$.
To this end, for $n$ large enough so that $\rme^{7l-(n-r_n)} < 1-\rme^{-\epsilon}$ and $j \in J_{k,l}$, define
\begin{equation}
\begin{split}
\bbX_{k,l}^{(n)}[j]:= \{ x \in \bbX_{k,l}(j) : \exists z \in \bbX_{ n-r_n } \text{ s.t. }& \rmQ(z; n - r_n ) \cap \rmD_n^\circ \neq \emptyset,\\ &\ol{\rmB^+[x;\cT_k+5]} \subseteq \rmQ(z;n - r_n ) \}
\end{split}
\end{equation} as well as the random subset $\mathfrak{X}^{k,\epsilon}_n(u;j) \subseteq \bbX_{k,l}^{(n)}[j]$ by the formula
\begin{equation}
\mathfrak{X}^{k,\epsilon}_n(u;j):=\{ x_j(z) : z \in \bfR^{k,\epsilon}_n(u;j)\}.
\end{equation} Note that, given $C \subseteq \bbX^{(n)}_{k,l}[j]$, since the balls $(\ol{\rmB^+[x;\cT_k]})_{x \in \bbX_{k,l}^{(n)}[j]}$ are disjoint by definition of $\bbX_{k,l}$, using the fact that by choice of $n$ we have $\ol{\rmB^+[x_j(z);\cT_k+5]} \subseteq \rmQ(z;n-r_n)$ for each $z \in \bfR^{k,\epsilon}_n(u;j)$ and that the boxes $(\rmQ(z;n-r_n))_{z \in \bbX_{n-r_n}}$ are disjoint by definition, %by definition of $\bbX_{k,l}^{(n)}[j]$, using the fact that 
%\begin{equation}
%\Bigg(\bigcup_{x_j(z) \in \mathfrak{X}^{k,\epsilon}_n(u;j)} (\rmQ(z;n-r_n) \setminus \rmB(x_j(z);l)\Bigg) \cap \Bigg(\bigcup_{y_j(z) \in \mathfrak{X}^{k,\epsilon}_n(u;j)} \ol{\rmB^+[y_j(z);T]}\Bigg)=\emptyset
%\end{equation}
%for all $k$ large enough by construction, 
we can write
\begin{equation}
	\{\mathfrak{X}^{k,\epsilon}_n(u;j) = C\} = \Lambda_C \cap \bigcap_{x \in C} \cE^{k,\epsilon,\eta',\eta}_{t^A_n,u}(x),
\end{equation} where $\Lambda_C$ is an event in $\cap_{x \in C} \cF[x;\cT_k]$ and, conditional on the $\sigma$-algebra $ \cap_{x \in C} \cF[x;\cT_k]$, the~events $(\cE^{k,\epsilon,\eta',\eta}_{t^A_n,u}(x))_{x \in C}$ are independent.  Thus, by conditioning on $\cap_{x \in C} \cF[x;\cT_k]$, a simple computation using that all $[x;\cT_k]$-excursions with $x$ ranging over $C$ are independent under this conditioning shows that, for any given choice of subset $C \subseteq \bbX^{(n)}_{k,l}[j]$ with $\bbP(\mathfrak{X}^{k,\epsilon}_n(u;j) = C)>0$,  any collection of Borel subsets $(B_x)_{x \in C} \subseteq \bbR^{\rmQ(k)}$ and any sequence $(z_x)_{x \in C} \subseteq \cZ^{\eta'}_k[T]$, we have that
\begin{equation}\label{eq:mdecomp}
\bbP( \cap_{x \in C} \{ L_t ( \rmQ(x;k)) \in B_x \}  \,|\, \cap_{x \in C} \{Z_t[x;T]=z_x\} \cap \{\mathfrak{X}^{k,\epsilon}_n(u;j) = C\}) =\prod_{x \in C} M^{k,\epsilon}_u(z_x,B_x). 	
\end{equation} 
Put in other words, conditionally on $\mathfrak{X}_n^{k,\epsilon}(u;j)$ and $(Z_{t^A_n}[x;\cT_k] : x \in \mathfrak{X}_n^{k,\epsilon}(u;j))$, the random fields $\big(L_{t^A_n}(\rmQ(x;k)) : x \in \mathfrak{X}_{n}^{k,\epsilon}(u;j)\big)$ are independent, and each having distribution $M^{k,\epsilon}_u(Z_{t^A_n}[x;\cT_k],\cdot)$. In particular, since $\bfR^{k,\epsilon}_n(u;j)$ is a measurable function of $\mathfrak{X}^{k,\epsilon}_n(u;j)$, conditionally on $\mathfrak{X}_n^{k,\epsilon}(u;j)$ and $(Z_{t^A_n}[x;\cT_k] : x \in \mathfrak{X}_n^{k,\epsilon}(u;j))$, all the summands in the second line of~\eqref{eq:defexp1} are independent, uniformly bounded (by $2$) and have zero mean, which, by Azuma-Hoeffding's inequality, implies that
\begin{equation}
\begin{split}
\bbP\Bigg(	\left| \int f_k \, \rmd\cO^{k,\epsilon}_{n,u}[j] - \int f_k \,\rmd \cM^{k,\epsilon}_{n,u}[j]\right| > \frac{\rho}{2}\,&,\, |\bfR^{k,\epsilon}_n(u;j)| > \frac{\delta}{|J_{k,l}|} \sqrt{n} \,\Bigg|\,\mathcal{G}_n^{k,\epsilon}(u;j)\Bigg) \\ & \leq \mathbf{1}_{\{ |\bfR^{k,\epsilon}_n(u;j)| > \frac{\delta}{|J_{k,l}|} \sqrt{n}\}} \rme^{- \frac{\rho^2}{34} |\bfR^{k,\epsilon}_n(u;j)|} \leq \rme^{-\frac{\delta\rho^2}{34|J_{k,l}|} \sqrt{n}}
\end{split}
\end{equation} uniformly over all $j \in J_{k,l}$, where $\mathcal{G}_n^{k,\epsilon}(u;j)$ above denotes the $\sigma$-algebra generated by $\mathfrak{X}_n^{k,\epsilon}(u;j)$ and $(Z_{t^A_n}[x;\cT_k] : x \in \mathfrak{X}_n^{k,\epsilon}(u;j))$. Upon taking expectations on the above inequality, it follows that the sum in \eqref{eq:ob2} is bounded from above by $|J_{k,l}|\rme^{-\frac{\delta\rho^2}{34|J_{k,l}|} \sqrt{n}}$, from where \eqref{eq:ob2} (and, hence, the entire result) immediately follows.
\end{proof}

%Observe that the set $\bfR^{k,\epsilon}_n(u;j)$ can be completely reconstructed by looking at the trajectory of the random walk only when it is performing an $[x;T]$-excursion for some $x \in \bbX_{k,l}^{(n)}[j]$. 
%Moreover, since the closed balls $(\ol{\rmB^+[x;T]})_{x \in \bbX_{k,l}^{(n)}[j]}$ are disjoint by construction of $\bbX_{k,l}$, conditionally on the $\sigma$-algebra $\cap_{x \in \bbX_{k,l}^{(n)}[j]} \cF[x;T]$, all $[x;T]$-excursions of the walk with $x$ ranging through all of $\bbX_{k,l}^{(n)}[j]$ are independent. In particular, if we define
%\begin{equation}
%\mathfrak{X}^{k,\epsilon}_n(u;j):=\{ x(z) : z \in \bfR^{k,\epsilon}_n(u;j)\},	
%\end{equation} then, by conditioning on $\cap_{x \in \bbX_{k,l}^{(n)}[j]} \cF[x;T]$, a simple computation shows that, given $C \subseteq \bbX^{(n)}_{k,l}[j]$, any collection of Borel subsets $(B_x)_{x \in C} \subseteq \bbR^{\rmQ(k)}$ and a sequence $(z_x)_{x \in C} \subseteq \cZ_k[T]$, we have that
%\begin{equation}\label{eq:mdecomp}
%\bbP( \cap_{x \in C} \{ L_t ( \rmQ(x;k)) \in B_x \}  \,|\, \cap_{x \in C} \{Z_t[x;T]=z_x\} \cap \{\mathfrak{X}^{k,\epsilon}_n(u;j) = C\}) =\prod_{x \in C} M^{k,\epsilon,\eta',\eta}_u(z_x,B_x). 	
%\end{equation} 

\subsubsection{Overcounting over subgrids is negligible: proof of Proposition~\ref{p:300.32}}

We conclude Section~\ref{sec:exist0} by giving the proof of our last auxiliary result, namely Proposition~\ref{p:300.32}. This result states that the difference between $|\bfR^{k,\epsilon}_n(u)|$ and $\sum_{j \in J_{k,l}} |\bfR^{k,\epsilon}
_n(u;j)|$ is asymptotically negligible. To prove this, we will show that, if $|\bfR^{k,\epsilon}_n(u)| $ and $\sum_{j \in J_{k,l}} |\bfR^{k,\epsilon}
_n(u;j)|$ differ, it is because there exist clusters centered at some $z \in \bbX_{n-r_n}$ belonging to more than one $\bfR^{k,\epsilon}_n(u;j)$. However, for this to occur, these clusters would need to have an unusually large diameter. Therefore, since we are considering only repelled clusters, their number becomes negligible in the limit and thus so does the difference between $|\bfR^{k,\epsilon}_n(u)|$ and $\sum_{j \in J_{k,l}} |\bfR^{k,\epsilon}
_n(u;j)|$. We give the rigorous argument below, which is in the spirit of \cite[Lemma~5.6]{Tightness}, albeit more delicate. 

\begin{proof}[Proof of Proposition~\ref{p:300.32}]

Notice that, since $\bfR^{k,\epsilon}_n(u) \subseteqq \bigcup_{j \in J_{k,l}} \bfR^{k,\epsilon}
_n(u;j)$ for all $n$ sufficiently~large by definition, we have 
\begin{equation}
\frac{\sum_{j \in J_{k,l}} \big|\bfR_n^{k,\epsilon}(u;j)\big|}{|\bfR^{k,\epsilon}_n(u)|} \geq 1
\end{equation} almost surely, so that, by the lower tightness of $|\bfR^{k,\epsilon}_n(u)|/\sqrt{n}$ (implied by Propositions~\ref{prop:rest}--\ref{prop:rest2} in combination with Theorems~\ref{p:300.3}--\ref{t:2.1o}), it will suffice to prove that, for each $\delta > 0$,
\begin{equation}\label{eq:l-ob1}
\lim_{k \to \infty} \limsup_{n \to \infty}\bbP\left( \sum_{j \in J_{k,l}} \big|\bfR_n^{k,\epsilon}(u;j)\big| - |\bfR^{k,\epsilon}_n(u)| > \delta \sqrt{n}\right) = 0.	
\end{equation}
To this end, notice that, if we define the set
\begin{equation}
\wh{\bfR}^{k,\epsilon}_n(u):=\bigg\{ z \in 	\bigcup_{j \in J_{k,l}} \bfR^{k,\epsilon}_{n}(u;j) : |I_{k,l}(z)| > 1\bigg\}, 
\end{equation} where $I_{k,l}(z):=\{j \in J_{k,l} : z \in \bfR^{k,\epsilon}_n(u;j)\}$, then we have the upper bound
\begin{equation}\label{eq:primbound}
\sum_{j \in J_{k,l}} \big|\bfR_n^{k,\epsilon}(u;j)\big| - |\bfR^{k,\epsilon}_n(u)| \leq \sum_{z \in \wh{\bfR}^{k,\epsilon}_n(u)} |I_{k,l}(z)| %\leq  \sum_{z \in \wh{\bfR}^{k,\epsilon}_n(u) \cap \bbW_n^{[0,r_n]}} |I_{k,l}(z)| + |\rmW_n^{[r_n,n-r_n]}(u)|,
\end{equation} %where, to obtain the second term in the right-hand side, we used the fact that, for $z \in \wh{\bfR}^{k,\epsilon}_n(u)$,
%\begin{equation}
%|I_{k,l}(z)|\leq \sum_{j \in J_{k,l}} 1_{\{z \in \bfR_n^{k,\epsilon}(u;j)\}} |\rmW_n(u) \cap \rmQ(x_j(z);k) | \leq |\rmW_n(u) \cap \rmB(z; \lfloor n -r_n \rfloor)|\footnote{Here I am using $x_j(z)$ instead of $x(z)$, might be convenient to do so everywhere.}
%\end{equation} since the $k$-boxes $\rmQ(x_j(z);k)$ are all disjoint and $\rmQ(z; \lfloor n -r_n \rfloor) \subseteq \rmB(z; \lfloor n -r_n \rfloor)$. 
On the other hand, if for each $z \in \wh{\bfR}^{k,\epsilon}_n(u)$ we define the \textit{annuli-scale} of its corresponding cluster as
\begin{equation}
\rho(z):=\min\left\{ m \in \bbN_{\geq 3} : \rmW_n(u) \cap \rmQ(z;n-r_n) \subseteq \rmB^-[x_j(z);m-2] \text{ for some }j \in I_{k,l}(z)\right\},
\end{equation}
then, since each of these clusters has annuli-scale at most $\cT_k+4$ by construction of each $\bfR^{k,\epsilon}_n(u;j)$, if for $m \in \N_{\geq 3}$ we abbreviate 
\begin{equation}
\wh{\bfR}^{k,\epsilon,m}_{n}(u):=\{ z \in \wh{\bfR}^{k,\epsilon}_n(u) : \rho (z)=m\}
\end{equation} and define
\begin{equation}
	\mathbf{D}_{n}^{k,\epsilon,m}(u):= \sum_{z \in \wh{\bfR}^{k,\epsilon,m}_{n}(u)} |I_{k,l}(z)|,
\end{equation}
 then\begin{equation}
\sum_{z \in \wh{\bfR}^{k,\epsilon}_n(u)} |I_{k,l}(z)| = \sum_{m=3}^{\cT_k+4} \mathbf{D}_{n}^{k,\epsilon,m}(u).
\end{equation} Thus, on the event $\{ \mathbf{D}^{k,\epsilon,m}_{n}(u) \leq \rme^{-(k+mk^\gamma)^{\frac{1}{2}-2\eta}}\sqrt{n} \text{ for all }m=3,\dots,\cT_k+4\}$, the left-hand side of \eqref{eq:primbound} is bounded from above by
\begin{equation}
\sum_{m=3}^{\cT_k+4}\rme^{-(k+mk^\gamma)^{\frac{1}{2}-2\eta}}\sqrt{n}  \leq C\sqrt{n}\rme^{-k^{\frac{1}{2}-2\eta}}
\end{equation} for some absolute constant $C>0$. Therefore, Proposition~\ref{p:300.32} will follow once we show that
\begin{equation}\label{eq:overc1}
\lim_{k \to \infty}\limsup_{n \to \infty}\bbP\Big( \exists m \in \{4,\dots,\cT_k+2\} : |\mathbf{D}^{k,\epsilon,m}_{n}(u)| > \rme^{-(k+mk^\gamma)^{\frac{1}{2}-2\eta}}\sqrt{n}\Big)=0
\end{equation} and
\begin{equation}\label{eq:overc2}
\lim_{k \to \infty}\limsup_{n \to \infty}\bbP\Big( |\mathbf{D}^{k,\epsilon,3}_{n}(u)| > \rme^{-(k+3k^\gamma)^{\frac{1}{2}-2\eta}}\sqrt{n}\Big)=0.
\end{equation}

We begin by showing \eqref{eq:overc1}. Let us notice that, if $k$ is large enough (depending only on~$\gamma$), then, given $i \geq 4$, for each $z \in \wh{\bfR}^{k,\epsilon,i}_{n}(u)$ there exist at least two points $x,x' \in \rmW_n(u) \cap \rmQ(z; n -r_n )$ such that $\log \|x-x'\| \geq k+ (i-5)k^\gamma$ (so that the annuli-scale of the corresponding cluster is at least $i$) and, by Lemma~\ref{lem:incball.1} (for $i-1$ in place of $i$), also some $y \in \bbX_{k+(i-2)k^\gamma}$ such that 
\begin{equation}
\rmW_n(u) \cap \rmQ(z;n -r_n ) \subseteq \rmB^-[y;i-1]	
\end{equation}
and
\begin{equation}
\wh{N}^{\rm{in}}_{t^A_n}[y;i] \geq \wh{N}_{t^A_n}[x_{j_0}(z);i]
\end{equation} for some $j_0 \in J_{k,l}$.
It follows from this choice of $x,x',y$ that, if we fix any $j \in I_{k,l}(z)$, then every $w_j \in \rmQ(x_j(z);k) \cap \rmW_n(u)$ satisfies $w_j \in \rmB^-[y;i-1]$ and $\log \|w_j - x\| \vee  \log \|w_j - x'\| \geq k+(i-6)k^\gamma$. Since the boxes $\rmQ(x_j(z);k)$ are disjoint for different values of $j \in J_{k,l}$, we conclude that 
\begin{equation}
|I_{k,l}(z)| \leq |\{ (w,w') : w,w' \in \rmW_n(u) \cap \rmB^-[y;i-1] \text{ and } \log \|w-w'\| \geq k+(i-6)k^\gamma\}|=:\rmH^i_{n,u}(y).
\end{equation} 
Hence, if we set $M^i_{k,\eta}:=[\sqrt{2}(k+(i-1)k^\gamma)+(k+ik^\gamma)^{\frac{1}{2}-\eta},\infty)$ and define
\begin{equation}
\begin{split}
\bbY^i_n(u;z):=\Big\{	y \in \bbX_{k+(i-2)k^\gamma} :\: & \rmW_n(u) \cap \rmQ(z;n-r_n) \subseteq \rmB^-[y;i-1] \\ 
& \sqrt{\wh{N}^{\rm{in}}_{t^A_n}[y;i]} \in M^i_{k,\eta}\Big\},
\end{split}
\end{equation}
together with $\bbZ_n^i(u):=\{ z \in \bfW_n(u) : \bbY^i_n(u;z) \neq \emptyset\}$, then 
\begin{equation}
\mathbf{D}^{k,\epsilon,i}_n(u) \leq \sum_{z \in \bbZ^i_n(u)} \max_{y \in \bbY^i_n(u;z)} \rmH^i_{n,u}(y),	
\end{equation} so that \eqref{eq:overc1} will now follow from Lemma~\ref{lem:resampling} (and Remark~\ref{rem:ninout}) if, for all $k$ large enough, we can show that
\begin{equation}\label{eq:boundbn.1}
b^{(i)}_n(u) \leq \rme^{-2(k+ik^\gamma)^{\frac{1}{2}-\eta}},
\end{equation} uniformly over all $i=4,\dots,\cT_k+4$, where 
\begin{equation}\label{eq:defbn}
b_n^{(i)}(u):= \sup_{y,\rho} \E \left( \rmH^i_{n,u}(y) \,\Big|\,  Z^{\mathrm{in}}_{t^A_n}[y;i]=\rho\,,\rmW_n(u) \cap \rmB^-[y;i-1] \neq \emptyset \right),
\end{equation} with the supremum being over the set of all $y \in \bbX_{k+(i-2)k^\gamma}$ with $\overline{\rmB(y;k+ik^\gamma -5\rme^{-k^\gamma})} \subseteq \rmD_n$ (so $N^{\mathrm{in}}_{t^A_n}[y;i]$ is well-defined) and $\rho:=(v,\overline{z}) \in \cZ^{\mathrm{in}}_k(i)$ with $v \in M^i_{k,\eta}$. Above $Z^{\mathrm{in}}_{t^A_n}[y;i]$ and $\cZ_k^{\mathrm{in}}(i)$ are defined as in \eqref{eq:defz} and \eqref{eq:defZcal} respectively, but with respect to $N_{t^A_n}^{\mathrm{in}}[y;i]$ instead of $N_{t^A_n}[y;i]$.

To check \eqref{eq:boundbn.1}, notice that, if $n$ is large enough (depending only on $\eta_0, \gamma$ and $\rmD$) so that the supremum in \eqref{eq:defbn} is over a nonempty set, then, for any $i=4,\dots,\cT_k+4$ and  $y,\rho:=(v,\ol{z})$ as in \eqref{eq:defbn}, whenever $\eta < \min \{\frac{1}{2}-\gamma,\frac{1}{6}\}$ and $k$ is large enough, we have by Proposition~\ref{prop:LT-2}~that 
\begin{equation}
\begin{split}
\E \left( \rmH^i_{n,u}(y) \,\Big|\,  Z^{\mathrm{in}}_{t^A_n}[y;i]=\rho\right) &\leq \sum_{w,w' \in \rmB^-[y;i-1] \atop \log \|w-w'\| \geq k+(i-6)k^\gamma} \bbP( \max \{ L_t(w) , L_t(w')\} \leq u \,|\, Z^{\mathrm{in}}_{t^A_n}[y;i]=\rho)\\
& \leq |\rmB^-[y;i]|^2 \rme^{-4(k+ik^\gamma) -4\sqrt{2}(k+ik^\gamma)^{\frac{1}{2}-\eta} +Ck^\gamma} \leq \rme^{-5(k+(i-1)k^\gamma)^{\frac{1}{2}-\eta}},
\end{split}
\end{equation} and by Proposition~\ref{prop:LT-1} that
\begin{equation}
\bbP\Big( \rmW_n(u) \cap \rmB^-[y;i-1] \neq \emptyset \,\Big|\,  Z_{t^A_n}^{\mathrm{in}}[y;i]=\rho\Big) \geq \rme^{-2\sqrt{2}(k+ik^\gamma)^{\frac{1}{2}-\eta}-(k+ik^\gamma)^{2\eta}-Ck^\gamma} \geq \rme^{-3(k+ik^\gamma)^{\frac{1}{2}-\eta}} , 
\end{equation} where $C$ above is some positive constant depending only on $u$ and $\gamma$. Combining both inequalities immediately yields \eqref{eq:boundbn.1}, provided that $k$ is taken large enough (depending only on $\eta$ and $\gamma$). This concludes the proof of \eqref{eq:overc1}.

The proof of \eqref{eq:overc2} is similar. Indeed, as before, for each $z \in \wh{\bfR}^{k,\epsilon,3}_{n}(u)$ there exists $y \in \bbX_{k+k^\gamma}$ such that 
\begin{equation}
\rmW_n(u) \cap \rmQ(z;n -r_n ) \subseteq \rmB^-[y;2]	
\end{equation}
and some $j_0 \in J_{k,l}$ for which
\begin{equation}
\wh{N}^{\rm{in}}_{t^A_n}[y;3] \geq \wh{N}_{t^A_n}[x_{j_0}(z);3],
\end{equation} as well as $j_1,j_2 \in J_{k,l}$ with $j_1\neq j_2$ such that $z \in \bfR_n^{k,\epsilon}(u;j)$. In particular, there exist two points $\ol{x}_1,\ol{x}_2 \in \rmW_n(u) \cap \rmB^-[y;2]$ such that $\ol{x}_i \in \rmQ(x_{j_i}(z);k-\epsilon)$ for $i=1,2$. Since $\| x_{j_1}(z) - x_{j_2}(z)\| \geq \lfloor \rme^k\rfloor$ by construction, it follows that $\log \|\ol{x}_1-\ol{x}_2\| \geq k - C_\epsilon$ for all $k$ large enough (depending only on $\epsilon$), where $C_\epsilon > 0$ is some constant depending only on~$\epsilon$. Thus, if we fix any $j \in I_{k,l}(z)$, then every $w_j \in \rmQ(x_j(z);k-\epsilon) \cap \rmW_n(u)$ satisfies $w_j \in \rmB^-[y;3]$ and $\log \|w_j - \ol{x}_1\| \vee  \log \|w_j - \ol{x}_2\| \geq k-C_\epsilon-1$. From there the proof continues as that of \eqref{eq:overc1}, we omit the details.
\end{proof}

\section{Sharp Ballot Estimates for Repelled Downcrossings}\label{s:barrier}

In this section, we prove Theorem~\ref{prop:coupling0}, the key element which allowed us to obtain Proposition~\ref{p:300.1a}. We begin by giving a brief outline of the proof. This outline provides a roadmap, highlighting the main auxiliary results required to establish Theorem~\ref{prop:coupling0}, without entering into their proofs. These auxiliary results will then be proved in separate subsections.
Before we begin, we suggest the reader to recall the definitions from Section~\ref{sec:downprelim}.

\subsection{General outline of the proof}

In order to prove Theorem~\ref{prop:coupling0}, our first step will be to show that the downcrossings probability on the left-hand side of~\eqref{eq:lemacoup} is asymptotically equivalent as $k \to \infty$ to that corresponding to a one-dimensional continuous time simple random walk (CTSRW). We will then obtain Theorem~\ref{prop:coupling0} by deriving the analogous sharp ballot estimates for the CTSRW.

To this end, let $\wt{X}^{[0,T]}$ be a CTSRW on the linear graph $\{0,\dots,T\}$ starting from $T$ with all transition rates between neighboring vertices equal to~$1$ and, given any $i=1,\dots,T$ and $n \in \N$, let $T_{n}(i)$ denote the number of transitions (or \textit{downcrossings}) from $i$ to $i-1$ of $\wt{X}^{[0,T]}$ in its first $n$ excursions away from $T$ and, for $s \in \cN_k$, let us write  $\wh{T}_{(s)}(i):=k^\gamma T_{\wt{s}_k}(i)$ with $\wt{s}_k:=\frac{s^2}{k^\gamma}$. Then, if given $s \in \cN_k$, $\eta \in (0,\tfrac{1}{2})$ and $i=1,\dots,T$ we define the event 
\begin{equation}
	\cT^{\eta,i}_{(s)}:= \Big\{ \sqrt{\wh{T}_{(s)}(i)}  \notin \mathcal{R}_k^\eta (i)\Big\}, 
\end{equation} and extend this definition to subsets $K \subseteq [1,T]$ in place of $i$ as usual by union over all $i \in K \cap \N_0$, the first step in the proof is to obtain the following asymptotic estimate, which is in the spirit of \cite[Lemma~2.4]{abe2021second} but also incorporates the entry and exit points at the initial and final scales.

\begin{prop}\label{lem:transfer} Given $\eta \in (0,\tfrac{1}{2}-\gamma)$, there exists $c=c(\eta) > 0$ such that, if $k$ is large enough, for any $2 \leq T \leq \rme^{ck^\gamma}$, $n \geq k$ and $x,\wt{x} \in \rmD_n$ with $x \in \rmB^-[\wt{x};T-1]$ and $\overline{\rmB^+[\wt{x};T]} \subseteq \rmD_n$, one has  
	\begin{equation}\label{eq:lemacoup2}
		\frac{\bbP \Big( \big\{ Z_t[x;1] = (u,\ol{y})\big\} \setminus \mathrm{NR}^{\eta,[2,T-1]}_{n,t}(x) \,\Big|\,Z_t[\wt{x};T] = (v,\ol{z}) \Big)}{\bbP \Big( \big\{ \sqrt{\wh{T}_{(v)}(1)} = u\big\} \setminus \mathcal{T}^{\eta,[2,T-1]}_{(v)}\Big)}=h_{k}(\ol{y})(1+O(\rme^{-ck^\gamma}))
	\end{equation} uniformly over all $t > 0$, $(u,\overline{y}) \in \cZ_k[1]$ with $0 \leq \wt{u}_k \leq \rme^{ck^\gamma}$ and $(v,\overline{z}) \in \cZ_k[T]$ with $1 \leq \wt{v}_k \leq \rme^{ck^\gamma}$, where $h_k$ is as in~\eqref{eq:defhk1}.
\end{prop}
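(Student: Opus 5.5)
Conditionally on $Z_t[\wt{x};T]=(v,\ol{z})$, the walk performs exactly $\wt v_k$ excursions from $\rmB^-[\wt x;T]$ to $\partial\rmB^+[\wt x;T]$. By the strong Markov property, these excursions are independent given their entry and exit points $\ol z$. Moreover, the downcrossing counts at the inner scales $i=1,\dots,T-1$ around $x$ are determined by these excursions alone. The plan is to compare the nested downcrossing structure across the annuli $\rmB^-[x;i]\subset\rmB^+[x;i]\subset\rmB^-[x;i+1]$ with the one-dimensional chain $\wt X^{[0,T]}$, following \cite[Lemma~2.4]{abe2021second}. We encode each excursion as a path on the "levels" $\{0,\dots,T\}$: level $i$ means the walk is in $\partial_i\rmB^-[x;i+1]$, approximately. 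The nesting in \eqref{eq:incballs} makes this well defined up to the $4\rme^{-k^\gamma}$ buffers.

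\textbf{Step 1: one-step transitions.} For a walk started on $\partial_i\rmB^-[x;i+1]$ (more precisely, at the exit point $\partial\rmB^+[x;i]$), the probability of hitting $\rmB^-[x;i]$ before exiting $\rmB^+[x;i+1]$ is given by the standard log-ratio formula for annuli. Since $\rmB^+$ and $\rmB^-$ are at log-distance $\approx k^\gamma$ on both sides, this probability equals $\tfrac12(1+O(\rme^{-ck^\gamma}))$ uniformly in the starting point. This follows from the Green's function asymptotics (\cite[Lemma~3.1]{Tightness}), whose error is $O(\rme^{-r})$ at radius $\rme^r$. The hitting distributions also matter. The entrance law on $\partial_i\rmB^-[x;i]$ from far away is close to the harmonic measure, with Radon--Nikodym derivative $1+O(\rme^{-ck^\gamma})$, uniformly in the starting point at log-distance $k^\gamma$; this is a Harnack/Poisson kernel comparison. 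Consequently, the sequence of level transitions of all excursions is, up to a factor $(1+O(\rme^{-ck^\gamma}))$ per transition, a symmetric nearest-neighbour chain. This is exactly the jump chain of $\wt X^{[0,T]}$ run for $\wt v_k$ excursions from $T$.

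\textbf{Step 2: counting the error.} The event on the numerator forces $\sqrt{\wh N_t[x;i]}\in\cR^\eta_k(i)$ for all $2\le i\le T-1$. In particular the total number of downcrossings is at most $\sum_i (k+ik^\gamma)^{1+2\eta}/k^\gamma \lesssim T^{2+2\eta}k^{2+2\eta}$, which together with $\wt v_k\le \rme^{ck^\gamma}$ is at most $\rme^{c'k^\gamma}$. Choosing $c$ small relative to the constant in Step 1, the product of the per-transition factors is $1+O(\rme^{-ck^\gamma})$. Without this restriction the number of transitions is not a priori bounded, so I would first restrict to the repelled event exactly as the statement does, and then compare path by path. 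This gives, for each level-path realization, a likelihood ratio $1+O(\rme^{-ck^\gamma})$ relative to the CTSRW. The first-excursion entry at level $T$, which depends on $\ol z$, and the start from $\partial$ only affect the first transition. These are absorbed in the same way, which yields uniformity in $\ol z$ and $t$.

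\textbf{Step 3: the entry/exit points at scale $1$.} Conditionally on the level path, which fixes $\wt u_k$, the $\wt u_k$ excursions into $\rmB^-[x;1]$ arrive from $\partial\rmB^+[x;2]$-level positions. By the comparison in Step 1 their entry points are distributed as $\amalg_{\rmB^-[0;1]}$ up to $1+O(\rme^{-ck^\gamma})$. Given the entry point, the exit point on $\partial\rmB^+[x;1]$ has law $\Pi_{\rmB^+[0;1]}(y_i,\cdot)$. I would argue this is $\Pi_{\rmB^+[0;1]}(0,\cdot)(1+O(\rme^{-ck^\gamma}))$ by Harnack, since $y_i$ lies at log-scale $k^\gamma$ below the exit radius; this matches $h_k$ in \eqref{eq:defhk1}. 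Multiplying over the $\wt u_k\le \rme^{ck^\gamma}$ excursions, with $c$ small, still gives $1+O(\rme^{-ck^\gamma})$.

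\textbf{Main obstacle.} The main difficulty is the uniform control of accumulated multiplicative errors in Steps 2--3 over up to $\rme^{ck^\gamma}$ transitions, together with handling the slightly mismatched boundaries between consecutive annuli. The hitting-probability and Poisson-kernel comparisons must be shown with error $\rme^{-c_0k^\gamma}$ for a fixed $c_0$ independent of $c$, and then $c<c_0/3$ is chosen.
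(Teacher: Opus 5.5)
Your argument is correct in substance, but it is organized differently from the paper's.

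\textbf{The paper's route.} It first proves a two-scale Transfer Lemma (Lemma~\ref{lem:rep1}). For a single outer $[\wt x;j]$-excursion with fixed endpoints, the probability of exactly $m$ inner $[x;i]$-excursions with prescribed entry/exit points is computed by induction on $m$ via the strong Markov property. This uses gambler's ruin for the factors $\frac{j-i}{j-i+1}$, and Lemma~\ref{lem:kernelfrominf} for $\amalg$, $\Pi$ and the exit-point ratio \eqref{eq:new-decoup4}. The result is then multiplied over the $\wt v_k$ outer excursions. Proposition~\ref{lem:transfer} follows by chaining consecutive scales:
\begin{itemize}
\item the Markov property of $(Z_t[x;i])_i$ in the scale index writes the numerator as $\sum\prod_i\bbP(Z_t[x;i-1]=z_{i-1}\mid Z_t[x;i]=z_i)$;
\item each factor is replaced via the Transfer Lemma;
\item the intermediate entry/exit points are summed out using $\sum h_{k,i}=1$;
\item the one-dimensional probability is reassembled by the Markov property of $\wt X^{[0,T]}$.
\end{itemize}

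\textbf{Your route, and what each buys.} You instead do one global sequential decomposition of the whole multi-scale crossing sequence and compare it path by path with the jump chain. The ingredients are the same. The paper's modular route yields a Transfer Lemma for arbitrary $i<j$, which is reused in Proposition~\ref{prop:GR-1}, and it keeps the exit-point conditioning local to one pair of scales. Yours avoids the scale-Markov property and the intermediate entry/exit bookkeeping, and makes the error budget transparent: (number of crossings)$\times$(one-step error).

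\textbf{Points to repair.} Neither of the first two is fatal.

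First, conditioning on $\ol z$ fixes the exit point $z'_r$ of every outer excursion. This is a Doob transform of the whole excursion, not an effect on the first transition only. Likewise, each entry point $z_r$ affects the first transition of its own excursion, not just that of the first excursion. The fix is to write the conditional probability as a ratio of joint probabilities. The final transition of each excursion then contributes the probability, from the current position $w$, of exiting $\rmB^+[\wt x;T]$ at $z'_r$ given that this happens before hitting $\rmB^-[x;T-1]$. By \eqref{eq:miss2} this equals $\Pi_{\rmB^+[0;T]}(0,z'_r-\wt x)(1+O(\rme^{-\ol c k^\gamma}))$ uniformly in $w$. It cancels against $\bbP_{z_r}(\text{exit at } z'_r)$ from \eqref{eq:miss1}, at a cost of one factor $1+O(\rme^{-\ol c k^\gamma})$ per outer excursion.

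Second, your count of transitions is off. On the event, $u_i=\sqrt2(k+(i-1)k^\gamma)+O((k+ik^\gamma)^{1/2+\eta})$. Hence $N_t[x;i]=u_i^2/k^\gamma$ is of order $(k+ik^\gamma)^2/k^\gamma$, not $(k+ik^\gamma)^{1+2\eta}/k^\gamma$. The total number of transitions is therefore $O(T^3k^2+\wt u_k+\wt v_k)=O(k^2\rme^{3ck^\gamma})$. This still works, but it forces $c$ below roughly $c_0/4$ rather than $c_0/3$.

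Third, a small slip in Step~3: the entries into $\rmB^-[x;1]$ come from level-$1$ positions at log-radius about $k+k^\gamma$, not from $\partial\rmB^+[x;2]$.
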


In view of Proposition~\ref{lem:transfer}, to obtain Theorem~\ref{prop:coupling0} it will suffice to prove the analogous result for the one-dimensional walk $\wt{X}^{[0,T]}$. This is the content of the following proposition.

\begin{prop}\label{prop:coupling1} If $\gamma$ is chosen sufficiently small then, for any $0 < \eta' < \eta < \tfrac{1}{2}$ small enough (depending only on $\gamma$), if $k$ is sufficiently large then, for any $2k \leq T \leq \rme^{\frac{1}{2}k^\gamma}$, we have that
	\begin{equation}\label{eq:lemacoupp0}
		\bbP \Big( \big\{ \sqrt{\wh{T}_{(v)}(1)} = u\big\} \setminus \mathcal{T}^{\eta,(T)}_{(v)} \Big)=r_{k,T}(\wh{u},\wh{v})(1+o_k(1))
	\end{equation} holds uniformly over all $u \in \mathcal{R}_k^{\eta'}(1)$ and $v \in \mathcal{R}_k^{\eta'}(T)$, where $r_{k,T}$ is as in~\eqref{eq:funcfg}.
\end{prop}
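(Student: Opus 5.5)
The approach is to treat $i \mapsto \sqrt{\wh{T}_{(v)}(i)}$, run from $i=T$ down to $i=1$, as a one-dimensional Markov chain with explicit transitions. Then change measure to remove its linear drift, and reduce \eqref{eq:lemacoupp0} to a sharp ballot estimate for a random walk bridge constrained to the repulsion window $\mathfrak{I}_k^\eta(\cdot)$.

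\textbf{Step 1: the transition kernel.} By the strong Markov property of $\wt{X}^{[0,T]}$, each of the $T_m(i)$ downcrossings $i\to i-1$ is followed by an excursion below $i$. Hence, given $T_m(i)$, the count $T_m(i-1)$ is a sum of $T_m(i)$ i.i.d.\ copies of the number of $(i-1)\to(i-2)$ downcrossings in a single such excursion. So $(T_m(i))_{i}$ is a critical Galton--Watson-type chain, a discrete version of the zero-dimensional squared Bessel process, with explicit one-step kernel. Here $i$ is the block index, and the blocks have log-length $k^\gamma$ in the normalization of the paper. I would first prove a sharp local limit theorem for this kernel in the variable $s=\sqrt{k^\gamma m}$. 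For $w$ in the bulk of the repulsion window, the point probability of $u$ given $w$ should equal the lattice spacing $k^\gamma/(2u)$ times a Gaussian density in $u-w$, times a factor $(w/u)^{1/2}$. The Gaussian has mean shift $-\sqrt{2}k^\gamma$ and variance of order $k^\gamma$; the factor $(w/u)^{1/2}$ comes from the Bessel-$0$ drift. The relative error should be $1+O(k^{-c})$ uniformly whenever $u,w \geq k^{1/2-\eta}$. This can be done with a saddle-point or Laplace-transform computation.

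\textbf{Step 2: exponential tilt.} Next I apply an exponential tilt removing the slope $\sqrt{2}$ per unit scale. This is exactly where the factors $\rme^{-2(T-1)k^\gamma}\rme^{2\sqrt{2}\wh u}\rme^{-2\sqrt{2}\wh v}$ come from, once we pass to the recentered variables $\wh u,\wh v$ of \eqref{eq:defudc}. After the tilt, the recentered process is a random walk with i.i.d.-like Gaussian increments and a slowly varying multiplicative weight. That weight is $\prod (w/u)^{1/2}$, and it telescopes up to $1+o_k(1)$ because every point stays within $(k+ik^\gamma)^{1/2+\eta}$ of the linear profile. The Gaussian endpoint factor $\rme^{-\wh v^2/(Tk^\gamma)}$ and the normalizing power of $T$ appear in the standard way from the bridge density.

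\textbf{Step 3: the ballot estimate.} It remains to compute the probability that the tilted walk bridge from $\wh v$ to $\wh u$ over $T-1$ steps stays in the window $\mathfrak{I}_k^\eta(i)$ for all $2\le i\le T-1$. I would first show that the upper barrier $(k+ik^\gamma)^{1/2+\eta}$ costs only a factor $1+o_k(1)$. For $T\ge 2k$ a bridge conditioned to stay positive has height of order $\sqrt{\min(i,T-i)k^\gamma}$ plus its endpoint contributions, which is well below the upper barrier. The lower barrier $(k+ik^\gamma)^{1/2-\eta}$ is a concave curve of smaller order than the fluctuations. Standard barrier comparison, namely sandwiching between two straight barriers displaced by $O(k^{1/2-\eta})$ on the first and last $o(T)$ steps, lets me replace it by $0$. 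The classical Gaussian ballot theorem then gives $\frac{2\wh u\wh v}{(T-1)k^\gamma}$ times the bridge density, uniformly since $\wh u,\wh v\in\mathfrak{I}_k^{\eta'}$. The constraint $\eta'<\eta$ guarantees that the endpoints start a distance $\gg$ one step's fluctuation inside the window. Multiplying everything together should produce $r_{k,T}(\wh u,\wh v)$, with the constant $\sqrt{2/\pi}$ and the power of $T k^{3/2}$ read off from the Gaussian normalizations.

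The main obstacle is the \emph{uniform} sharpness, $1+o_k(1)$ over the whole range $2k\le T\le\rme^{k^\gamma/2}$ and all endpoints. This concerns both the local limit theorem near the lower barrier, where $u$ is only of size $k^{1/2-\eta}$ and the square-root nonlinearity is strongest, and the replacement of the curved lower barrier near the two endpoints. I would handle the endpoint regions first: use Proposition~\ref{prop:GR-1}-type tail bounds (in their one-dimensional form) to confine the first and last $k^{\gamma'}$ steps, then apply the ballot theorem in the middle with the curved barrier absorbed into the error.
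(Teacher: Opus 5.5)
Your route differs from the paper's. You work directly with the downcrossing chain $i\mapsto\sqrt{\wh{T}_{(v)}(i)}$ through a one-step local limit theorem; your Step~3 is in spirit the Brownian-bridge computation of Proposition~\ref{prop:coupling2}. However, Steps~1--2 as written cannot give a $1+o_k(1)$ error, for two concrete reasons.

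First, the Bessel weight does not telescope to $1+o_k(1)$. The product $\prod (w/u)^{1/2}$ telescopes exactly to $\sqrt{v/u}$, and with $v\approx\sqrt2(k+Tk^\gamma)$, $u\approx\sqrt2 k$ this is $\asymp\sqrt{Tk^{\gamma-1}}\gg1$. This factor is indispensable. The bridge density ($\propto T^{-1/2}k^{-\gamma/2}$), the ballot factor ($\propto\wh{u}\wh{v}/(Tk^\gamma)$) and the terminal lattice spacing $k^\gamma/(2u)$ multiply to a prefactor of order $T^{-3/2}$. Only the extra $\sqrt{v/u}$ turns this into the $(Tk^{3/2})^{-1}$ of \eqref{eq:funcfg}.

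Second, and more seriously, a per-step relative error $1+O(k^{-c})$ compounds over $T-1\ge 2k-1$ steps, with $T$ as large as $\rme^{k^\gamma/2}$, into something that is not $1+o_k(1)$. Even corrections of the natural first-order size $k^{\gamma/2}/w_i$, with $w_i\approx\sqrt2(k+ik^\gamma)$, add up to order $k^{-\gamma/2}\log(Tk^{\gamma-1})$, which is unbounded on the allowed range of $T$. To rescue the direct route you would need two things you do not supply. You would need per-step discrepancies of order $k^\gamma/w_i^2$ (summable to $O(1/k)$), i.e.\ an exact match of all first-order terms with a zero-dimensional Bessel kernel. You would also need a sharp ballot theorem for a non-Gaussian, state-dependent chain over exponentially many steps; the ``classical Gaussian ballot theorem'' does not cover this.

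Some smaller points. The kernel has no $-\sqrt2k^\gamma$ mean shift: by Lemma~\ref{lem:idlaw}(i) each step is a sum of $T(i)$ i.i.d.\ mean-one variables, and the slope comes only from the bridge endpoints. The obstacle you name is misplaced, since $u\ge\sqrt2k$ throughout the window and only $\wh{u}$ is of size $k^{1/2-\eta}$. For step variance $k^\gamma/2$ the ballot factor is $4\wh{u}\wh{v}/((T-1)k^\gamma)$, not $2\wh{u}\wh{v}/((T-1)k^\gamma)$.

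The paper avoids all of this by going through the local time field, for which an exact Brownian representation exists. By Lemma~\ref{lem:law}, $(\wt{L}_t(T-i))_{i\le T}$ is exactly a scaled zero-dimensional squared Bessel process sampled at integer times. Its square root therefore has the law of Brownian motion weighted by $\sqrt{B_0/B_T}\,\exp(-\tfrac{3}{16}\int_0^T B_s^{-2}\rmd s)$. The first factor is the deterministic $\sqrt{v/u}$, and the second is $1+O(1/k)$ off a negligible event, so the sharp estimate reduces to a pure Brownian-bridge computation (Proposition~\ref{prop:coupling2}).

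Downcrossings then enter only through exact conditional laws. Lemma~\ref{lem:asympforl} conditions on $\wh{L}_{(v)}(1)$ and $\wh{L}_{(v)}(T-1)$ and uses the explicit Poisson and Gamma densities for the two end steps, which concentrate within $k^{3\gamma/2}$. Lemma~\ref{lem:asympdisc} uses Poisson--Geometric concentration to show $|\sqrt{\wh{L}_{(v)}(i)}-\sqrt{\wh{T}_{(v)}(i)}|\le(k+ik^\gamma)^{1/2-\eta}$ at every level at negligible cost. A sandwich $\eta'<\eta_2<\eta<\eta_1$ then exchanges local-time repulsion for downcrossing repulsion. This exact embedding is the idea missing from your plan.
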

 
From these two propositions, Theorem~\ref{prop:coupling0} follows at once.

\begin{proof}[Proof of Theorem~\ref{prop:coupling0}]
Immediate from Proposition~\ref{lem:transfer} and Proposition~\ref{prop:coupling1}.	
\end{proof}

Thus, we now turn to discuss how to prove Proposition~\ref{lem:transfer} and Proposition~\ref{prop:coupling1}.	

Proposition~\ref{lem:transfer} is a consequence of a careful analysis of the trajectories of the random walk $\bfX$, using its strong Markov property together with standard gambler's ruin estimates combined with sharp asymptotics for the Poisson kernel and harmonic measure of balls. We give the proof of this result in Section~\ref{sec:prooftransfer} below.

In order to prove Proposition~\ref{prop:coupling1}, we will first establish the analogous statement but instead for the local time field associated with $\wt{X}^{[0,T]}$. To this end, we first need some further definitions. Given $t > 0$, we define the local time field $\wt{L}_t:=(\wt{L}_t(i) : i=0,\dots,T)$, where $\wt{L}_t(i)$ here denotes the local time spent at site $i$ by $\wt{X}^{[0,T]}$ until $T$-time $t$, i.e., until it spends $t$ local time at site $T$. In addition,  we set $\wh{L}_t:=k^\gamma \wt{L}_t$ and, for each $\eta \in (0,\tfrac{1}{2})$ and $i=0,\dots,T$, define the set $\mathcal{Q}_k^\eta(i)$ of all $u \geq 0$ which lie on the interval $\sqrt{2}(k+ik^\gamma) + \mathfrak{I}_k^\eta(i)$, i.e. 
\begin{equation}
		\mathcal{Q}_k^\eta(i):=\Big\{ u \in \R_{\geq 0} : u - \sqrt{2}(k+ik^\gamma) \in \mathfrak{I}_k^\eta(i)\Big\}\,,	
\end{equation} where $\mathfrak{I}_k^\eta(i)$ is as in \eqref{eq:defirep}. Given $i=1,\dots,T$ and $u \in \cQ^\eta_k(i)$, we define the \textit{recentering} of $u$ as 
\begin{equation}\label{eq:defwlt}
\wh{u}:= u - \sqrt{2}(k+ik^\gamma).
\end{equation} \textcolor{black}{Notice that $\wh{u}$ depends on $i$ but we are omitting this dependence from the notation for simplicity. However, this will not cause any confusion in the sequel, as we will always clarify beforehand that $u \in \cQ^\eta_k(i)$ for some specific value of $i$, which is the precise value to be used when recentering. On a similar note, note the difference between the recentering in \eqref{eq:defwlt} for $u \in \cQ^\eta_k(i)$ and the~one in \eqref{eq:defudc} for $u \in \cR^\eta_k(i)$.  Again, no confusion will arise from this, as we will always clarify whether we are working with $u \in \cQ^\eta_k(i)$ or $u \in \cR^\eta_k(i)$ beforehand.}
Finally, if, for $i=0,\dots,T$ and $t > 0$, we define the event
\begin{equation}
	\cL^{\eta,i}_t:= \Big\{ \sqrt{\wh{L}_t(i)}  \notin \mathcal{Q}_k^\eta(i)\Big\}, 
\end{equation} and extend the definition to subsets $K \subseteq [0,T]$ in place of $i$ as usual by union over all $i \in K \cap \N_0$, we have the following result.

\begin{prop}\label{prop:coupling2} If $\gamma > 0$ is taken small enough then, for any $\eta \in (0,\tfrac{1}{2})$ sufficiently small (depending only on $\gamma$) we have that, for all $k$ sufficiently large and $T \in [k,\rme^k]$,
	\begin{equation}\label{eq:lemacoupp}
		\bbP \bigg( \bigg\{ \sqrt{\wh{L}_{\wt{v}_k}(0)} \in \rmd u\bigg\} \setminus \mathcal{L}^{\eta,[1,T-1]}_{\wt{v}_k} \bigg)=\frac{4}{\sqrt{\pi}} \cdot \frac{1}{Tk^{1/2+\gamma}}\rme^{-2Tk^\gamma}\left(\wh{u}\rme^{2\sqrt{2}\wh{u}}\right)\left(\wh{v}\rme^{-2\sqrt{2}\wh{v}-\frac{\wh{v}^2}{Tk^\gamma}}\right)(1+o_k(1))\,
	\end{equation} holds uniformly over $u \in \mathcal{Q}_k^\eta(0)$ and $v \in \mathcal{Q}_k^\eta(T)$, where $\wh{u}:=u-\sqrt{2}k$ and $\wh{v}:=v -\sqrt{2}(k+Tk^\gamma)$.
\end{prop}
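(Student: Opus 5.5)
The plan is to use an exact Ray--Knight description of the one-dimensional local time field, which reduces the estimate to a Gaussian ballot problem. For a continuous time walk on the path $\{0,\dots,T\}$ with unit rates, started at $T$ and stopped when its local time at $T$ reaches $t$, the profile $j \mapsto \wt L_t(T-j)$ is a Markov chain. Its transition kernel is exactly that of a $0$-dimensional squared Bessel process observed at integer times, since the path is a chain of unit resistances. One can read this off directly from Theorem~\ref{t:103.1} on the path graph, where the DGFF is a discrete Brownian motion, or from the excursion decomposition. After the rescaling $\wh L_t = k^\gamma \wt L_t$, the process $j \mapsto \sqrt{\wh L_{\wt v_k}(T-j)}$ is a $\mathrm{BES}(0)$ process $R$ started at $v$. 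We observe it at times $jk^\gamma$, with the time normalization in which the line of slope $\sqrt2$ is the natural drift, and the event in \eqref{eq:lemacoupp} becomes a statement about a single path. Namely, $R_{Tk^\gamma} \in \rmd u$, and at each intermediate integer time $R - \sqrt2(k + ik^\gamma)$ lies in the window $\mathfrak{I}_k^\eta(i)$.

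Next we change measure from $\mathrm{BES}(0)$ to Brownian motion. By Girsanov, on paths that stay away from $0$ the density is
\[
\Big(\frac{R_{Tk^\gamma}}{R_0}\Big)^{-1/2} \exp\Big(-\tfrac38\int_0^{Tk^\gamma} R_s^{-2}\,\rmd s\Big).
\]
Inside the band, $R_s$ is of order $\sqrt2\,(k + (Tk^\gamma - s))$. So the integral is $O(1/k) \to 0$, the prefactor is $\sqrt{v/u}\,(1+o_k(1))$, and we may treat the process as a Brownian motion. Between the integer observation times we control the unobserved excursions by standard Gaussian tail bounds, so absorption at $0$ plays no role. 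The Gaussian transition density from $v$ to $u$ over time $Tk^\gamma$ is $(\pi Tk^\gamma)^{-1/2}\exp\{-(v-u)^2/(Tk^\gamma)\}$. We write $v - u = \sqrt2\,Tk^\gamma + \wh v - \wh u$ and expand. This gives the factor $\rme^{-2Tk^\gamma}\rme^{-2\sqrt2(\wh v-\wh u)}\rme^{-\wh v^2/(Tk^\gamma)}$ times a correction. The correction terms $\wh u^2/(Tk^\gamma)$ and $\wh u\wh v/(Tk^\gamma)$ are $o(1)$. Indeed, the second is at most $k^{1/2+\eta}(Tk^\gamma)^{-1/2+\eta} \le k^{-\gamma/2+O(\eta)}$ because $T \ge k$, and this is where we need $\eta$ small compared with $\gamma$.

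The remaining factor is the probability that a Brownian bridge, taken relative to the line $\sqrt2(k+\cdot)$, goes from $\wh v$ to $\wh u$ in time $Tk^\gamma$ while staying, at integer multiples of $k^\gamma$, inside $[(k+ik^\gamma)^{1/2-\eta},(k+ik^\gamma)^{1/2+\eta}]$. We claim this equals $(1+o_k(1))\,4\wh u\wh v/(Tk^\gamma)$, the classical ballot asymptotic for staying positive in our variance normalization. Multiplying the pieces gives
\[
\sqrt{\tfrac{v}{u}}\cdot(\pi Tk^\gamma)^{-1/2}\cdot\frac{4\wh u\wh v}{Tk^\gamma},
\qquad \sqrt{v/u}\approx (Tk^\gamma/k)^{1/2},
\]
which is $\frac{4}{\sqrt\pi}(Tk^{1/2+\gamma})^{-1}$, as required. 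The variance and drift constants will be fixed so that the exponent $\rme^{-2Tk^\gamma}$ and the prefactor come out exactly as in \eqref{eq:lemacoupp}.

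I expect the ballot step to be the main obstacle. It requires showing that the curved lower barrier $(k+s)^{1/2-\eta}$, the upper barrier $(k+s)^{1/2+\eta}$, and the fact that we only observe at discrete times together change the positive-excursion probability by a factor $1+o_k(1)$, uniformly over the endpoints in the band. I would first prove a lower bound by an entropic repulsion argument. Conditioned to stay positive, the bridge sits at height $\asymp \min(s, Tk^\gamma - s)^{1/2}$ in the bulk, and by Bessel-3 bridge estimates it exceeds $(k+s)^{1/2-\eta}$ and stays below $(k+s)^{1/2+\eta}$ with probability $1-o_k(1)$. Near the two ends, the endpoints $\wh u \ge k^{1/2-\eta}$ and $\wh v \ge (Tk^\gamma)^{1/2-\eta}$ are of order comparable to or above the barrier, and I would handle this endpoint regime by a separate chaining over dyadic blocks of $s$. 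If the endpoints sitting exactly at the barrier turn out to cause trouble, I would fall back on the slack $\eta' < \eta$ available in the application, Theorem~\ref{prop:coupling0}, and prove the estimate uniformly for endpoints in $\cQ^{\eta'}_k$.
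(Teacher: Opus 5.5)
Your reduction is the same as the paper's, in four steps.
\begin{itemize}
\item The Ray--Knight identification of $j\mapsto \wt L(T-j)$ with a scaled squared Bessel process of dimension $0$ (Lemma~\ref{lem:law}).
\item The Girsanov change to Brownian motion with weight $\sqrt{v/u}\,\exp(-c\int R_s^{-2}\,\rmd s)$.
\item The expansion of the Gaussian kernel about the slope-$\sqrt2$ line, where $T\ge k$ and $\eta\ll\gamma$ enter exactly as you say.
\item The replacement of the band probability by the positivity probability $4\wh u\wh v/(Tk^\gamma)$. The paper dispatches this step by citing ``standard estimates'' \cite{CHL17Supp}.
\end{itemize}
You handle the unobserved pieces between integer times through the Markov property at integer times. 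That is, if anything, cleaner than the paper's comparison of an $O(\rme^{-3k/2})$ dip probability with the main term $\ge 1/T\ge \rme^{-k}$.

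The point to correct is that your final hedge is not optional. The $1+o_k(1)$ asymptotic is false near the edges of $\cQ^\eta_k(0)\times\cQ^\eta_k(T)$, so the dyadic chaining you propose for endpoints sitting at the barrier cannot succeed.

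Take $\wh u=k^{1/2-\eta}$.
\begin{itemize}
\item The barrier at site $1$, $(k+k^\gamma)^{1/2-\eta}$, exceeds $\wh u$ by only $O(k^{\gamma-1/2-\eta})=o(k^{\gamma/2})$.
\item The first step of the recentred bridge is Gaussian with standard deviation of order $k^{\gamma/2}$ and negligible drift.
\item Under the positivity-conditioned law the tilt is only $1+O(k^{\gamma/2}/\wh u)$, so the first step lands above the barrier with probability $\tfrac12+o(1)$.
\end{itemize}
In fact every later lower barrier is also $\ge \wh u$. So the band event forces a random-walk bridge started on its barrier to stay above it. That has probability of order $k^{\gamma/2}\wh v/(Tk^\gamma)$, a factor $O(k^{\gamma/2}/\wh u)\to 0$ below the claimed value.

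The same first-step obstruction occurs at $\wh u=k^{1/2+\eta}$ and at both edges of $\cQ^\eta_k(T)$. There the barrier at site $T-1$ is within $o(k^{\gamma/2})$ of $\wh v$.

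So you must restrict the endpoints to lie far from the barriers on the barriers' own scale, e.g.\ $u\in\cQ^{\eta'}_k(0)$ and $v\in\cQ^{\eta'}_k(T)$ with $\eta'<\eta$. There your entropic-repulsion argument does go through: a Bessel-$3$ path started at height $A$ ever reaches level $x<A$ with probability $x/A$, and $k^{1/2-\eta}/k^{1/2-\eta'}\to 0$.

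This restricted version is all that is used later.
\begin{itemize}
\item In the proof of Lemma~\ref{lem:asympforl}, the sharp form of $r_2(x,y)$ is needed only for $|x-u|\vee|y-v|\le k^{3\gamma/2}$ with $u\in\cR^{\eta'}_k(1)$ and $v\in\cR^{\eta'}_k(T)$.
\item Theorem~\ref{prop:coupling0} is stated over $\cZ^{\eta'}_k$.
\end{itemize}
The paper's proof has the same blind spot, so your fallback is really the correct formulation of Proposition~\ref{prop:coupling2}.
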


Having the ballot estimates for the local time field of $\wt{X}^{[0,T]}$ as given by Proposition~\ref{prop:coupling2}, our next step will be to show that an analogue of Proposition~\ref{prop:coupling2} still holds if we replace $\wt{L}_t(0)$ by $T_{\wt{v}_k}(1)$ with $\wt{v}_k$ the number of excursions of $\wt{X}^{[0,T]}$ until $T$-time $t$. In other words, our next step is to show a version of the ballot estimate from Proposition~\ref{prop:coupling1} in which the repulsion condition is stated in terms of  local time instead of downcrossings. 

To state this version properly, let us consider the local time field associated with the CTSRW $\wt{X}^{[0,T]}$ but measured with respect to number of excursions away from $T$. That is, for each $m \in \N$ and $i=0,\dots,T$, we define the local time $\wt{L}^*_{m}(i):=\wt{L}_{\tau_{m}}(i)$ where $\tau_m$ denotes the total local time spent at site $T$ until the $m$-th return time to $T$ (i.e., after $m$ excursions away from $T$). Then, if given $s \in \cN_k$ we write $\wh{L}_{(s)}:=k^\gamma \wt{L}^*_{\wt{s}_k}$ and, for $\eta \in (0,\tfrac{1}{2})$ and $i=0,\dots,T$, we define the event 
\begin{equation}
	\cL^{\eta,i}_{(s)}:= \Big\{ \sqrt{\wh{L}_{(s)}(i)}  \notin \mathcal{Q}_k^\eta(i)\Big\} 
\end{equation} and extend this definition to subsets $K \subseteq [0,T]$ in place of $i$ as usual by union over all $i \in K \cap \N_0$, the modification of Proposition~\ref{prop:coupling1} we are after is the following. 

\begin{lem}\label{lem:asympforl}
	If $\gamma > 0$ is small enough then, for any $0 < \eta' < \eta < \tfrac{1}{2}$ sufficiently small (depending only on $\gamma$)  we have that, for all $k$ sufficiently large and $T \in [2k,\rme^{\tfrac{1}{2}k^\gamma}]$,
	\begin{equation}\label{eq:asymp3}
		\bbP \Big( \Big\{ \sqrt{\wh{T}_{(v)}(1)} = u\Big\} \setminus \cL^{\eta,[1,T-1]}_{(v)} \Big)=r_{k,T}(\wh{u},\wh{v})(1+o_k(1))
	\end{equation} holds uniformly over all $u \in \cR^{\eta'}_k(1)$ and $v \in \cR^{\eta'}_k(T)$, where $r_{k,T}$ is as in \eqref{eq:funcfg}.
\end{lem}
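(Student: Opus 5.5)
We derive Lemma~\ref{lem:asympforl} from Proposition~\ref{prop:coupling2} by conditioning on the local time at site $1$ and converting it into a downcrossing count for the edge $\{0,1\}$.

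\emph{Step 1: Markov decomposition at site $1$.} By the strong Markov property of $\wt{X}^{[0,T]}$, given $\wt{L}^*_{\wt{v}_k}(1)$, the number $T_{\wt{v}_k}(1)$ of jumps $1\to 0$ is Poisson with mean $\wt{L}^*_{\wt{v}_k}(1)$, since the jump rate from $1$ to $0$ is $1$. This number is independent of the local time on $[1,T]$, because excursions into $\{0\}$ do not affect the field on $[1,T]$. Moreover, after a Poissonization in the number of excursions from $T$, the field $(\wt{L}^*(i))_{i \ge 1}$ is a $0$-dimensional squared Bessel-type chain. Proposition~\ref{prop:coupling2}, applied to the walk on $\{1,\dots,T\}$ (that is, with $T-1$ in place of $T$ and scales shifted by one $k^\gamma$), gives the joint density of $\sqrt{\wh{L}_{(v)}(1)}$ at $w\in\cQ^\eta_k(1)$ on the repulsion event $\cL^{\eta,[2,T-1]}_{(v)}$ failing.

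There is one subtlety. Proposition~\ref{prop:coupling2} is stated for a fixed $T$-time $\wt{v}_k$, whereas here we fix the number $\wt{v}_k$ of excursions from $T$. I would pass between the two using the fact that the number of excursions up to local time $t$ at $T$ is Poisson with mean $t$ (the jump rate $T\to T-1$ is $1$). The local limit for this Poisson variable gives a $\sqrt{\wt{v}_k}$-window of time. Since $\wh{v}$ varies by only $O(k^{-\gamma/2})$ in this window, the error is $1+o_k(1)$. This gives the change of scale and accounts for part of the constants, namely $4/\sqrt{\pi}$ becoming $\sqrt{2/\pi}$ together with the $k^{-\gamma}$ lattice mesh factor.

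\emph{Step 2: Gaussian convolution.} We write
\[
\bbP\Big(\sqrt{\wh{T}_{(v)}(1)}=u,\ \text{repulsion}\Big)=\int \bbP\big(\mathrm{Poi}(w^2/k^\gamma)=u^2/k^\gamma\big)\,\bbP\Big(\sqrt{\wh{L}_{(v)}(1)}\in \rmd w,\ \text{repulsion on }[1,T-1]\Big).
\]
In the variable $w$, the Poisson kernel is approximately Gaussian with mean $u$ and variance of order $k^\gamma/4$ on the square-root scale, and it carries the mesh factor $k^\gamma/(2u)$. We then insert the asymptotic density $\propto \wh{w}\,\rme^{2\sqrt{2}\wh{w}}$ from Step~1, where $\wh{w}=w-\sqrt{2}(k+k^\gamma)$ and $\wh{u}=u-\sqrt{2}k$, so that $\wh{w}=\wh{u}+O(\sqrt{k^\gamma})-\sqrt{2}k^\gamma$. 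Integrating the exponential against the Gaussian gives the factor $\rme^{2\sqrt{2}\wh{u}}\cdot\rme^{-2\sqrt2\cdot\sqrt2 k^\gamma}\cdot\rme^{\mathrm{var}\cdot 8/2}$. Tracking these factors should reproduce exactly $\rme^{-2(T-1)k^\gamma}$ and the prefactor in $r_{k,T}$; the Gaussian moment $\rme^{k^\gamma}$ and the shift $\rme^{-4k^\gamma}$ combine with the exponent $-2(T-1)k^\gamma$ coming from Step~1. The factor $\wh{w}$ can be replaced by $\wh{u}(1+o_k(1))$ because $\wh{u}\ge k^{1/2-\eta'}\gg k^{\gamma/2}$ once $\gamma<1-2\eta'$. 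The requirement that $\wh{w}$ lie in the window $\mathfrak{I}^\eta_k(1)$ is automatic up to Gaussian tails, since $\eta'<\eta$.

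\emph{Main obstacle.} The hardest part is the uniformity of the error in Step~2. Proposition~\ref{prop:coupling2} only gives the density of $\sqrt{\wh{L}(1)}$ for $w$ inside $\cQ^\eta_k(1)$, while the exponential tilt $\rme^{2\sqrt2 \wh{w}}$ shifts the effective mass of the Gaussian integral by $O(k^\gamma)$. I would handle this by truncating at $|w-u|\le k^{\gamma/2+\epsilon}$. Outside this region, I would bound the contribution using the crude estimate of Proposition~\ref{prop:GR-1}-type (Poisson tail times $\rme^{2\sqrt2 \wh{w}}$), checking that it is $o_k(1)\,r_{k,T}$. Inside the region, I would use the uniform asymptotics.
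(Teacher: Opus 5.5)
Your Step~1 has a genuine gap. There you move Proposition~\ref{prop:coupling2}, a statement at fixed $T$-time, to a fixed number $\wt v_k$ of excursions from $T$, and you assert the cost is $1+o_k(1)$. It is not.

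\emph{The window is the wrong size.} Since $v=\sqrt{k^\gamma\wt v_k}$, a Poisson fluctuation of size $\sqrt{\wt v_k}$ in the excursion count moves $v$ by about $\frac{k^\gamma}{2v}\sqrt{\wt v_k}=\frac12 k^{\gamma/2}$, not $O(k^{-\gamma/2})$. On the square-root scale, the fixed-time law is a Gaussian mixture, with variance about $k^\gamma/4$, of the fixed-excursion laws. The target carries the tilt $\rme^{-2\sqrt2\,\wh v}$, so this mixing multiplies it by $\rme^{\frac12(2\sqrt2)^2 k^\gamma/4}=\rme^{k^\gamma}$.

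\emph{Your own bookkeeping shows the discrepancy once the top recentering is included.} Apply Proposition~\ref{prop:coupling2} to the chain on $\{1,\dots,T\}$, which has length $T-1$ and base scale $k+k^\gamma$. The bottom shift $\rme^{-4k^\gamma}$ from $\wh w=(w-u)+\wh u-\sqrt2 k^\gamma$ cancels the top shift $\rme^{+4k^\gamma}$ from $v-\sqrt2(k+Tk^\gamma)=\wh v-\sqrt2k^\gamma$. The Poisson kernel gives $\int \bbP\big(\mathrm{Poisson}(w^2/k^\gamma)=u^2/k^\gamma\big)\rme^{2\sqrt2(w-u)}\,\rmd w=\frac{k^\gamma}{2u}\rme^{k^\gamma}(1+o_k(1))$. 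The result is $r_{k,T}(\wh u,\wh v)\,\rme^{k^\gamma}$, too large by exactly $\rme^{k^\gamma}$. The change of prefactor $\frac{4}{\sqrt\pi}k^{-1/2-\gamma}\to\sqrt{2/\pi}\,k^{-3/2}$ comes entirely from the mesh factor $\frac{k^\gamma}{2u}$, not from the change of time parametrization.

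\emph{A local limit theorem cannot supply the missing $\rme^{-k^\gamma}$.} The fixed-time probability equals $\sum_m \bbP(\mathrm{Poisson}(t)=m)\,f(m)$, where $f(m)$ is the fixed-excursion probability you need. Proposition~\ref{prop:coupling2} controls only this mixture, so recovering $f$ from it would require a deconvolution you have no means to carry out.

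The missing idea, which is what the paper does, is to cut the chain also at site $T-1$. With $\wt v_k$ excursions fixed, $\wt L_{(v)}(T-1)\sim\Gamma(\wt v_k,1)$ exactly, by Lemma~\ref{lem:idlaw}(ii). By the spatial Markov property, given $\sqrt{\wh L_{(v)}(T-1)}=y$, the field below $T-1$ is that of the walk on $\{0,\dots,T-1\}$ at fixed $(T-1)$-time $\wt y_k$. So Proposition~\ref{prop:coupling2}, adapted to a segment of length $T-2$, applies directly, and no de-Poissonization is needed.

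The probability becomes $\iint r_1(u,x)\,r_2(x,y)\,r_3(y,v)\,\rmd x\,\rmd y$, with $r_1$ the Poisson kernel and $r_3$ the Gamma density. By Stirling, each is a Gaussian of variance $k^\gamma/4$, centred at $u$ and $v$ respectively. Integrating each against its tilt $\rme^{\pm 2\sqrt2(\cdot)}$ yields a factor $\rme^{k^\gamma}$. Together these turn $\rme^{-2(T-2)k^\gamma-4k^\gamma}$ into $\rme^{-2(T-1)k^\gamma}$.

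Two smaller points. The tilts shift the effective mass of each Gaussian by $k^\gamma/\sqrt2$, so your truncation $|w-u|\le k^{\gamma/2+\epsilon}$ needs $\epsilon>\gamma/2$; the paper uses $k^{3\gamma/2}$ on both sides. Consequently, replacing $\wh w$ by $\wh u(1+o_k(1))$ requires $\wh u\gg k^{3\gamma/2}$, not merely $\wh u\gg k^{\gamma/2}$.
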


The last ingredient for the proof of Proposition~\ref{prop:coupling1} is to show that in~\eqref{eq:asymp3} we can replace $\cL_{(v)}^{\eta,[1,T-1]}$ by $\cT_{(v)}^{\eta,[1,T-1]}$, i.e., replace local time repulsion by repulsion in number of downcrossings. To do this, we will show that, if the local time field is repelled, then the downcrossings trajectory cannot be to far apart and thus must also be repelled, and viceversa. More precisely, if for $s \in \cN_k$, $\eta \in (0,\frac{1}{2})$ and $i=1,\dots,T$ we define the event
\begin{equation}
	\cJ^{\eta,i}_{(s)}:=\Big\{ \Big|\sqrt{\wh{L}_{(s)}(i)} -\sqrt{\wh{T}_{(s)}(i)}\Big| > (k+ik^\gamma)^{\frac{1}{2}-\eta}\Big\}
\end{equation} and extend this definition to subsets $K \subseteq [1,T]$ in place of $i$ as usual by union over all $i \in K \cap \N_0$, then the last step of the proof is to show the following result.

\begin{lem}\label{lem:asympdisc} If $\gamma > 0$ is chosen small enough then, for any $\eta > 0$ small enough (depending only~on~$\gamma$) we have that, for all $k$ sufficiently large and $T \in [2k,\rme^{\frac{1}{2}k^\gamma}]$, 
	\begin{equation}\label{eq:step3}
		\bbP \Big( \Big\{ \sqrt{ \wh{T}_{(v)}(1)} = u \Big\} \cap \cJ^{\eta,[1,T-1]}_{(v)} \setminus \Big( \cL_{(v)}^{\eta,[1,T-1]} \cap \cT_{(v)}^{\eta,[2,T-1]}\Big)
		\Big) = r_{k,T}(\wh{u},\wh{v})o_k(1)
	\end{equation} holds uniformly over all $u \in \cR^{\eta}_k(1)$ and $v \in \cR^\eta_k(T)$, where $r_{k,T}$ is as in \eqref{eq:funcfg}.
\end{lem}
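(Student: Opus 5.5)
We bound the probability in \eqref{eq:step3} by a sum over the indices $i\in[1,T-1]$ at which the discrepancy event $\cJ^{\eta,i}_{(v)}$ occurs. Since we are outside $\cL^{\eta,[1,T-1]}_{(v)}\cap\cT^{\eta,[2,T-1]}_{(v)}$, at least one of the two trajectories (local time, or downcrossings) stays repelled on $[2,T-1]$. We treat the case in which the local time trajectory is repelled; the other case is symmetric, with the roles of $\wh{L}$ and $\wh{T}$ exchanged.

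\textbf{Step 1 (local comparison).} For the one-dimensional walk $\wt{X}^{[0,T]}$, conditional on $\wh{T}_{(v)}(i)=k^\gamma m$, the local time $\wt{L}^*(i)$ at site $i$ is a sum of $m$ (or $m$ plus the number of visits from the right) i.i.d.\ exponential holding times. It is therefore a Gamma-type variable whose mean matches $T_{\wt{v}_k}(i)$ up to $O(1)$. Consequently
\[
\Bigl|\sqrt{\wh{L}_{(v)}(i)}-\sqrt{\wh{T}_{(v)}(i)}\Bigr|
\]
has sub-Gaussian tails at scale $O(k^{\gamma/2})$, uniformly in $m$. This gives
\[
\bbP\bigl(\cJ^{\eta,i}_{(v)}\,\big|\,\text{all data at other sites}\bigr)\le \rme^{-c(k+ik^\gamma)^{1-2\eta}k^{-\gamma}}.
\]
By the Markov structure, the local time at site $i$ given the downcrossing counts depends only on the counts adjacent to $i$. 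Hence this conditional bound survives conditioning on the rest of the trajectory, in particular on the repulsion event and on the endpoint value $\sqrt{\wh{T}_{(v)}(1)}=u$.

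\textbf{Step 2 (splitting at $i$).} Fix $i$ and decompose the trajectory at site $i$ using the Markov property of the process $i\mapsto \wt{L}^*(i)$ (a squared-Bessel-type, or Galton--Watson with geometric offspring, chain). We bound
\[
\bbP\bigl(\{\sqrt{\wh{T}_{(v)}(1)}=u\}\cap\cJ^{\eta,i}_{(v)}\cap\text{repulsion on }[1,T-1]\setminus\{i\}\bigr)
\]
by the product of three factors: a ballot upper bound on $[i,T]$, obtained from Proposition~\ref{prop:coupling2}-type upper bounds of the same form as $r_{k,T}$ but without the $(1+o_k(1))$ sharpness; the Step 1 factor; and a ballot upper bound on $[1,i]$. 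Summing over the value $w$ at site $i$, the two ballot factors combine to at most
\[
C\,r_{k,T}(\wh{u},\wh{v})\,\Bigl(\frac{T}{i(T-i)}\Bigr)^{1/2}\times\mathrm{poly}(k+ik^\gamma),
\]
the standard "entropic" loss from pinning at an intermediate site. The polynomial factor is then killed by the stretched-exponential factor $\rme^{-c(k+ik^\gamma)^{1-2\eta-\gamma}}$, provided $\eta+\gamma/2<1/4$, say. This is where $\gamma,\eta$ small is used.

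\textbf{Step 3 (summing over $i$).} Summing $\rme^{-c(k+ik^\gamma)^{1-2\eta-\gamma}}\,\mathrm{poly}$ over $i\in[1,T-1]$ gives $o_k(1)$, uniformly in $T\le \rme^{k^\gamma/2}$. For the boundary index $i=1$ in the case where downcrossings and not local time are repelled, the event $\sqrt{\wh{T}_{(v)}(1)}=u$ with $u\in\cR_k^\eta(1)$ pins the value directly, so no ballot factor is needed on the left.

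\textbf{Main obstacle.} The hardest step is the upper ballot bound with an arbitrary pinned intermediate value $w$ that may lie slightly outside the repulsion window, with the correct $r_{k,T}$ prefactors, since Proposition~\ref{prop:coupling2} only covers endpoints inside the windows. I would first try to derive it from the explicit Bessel/Galton--Watson transition densities via a reflection-principle upper bound for the Brownian bridge staying above a linear barrier. This gives $(\wh{u}+1)(\wh{w}+1)/i$-type factors that absorb the deviation $|\wh{w}|\lesssim (k+ik^\gamma)^{1/2+\eta}$ into the polynomial loss.
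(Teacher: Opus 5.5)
Your skeleton matches the paper's: a union bound over the index $i$ at which $\cJ^{\eta,i}_{(v)}$ occurs, a decomposition at site $i$ through the pair $\bigl(\wh{L}_{(v)}(i),\wh{T}_{(v)}(i)\bigr)$, and a stretched-exponential gain from the concentration of one around the other. But Step~1, on which your factorization in Step~2 rests, is false as stated.

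Given the jump chain, an interior site $i$ is visited exactly $T_{\wt{v}_k}(i)+T_{\wt{v}_k}(i+1)$ times (arrivals from below and from above). Each visit has an independent $\mathrm{Exp}(2)$ holding time. So, conditionally on the downcrossing counts, $\wt{L}^*_{\wt{v}_k}(i)$ has mean $\tfrac12\bigl(T_{\wt{v}_k}(i)+T_{\wt{v}_k}(i+1)\bigr)$, not $T_{\wt{v}_k}(i)+O(1)$. Now condition on neighbouring data with $\bigl|\sqrt{\wh{T}_{(v)}(i+1)}-\sqrt{\wh{T}_{(v)}(i)}\bigr|\gg(k+ik^\gamma)^{1/2-\eta}$. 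This is fully compatible with repulsion, since the windows have width of order $(k+ik^\gamma)^{1/2+\eta}$, and it makes $\cJ^{\eta,i}_{(v)}$ nearly certain. So your bound does not survive conditioning on the rest of the trajectory.

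The correct structure is one-sided. Given $\wt{L}^*_{\wt{v}_k}(i)=\ell$, the count $T_{\wt{v}_k}(i)$ is $\mathrm{Poisson}(\ell)$ (the downward clock at $i$, cf.\ Lemma~\ref{lem:idlaw}(iii)) and is independent of everything above $i$. Given $T_{\wt{v}_k}(i)$, everything below $i$ is independent of $\wt{L}^*_{\wt{v}_k}(i)$. So you must pivot first on $\sqrt{\wh{L}_{(v)}(i)}=\theta$ and then on $\sqrt{\wh{T}_{(v)}(i)}=w$. The discrepancy then enters only through the kernel bound $\sum_{|w-\theta|>q}\bbP\bigl(\sqrt{\wh{T}_{(v)}(i)}=w\,\big|\,\sqrt{\wh{L}_{(v)}(i)}=\theta\bigr)\le 2\rme^{-q^2/k^\gamma}$, with $q=(k+ik^\gamma)^{1/2-\eta}$. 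This is exactly the paper's decomposition \eqref{eq:split1}.

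Second, the step you single out as the main obstacle is not needed. The paper bounds the two outer factors with barrier-free compound-distribution tails (Lemma~\ref{lem:compound}), using the windows only to locate the values at sites $i-1$ and $i$. The left factor, given $\sqrt{\wh{T}_{(v)}(i)}=w$, is at most $\rme^{-2(i-1)k^\gamma+2\sqrt{2}\wh{u}+4k^{1/2+\eta}}$ uniformly in $w$. The right factor satisfies $\bbP\bigl(\sqrt{\wh{L}_{(v)}(i)}\in\cQ^\eta_k(i),\ \sqrt{\wh{L}_{(v)}(i)}\le v\bigr)\le \rme^{-2(T-i)k^\gamma-2\sqrt{2}\wh{v}-\wh{v}^2/(Tk^\gamma)+4(k+ik^\gamma)^{1/2+\eta}}$.

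Relative to $r_{k,T}(\wh{u},\wh{v})$, this throws away the whole ballot prefactor $Tk^{3/2}/(\wh{u}\wh{v})\le\rme^{k^\gamma}$ (as $T\le\rme^{k^\gamma/2}$) and loses a further $\rme^{O((k+ik^\gamma)^{1/2+\eta})}$. All of this is swamped by $\rme^{-(k+ik^\gamma)^{1-2\eta-\gamma}}$ once $1-2\eta-\gamma>\tfrac12+\eta$, a slightly stronger smallness condition than your $\eta+\gamma/2<\tfrac14$. No pinned ballot upper bound is ever used.

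What this crude route does need, and your outline omits, are a few side cases:
\begin{itemize}
\item discarding $\{\sqrt{\wh{L}_{(v)}(i)}>v\}$ via \eqref{eq:ex1}--\eqref{eq:ex2}, which works only for $i<T-1$;
\item the index $i=1$, via \eqref{eq:closeu} together with $\tfrac32\gamma<\tfrac12-\eta$;
\item the index $i=T-1$, handled by first localizing $\sqrt{\wh{L}_{(v)}(T-2)}$ within $k^{3\gamma/2}$ of $v$.
\end{itemize}
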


Having Propositions~\ref{lem:asympforl}--\ref{lem:asympdisc} at our disposal, the proof of Proposition~\ref{prop:coupling1} is straightforward.

\begin{proof}[Proof of Proposition~\ref{prop:coupling1}] Let us fix $0< \eta' < \eta < \frac{1}{2}$ and take $\eta_1,\eta_2$ such that $\eta' < \eta_2 < \eta < \eta_1$. Then, by the union bound, if $k$ is sufficiently large then the left-hand side of \eqref{eq:lemacoupp0} is bounded from above by
	\begin{equation}
		\bbP \Big( \big\{ \sqrt{\wh{T}_{(v)}(1)} = u\big\} \setminus \mathcal{L}^{\eta_1,[1,T-1]}_{(v)} \Big) + \bbP \Big( \big\{ \sqrt{\wh{T}_{(v)}(1)} = u\big\} \cap \cJ_{(v)}^{\eta_1,[1,T-1]} \setminus \mathcal{T}^{\eta_1,[2,T-1]}_{(v)} \Big)
	\end{equation} and from below by
	\begin{equation}
		\bbP \Big( \big\{ \sqrt{\wh{T}_{(v)}(1)} = u\big\} \setminus \mathcal{L}^{\eta_2,[1,T-1]}_{(v)} \Big) - \bbP \Big( \big\{ \sqrt{\wh{T}_{(v)}(1)} = u\big\} \cap \cJ_{(v)}^{\eta,[1,T-1]} \setminus \mathcal{L}^{\eta,[1,T-1]}_{(v)} \Big)\,.
	\end{equation} The result now follows at once from Lemma~\ref{lem:asympforl} and Lemma~\ref{lem:asympdisc}.
\end{proof}

In order to complete the proof of Theorem~\ref{prop:coupling0}, the plan for the rest of this section is to give the proofs of Propositions~\ref{lem:transfer} and \ref{prop:coupling2}, together with those of Lemmas~\ref{lem:asympforl} and \ref{lem:asympdisc}- We begin by presenting some preliminary results to be used in the proofs. 

\subsection{Preliminaries}\label{sec:prelimballot}

We collect some auxiliary results which are necessary for the proofs in the subsections to follow. The proof of all these auxiliary results can be found in Section~\ref{sec:prelimballot}. Henceforth we denote by $\tau_\rmA$ and $\ol{\tau}_\rmA$ the first hitting (real) time of $\rmA \subseteq \rmD_n$ and the first return (real) time to that set by the random walk $\bfX$, namely,
\begin{equation}
	\label{e:7.1i}
	\tau_\rmA = \inf \{u \geq 0 :\: \bfX_u \in \rmA \} \quad ;\qquad
	\ol{\tau}_\rmA = \inf \{u > \tau_{\rmA^\rmc}:\: \bfX_u \in \rmA \} \,.
\end{equation}
We extend these definitions also to the simple symmetric random walk on $\bbZ^2$ (not just on~$\wh{\rmD}_n$).
The first auxiliary result gives standard gambler's ruin estimates for simple random walk on~$\bbZ^2$.

\begin{lem} \label{lem:ll1} Given $x \in \bbZ^2$ and $0 < r < R$, the simple symmetric random walk on $\bbZ^2$ satisfies the following estimates:% < n$ with $\rmB(x;R) \subseteq \rmD_n$, the following estimates hold
	\begin{enumerate}
		\item [a)] For any $y \in \rmB(x;R)\setminus \rmB(x;r)$, 
		\begin{equation}
			\bbP_y( \tau_{\rmB(x;r)} < \tau_{\partial \rmB(x;R)})  = \frac{R- \log \|x-y\| +O(\rme^{-r})}{R-r}.  
		\end{equation}
		\item [b)] For any $y \in \rmB(x;R) \setminus \{x\}$, 
		\begin{equation}\label{eq:LLest2}
			\bbP_y( \tau_x < \tau_{\partial \rmB(x;R)}) = \left(\frac{R-\log\|x-y\|+O(\|x-y\|^{-1})}{R}\right)(1+O(R^{-1})).  
		\end{equation} 
	\end{enumerate}
\end{lem}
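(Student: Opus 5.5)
We treat the two estimates separately; both reduce to the standard fact that $a(z):=\frac{2}{\pi}\log\|z\| + \kappa + O(\|z\|^{-2})$, the potential kernel of the simple random walk on $\bbZ^2$ (in the normalization of discrete-time SRW, since hitting probabilities do not depend on time parametrization), is harmonic off the origin. This lets us apply optional stopping to $a(\bfX_u - x)$ at the exit time of an annulus or a punctured ball. Recall that $\rmB(x;r)$ is the discrete Euclidean ball of radius $\lfloor \rme^r\rfloor$, so on its inner boundary $\log\|z-x\| = r + O(\rme^{-r})$, and on $\partial\rmB(x;R)$ it equals $R+O(\rme^{-R})$.

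For part a), let $\tau:=\tau_{\rmB(x;r)}\wedge\tau_{\partial\rmB(x;R)}$, which is a.s. finite, and let $M_u:=a(\bfX_{u\wedge\tau}-x)$. This is a bounded martingale for a walk started at $y\notin\rmB(x;r)$, because $a(\cdot - x)$ is harmonic away from $x$. Optional stopping gives
\[
a(y-x)=p\,\bbE[a(\bfX_\tau-x)\mid\text{hit }\rmB(x;r)]+(1-p)\,\bbE[a(\bfX_\tau-x)\mid\text{hit }\partial\rmB(x;R)],
\]
where $p$ is the desired probability. At hitting the walk sits on the inner boundary of $\rmB(x;r)$ (one lattice step inside radius $\rme^r$), where $a=\frac2\pi r+\kappa+O(\rme^{-r})$. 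On $\partial\rmB(x;R)$ we have $a=\frac2\pi R+\kappa+O(\rme^{-R})$. Solving the linear equation gives
\[
p=\frac{R-\log\|x-y\|+O(\rme^{-r})}{R-r}.
\]
The $O(\|y-x\|^{-2})$ error from $a(y-x)$ is absorbed into $O(\rme^{-r})$, since $\|y-x\|\ge\rme^{r}$.

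For part b), run the same argument with $\rmB(x;r)$ replaced by $\{x\}$, using $a(0)=0$. Optional stopping now yields
\[
a(y-x)=(1-p)\Big(\tfrac2\pi R+\kappa+O(\rme^{-R})\Big),
\]
so
\[
p=1-\frac{\frac2\pi\log\|x-y\|+\kappa+O(\|x-y\|^{-2})}{\frac2\pi R+\kappa+O(\rme^{-R})}.
\]
Rewriting this as $\frac{R-\log\|x-y\|-\frac{\pi}{2}\kappa+O(\|x-y\|^{-2})}{R}\,(1+O(R^{-1}))$ produces a constant shift $\frac\pi2\kappa$ in the numerator, which is not of the form $O(\|x-y\|^{-1})$ when $\|x-y\|$ is large. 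For $\|x-y\|$ of order $1$ the constant can be absorbed into $O(\|x-y\|^{-1})$. Beyond that, we relative-multiplicative-absorb it: $\frac{R-\log\|x-y\| -c}{R}$ differs from $\frac{R-\log\|x-y\|}{R}(1+O(R^{-1}))$ only when $R-\log\|x-y\|$ is bounded. This boundary regime, where $y$ is near $\partial\rmB(x;R)$, is the one delicate point.

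The main obstacle is therefore matching the exact error format in \eqref{eq:LLest2} uniformly over all $y$, including $y$ near the outer boundary. We handle it by splitting into the cases $R-\log\|x-y\|\ge 1$, where the multiplicative error suffices, and the complementary case. In the complementary case we instead use a last-step decomposition, or compare directly with part a) with $r=0$, to show that both sides are $O(R^{-1})$ in consistent form. We expect this case analysis to be routine bookkeeping once the potential-kernel asymptotics are invoked, and we would cite a standard reference (e.g. Lawler–Limic) for the expansion of $a$.
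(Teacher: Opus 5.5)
Your part a) is correct. It is the potential-kernel/optional-stopping argument behind Propositions~6.4.1 and~6.4.3 of Lawler--Limic, which is all the paper does (it simply cites them). One detail is missing: the stated form has the exact denominator $R-r$, so do not divide by $\frac{2}{\pi}(R-r)+O(\rme^{-r})$. Instead multiply out,
\[
p\cdot\tfrac{2}{\pi}(R-r)=\tfrac{2}{\pi}(R-\log\|x-y\|)+O(\rme^{-r})+p\,O(\rme^{-r}),
\]
and use $0\le p\le 1$ to move the denominator's error into the numerator.

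Part b) goes wrong at the ``rewriting'' step, because the constant shift $-\frac{\pi}{2}\kappa$ in the numerator does not exist. Set $\tau:=\tau_x\wedge\tau_{\partial\rmB(x;R)}$ and $A_R:=\bbE_y\bigl[a(Z_\tau-x)\,\big|\,\tau_{\partial\rmB(x;R)}<\tau_x\bigr]=\frac{2}{\pi}R+\kappa+O(\rme^{-R})$. Your optional-stopping identity $a(y-x)=(1-p)A_R$ then gives exactly
\[
p=\frac{A_R-a(y-x)}{A_R}=\frac{\frac{2}{\pi}\bigl(R-\log\|x-y\|\bigr)+O(\|x-y\|^{-2})+O(\rme^{-R})}{\frac{2}{\pi}R+\kappa+O(\rme^{-R})}\,.
\]
So $\kappa$ cancels in the numerator and survives only in the denominator. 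There $\frac{2}{\pi}R+\kappa+O(\rme^{-R})=\frac{2}{\pi}R\,(1+O(R^{-1}))$, which is exactly the multiplicative factor in the statement. Since $1\le\|x-y\|\le\rme^{R}$, both numerator errors are $O(\|x-y\|^{-1})$. This proves b) uniformly in $y\in\rmB(x;R)\setminus\{x\}$, with no case analysis and no special boundary regime.

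Your shift came from expanding $1/(\frac{2}{\pi}R+\kappa)$ inside the subtracted fraction and discarding a term. That term is of the same order as the shift (about $\kappa/(\frac{2}{\pi}R)$ when $\log\|x-y\|\approx R$) and exactly compensates it.

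The repair you sketched could not have worked. If the shift were real, the discrepancy $\frac{\pi\kappa}{2R}$ would exceed the allowed error whenever $R-\log\|x-y\|=o(R)$, not only when it is bounded, so the lemma itself would be false there. Showing that ``both sides are $O(R^{-1})$'' is also much weaker than what the lemma asserts in that regime. Finally, part a) with small $r$ gives only $O(1)$ errors and a target set strictly larger than $\{x\}$.
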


We shall also need precise asymptotics for the Poisson kernel and harmonic measure of balls. Let us first recall what these two objects are. Given any finite subset $A \subseteq \bbZ^2$ with $0 \in A$ and a SSRW $(Z_n)_{n \in \bbN_0}$ on $\bbZ^2$, we write $\Pi_A : \ol{A} \times \partial A \to [0,1]$ for the corresponding \textit{Poisson kernel} given by
\begin{equation}
\Pi_A(x,z):=\bbP_x( Z_{\tau_{\partial A}} = z )
\end{equation} and $\amalg_A: \partial_i A \to [0,1]$ to denote the corresponding \textit{harmonic measure} (from infinity) given by 
\begin{equation}\label{eq:defamalg}
\amalg_{A}(z):=\lim_{|x| \to \infty} \bbP_x(Z_{\tau_A} = z).	
\end{equation} It can be shown that the limit in~\eqref{eq:defamalg} indeed exists and satisfies the equality 
\begin{equation}\label{eq:amalgdef}
\amalg_{A}(w)=\E_w(g_A(Z_{\ol{\tau}_A \wedge \tau_{\partial \rmB(0;m)}})),
\end{equation} for any $m \geq 1$, where $g_{A}(z):=a(z)-\E_z(a(Z_{\tau_{A}}))$, see e.g. \cite[Section 6.6]{LL} for further details. The asymptotic estimates we will need for these quantities are contained in the following lemma.

\begin{lem}\label{lem:kernelfrominf} There exists $\ol{c} > 0$ such that, if $r,r'$ are large enough, given any $n \geq 0$, $x,\wt{x} \in \rmD_n$ and $k,l \geq 1$ such that $l \geq k+2$, $\rmB(x;k+r) \subseteq \rmB(\wt{x};l)$ and $\ol{\rmB(\wt{x};l+r')}\subseteq \rmD_n$, it holds that:
	\begin{equation}\label{eq:miss1}
		\bbP_{y} ( Z_{\tau_{\partial \rmB(\wt{x};l+r')}}=\wt{x}+z)= \Pi_{\rmB(0;l+r')}(0,z)(1+O(\rme^{-\ol{c}r'}))
	\end{equation} uniformly over all $z \in \partial \rmB(0;l+r')$ and $y \in \rmB(\wt{x};l)$; 
\begin{equation}\label{eq:miss2}
		\bbP_{y} ( Z_{\tau_{\partial \rmB(\wt{x};l+r')}}=\wt{x}+z\,|\, \tau_{\partial \rmB(\wt{x};l+r')} <  \ol{\tau}_{\rmB(x;k)})= \Pi_{\rmB(0;l+r')}(0,z)(1+O(\rme^{-\ol{c}r')}))
	\end{equation} uniformly over all $z \in \partial \rmB(0;l+r')$ and $y \in \rmB(\wt{x};l)\setminus (\rmB(x;k) \setminus \partial_i \rmB(x;k))$; and, finally, 
	\begin{equation}\label{eq:miss3}
		\bbP_{y}\big( Z_{\tau_{\rmB(x;k)}} = x+w \,\big| \tau_{\rmB(x;k)}< \tau_{\partial \rmB(\wt{x};l+r')}\big)=\amalg_{\rmB(0;k)}(w)(1+O(\rme^{-\ol{c}(r \wedge k)}))
	\end{equation}
	uniformly over all $w \in \partial_i \rmB(0;k)$ and $y \in \rmB(\wt{x};l+r') \setminus \rmB(x;k+r)$.
	\end{lem}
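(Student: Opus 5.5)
We prove the three estimates of Lemma~\ref{lem:kernelfrominf} by reducing each to classical potential-theoretic facts for simple random walk on $\bbZ^2$, namely the Harnack inequality and the asymptotics of the potential kernel $a(z)=\frac{2}{\pi}\log\|z\|+\kappa+O(\|z\|^{-2})$, as in \cite[Chapter~6]{LL}. First we observe that, since $\ol{\rmB(\wt{x};l+r')} \subseteq \rmD_n$, the walk $\bfX$ on $\wh{\rmD}_n$ and the walk $Z$ on $\bbZ^2$ agree up to the exit time of $\rmB(\wt{x};l+r')$. Hence all three statements concern only $Z$, and by translation we may take $\wt{x}=0$.

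\emph{Step 1: estimate \eqref{eq:miss1}.} The function $y \mapsto \Pi_{\rmB(0;l+r')}(y,z)$ is nonnegative and harmonic in $\rmB(0;l+r')$. By the discrete Harnack inequality with oscillation control (see \cite[Theorem~6.3.8]{LL}), its ratio to its value at the origin satisfies $1+O(\rme^{-l-r'}|y|)$, uniformly for $y \in \rmB(0;l)$. This gives $1+O(\rme^{-r'})$, and one may take $\ol c\le 1$. An alternative route is the explicit difference estimate for the Poisson kernel of a ball, \cite[Lemma~6.3.7]{LL}.

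\emph{Step 2: estimate \eqref{eq:miss2}.} We apply the strong Markov property at the last visit to $\rmB(x;k)$ before exiting, or more conveniently at the first hitting time of $\partial\rmB(0;l+1)$ on the event of not returning to $\rmB(x;k)$. Write the conditional law of the exit point as a mixture, over starting points $y' \in \rmB(0;l+1)\setminus\rmB(x;k)$, of the law of $\Pi(y',\cdot)$ conditioned to avoid $\rmB(x;k)$. Conditioning on avoiding $\rmB(x;k)$ amounts to a Doob transform of the walk killed on $\rmB(x;k)$. By a last-exit decomposition, the conditioned exit distribution is a mixture of unconditioned exit distributions $\Pi(w,\cdot)$ started from points $w$ in a slightly larger ball $\rmB(0;l+r'/2)$, because the walk must leave $\rmB(0;l+r'/2)$ at some point. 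Applying Step 1 with $r'/2$ in place of $r'$ then gives the claim with $\ol c$ halved. The only subtlety is uniformity in $y$ when $y$ lies near $\partial_i\rmB(x;k)$, where the conditioning event is rare. The mixture argument handles this regardless of the weights, since every term in the mixture is individually $\Pi(0,z)(1+O(\rme^{-\ol c r'}))$.

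\emph{Step 3: estimate \eqref{eq:miss3}.} This is the main obstacle. We use a last-exit decomposition on $\rmB(x;k+r/2)$: every path from $y$ to $\rmB(x;k)$ that avoids $\partial\rmB(0;l+r')$ passes through $\partial\rmB(x;k+r/2)$ in its last crossing. So the conditional entrance law is a mixture of $\bbP_{y'}(Z_{\tau_{\rmB(x;k)}}=\cdot \mid \tau_{\rmB(x;k)}<\tau_{\partial\rmB(x;k+r)\text{-type outer set}})$ over $y'$ at distance roughly $\rme^{k+r/2}$ from $x$. For such starting points we invoke the harmonic measure estimate \cite[Theorem~6.3.8/Proposition~6.6.1]{LL}: for $|y'-x|\ge \rme^{s}|\rmB(x;k)|^{\text{rad}}$,
\[
\bbP_{y'}\big(Z_{\tau_{\rmB(x;k)}}=x+w\big)=\amalg_{\rmB(0;k)}(w)\big(1+O(\rme^{-s}\, s)\big).
\]
The same estimate holds under the additional conditioning of hitting before exiting a far outer set, by the representation \eqref{eq:amalgdef} and the standard comparison of the hitting distribution from $y'$ with the harmonic measure, where the error is controlled by $\log$-ratio Green function bounds. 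We expect the error here to be $O(\rme^{-\ol c r})$; the alternative term $\rme^{-\ol c k}$ absorbs lattice corrections to the potential kernel on $\partial_i\rmB(0;k)$. Finally, we average the uniform estimate over the mixture weights to obtain \eqref{eq:miss3}.
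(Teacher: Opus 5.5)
Your Step~1 is the paper's argument: the paper iterates the neighbour-ratio bound $1\pm C\rme^{-(l+r')}$ from \cite[Theorem~6.3.8]{LL} along a path of at most $\rme^{l}$ steps.

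The gap is in Steps~2 and~3, and it is the same in both: the passage from unconditioned exit/hitting laws to conditioned ones. In Step~2 you assert that the conditioned exit law is a mixture of \emph{unconditioned} laws $\Pi(w,\cdot)$ with $w\in\partial\rmB(\wt x;l+r'/2)$. This is false. After the walk first reaches such a $w$, it is still conditioned on $H'=\{\tau_{\partial\rmB(\wt x;l+r')}<\tau_{\rmB(x;k)}\}$. There $\bbP_w(H')\approx 1-\frac{r'/2}{l+r'-k}$, which in general is only bounded below by about $\tfrac12$, so the Doob transform is not close to the free walk. A last-exit decomposition does not produce free-walk components either, since its pieces are walks conditioned not to return to the sphere. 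Hence ``every term in the mixture is individually $\Pi(0,z)(1+O(\rme^{-\ol c r'}))$'' is exactly what remains unproved. Step~3 has the same hole. ``The same estimate holds under the additional conditioning'' and ``we expect the error to be $O(\rme^{-\ol c r})$'' are assertions, not arguments. Also, in a first-hitting decomposition at $\partial\rmB(x;k+r/2)$ the components are conditioned on hitting $\rmB(x;k)$ before $\partial\rmB(\wt x;l+r')$, not before a set at scale $k+r$.

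The missing ingredient is the subtraction identity the paper uses for \eqref{eq:miss2}. With $\tau=\tau_{\partial\rmB(\wt x;l+r')}$ and $w\notin\rmB(x;k)$,
\[
\bbP_w\big(Z_\tau=\wt x+z,\,H'\big)=\bbP_w\big(Z_\tau=\wt x+z\big)-\bbE_w\Big[1_{(H')^{\rmc}}\,\bbP_{Z_{\tau_{\rmB(x;k)}}}\big(Z_\tau=\wt x+z\big)\Big].
\]
Since $Z_{\tau_{\rmB(x;k)}}\in\rmB(\wt x;l)$, applying \eqref{eq:miss1} to both terms gives $\Pi_{\rmB(0;l+r')}(0,z)\big(\bbP_w(H')+O(\rme^{-\ol c r'/2})\big)$. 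That is a relative error of $O(\rme^{-\ol c r'/2})/\bbP_w(H')$. On your intermediate sphere, Lemma~\ref{lem:ll1} gives $\bbP_w(H')\ge\bbP_w(\tau_{\partial\rmB(\wt x;l+r')}<\tau_{\rmB(\wt x;l)})\approx\tfrac12$. Feeding this into the strong Markov decomposition at the first hitting of $\partial\rmB(\wt x;l+r'/2)$ proves \eqref{eq:miss2} uniformly in $y$.

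For \eqref{eq:miss3}, do the same at the first hitting point $y'$ of $\rmB(x;k+r/2)$:
\begin{itemize}
\item subtract the event $\{\tau<\tau_{\rmB(x;k)}\}$ on the recurrent lattice;
\item use the unconditioned harmonic-measure estimate both at $y'$ and at the exit points;
\item note that $l\ge k+r$ gives $\bbP_{y'}(\tau_{\rmB(x;k)}<\tau)\gtrsim\tfrac12$.
\end{itemize}

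Once repaired this way, your route is sound and more robust than the paper's. The paper applies the subtraction directly at $y$, which yields relative error $O(\rme^{-\ol c r'})/\bbP_y(H)$. For $y$ near $\partial_i\rmB(x;k)$, $\bbP_y(H)$ is of order $\rme^{-k}/(l+r'-k)$, so this does not give the claimed uniformity. The paper's proof of \eqref{eq:miss3} uses exactly that regime: it applies a last-exit decomposition over $\partial\rmB(x;k+r)$, path reversal, \eqref{eq:miss2} started from $\partial_i\rmB(x;k)$, and then \eqref{eq:amalgdef} with $g_{\rmB(0;k)}=r+O(\rme^{-k})$, which is where the $\rme^{-\ol c k}$ comes from. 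Your Step~3 avoids path reversal and that regime altogether, at the cost of quoting the harmonic-measure asymptotics from \cite{LL}.
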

	
In addition, in the proofs we will need to identify the distributions the local time field $\wt{L}_{(v)}(j)$ and number of downcrossings $T_n(i)$ associated with the one-dimensional CTSRW. To this end, we introduce the concept of compound distributions. 

For us, a \textit{compound distribution} will be the law of any random sum of the form $S:=\sum_{i=1}^N V_i$, where $(V_i)_{i \in \bbN}$ are i.i.d. random variables and $N$ is an independent $\bbN_0$-valued random variable. In the sequel, we will say that $S$ has:
\begin{itemize}
\item [i.] \textit{Binomial-Geometric} distribution of parameters $(n,p;q)$, and denote it $S \sim \Bigeo(n,p;q)$, if $N \sim \textrm{Binormail}(n,p)$ and $V_i \sim \textrm{Geometric}(q)$; 	
\item [ii.] \textit{Binomial-Exponential} distribution of parameters $(n,p;\lambda)$, denoted as $S \sim \Biexp(n,p;\lambda)$, if $N \sim \textrm{Binormail}(n,p)$ and $V_i \sim \textrm{Geometric}(q)$; 	
\item [iii.] \textit{Poisson-Geometric} distribution of parameters $(\mu,q)$, and denote it by $S \sim \Poigeo(\mu;q)$, if $N \sim \textrm{Binormail}(n,p)$ and $V_i \sim \textrm{Geometric}(q)$; 
\end{itemize}

The following auxiliary result gives some basic facts about the laws of $\wt{L}_{(v)}(j)$ and $T_n(j)$.

\begin{lem}\label{lem:idlaw} Given $T \in \N$, the random walk $\wt{X}^{[0,T]}$ satisfies:
\begin{itemize}
\item [\rm{i.}] For any $i=0,\dots,T-1$, $j=i,\dots,T-1$, $v \in \cN_k$ and $m \in \bbN$, conditional on $T_{\wt{v}_k}(T-i)=m$, 

\begin{equation}\label{eq:propltc}
T_{\wt{v}_k}(T-j) \sim \Bigeo\left(m, \frac{1}{j-i+1},\frac{1}{j-i+1}\right)	.
\end{equation}
 	\item [\rm{ii.}] For any $i=1,\dots,T$ and $v \in \cN_k$, 
	\begin{equation}\label{eq:proplta}
	\wt{L}_{(v)}(T-i) \sim \Biexp\left(\wt{v}_k,\frac{1}{i};\frac{1}{i}\right).
\end{equation}
	\item [\rm{iii.}] For any $i=0,\dots,T-1$, $j=i,\dots,T-1$ and $\theta >0$, conditional on $\sqrt{\wt{L}_{(v)}(T-i)}=\theta$, 
\begin{equation}\label{eq:propltb}
T_{\wt{v}_k}(T-j) \sim \Poigeo\left(\frac{\wt{\theta}_k}{j-i+1};\frac{1}{j-i+1}\right).
\end{equation}
\end{itemize} 
\end{lem}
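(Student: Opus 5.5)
The plan is to treat all three items as statements about the discrete-time skeleton of $\wt{X}^{[0,T]}$ together with the exponential holding times. I read the compound laws in the natural way: in $\Biexp$ the summands are exponential, and in $\Poigeo$ the count $N$ is Poisson with mean $\mu$. All three items then reduce to one-dimensional gambler's ruin computations combined with the strong Markov property.

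\emph{Item (i).} Condition on $T_{\wt{v}_k}(T-i)=m$. Each of these $m$ downcrossings $T-i\to T-i-1$ starts an excursion of the walk below level $T-i$ that ends upon the next return to $T-i$. By the strong Markov property these $m$ excursions are i.i.d. Moreover, the number of $T-j\to T-j-1$ downcrossings is the sum over these excursions of the downcrossings each one contains. Fix one such excursion, started at $T-i-1$.
\begin{itemize}
\item By gambler's ruin on the segment $[T-j-1,T-i]$, it reaches $T-j-1$ before $T-i$ with probability $\frac{1}{j-i+1}$; this gives the Binomial thinning with parameter $\frac1{j-i+1}$.
\item Once the excursion has made one downcrossing $T-j\to T-j-1$, it must return to $T-j$. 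From $T-j$ it makes another downcrossing before hitting $T-i$ with probability $\frac{j-i}{j-i+1}$.
\end{itemize}
Hence the number of downcrossings in an excursion that reaches level $T-j-1$ is $\mathrm{Geometric}\big(\frac1{j-i+1}\big)$ on $\{1,2,\dots\}$, independently across excursions. Only the skeleton chain enters here, so holding times play no role. In the case $j=i$ this correctly returns $m$.

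\emph{Item (ii).} The argument is the same, now run from the top vertex $T$. Each of the $\wt{v}_k$ excursions away from $T$ starts at $T-1$ and reaches $T-i$ before $T$ with probability $1/i$, which produces the Binomial count. An excursion that reaches $T-i$ returns to $T-i$ before hitting $T$ a total number of times that is geometric with escape probability $1/i$. Each visit contributes an independent exponential holding time. A geometric sum of i.i.d. exponentials is again exponential, with rate multiplied by the escape probability. One must check that the paper's normalization of $\wt{L}$ (edge rate $1$, local time at $T$ used as the clock) turns this into exactly $\mathrm{Exp}(1/i)$; I will fix that constant by a direct computation at $i=1$.

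\emph{Item (iii)} is the delicate part. I will use excursion theory at the vertex $T-i$: parametrize time by the local time accumulated at $T-i$. While the walk sits at $T-i$, jumps to $T-i-1$ occur at edge rate $1$. Hence, conditionally on total local time $\wt{\theta}_k$ at $T-i$, the downcrossings $T-i\to T-i-1$ form a Poisson process on $[0,\wt{\theta}_k]$, so their number is $\mathrm{Poisson}(\wt{\theta}_k)$. The excursions they initiate below $T-i$ are i.i.d. and independent of both the upward excursions and the local time at $T-i$. Applying the per-excursion analysis of item (i) to each of them, Poisson thinning with probability $\frac1{j-i+1}$ gives $\mathrm{Poisson}\big(\frac{\wt{\theta}_k}{j-i+1}\big)$ many excursions reaching $T-j-1$, each carrying an independent $\mathrm{Geometric}\big(\frac1{j-i+1}\big)$ number of downcrossings.

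The main obstacle is justifying rigorously this conditional independence of the downward excursions from the local time at $T-i$; the other two items are exact computations once that structure is in place. I would obtain it from the standard construction of a continuous-time chain as Poisson clocks run in the local time of each vertex.
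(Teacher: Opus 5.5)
Your proposal is correct and follows the paper's own argument: gambler's ruin for the thinning and geometric parameters, plus the strong Markov decomposition into i.i.d.\ excursions, with your Poisson-clocks-in-local-time construction supplying the justification for the Poisson count in (iii) that the paper merely asserts. Two small corrections.

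\begin{enumerate}
\item In (ii), calibrating at $i=1$ is not legitimate. At an interior vertex the per-sojourn escape probability is $\frac{1}{2i}$, not $\frac1i$, with $\mathrm{Exp}(2)$ holding times; at the endpoint $0$ (the case $i=T$) it is $\frac1T$ with $\mathrm{Exp}(1)$ holding. Instead, count up-jumps from $T-i$: these occur at rate $1$ per unit local time at every vertex, and each reaches $T$ before returning with probability $\frac1i$. This gives $\mathrm{Exp}(\frac1i)$ uniformly, including $i=T$.
\item In (iii), the independence you need holds for $i\ge1$ because the local time at $T-i$ at the $\wt{v}_k$-th return to $T$ is a functional of the up-clock at $T-i$ and the clocks above it. This fails for $i=0$: there the number of excursions from $T$ is exactly $\wt{v}_k$, so (iii) is actually false as stated at $i=0$. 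The paper only ever uses it with $i\ge1$.
\end{enumerate}
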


In light of Lemma~\ref{lem:idlaw}, the following tail estimates for compound distributions will be of use. 

\begin{lem}\label{lem:compound} Let $n \in \bbN$, $p,q \in (0,1]$ and $\lambda,\mu > 0$ be fixed parameters. Then:
\begin{itemize}
\item [\rm{i.}] If $S \sim \Bigeo(n,p;q)$, then $\begin{cases}\bbP( S \leq \theta ) \leq \rme^{-(\sqrt{np}-\sqrt{\theta q})^2}  \quad \text{ if $\theta \leq \frac{np}{q}$,} \\ \\ \bbP(  S  \geq \theta ) \leq \rme^{-(\sqrt{\theta q}-\sqrt{np})^2} \quad \text{ if $\theta \geq \frac{np}{q}$}.\end{cases}$ 
\item [\rm{ii.}] If $S \sim \Biexp(n,p;\lambda)$, then $\begin{cases}
\bbP( S \leq \theta ) \leq \rme^{-(\sqrt{np}-\sqrt{\theta \lambda})^2} \quad \text{ if $\theta \leq \frac{np}{\lambda}$,}\\ \\
\bbP(  S  \geq \theta ) \leq \rme^{-(\sqrt{\theta \lambda}-\sqrt{np})^2} \quad \text{ if $\theta \geq \frac{np}{\lambda}$}.\end{cases}$
\item[\rm{iii.}] If $S \sim \Poigeo(\mu;q)$, then $\begin{cases}
\bbP(S \leq \theta) \leq \rme^{-(\sqrt{\mu}-\sqrt{\theta p})^2} \quad \text{ if $\theta \leq \frac{\mu}{p}$,} \\ \\ 
\bbP(S \geq \theta) \leq \rme^{-(\sqrt{\mu}-\sqrt{\theta p})^2} \quad \text{ if $\theta \geq \frac{\mu}{p}$}.\end{cases}$
\end{itemize}
\end{lem}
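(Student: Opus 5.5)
The plan is a Chernoff (exponential Markov) bound in each of the three cases, using the compound structure to factor the moment generating function. For $S=\sum_{i=1}^N V_i$ with $N$ independent of the i.i.d.\ $V_i$, write $\varphi(s):=\bbE\,\rme^{sV_1}$. Then
\[
\bbE\,\rme^{sS}=\bbE\big[\varphi(s)^N\big].
\]
If $N\sim\mathrm{Binomial}(n,p)$, this equals $(1-p+p\varphi(s))^n\le \exp\{np(\varphi(s)-1)\}$, by $\log(1+y)\le y$ for $y>-1$. This bound holds for $s>0$ (upper tail) and for $s<0$ (lower tail). If $N\sim\mathrm{Poisson}(\mu)$, it equals $\exp\{\mu(\varphi(s)-1)\}$ exactly. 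So in all three cases
\[
\bbP(S\ge\theta)\le \exp\{-s\theta+m(\varphi(s)-1)\},\quad s>0,
\qquad
\bbP(S\le\theta)\le \exp\{s|\theta|\cdot\mathrm{sgn}+\dots\},
\]
more precisely $\bbP(S\le\theta)\le \exp\{s\theta+m(\varphi(-s)-1)\}$ for $s>0$. Here $m=np$ or $m=\mu$. This reduces everything to a one-variable optimisation.

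\textbf{Exponential case (ii).} For $V\sim\mathrm{Exp}(\lambda)$ we have $\varphi(s)-1=s/(\lambda-s)$ for $s<\lambda$. The exponent $-s\theta+np\,s/(\lambda-s)$ is minimised at $\lambda-s=\sqrt{np\lambda/\theta}$. This choice is admissible, i.e.\ $s\in(0,\lambda)$, exactly when $\theta\ge np/\lambda$. Substituting gives the exponent $-\theta\lambda+2\sqrt{np\lambda\theta}-np=-(\sqrt{\theta\lambda}-\sqrt{np})^2$. The lower tail is identical, with $s\to-s$ and the same optimiser $\lambda+s=\sqrt{np\lambda/\theta}$, which is positive precisely when $\theta\le np/\lambda$.

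\textbf{Geometric cases (i) and (iii).} I will reduce to the exponential computation rather than re-optimise from scratch. I would first try to show that the geometric m.g.f.\ is dominated by the exponential one with rate $q$, namely
\[
\varphi_{\rm geo}(s)-1\le \frac{s}{q-s}\quad\text{for } s\in(-\infty,q).
\]
With the convention that $\mathrm{Geometric}(q)$ has mean $1/q$ on $\{1,2,\dots\}$, we have $\varphi_{\rm geo}(s)-1=(\rme^s-1)/(1-(1-q)\rme^s)$. The inequality then follows by setting $y=\rme^s-1$ and comparing: it reduces to $q y\le s(1+y)$ on the relevant range, which I expect to follow from $\rme^{s}-1\le s\rme^{s}$. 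If the opposite convention on $\{0,1,\dots\}$ is intended, the same argument applies with an even smaller m.g.f. Once the domination holds, the exponential optimisation above goes through verbatim and gives (i) with $\lambda=q$. For (iii) it gives the same bound with $np$ replaced by $\mu$; the parameter written $p$ in the statement should be read as $q$.

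\textbf{Main obstacle.} The only step requiring care is this m.g.f.\ comparison for the geometric law, specifically checking it on both sides ($s>0$ for the upper tail and $s<0$ for the lower tail) and for whichever support convention the paper uses. The geometric law is a discretisation of the exponential, so the comparison can fail for the lower tail near $s$ large negative. If it does, the fallback is to optimise the geometric exponent directly. Writing $x=\rme^{s}$, the stationarity equation is a quadratic in $x$. I would then verify that the resulting exponent is at most $-(\sqrt{\theta q}-\sqrt{np})^2$, using the elementary inequality $-\log(1-q)\ge q$.
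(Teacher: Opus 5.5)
Your proof is correct, and it is self-contained where the paper is not. The paper gives no derivation of its own. It cites \cite[Lemma~4.6]{belius2017subleading} for the lower tail in (i) and \cite[Lemma~A.3]{Tightness} for (ii)--(iii), and says the upper tails ``follow by similar arguments''.

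Your route combines three steps. First, exponential Markov. Second, the bound $(1-p+p\varphi)^n\le \rme^{np(\varphi-1)}$, which is exact in the Poisson case. Third, a single optimisation of $-s\theta+m\,s/(\lambda-s)$ at $\lambda\mp s=\sqrt{m\lambda/\theta}$. Together these give all six bounds at once. The comparison $\varphi_{\rm geo}(s)-1\le s/(q-s)$ is what lets the geometric cases reuse the exponential computation instead of being re-optimised. Your reading of the statement is also the right one: the $p$ in (iii) must be $q$, and the definitions of $\Biexp$ and $\Poigeo$ must use $\mathrm{Exp}(\lambda)$ and $\mathrm{Poisson}(\mu)$, despite the typos there.

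Two of your hedges need correcting.

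First, the comparison does not fail for negative $s$. For every $s<q$, both denominators $q-s$ and $1-(1-q)\rme^{s}$ are positive; the latter holds because $s<q\le-\log(1-q)$. So cross-multiplying is legitimate for both signs of $s$. It reduces exactly to $\rme^{s}-1\le s\rme^{s}$, i.e.\ $\rme^{-s}\ge 1-s$, which holds for all real $s$. Hence the domination holds on all of $(-\infty,q)$, and the fallback direct optimisation is unnecessary.

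Second, your remark about the $\{0,1,\dots\}$ convention is wrong. Shifting the geometric down by one multiplies its m.g.f.\ by $\rme^{-s}$, which \emph{increases} it for $s<0$. In fact the lower-tail bound in (i) is false under that convention. The mean becomes $np(1-q)/q<np/q$. For fixed $q\in(0,1)$ and $\theta$ near $np(1-q)/q$, the probability $\bbP(S\le\theta)$ is of order one, while the claimed bound $\rme^{-np(1-\sqrt{1-q})^2}$ is exponentially small in $np$. The lemma genuinely needs the convention on $\{1,2,\dots\}$ with mean $1/q$. This is also the one the paper uses; see Lemma~\ref{lem:idlaw}(i) and the law of $T^{[i-1,j]}_1(i)$ in the proof of Lemma~\ref{lem:rep1}.
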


\subsection{Transfer to a 1D CTSRW: proof of Proposition~\ref{lem:transfer}}\label{sec:prooftransfer}

To prove Proposition~\ref{lem:transfer}, we first obtain a similar statement but without the repulsion condition. Throughout this subsection, $\wt{X}^{[i-1,j]}$ will denote a CTSRW on the linear graph $\{i-1,\dots,j\}$ starting from $j$ with all transition rates between neighbors equal to~$1$ and, given $i=1,\dots,j$ and $n \in \N$, we will write $T^{[i-1,j]}_{n}(i)$ for the number of downcrossings from $i$ to $i-1$ of $\wt{X}^{[i-1,j]}$ in its first $n$ excursions away from $j$ and, given $s \in \cN_k$, also abbreviate $\wh{T}^{[i-1,j]}_{(s)}(i):=k^\gamma T^{[i-1,j]}_{\wt{s}_k}(i)$. The auxiliary asymptotics without repulsion we shall need are the following.

\begin{lem}[Transfer Lemma]\label{lem:rep1} There exists a constant $c > 0$ such that, if $k$ is large enough, for any $i,j \in \N$ with $i<j$, $n \geq k$ and $x,\wt{x} \in \rmD_n$ such that $x \in \rmB^-[\wt{x};j-1]$ and $\overline{\rmB^+[\wt{x};j]}\subseteq \rmD_n$, we have that	
\begin{equation}\label{eq:GR-3}
\bbP\Big( Z_t[x;i] = (u,\ol{y}) \,\Big|\, Z_t[\wt{x};j] = (v,\ol{z})\,;\cF[\wt{x};j]\Big) = \bbP \Big( \sqrt{\wh{T}^{[i-1,j]}_{(v)}(i)} = u\Big)h_{k,i}(\ol{y}) (1+O(\rme^{-ck^\gamma})), 
\end{equation}
uniformly over all $t> 0$ and $(u,\ol{y}) \in \cZ_k[i]$, $(v,\ol{z}) \in \cZ_k[j]$ with $0 \leq \wt{u}_k \leq \rme^{ck^\gamma}$ and $1 \leq \wt{v}_k \leq \rme^{ck^\gamma}$. Above, the function $h_{k,i}$ is given by 
	\begin{equation}
		h_{k,i}(\ol{y}):=\begin{cases}1 & \text{ if $\ol{y}=\emptyset$}\\ \\\displaystyle{\prod_{r=1}^m \amalg_{\rmB^-[0;i]}(y_r) \Pi_{\rmB^+[0;i]}(0,y_r')} &\text{ if $\ol{y}=((y_1,y_1'),\dots,(y_{m},y'_{m})) \in \textrm{E}_{m}[i]$ for $m \in \N$}.
		\end{cases}	
	\end{equation}
\end{lem}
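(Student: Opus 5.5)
We decompose the walk at the successive scales $i,\dots,j$ and use the strong Markov property to reduce everything to gambler's ruin between consecutive annuli. Conditioning on $\cF[\wt{x};j]$ fixes the $[\wt{x};j]$-downcrossings, and in particular their number $\wt{v}_k$ and their entry/exit points $\ol{z}$. What remains are the $\wt{v}_k$ excursions inside $\rmB^+[\wt{x};j]$. Given the entry points, these are independent walks started at $\partial_i\rmB^-[\wt{x};j]$ and run until exit of $\rmB^+[\wt{x};j]$. The plan is to show two things. First, the number of $[x;i]$-downcrossings made during these excursions has, up to multiplicative error $1+O(\rme^{-ck^\gamma})$, the law of the number of $i\to i-1$ transitions of $\wt{X}^{[i-1,j]}$ in $\wt{v}_k$ excursions from $j$. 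Second, conditionally on this number, the entry/exit pairs of the $[x;i]$-excursions are i.i.d. with law $\amalg_{\rmB^-[0;i]}\otimes\Pi_{\rmB^+[0;i]}(0,\cdot)$, again up to such errors.

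\textbf{Step 1: one step of the chain.} We encode the walk by the sequence of annuli boundaries $\partial \rmB^+[x;m]\approx \partial_i\rmB^-[x;m+1]$, for $m=i-1,\dots,j$, that it successively visits. Since $x\in \rmB^-[\wt{x};j-1]$, the outermost level can be taken around $\wt{x}$. By Lemma~\ref{lem:ll1}(a), from any point of $\partial \rmB^+[x;m]$ the walk hits $\rmB^-[x;m]$ before $\partial\rmB^+[x;m+1]$ with probability $\tfrac12+O(\rme^{-k^\gamma}/k^\gamma)$. The thin gaps of width $4\rme^{-k^\gamma}$ built into the definitions make the levels genuinely separated, and the $O(\rme^{-r})$ error in (a) is uniform over starting points. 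Hence the sequence of visited levels is a Markov chain whose transition probabilities are $\tfrac12(1+O(\rme^{-ck^\gamma}))$, regardless of the exact hitting point. Any trajectory of this chain from level $j$ down to level $i-1$ and back, repeated over $\wt{v}_k$ excursions and producing exactly $\wt{u}_k$ downcrossings of $[x;i]$, therefore has probability equal to the 1D CTSRW probability of the same skeleton times $\prod(1+O(\rme^{-ck^\gamma}))$. The embedded jump chain of the CTSRW is symmetric, so the comparison is exact at the skeleton level.

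\textbf{Step 2: controlling the accumulated error.} The number of steps in a skeleton is random, so the multiplicative errors compound. We first restrict to skeletons with at most $\rme^{c'k^\gamma}$ steps, for some small $c'<c$, where the product of the errors is $1+O(\rme^{-ck^\gamma/2})$. We then bound the complement. Given $\wt{u}_k,\wt{v}_k\le \rme^{ck^\gamma}$ and $j-i\le T\le \rme^{ck^\gamma}$, the expected number of chain steps is polynomial in these quantities. The lower bound on the target probability coming from Lemma~\ref{lem:idlaw}(i) and Lemma~\ref{lem:compound} allows us to absorb the tail of the step count. This bookkeeping is the main obstacle. It requires that long skeletons be negligible \emph{relative} to $\bbP(\sqrt{\wh{T}^{[i-1,j]}_{(v)}(i)}=u)$, which may itself be small, rather than merely small in absolute terms. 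We would handle it by a generating-function comparison: perturbing the transition probabilities by $1\pm\delta$ perturbs the 1D probability by at most a factor $(1\pm\delta)^{\#\text{steps}}$. This is a relative bound, which we then sum using the geometric structure provided by Lemma~\ref{lem:idlaw}(i).

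\textbf{Step 3: entry and exit points.} Each $[x;i]$-excursion starts at the hitting point of $\rmB^-[x;i]$ when the walk arrives from $\partial\rmB^+[x;i]$, which is at log-distance about $k^\gamma$ away. By \eqref{eq:miss3} with $r\approx k^\gamma$, the law of this entry point is $\amalg_{\rmB^-[0;i]}(1+O(\rme^{-ck^\gamma}))$, uniformly in the starting point and also after conditioning on hitting before reaching the next level. The exit point is $\bfX_{\tau_{\partial\rmB^+[x;i]}}$ started from the entry point. By \eqref{eq:miss1} with $r'\approx k^\gamma$ its law is $\Pi_{\rmB^+[0;i]}(0,\cdot)(1+O(\rme^{-ck^\gamma}))$, uniformly in the start, and the same holds conditionally on avoiding a return, by \eqref{eq:miss2}. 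By the strong Markov property these factors multiply across the $\wt{u}_k$ excursions, giving $h_{k,i}(\ol{y})(1+O(\rme^{-ck^\gamma}))^{\wt{u}_k}$. We should use $\wt{u}_k\le \rme^{ck^\gamma}$ together with a slightly smaller constant in the exponent, so that this product is still $1+O(\rme^{-c''k^\gamma})$. Combining with Steps 1–2 yields \eqref{eq:GR-3}.
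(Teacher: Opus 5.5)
There is a genuine gap, and it sits exactly where you place your ``main obstacle'': Step~2 is left as a sketch, and as set up it cannot give the lemma as stated. By inserting the intermediate levels $i,\dots,j-1$, you turn each $[\wt{x};j]$-excursion into a skeleton path of random length $L$. Your comparison with the 1D walk is only path-wise, namely a factor in $[(1-\delta)^L,(1+\delta)^L]$ with $\delta\asymp \rme^{-k^\gamma}/k^\gamma$.

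To conclude you would need $\bbE\big[(1+\delta)^{L}\,\big|\,T^{[i-1,j]}_{\wt{v}_k}(i)=\wt{u}_k\big]=1+o(1)$ uniformly in the parameters, and you never prove this. The ``lower bound on the target probability'' you draw from Lemma~\ref{lem:compound} is not there either, since that lemma gives only upper tail bounds.

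More seriously, your bookkeeping assumes $j-i\le \rme^{ck^\gamma}$, which is not a hypothesis. The lemma is for all $i<j$ with $\overline{\rmB^+[\wt{x};j]}\subseteq\rmD_n$, so $j-i$ may be of order $n/k^\gamma$, while the error must be $O(\rme^{-ck^\gamma})$ uniformly in $n$. Every skeleton path contributing to $\{\wt{u}_k\ge 1\}$ has at least $2(j-i)$ steps. Hence once $j-i\gg\delta^{-1}$ the path-wise factor $(1\pm\delta)^L$ carries no information, and nothing in your argument recovers the cancellations.

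The missing idea, which is the paper's route, is that the intermediate levels are superfluous. The law of $T^{[i-1,j]}_1(i)$ is governed by just two 1D ruin probabilities. From $j-1$ the walk reaches $i-1$ before $j$ with probability $\frac{1}{j-i+1}$. From $i$ it returns to $i-1$ before $j$ with probability $\frac{j-i}{j-i+1}$. Hence $\bbP(T^{[i-1,j]}_1(i)=m)=\big(\frac{1}{j-i+1}\big)^2\big(\frac{j-i}{j-i+1}\big)^{m-1}$ for $m\ge1$.

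The 2D counterparts follow from a single application of Lemma~\ref{lem:ll1}(a) between $\rmB^-[x;i]$ and $\partial\rmB^+[\wt{x};j]$. Before applying it, sandwich $\partial\rmB^+[\wt{x};j]$ between balls about $x$ of log-radii $k+jk^\gamma-4\rme^{-k^\gamma}\pm\rme^{-\frac{3}{2}k^\gamma}$; this is where $x\in\rmB^-[\wt{x};j-1]$ is used. The resulting relative error is $O(\rme^{-k^\gamma})$, uniformly in $j-i$.

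Induction on the number of $[x;i]$-excursions via the strong Markov property then produces one error factor per $[x;i]$-excursion and one per $[\wt{x};j]$-excursion. That is at most $\wt{u}_k+\wt{v}_k\le 2\rme^{ck^\gamma}$ factors in total, which is why a small $c$ suffices.

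You also omit that conditioning on $Z_t[\wt{x};j]=(v,\ol{z})$ fixes the exit points $z'$, not just the entry points. So the $[\wt{x};j]$-excursions are walks conditioned to leave $\rmB^+[\wt{x};j]$ at $z'$, not free walks ``run until exit''. This Doob transform must be removed once per excursion, via Poisson-kernel ratio estimates from \eqref{eq:miss1}--\eqref{eq:miss2} as in \eqref{eq:new-hc} and \eqref{eq:new-decoup4}; that is where \eqref{eq:miss2} is really needed. Your treatment of the entry and exit points of the $[x;i]$-excursions via \eqref{eq:miss3} and \eqref{eq:miss1} is fine.
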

 
\begin{proof} Let $V_r$ be the number of $[x;i]$-downcrossings during the $r$-th $[\wt{x};j]$-excursion of the walk on $\wh{\rmD}_n$. Then, on the one~hand, on the event $\Big\{\sqrt{\wh{N}_t[\wt{x};j]}=v\Big\}$,
	\begin{equation}\label{eq:ntrep}
		N_t[x;i]=\sum_{r=1}^{\wt{v}_k} V_r.	
	\end{equation} On the other hand, the random variables $(V_r)_{r \in \N}$ are independent conditionally on $\cF[\wt{x};j]$ since, by the strong Markov property, these excursions are independent given their entry/exit points. Moreover, if $\bbP_{z,z'}$ denotes the law of an $[\wt{x};j]$-excursion conditioned to start at $z$ and end at~$z'$ and $V$ denotes the number of $[x;i]$-downcrossings during this excursion, for any $m \in \bbN$ we have
\begin{equation}
	\bbP(V_r = m \,|\, \cF[\wt{x};j]) = \bbP_{Y_r^{\text{in}}[\wt{x};j],Y_r^{\text{out}}[\wt{x};j]}(V = m), 
\end{equation} where $Y_r^{\text{in}}[\wt{x};j]$ and $Y_r^{\text{out}}[\wt{x};j]$ are respectively the entry and exit points of the $r$-th $[\wt{x};j]$-excursion.  With all this information in mind, let us first prove \eqref{eq:GR-3}. We proceed by induction on $\wt{v}_k \in \N$. 

Assume first that $\wt{v}_k=1$. In this case, notice that proving \eqref{eq:GR-3} boils down to showing that, for some suitable constant $c_1 > 0$,
\begin{equation}\label{eq:new-v=1}
\bbP_{z.z'}(N^*_{1}[x;i]=\wt{u}_k\,,\, \ol{Y}_{(\wt{u}_k)}[x;i]=\ol{y})=\bbP(T_1^{[i-1,j]}(i)=\wt{u}_k)h_k(\overline{y})(1+O(\rme^{-c_1k^\gamma}))
\end{equation} holds uniformly over all $\overline{z}=(z,z') \in \partial_i \rmB^-[\wt{x};j] \times \partial \rmB^+[\wt{x};j]$ and $(u,\overline{y}) \in \cZ_k[i]$ with $0 \leq \wt{u}_k \leq \rme^{ck^\gamma}$, where, for $m \in \N$, $N^*_{m}[x;i]$ is the number of $[x;i]$-excursions during the first $m$ $[\wt{x};j]$-excursions. 
Furthermore,  by standard gambler's ruin estimates for a random walk on $\bbZ$, a straightforward computation shows that
\begin{equation}
	\bbP(T_1^{[i-1,j]}(i)=\wt{u}_k)=\begin{cases} \displaystyle{\frac{j-1}{j-i+1}} & \text{ if $\wt{u}_k=0$ }\\ \\ \displaystyle\left(\frac{1}{j-i+1}\right)^2\left(\frac{j-i}{j-i+1}\right)^{\wt{u}_k-1} & \text{ if $\wt{u}_k \geq 1$.}\end{cases}
\end{equation} With this in mind, let us show \eqref{eq:new-v=1}. To this end, note first that for $\wt{u}_k=0$ the left-hand side of \eqref{eq:new-v=1} becomes
	\begin{equation}\label{eq:ndecomp}
		\bbP_{z,z'}(N^*_{1}[x;i] = 0) = \frac{\bbP_{z}( Z_{\tau_{\partial \rmB^+[\wt{x};j]}}=z' \,| \tau_{\partial \rmB^+[\wt{x};j]} < \tau_{\rmB^-[x;i]}) \bbP_{z}(\tau_{\partial \rmB^+[\wt{x};j]} < \tau_{\rmB^-[x;i]})}{\bbP_{z} (Z_{\tau_{\partial \rmB^+[\wt{x};j]}}=z')},	
	\end{equation} where we recall that $(Z_n)_{n \in \bbN_0}$ above denotes a SSRW on $\bbZ^2$. But then, by Lemma~\ref{lem:kernelfrominf} we see~that 
	\begin{equation}\label{eq:new-hc}
		\frac{\bbP_{z}( Z_{\tau_{\partial \rmB^+[\wt{x};j]}}=z' \,| \tau_{\partial \rmB^+[\wt{x};j]} < \tau_{\rmB^-[x;i]}) }{\bbP_{z} (Z_{\tau_{\partial \rmB^+[\wt{x};j]}}=z')}=1+O(\rme^{-\overline{c}k^\gamma})
	\end{equation} for some $\overline{c} > 0$, while, on the other hand,  we have
	\begin{equation}\label{eq:new-cross1}
		\bbP_{z}(\tau_{\partial \rmB^+[\wt{x};j]} < \tau_{\rmB^-[x;i]}) = \frac{j-i}{j-i+1}(1+O(\rme^{-k^\gamma}))\,.
	\end{equation} 
Indeed, notice that, since $x \in \rmB^-[\wt{x};j-1]$, if we define the radii $\rho^\pm_j := k+jk^\gamma -4\rme^{-k^\gamma} \pm  \rme^{-\tfrac{3}{2}k^\gamma}$, then, for all $k$ large enough (depending only on $\gamma$),  
\begin{equation}
\rmB^-[\wt{x};j] \subseteq \rmB(x;\rho^-_l) \subseteq \rmB^+[\wt{x};j] \subseteq \rmB(x;\rho^+_j)	,
\end{equation} which implies the inequalities
\begin{equation}
\bbP_z (\tau_{\partial \rmB(x;\rho_j^+)} < \tau_{\rmB^-[x;i]}) \leq \bbP_{z}(\tau_{\partial \rmB^+[\wt{x};j]} < \tau_{\rmB^-[x;i]}) \leq \bbP_z (\tau_{\partial \rmB(x;\rho_j^-)} < \tau_{\rmB^-[x;i]}), 
\end{equation} from where \eqref{eq:new-cross1} now follows by Lemma~\ref{lem:ll1}, since $\rho_j^\pm - (k+(i-1)k^\gamma)=(j-i+1)k^\gamma + O(\rme^{-k^\gamma})$ and $\log |x-z| = k + (j-1)k^\gamma + O(\rme^{-k^\gamma})$ 
uniformly over all $z \in \partial_i \rmB^-[\wt{x};j]$. Inserting \eqref{eq:new-hc}--\eqref{eq:new-cross1} into \eqref{eq:ndecomp} immediately gives \eqref{eq:new-v=1} for $\wt{u}_k=0$.
	
	To show \eqref{eq:new-v=1} for $\wt{u}_k \geq 1$ (recall that $\wt{u}_k$ is a nonnegative integer since $u \in \cN_k$), notice that it will suffice to prove there exists $c_1>0$ such that
	\begin{equation}\label{eq:new-c1}
		\bbP_{z,z'}(N^*_{1}[x;i]=\wt{u}_k\,,\, \ol{Y}_{(\wt{u}_k)}[x;i]=\ol{y}) =\left(\frac{1}{j-i+1}\right)^2 \left(\frac{j-i}{j-i+1}\right)^{\wt{u}_k-1}	h_{k,i}(\ol{y})(1+\wt{u}_kO(\rme^{-2c_1k^\gamma }))
	\end{equation} uniformly over $1 \leq \wt{u}_k \leq \rme^{c_1k^\gamma}$, $\ol{y} \in (\partial_i \rmB^-[0;i] \times \partial \rmB^+[0;i])^{\wt{u}_k}$ and $(z,z') \in \partial_i \rmB^-[\wt{x};j] \times \partial \rmB^+[\wt{x};j]$. 
	We proceed by induction on $\wt{u}_k$.
	
	If $\wt{u}_k=1$ and we abbreviate $x+\ol{y}=x+(y,y')=:(w,w')$, by the strong Markov property we may write the left-hand side of \eqref{eq:new-c1} as
	\begin{equation}\label{eq:new-decoup1}
		\bbP_{z}(Z_{\tau_{\rmB^-[x;i]}}=w\,, \tau_{\rmB^-[x;i]} < \tau_{\partial \rmB^+[\wt{x};j]})\bbP_{w}(Z_{\tau_{\partial \rmB^+[x;i]}}=w')\bbP_{w',z'}( N^*_{1}[x;i]=0) \frac{ \bbP_{w'}( Z_{\tau_{\partial \rmB^+[\wt{x};j]}}=z')}{ \bbP_z( Z_{\tau_{\partial \rmB^+[\wt{x};j]}}=z')}\,,
	\end{equation} where $N^*_{1}[x;i]$ denotes the number of $[x;i]$-downcrossings of the $[\wt{x};j]$-excursion with law $\bbP_{w',z'}$. Furthermore, by Lemma~\ref{lem:kernelfrominf} there exists $\ol{c} > 0$ such that, uniformly over all $\overline{y}$, $(z,z')$ as above,
	\begin{equation}\label{eq:new-decoup2}
		\bbP_{z}(Z_{\tau_{\rmB^-[x;i]}}=w\,, \tau_{\rmB^-[x;i]} < \tau_{\partial \rmB^+[\wt{x};j]}) = \amalg_{\rmB^-[0;i]}(y)\bbP_{z}(\tau_{\rmB^-[x;i]} < \tau_{\partial \rmB^+[\wt{x};j]}) (1+O(\rme^{-\ol{c}k^\gamma }))\,,
	\end{equation}
	\begin{equation}\label{eq:new-decoup3}
		\bbP_{w}(Z_{\tau_{\partial \rmB^+[x;i]}}=w')= \Pi_{\rmB^+[0;i]}(0,y')(1+O(\rme^{-\ol{c}k^\gamma}))\,,
	\end{equation} as well as 
	\begin{equation}\label{eq:new-decoup4}
		\frac{ \bbP_{w'}( Z_{\tau_{\partial \rmB^+[\wt{x};j]}}=z')}{ \bbP_z( Z_{\tau_{\partial \rmB^+[\wt{x};j]}}=z')}=1+O(\rme^{-\ol{c}k^\gamma})\,.
	\end{equation}
	On the other hand, by the same type of argument used when $\wt{u}_k=0$ (with the minor difference that the starting point of the excursion is now $w' \in \partial \rmB^+[x;i]$ instead of $z \in \partial_i \rmB^-[\wt{x};j]$, but~which does not affect the overall argument), we have 
	\begin{equation}\label{eq:new-decoup5}
		\bbP_{w',z'}( N^*_{1}[x;i]=0) = \frac{1}{j-i+1}(1+O(\rme^{-k^\gamma}))\,.
	\end{equation} Finally, inserting the estimates \eqref{eq:new-decoup2}--\eqref{eq:new-decoup3}--\eqref{eq:new-decoup4}--\eqref{eq:new-decoup5} together with \eqref{eq:new-cross1} into \eqref{eq:new-decoup1} gives \eqref{eq:new-c1} in the case $\wt{u}_k=1$ for $c_1:=\frac{1}{2}(\ol{c} \wedge 1)$.
		
	For $\wt{u}_k > 1$, by the strong Markov property again we can write the left-hand side of \eqref{eq:new-c1} as in \eqref{eq:new-decoup1}, with the only difference that, if we write $\ol{y}=(\ol{y}_1,\dots,\ol{y}_{\wt{u}_k}) \in (\partial_i \rmB_k^-[0;i] \times \partial \rmB^+[0;i])^{\wt{u}_k}$ and $x+\ol{y}_1:=(w,w')$, the term $\bbP_{w',z'}(N^*_{1}[x;i]=0)$ from \eqref{eq:new-decoup1} is now replaced with
	\begin{equation}\label{eq:new-decouple7}
		\bbP_{w',z'}(N^*_{1}[x;i]=\wt{u}_k-1, \ol{Y}_{(\wt{u}_k-1)}[x;i]=(\ol{y}_2,\dots,\ol{y}_{\wt{u}_k}))\,. 
	\end{equation} One may then factorize \eqref{eq:new-decouple7} by using the strong Markov property and inductive hypothesis, but this requires some care since the starting point here is $w' \in \partial \rmB^+[x;i]$ instead of $z \in \partial_i \rmB^-[\wt{x};j]$, as it was in the case $\wt{u}_k=1$. Nevertheless, one may deal with this by the same type of argument used for $\wt{u}_k=1$, which yields that \eqref{eq:new-decouple7} is in this case
	\begin{equation}
		\left(\frac{j-i}{j-i+1}\right)^{\wt{u}_k-1}\left(\frac{1}{j-i+1}\right) h_{k,i}(\ol{y}_2,\dots,\ol{y}_{\wt{u}_k}) \prod_{r=1}^{\wt{u}_k-1}(1+O_r(\rme^{-2c_1k^\gamma}))\,,
	\end{equation} where the $O_r(\rme^{-2c_1k^\gamma})$ error terms coming from each inductive step are all uniformly bounded~by $C\rme^{-2c_1k^\gamma}$ for some universal constant $C>0$. \eqref{eq:new-c1} now follows by noticing that for $\wt{u}_k \leq \rme^{c_1k^\gamma}$ (and since the error terms $O_r$ are uniformly bounded) we have
	\begin{equation}
		\prod_{r=1}^{\wt{u}_k-1}(1+O_r(\rme^{-2c_1k^\gamma}))=1+(\wt{u}_k-1)O(\rme^{-2c_1k^\gamma}).
	\end{equation} 
	
	Next, we continue with the proof of \eqref{eq:GR-3} in the case $\wt{v}_k > 1$. To this end, we notice that, by the independence of the different $[\wt{x};j]$-excursions, for each $\wt{v}_k > 1$ and $\ol{z}=(\ol{z}_1,\dots,\ol{z}_{\wt{v}_k}) \in \textrm{E}_{\wt{v}_k}[j]$, by splitting $u$ into the different number of $[x;i]$-downcrossings made in each $[\wt{x};j]$-excursion, we can decompose the numerator in the left-hand side of~\eqref{eq:lemacoup2} as
	\begin{equation}
		\sum_{\substack{(m_1,\dots,m_{\wt{v}_k}) \in \N_0^{\wt{v}_k} \\ \sum m_r = \wt{u}_k}} \prod_{r=1}^{\wt{v}_k} \bbP \Big( Z_t[x;i] = (\sqrt{k^\gamma m_i},(\ol{y}_{m_1+\dots+m_{r-1}+1},\dots,\ol{y}_{m_1+\dots+m_r})) \,\Big|\,Z_t[\wt{x};j] = (\sqrt{k^\gamma},\ol{z}_i)\Big)
	\end{equation} and, similarly, its denominator as
	\begin{equation}\label{eq:n-dec1}
		\sum_{\substack{(m_1,\dots,m_{\wt{v}_k}) \in \N_0^{\wt{v}_k} \\ \sum m_j = \wt{u}_k}} \prod_{i=1}^{\wt{v}_k} \bbP (T^{[i-1,j]}_1(i)=m_i)
	\end{equation} Therefore, since by the case $\wt{v}_k=1$ we have that
	\begin{equation}\label{eq:n-dec2}
		\frac{\bbP \Big( Z_t[x;i] = (u,\ol{y}) \,\Big|\,Z_t[\wt{x};j] = (\sqrt{k^\gamma},\ol{z})\Big)}{\bbP(T^{[i-1,j]}_1(i)=\wt{u}_k)}=h_{k,i}(\ol{y})(1+O(\rme^{-c_1k^\gamma}))\,	
	\end{equation} holds uniformly over all $t>0$, $(\sqrt{k^\gamma},\ol{z}) \in \cZ_k[j]$ and $(u,\ol{y}) \in \cZ_k[i]$ satisfying that $0 \leq \wt{u}_k \leq \rme^{ck^\gamma}$, the decompositions in \eqref{eq:n-dec1} and \eqref{eq:n-dec2} imply that \eqref{eq:GR-3} holds also uniformly over $1 < \wt{v}_k \leq \rme^{c_2k^\gamma}$ with $c_2:=\frac{c_1}{2}$ since, whenever $\wt{v}_k \leq \rme^{c_2k^\gamma}$, we have
	\begin{equation}
		\prod_{i=1}^{\wt{v}_k} (1+O_i(\rme^{-c_1k^\gamma}))=1+\wt{v}_kO(\rme^{-c_1k^\gamma}) =1 + O\big(\rme^{-c_2 k^\gamma}\big)\,.
	\end{equation} This concludes the proof.
	\end{proof} 
 
With the help of the Transfer Lemma, we can now give the proof of Proposition~\ref{lem:transfer}.

\begin{proof}[Proof of Proposition~\ref{lem:transfer}]  Notice that the case $T=2$ is an immediate consequence of Lemma~\ref{lem:rep1}. Thus, in order to conclude the proof, it remains to show the general case $T > 2$. To this end, by splitting into the different values of $Z_t[x;i]$ for $i=2,\dots,T-1$ and using the Markov property, we can write the numerator in the left-hand side of \eqref{eq:lemacoup2} as 
	\begin{equation}\label{eq:asymprw0}
		\sum_{(z_1,\dots,z_{T})}\prod_{i=2}^{T} \bbP \Big( Z_t[x_{i-1};i-1] = z_{i-1} \,\Big|\,Z_t[x_i;i] = z_i \Big)\,
	\end{equation} where, for convenience, for $i=1,\dots,T$ we write $z_i:=(u_i,\ol{y}_i)$, $x_i:=x$ if $i < T$, $x_i:=\wt{x}$ if $i=T$, and the sum is over all $T$-tuples $(z_1,\dots,z_{T})$ such that $z_1=(u,\ol{y})$, $z_T=(v,\ol{z})$ and $z_j \in \cZ_k^\eta(j)$ for each $j=2,\dots,T-1$. Now,  by Lemma~\ref{lem:rep1} we have that if $k$ is large enough, for each $i=2,\dots,T$ the $i$-th factor in the product in \eqref{eq:asymprw0} is
	\begin{equation}\label{eq:asymprw1}
		\bbP\Big(\sqrt{\wh{T}_{(v)}(i-1)}=u_{i-1}\,\,\Big|\,\sqrt{\wh{T}_{(v)}(i)}=u_{i}\Big)h_{k,i-1}(\ol{y}_{i-1})(1+O_i(\rme^{-c_2k^\gamma}))
	\end{equation} uniformly over $t>0$, $0 \leq \wt{u}_k \leq \rme^{c_2k^\gamma}$, $1 \leq \wt{v}_k \leq \rme^{c_2k^\gamma}$ and $(u_j,\ol{y}_j) \in \cZ_k^\eta[j]$ for each $j=2,\dots,T-1$, where the $O_i(\rme^{-c_2k^\gamma})$ error terms are all uniformly bounded in $i$ by $C\rme^{-c_2k^\gamma}$ for some $C>0$. Upon noticing that
	\begin{equation}
		\sum_{(y,y') \in \partial_i \rmB^-[0;i] \times \partial \rmB^+[0;i]} h_{k,i}((y,y'))=1
	\end{equation} since $\amalg_{\rmB^-[0;i]}$ and $\Pi_{\rmB^+[0;i]}$ are both probability measures, inserting the asymptotics from~\eqref{eq:asymprw1} in \eqref{eq:asymprw0} and summing over all $\ol{z}_j$ with $j=2,\dots,T-1$ yields that \eqref{eq:asymprw0} equals
	\begin{equation}\label{eq:asymprw2}
		h_k(\ol{y}) \sum_{(u_1,\dots,u_{T})}\left[\prod_{i=2}^{T} \bbP\Big(\sqrt{\wh{T}_{(v)}(i-1)}=u_{i-1}\,\,\Big|\,\sqrt{\wh{T}_{(v)}(i)}=u_{i}\Big)\right]\left[\prod_{i=2}^{T} (1+O_i(\rme^{-c_2k^\gamma}))\right]\,,
	\end{equation} where the sum is over all $T$-tuples $(u_1,\dots,u_{T})$ satisfying that $u_1:=u$, $u_T:=v$ and $u_j \in \mathcal{R}_k^\eta(j)$ for each $j=2,\dots,T-1$. Thus, if we take $c:=\frac{c_2}{2}$ then for all $T \leq \rme^{ck^\gamma}$ (and since the errors 	$O_i$ are uniformly bounded) we have
	\begin{equation}
		\left[\prod_{i=2}^{T} (1+O_i(\rme^{-c_2k^\gamma}))\right]=1+O(\rme^{-ck^\gamma})
	\end{equation} and hence \eqref{eq:asymprw2} becomes
	\begin{equation}
		\left[\sum_{(u_1,\dots,u_{T})}\left[\prod_{i=2}^{T} \bbP\Big(\sqrt{\wh{T}_{(v)}(i-1)}=u_{i-1}\,\,\Big|\,\sqrt{\wh{T}_{(v)}(i)}=u_{i}\Big)\right]\right]h_k(\ol{y})(1+O(\rme^{-ck^\gamma}))
	\end{equation} which, by the strong Markov property of $\wt{X}^{[0,T]}$, coincides with 
	\begin{equation}
		\bbP \Big( \big\{ \sqrt{\wh{T}_{(v)}(1)} = u\big\} \setminus \mathcal{T}^{\eta,[2,T-1]}_{(v)}\Big)h_{k}(\ol{y})(1+O(\rme^{-ck^\gamma}))
	\end{equation} and thus concludes the proof. 
\end{proof}

\subsection{Ballot estimates for local time of 1D CTSRW: proof of Proposition~\ref{prop:coupling2}}

To prove Proposition~\ref{prop:coupling2}, we will need the following result, which characterizes the distribution of the local time field $\wt{L}_t$ in terms of Brownian motion.

\begin{lem}\label{lem:law} Let $\wt{X}^{[0,T]}$ be a CTSRW on the linear graph $\{0,\dots,T\}$ with edge transition rates equal to~$1$ starting from $T$ and denote by $\wt{L}_t$ its local time field at $T$-time $t$. Then, for any $t>0$, we have  
	\begin{equation}\label{eq:law1}
		\left( \wt{L}_t(T-i) : i=0,\dots,T\right) \overset{d}{=} \left( Y_i : i=0,\dots,T\right), 
	\end{equation} where $Y=(Y_s)_{s \in [0,T]}$ is $\tfrac{1}{2}$ times a $0$-dimensional Bessel process starting from $2t$ (so that $Y_0=t$). In particular, the process $Y$ satisfies, for any Borel measurable function $\varphi : \R^{T+1} \to \R$, 
	\begin{equation}\label{eq:law2}
		\E\left( \varphi(Y_0,\dots,Y_T) ; Y_T \neq 0\right) = \E\left( \varphi(B_0^2,\dots,B_T^2) \sqrt{\frac{\sqrt{t}}{B_T} }\exp\left(-\frac{3}{16}\int_0^T B_s^{-2} \rmd s\right) ; \min_{s \in [0,T]} B_s > 0 \right)\,,
	\end{equation} where $(B_s)_{s \in [0,T]}$ is a Brownian motion starting from $\sqrt{t}$ with variance $\frac{1}{2}$.	
\end{lem}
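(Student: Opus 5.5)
The first claim \eqref{eq:law1} is a discrete Ray--Knight theorem for the birth--death chain on $\{0,\dots,T\}$. I would prove it by excursion decomposition from $T$. Run until $T$-time $t$. Excursions of $\wt{X}^{[0,T]}$ from $T$ that step to $T-1$ arrive as a Poisson process in local time at $T$; with edge rate $1$, the number of such excursions is $\mathrm{Poisson}(t)$. By the strong Markov property, each downcrossing of $i\to i-1$ is followed by an independent number of returns to $i-1$ before the walk climbs back to $i$. Consequently $\wt L_t(T-i-1)$ is obtained from $\wt L_t(T-i)$ as a compound Poisson sum of i.i.d.\ $\mathrm{Exp}(1)$ holding times, with Poisson parameter equal to $\wt L_t(T-i)$. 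This is exactly the structure behind Lemma~\ref{lem:idlaw}.

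So $(\wt L_t(T-i))_{i}$ is a Markov chain with transition
\[
Y_{i+1}\,\big|\,Y_i=y \;\sim\; \sum_{m=1}^{N}\mathcal E_m,\qquad N\sim\mathrm{Poisson}(y),
\]
with $\mathcal E_m\sim\mathrm{Exp}(1)$ i.i.d.\ and independent of $N$. Its Laplace transform is
\[
\E\, \rme^{-\lambda Y_{i+1}} \;=\; \exp\!\Big(-\frac{\lambda y}{1+\lambda}\Big).
\]
On the other side I will use the known transform of $\mathrm{BESQ}^0$. For $Q$ a squared Bessel process of dimension $0$ with generator $2x\partial_{xx}$,
\[
\E_x\, \rme^{-\mu Q_s} \;=\; \exp\!\Big(-\frac{\mu x}{1+2\mu s}\Big).
\]
Take $Y=Q/2$, i.e.\ $\tfrac12$ times $\mathrm{BESQ}^0$ started from $2t$, so $Y_0=t$. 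Then $\E_y\, \rme^{-\lambda Y_1}=\exp(-\lambda y/(1+\lambda))$, which matches the chain's transform. Since both processes are Markov with the same transitions, \eqref{eq:law1} follows. One point to check here is the phrase ``$0$-dimensional Bessel process''. The process must be the squared one, $\mathrm{BESQ}^0$, and $Y_s=B_s^2$ in \eqref{eq:law2} confirms this.

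For \eqref{eq:law2} I write $Y=R^2$ with $R$ a Bessel process of dimension $\delta=0$, scaled so that the driving noise has variance $\tfrac12$. Itô's formula gives $\rmd Y = 2\sqrt{Y}\,\rmd B$ with $\langle B\rangle_s=s/2$. Setting $R=\sqrt Y$, a second application of Itô yields
\[
\rmd R = \rmd B - \frac{1}{4R}\,\rmd s .
\]
The drift $-1/(4R)$ equals $(\delta-1)/(4R)$ at $\delta=0$.

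Girsanov's theorem relative to $B$ (variance $\tfrac12$), on the event that $B$ does not hit $0$, gives the density
\[
\exp\Big(\int_0^T \frac{-1/(4B)}{1/2}\,\rmd B_s - \frac12\int_0^T \frac{(1/(4B))^2}{1/2}\,\rmd s\Big).
\]
To remove the stochastic integral I apply Itô to $\log B$:
\[
\rmd \log B = \frac{\rmd B}{B}-\frac{1}{4B^2}\,\rmd s,
\qquad\text{hence}\qquad
-\frac12\int_0^T \frac{\rmd B}{B} = -\frac12\log\frac{B_T}{B_0}-\frac18\int_0^T B^{-2}\,\rmd s .
\]
Adding the remaining $-\tfrac1{16}\int_0^T B^{-2}\,\rmd s$, the total exponent is $-\tfrac3{16}\int_0^T B^{-2}\,\rmd s$. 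The prefactor is $(B_0/B_T)^{1/2}=\sqrt{\sqrt t/B_T}$, as stated.

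The one delicate step is justifying the change of measure, because the drift blows up at $0$. I localize at $\tau_\epsilon=\inf\{s: B_s\le\epsilon\}$. On $\{\tau_\epsilon>T\}$ the density is a bona fide martingale since it is bounded there. Letting $\epsilon\downarrow 0$, the events $\{\min_{[0,T]} R>0\}=\{Y_T\ne0\}$ increase to the full event, because $\mathrm{BESQ}^0$ is absorbed at $0$ upon hitting it. Monotone convergence then gives \eqref{eq:law2}.
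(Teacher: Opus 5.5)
Your proof is correct. It differs from the paper in being self-contained: the paper simply cites \cite[Lemma~7.7]{belius2017subleading} for \eqref{eq:law1} and \cite{Abe_Tree} for \eqref{eq:law2}. You instead derive \eqref{eq:law1} by identifying the level-to-level transition of $(\wt L_t(T-i))_i$ as a compound Poisson--exponential kernel and matching its Laplace transform $\exp(-\lambda y/(1+\lambda))$ with the one-step semigroup of $\tfrac12\mathrm{BESQ}^0$. You derive \eqref{eq:law2} by Girsanov's theorem, It\^o's formula for $\log B$, and localization at $\tau_\epsilon$. Your route checks normalizations that the citations leave implicit: the process in the statement must be the \emph{squared} Bessel process, and the constants $\tfrac{3}{16}$ and $\sqrt{\sqrt t/B_T}$ do come out for a Brownian motion of variance $\tfrac12$. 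The citation route buys brevity and outsources the excursion bookkeeping.

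A few points should be tightened if you write it out.
\begin{itemize}
\item The $\mathrm{Exp}(1)$ variables are not holding times. At an interior vertex the holding time is $\mathrm{Exp}(2)$. The $\mathrm{Exp}(1)$ variable is the total local time at $T-i-1$ collected during one downward excursion from $T-i$. This is a Geometric$(\tfrac12)$ number of $\mathrm{Exp}(2)$ holding times, or a single $\mathrm{Exp}(1)$ holding time at vertex $0$.
\item You claim that, given $\wt L_t(T-i)=y$, the number of downward jumps from $T-i$ is $\mathrm{Poisson}(y)$, and that lower levels are conditionally independent of $\wt L_t(T),\dots,\wt L_t(T-i+1)$. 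This needs one explicit sentence. The rate-one downward clock at $T-i$, run in local time at $T-i$, together with the excursions it triggers, is independent of the trace of the walk on $\{T-i,\dots,T\}$. That trace alone determines the stopping rule and $\wt L_t(T-i),\dots,\wt L_t(T)$. This is what yields the Markov property, not merely the one-step law.
\item In the localization, for $\epsilon<\sqrt t$ the stopped density $(\sqrt t/B_{s\wedge\tau_\epsilon})^{1/2}\exp\bigl(-\tfrac{3}{16}\int_0^{s\wedge\tau_\epsilon}B_r^{-2}\,\rmd r\bigr)$ is bounded by $(\sqrt t/\epsilon)^{1/2}$. That boundedness is why it is a true martingale.
\item As $\epsilon\downarrow 0$, it is the events $\{\tau_\epsilon>T\}$ that increase to $\{\min_{[0,T]}R>0\}=\{Y_T\neq 0\}$, not to the full event. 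For general $\varphi$, apply monotone convergence to $\varphi^{+}$ and $\varphi^{-}$ separately.
\end{itemize}
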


\begin{proof} \eqref{eq:law1} follows from \cite[Lemma~7.7]{belius2017subleading}, whereas the proof of \eqref{eq:law2} can be found in \cite{Abe_Tree}. 
\end{proof}

\begin{proof}[Proof of Proposition~\ref{prop:coupling2}] The proof is similar to that of \cite[Lemma~6.1]{cortines2021scaling} but we include it~here for completeness.
	By Lemma~\ref{lem:law}, the left-hand side of \eqref{eq:lemacoupp} as $\bbP\big(W_T \in \rmd u \,\big|\,  W_0 = v\big)$ times 
	\begin{equation}\label{eq:integralasd}
		\sqrt{\frac{v}{u}}\E\Bigg(\exp\Bigg(-\frac{3}{8}\cdot\frac{k^\gamma}{2}\int_0^T W_s^{-2} \rmd s\Bigg) \,;\,\bigcap_{i=1}^{T-1}\Big\{ W_i \in \mathcal{Q}_k^\eta(i)\Big\} \cap \Big\{ \min_{s \in [0,T]} W_s > 0 \Big\} \,\Bigg|\,W_0=v\,,\,W_T=u\Bigg)\,,
	\end{equation} where $(W_s)_{s \in [0,T]}$ is a Brownian motion with variance $\tfrac{k^\gamma}{2}$ starting from $v$ (which we write formally as conditioning on $W_0=v$). 
	
	It follows from standard estimates for the Brownian bridge (see \cite[Section 4.3., Eq.~(3.40)]{Karatzas}) that, for any $a,b \in \R$ and $x \leq a \wedge b$, 
	\begin{equation}
		\bbP\Big( \min_{s \in [0,T]} W_s \leq x \,\Big|\, W_0=a\,,\,W_T=b\Big)=\exp\left(-\left(\frac{k^\gamma}{2}\right)^{-1}\frac{2(b-x)(a-x)}{T}\right)\,.
	\end{equation} In particular, by tilting the Brownian bridge by the linear function $H_s:=-\frac{\sqrt{2}}{2}(k+(T-s)k^\gamma)$, we obtain 
	\begin{equation}\label{eq:boundmin}
		\bbP\Big( \min_{s \in [0,T]} \big[W_s + H_s \big] \leq 0\,\Big|\,W_0=v\,,\,W_T=u\Big)= \exp\left(- \frac{4(\wh{v}-H_0)(\wh{u} - H_T)}{Tk^\gamma}\right)= O(\rme^{-\frac{3}{2}k})\,,
	\end{equation} where $\wh{u}:=u-\sqrt{2}k \in \mathfrak{I}_k^\eta(0)$ and $\wh{v}:=v-\sqrt{2}(k+Tk^\gamma) \in \mathfrak{I}_k^\eta(T)$.
	On the other hand, upon tilting by $2H_s$ and then dividing by $\sqrt{k^\gamma}$, we see that
	\begin{equation}\label{eq:eqprob}
		\bbP\Bigg(\bigcap_{i=1}^{T-1}\Big\{ W_i \in \mathcal{Q}_k^\eta(i) \Big\} \,\Bigg|\, W_0=v\,,W_T=u\Bigg) = \bbP\Bigg(\bigcap_{i=1}^{T-1}\Big\{ B_i \in \mathfrak{I}_k^{\eta+\gamma/2}(i) \Big\} \,\Bigg|\, B_0=\frac{\wh{v}}{\sqrt{k^\gamma}}\,,B_T=\frac{\wh{u}}{\sqrt{k^\gamma}}\Bigg)\,,
	\end{equation} where  $(B_s)_{s \in [0,T]}$ is a Brownian motion with variance $\tfrac{1}{2}$ starting from $\frac{\wh{v}}{\sqrt{k^\gamma}}$.
	
	Now, if $\eta+\gamma/2$ is sufficiently small, then standard estimates for Brownian motion (see e.g.~\cite{CHL17Supp}) show that the probability on the right-hand side of \eqref{eq:eqprob} has the same asymptotics as $k \to \infty$ as the probability of the same Brownian bridge remaining positive on $[0,T]$ or, equivalently, as 
	\begin{equation}
		\bbP\Big( \min_{s \in [0,T]} W_s > 0 \,\Big|\, W_0=\wh{v}\,,\,W_T=\wh{u}\Big)=\frac{4\wh{v}\wh{u}}{Tk^\gamma}(1+o_k(1))=O(k^{-\gamma/3})	\,,
	\end{equation} where, for the last two equalities to hold, we have used that $T \geq k$ and $\eta$ is taken small enough (depending only on $\gamma$) so that $\frac{\wh{v}\wh{u}}{Tk^\gamma}=O(k^{-\gamma/3})$ uniformly in $\wh{u}\in \mathfrak{I}_k^\eta(0)$ and $\wh{v} \in \mathfrak{I}_k^\eta(T)$.
	Finally, since by taking $\eta$ sufficiently small (depending only on $\gamma$) we have, for all $k$ large enough, that
	\begin{equation}\label{eq:eqprob3}
		\frac{\wh{v}\wh{u}}{Tk^\gamma} \geq \frac{1}{T} \geq \rme^{-k}
	\end{equation} and, in addition, on the complement of the event in \eqref{eq:boundmin} the integral in \eqref{eq:integral} is bounded from above by $4k^{-(1+\gamma)}$, in light of \eqref{eq:eqprob}--\eqref{eq:eqprob3} we see that \eqref{eq:integralasd} becomes
	\begin{equation}\label{eq:asymp1}
		4\cdot \sqrt{\frac{v}{u}}\cdot \frac{\wh{v}\wh{u}}{Tk^\gamma}(1+o_k(1))=\frac{4\wh{v}\wh{u}}{\sqrt{Tk^{1+\gamma}}}(1+o_k(1))\,.
	\end{equation}
	
	At the same time, if $\eta$ is taken sufficiently small, since $T \geq k$ we have that
	\begin{equation}\label{eq:asymp2}
		\begin{split}
			\bbP\big(W_T \in \rmd u \,\big|\,  W_0 = v\big) &= \frac{1}{\sqrt{\pi Tk^\gamma}} \exp\left( - \frac{(u-v)^2}{Tk^\gamma} \right) \\ &= \frac{1}{\sqrt{\pi}}\frac{1}{\sqrt{Tk^\gamma}} \exp\bigg(-2\sqrt{2}(\wh{v}-\wh{u}) - \frac{\wh{v}^2}{Tk^\gamma} - 2Tk^\gamma\bigg)(1+o_k(1))\,.
		\end{split}
	\end{equation}
	Combining \eqref{eq:asymp1} and \eqref{eq:asymp2}, we conclude that the left-hand side of \eqref{eq:lemacoupp} equals
	\begin{equation}
		\frac{4}{\sqrt{\pi}}\cdot \frac{1}{Tk^{1/2+\gamma}}\rme^{-2Tk^\gamma}\left(\wh{u}\rme^{2\sqrt{2}\wh{u}}\right)\left(\wh{v}\rme^{-2\sqrt{2}\wh{v}-\frac{\wh{v}^2}{Tk^\gamma}}\right)(1+o_k(1))\, 
	\end{equation} from where the result now immediately follows.
\end{proof}

\subsection{A ballot estimate with local time field repulsion: proof of Lemma~\ref{lem:asympforl}}

In this subsection, we give the proof of Lemma~\ref{lem:asympforl}. 

\begin{proof}[Proof of Lemma~\ref{lem:asympforl}] By conditioning on the pair $(\wh{L}_{(v)}(1),\wh{L}_{(v)}(T-1))$ and using the (spatial) Markov property of~$\wt{X}^{[0,T]}$, we can write the left-hand side of~\eqref{eq:asymp3} as
	\begin{equation}\label{eq:integral}
		\int_{\cQ^{\eta}_k(T-1)} \int_{\cQ^{\eta}_k(1)}  r_1(u,x) r_2(x,y)r_3(y,v)\,\rmd x \rmd y\,
	\end{equation} where
	\begin{equation}
		r_1(u,x):= \bbP\bigg( \sqrt{\wh{T}_{(v)}(1)}= u \,\bigg|\, \sqrt{\wh{L}_{(v)}(1)} = x\bigg)\,,
	\end{equation}
	\begin{equation}
		r_2(x,y):=\bbP \bigg( \sqrt{\wh{L}_{(v)}(1)} \in \rmd x \,,\, \sqrt{\wh{L}_{(v)}(i)} \in \cQ^{\eta}_k(i) \,\forall i=2,\dots,T-2 \,\bigg|\, \sqrt{\wh{L}_{(v)}(T-1)} = y\bigg)
	\end{equation} and
	\begin{equation}
		r_3(y,v):= \bbP\bigg( \sqrt{\wh{L}_{(v)}(T-1)} \in \rmd y\bigg)\,.
	\end{equation} Our first task is now to show that the integral in \eqref{eq:integral} is concentrated around values of $x$ and~$y$ which are at most $k^{\frac{3}{2}\gamma}$ away from $u$ and $v$, respectively. To be more precise, we will show that, uniformly over all $u \in \cR^{\eta}_k(1)$ and $v \in \cR^{\eta}_k(T)$, 
	\begin{equation}\label{eq:integraldisc}
		\bbP\Big( \Big\{\sqrt{\wh{T}_{(v)}(1)} = u \,,\,  \Big|\sqrt{\wh{L}_{(v)}(1)} -u \Big| \vee \Big|\sqrt{\wh{L}_{(v)}(T-1)} -v \Big|> k^{\frac{3}{2}\gamma}\Big\} \setminus \cL^{\eta,[1,T-1]}_{(v)}\Big)= r_{k,T}(\wh{u},\wh{v}) o_k(1)\,,
	\end{equation} where $r_{k,T}$ is given by \eqref{eq:funcfg}.
	We begin by establishing that, with high probability on the event on the left-hand side of \eqref{eq:asymp3}, $\sqrt{\wh{L}_{(v)}(T-1)}$ is at distance at most $k^{\frac{3}{2}\gamma}$ from $v$.
	For this purpose, we first notice that, since
	$\wt{L}_{(v)}(T-1)\sim \Gamma(\wt{v}_k,1)$ by \eqref{eq:proplta}, by Lemma~\ref{lem:compound} we have, for any $z \geq 0$,
	\begin{equation}\label{eq:condeq0}
		\bbP\left( \Big|\sqrt{\wh{L}_{(v)}(T-1)} - v\Big| \geq z\right) \leq 2\rme^{-\frac{z^2}{k^\gamma}}\,.
	\end{equation} On the other hand, in light of \eqref{eq:propltb}, Lemma~\ref{lem:compound} again we have that, for any $z \geq u-v$,
	\begin{equation}\label{eq:condeq1}
		\bbP\Big( \sqrt{\wh{T}_{(v)}(1)} \leq u \,\Big|\, \sqrt{\wh{L}_{(v)}(T-1)}= v+z\Big) \leq \exp\Big\{-\frac{(v +z - u)^2}{(T-1)k^\gamma}\Big\}\,.
	\end{equation} In particular, if we denote as usual $\wh{v}:=v - \sqrt{2}(k+(T-1)k^\gamma)$ and $\wh{u}:=u-\sqrt{2}k$ then, whenever
	$v + z \in \cQ^{\eta}_k(T-1)$, for all $k$ large enough not only the condition $z \geq u-v $ is satisfied but also that $|\wh{v}+z| \leq 2 (Tk^\gamma)^{1/2+\eta}$, in which case, since $T \geq k$, for $\eta$ small enough we can further~bound
	\begin{align}\label{eq:condeq2}
		\exp\Big\{-\frac{(v +z - u)^2}{(T-1)k^\gamma}\Big\} &\leq 2 \exp\Big\{-2(T-1)k^\gamma -2\sqrt{2}(\wh{v}-\wh{u}+z) -\frac{(\wh{v}+z)^2}{(T-1)k^\gamma}\Big\} \nonumber \\ &\leq r_{k,T}(\wh{u},\wh{v})\rme^{k^\gamma+4|z|}\,,
	\end{align} where, to obtain the second inequality, we have used that $\wh{v}=o(Tk^\gamma)$, $\wh{u}\wh{v}\geq 1$ and $T \leq \rme^{\tfrac{1}{2}k^\gamma}$. Therefore, by conditioning on $\wt{L}_{(v)}(T-1)$ and then combining the estimates from \eqref{eq:condeq0}--\eqref{eq:condeq2}, for all $k$ large enough we obtain that, for any $m \in \bbZ \setminus \{0\}$, 
	\begin{equation}\label{eq:decompint0}
		\bbP\Big( \Big\{\sqrt{\wh{T}_{(v)}(1)} \leq u \,,\, \sqrt{\wh{L}_{(v)}(T-1)} -v \in  [m,m+1]\Big\} \setminus \cL^{\eta,T-1}_{(v)}\Big)\leq r_{k,T}(\wh{u},\wh{v})\rme^{2k^\gamma +4|m| - \frac{(|m|-1)^2}{k^\gamma}}\,.
	\end{equation} From here, upon summing \eqref{eq:decompint0} over all $m \in \bbZ$ with $|m| \geq k^{\frac{3}{2}\gamma}$, the union bound yields that
	\begin{equation}\label{eq:boundintegral1}
		\bbP\Big( \Big\{\sqrt{\wh{T}_{(v)}(1)} \leq u \,,\, \Big|\sqrt{\wh{L}_{(v)}(T-1)} -v \Big| > k^{\frac{3}{2}\gamma}\Big\} \setminus \cL^{\eta,T-1}_{(v)}\Big)\leq r_{k,T}(\wh{u},\wh{v})\rme^{-\frac{1}{2}k^{2\gamma}}
	\end{equation} holds, for all $k$ sufficiently large, uniformly over all $u \in \cR^{\eta}_k(1)$ and $v \in \cR^{\eta}_k(T)$.
	
	Similarly, with high probability on the event in~\eqref{eq:asymp3}, $\sqrt{\wh{L}_{(v)}(1)}$ is at a distance at most~$k^{\frac{3}{2}\gamma}$ away from $u$. Indeed, recalling \eqref{eq:propltb}, by Lemma~\ref{lem:compound} we have that, for any $z \geq 0$,
	\begin{equation}\label{eq:condeq3}
		\bbP\Big( \big|\sqrt{\wh{T}_{(v)}(1)} - \sqrt{k^\gamma\theta} \big| \geq z \,\Big|\,\sqrt{\wh{L}_{(v)}(1)} = \sqrt{k^\gamma \theta} \Big) \leq 2\rme^{-\frac{z^2}{k^\gamma}}\,.
	\end{equation} Moreover, since $\wt{L}_{(v)}(1) \sim \textrm{Bi-Exp}\big(\wt{v}_k,\frac{1}{T-1};\frac{1}{T-1}\big)$ by \eqref{eq:proplta}, by Lemma~\ref{lem:compound} we have , for any $z \in \bbR$ with $|u +z| \leq v$, 
	\begin{equation}\label{eq:condeq4}
		\bbP\Big( \sqrt{\wh{L}_{(v)}(1)} \leq u + z\Big) \leq \exp\Big\{- \frac{(v - (u + z))^2}{(T-1)k^\gamma}\Big\}\,.
	\end{equation} In particular, if for $\sqrt{k^\gamma \theta} \in \cQ^{\eta}_k(1)$ we define $z:=\sqrt{k^\gamma \theta} - u +1$ then $|\wh{u} + z| \leq 2(k+k^\gamma)^{1/2+\eta}$ for all $k$ large enough, so that then the condition $|u +z| \leq v$ holds and, in addition, since $T \geq k$ and $\wh{v} \leq (k+Tk^\gamma)^{1/2+\eta}$, for $\eta$ small enough we can further bound
	\begin{align}
		\exp\Big\{- \frac{(v - (u + z))^2}{(T-1)k^\gamma}\Big\} &\leq 2 \exp\Big\{- 2(T-1)k^\gamma -2\sqrt{2}(\wh{v}-\wh{u}-z) - \frac{\wh{v}^2}{(T-1)k^\gamma}\Big\} \nonumber \\ &  \leq r_{k,T}(\wh{u},\wh{v})\rme^{k^\gamma +2\sqrt{2}|z|}\label{eq:condeq5}\,,
	\end{align} where, once again, the second inequality follows from the fact that $T\leq \rme^{\frac{1}{2}k^\gamma}$ and $\wh{u}\wh{v}\geq 1$.
	Finally, by conditioning on $\wt{L}_{(v)}(1)$ and using \eqref{eq:condeq3}--\eqref{eq:condeq5}, a straightforward computation yields that, for any $m \in \bbZ \setminus \{0\}$, 
	\begin{equation}\label{eq:decompint}
		\bbP\Big( \Big\{\sqrt{\wh{T}_{(v)}(1)} = u \,,\, \sqrt{\wh{L}_{(v)}(1)} -u \in  [m,m+1]\Big\} \setminus \cL^{\eta,1}_{(v)}\Big)\leq r_{k,T}(\wh{u},\wh{v})\rme^{2k^\gamma +2\sqrt{2}|m| - \frac{(|m|-1)^2}{k^\gamma}}
	\end{equation} from where, by summing \eqref{eq:decompint} over all $m \in \bbZ$ with $|m| \geq k^{\frac{3}{2}\gamma}$, the union bound yields that
	\begin{equation}\label{eq:closeu}
		\bbP\Big( \Big\{ \sqrt{\wh{T}_{(v)}(1)} = u\,,\, \Big|\sqrt{ \wh{L}_{(v)}(1)} -u \Big| > k^{\frac{3}{2}\gamma}\Big\} \setminus \cL^{\eta,1}_{(v)}\Big)\leq r_{k,T}(\wh{u},\wh{v})\rme^{-\frac{1}{2}k^{2\gamma}}
	\end{equation} holds for all $k$ sufficiently large uniformly over $u \in \cR^\eta_k(1)$ and $v \in \cR^\eta_k(T)$. Together with~\eqref{eq:boundintegral1}, this implies \eqref{eq:integraldisc}.
	
	In light of \eqref{eq:integraldisc}, to conclude the proof it suffices to show that for all $k$ sufficiently large and $T \in [2k,\rme^{\frac{1}{2}k}]$, we have that
	\begin{equation}
		\int_{\cQ^{\eta}_k(T-1)} \int_{\cQ^{\eta}_k(1)}  r_1(u,x) r_2(x,y)r_3(y,v) 1_{\big\{|x-u| \vee |y-v| \leq k^{\frac{3}{2}\gamma}\big\}}\,\rmd x \rmd y = h_{k,T}(1+o_k(1))
	\end{equation} holds uniformly over all $u \in \cR^{\eta'}_k(1)$ and $v \in \cR^{\eta'}_k(T)$, where the functions $r_i$ are those from~ \eqref{eq:integral}.
	To this end, on the one hand we observe that, by \eqref{eq:propltb},
	\begin{equation}
		r_1(u,x)=\frac{\rme^{-\wt{x}_k}}{\wt{u}_k!}\left(\wt{x}_k\right)^{\wt{u}_k}\,.
	\end{equation} In particular, since $\wt{u}_k! = \sqrt{2\pi \wt{u}_k} (\wt{u}_k)^{\wt{u}_k} \rme^{-\wt{u}_k}(1+O(\wt{u}^{-1}_k))$ by Stirling's approximation and also $\wt{x}_k= \wt{u}_k(1+\frac{x-u}{u})^2$, a straightforward computation using the asymptotics $1+z=\rme^{z-\frac{1}{2}z^2}(1+O(z^3))$ as $z \to 0$ shows that, if $\gamma$ is chosen small enough, then one has 
	\begin{equation}\label{eq:asympeq1}
		r_1(u,x):= \frac{\sqrt{k^\gamma}}{\sqrt{2\pi}u} \rme^{-2\frac{(x-u)^2}{k^\gamma}}(1+o_k(1))
	\end{equation} uniformly over all $x$ with $|x-u|\leq k^{\frac{3}{2}\gamma}$. 
	On the other hand, since $\wt{L}_{(v)}(T-1) \sim \Gamma(\wt{v}_k,1)$ by~\eqref{eq:proplta}, we have that 
	\begin{equation}
		r_3(y,v)=\frac{2y}{k^\gamma}\frac{\rme^{-\wt{y}_k}}{(\wt{v}_k-1)!}{(\wt{y}_k)^{\wt{v}_k-1}}\,.
	\end{equation} By an analogous computation to the one yielding \eqref{eq:asympeq1}, we obtain that, if $\gamma$ is small enough, 
	\begin{equation}\label{eq:finalasymp3}
		r_3(y,v)=\frac{2}{\sqrt{2\pi k^\gamma}}\rme^{-2\frac{(y-v)^2}{k^\gamma}}(1+o_k(1)).
	\end{equation} Finally, by making a few minor modifications to the proof of Proposition~\ref{prop:coupling2}, one can show~that, if $T \in [2k,\rme^k]$, then
	\begin{equation}
		r_2(x,y)=\frac{4}{\sqrt{\pi}}\cdot \frac{1}{Tk^{1/2+\gamma}} \rme^{-2(T-2)k^\gamma}\left(\wh{x}\rme^{2\sqrt{2}\wh{x}}\right)\left(\wh{y}\rme^{-2\sqrt{2}\wh{y}-\frac{\wh{y}^2}{(T-2)k^\gamma}}\right)(1+o_k(1))\,
	\end{equation} holds uniformly over all $x \in \cQ^\eta_k(1)$ and $y \in \cQ^\eta_k(T-1)$, where as usual we write $\wh{x}:=x - \sqrt{2}(k+k^\gamma)$ and $\wh{y}:=y-\sqrt{2}(k+(T-1)k^\gamma)$. However, since $\wh{x}=(x-u)+\wh{u}-\sqrt{2}k^\gamma$ and $\wh{y}=(y-v)+\wh{v}$, if $\eta$ is small enough (depending only on $\gamma$), then whenever $|x-u| \vee |y-v| \leq k^{\frac{3}{2}\gamma}$ we have that
	\begin{equation}\label{eq:finalasymp2}
		r_2(x,y)=\frac{4}{\sqrt{\pi}}\cdot \frac{1}{Tk^{1/2+\gamma}} \rme^{-2Tk^\gamma}\left(\wh{u}\rme^{2\sqrt{2}((x-u)+\wh{u})}\right)\left(\wh{v}\rme^{-2\sqrt{2}((y-v)+\wh{v})-\frac{\wh{v}^2}{Tk^\gamma}}\right)(1+o_k(1))\,.
	\end{equation} Combining \eqref{eq:asympeq1}-\eqref{eq:finalasymp3}-\eqref{eq:finalasymp2} with \eqref{eq:integraldisc}, and since the condition $|x-u| \vee |y-v| \leq k^{\frac{3}{2}\gamma}$ implies that $x \in \cQ_k^{\eta}(1)$ and $y \in \cQ_k^{\eta}(T-1)$ uniformly over all $u \in \cR^{\eta'}_k(1)$ and $v \in \cR^{\eta'}_k(T)$ for all $k$ large enough whenever $\eta$ is chosen small enough depending only on $\gamma$ (here is where we use the fact that $\eta' < \eta$), a straightforward computation shows that the left-hand side of \eqref{eq:asymp3} is equal to
	\begin{equation}\label{eq:finalasymp4}
		r_{k,T}(\wh{u},\wh{v}) \left( \int_{u-k^{\frac{3}{2}}}^{u+k^{\frac{3}{2}}}\sqrt{\frac{2}{\pi k^\gamma}}\rme^{-\frac{(x-u-\frac{k^\gamma}{\sqrt{2}})^2}{\frac{k^\gamma}{2}}}\rmd x\right)\left( \int_{v-k^{\frac{3}{2}}}^{v+k^{\frac{3}{2}}}\sqrt{\frac{2}{\pi k^\gamma}}\rme^{-\frac{(y-v+\frac{k^\gamma}{\sqrt{2}})^2}{\frac{k^\gamma}{2}}}\rmd y\right)(1+o_k(1))\,.
	\end{equation} Since the two integrals in \eqref{eq:finalasymp4} each coincide with the probability that a standard Gaussian random variable lies in an interval of length $2k^\gamma(1+o_k(1))$ around the origin, we conclude that these two integrals tend to $1$ as $k \to \infty$ and thus \eqref{eq:asymp3} is finally equal to $r_{k,T}(\wh{u},\wh{v})(1+o_k(1))$ uniformly over all $u \in \cR^{\eta'}_k(1)$ and $v \in \cR^{\eta'}_k(T)$ as claimed. 
\end{proof}

\subsection{From local time to downcrossing repulsion: proof of Lemma~\ref{lem:asympdisc}}

The last step in the proof is to give the proof of Lemma~\ref{lem:asympdisc}, which we do next.

\begin{proof}[Proof of Lemma~\ref{lem:asympdisc}] Since $T \leq \rme^{\frac{1}{2}k^\gamma}$, by the union bound we see that, in order to show \eqref{eq:step3}, it will suffice to prove that, for each $i=1,\dots,T-1$, 
	\begin{equation}\label{eq:boundtp}
		\bbP \Big( \Big\{ \sqrt{ \wh{T}_{(v)}(1)} = u \Big\} \cap \cJ^{k,i}_{(v)} \setminus \cL_{(v)}^{\eta,[1,T-1]}\Big)\,\vee \,\bbP \Big( \Big\{ \sqrt{ \wh{T}_{(v)}(1)} = u \Big\} \cap \cJ^{k,i}_{(v)} \setminus  \cT_{(v)}^{\eta,[2,T-1]}
		\Big) \leq r_{k,T}(\wh{u},\wh{v})\rme^{-k^\gamma} 
	\end{equation} holds uniformly over all $u \in \cR^\eta_k(1)$ and $v \in \cR^\eta_k(T)$ whenever $\eta$ is small enough, $T \in [2k,\rme^{\frac{1}{2}k^\gamma}]$ and $k$ is large enough (but not depending on $i$). We will only show the first half of the inequality, i.e., 
	\begin{equation}\label{eq:boundtppri}
		\bbP \Big( \Big\{ \sqrt{ \wh{T}_{(v)}(1)} = u \Big\} \cap \cJ^{k,i}_{(v)} \setminus \cL_{(v)}^{\eta,[1,T-1]}\Big)\leq r_{k,T}(\wh{u},\wh{v})\rme^{-k^\gamma} 
	\end{equation} as the proof of the other half is very similar. We focus first on the case $1 < i < T-1$.
	
	In this case, our first step will be to show that we can restrict ourselves to the event in which $\sqrt{\wh{L}_{(v)}(i)} \leq v$, i.e., that, if $\eta$ is chosen small enough then, uniformly over  $i \in [2,T-2]$, $u \in \cR^\eta_k(1)$ and $v \in \cR^\eta_k(T)$, one has that
	\begin{equation}\label{eq:ex1}
		\bbP \Big( \sqrt{ \wh{T}_{(v)}(1)} = u \,,\,\sqrt{\wh{L}_{(v)}(i)} > v\Big) \leq r_{k,T}(\wh{u},\wh{v})\rme^{-k^\gamma}\,	
	\end{equation} for all $T \in [2k,\rme^{\frac{1}{2}k^\gamma}]$ provided that $k$ is taken large enough. 
	
	To this end, let us notice that, by \eqref{eq:propltb}, by Lemma~\ref{lem:compound} we have that, if $\eta$ is small enough (depending only on $\gamma$) then, for all $k$ large enough and $T \in [2k,\rme^{\frac{1}{2}k^\gamma}]$ we have~$\wh{v} \geq \wh{u}$,  so that
	\begin{equation}\label{eq:ex2}
		\sup_{\theta > v} \bbP\Big( \sqrt{\wh{T}_{(v)}(1)} \leq u	\,\Big|\, \sqrt{\wh{L}_{(v)}(i)}=\theta\Big) \leq \rme^{-\frac{(v-u)^2}{ik^\gamma}}\leq r_{k,T}(\wh{u},\wh{v})\rme^{-\frac{3}{2}k^\gamma}
	\end{equation} by a straightforward computation similar to that one yielding~\eqref{eq:condeq2} (here is the only moment where we use that $i < T-1$), and therefore, upon conditioning on $\wh{L}_{(v)}(i)$ and then using \eqref{eq:ex2}, \eqref{eq:ex1} now immediately follows.
	
	On the other hand, by conditioning first on $\wh{L}_{(v)}(i)$ and then on $\wh{T}_{(v)}(i)$, for $i > 1$ we can~bound the leftmost probability in \eqref{eq:boundtp} on the event $\{ \wh{L}_{(v)}(i) \leq v\}$ from above by 
	\begin{equation}\label{eq:split1}
		\int_{\cQ_k^\eta(i)}  \sum_{w \in \cN_k } 1_{\{|w-\theta| > q_{k,\eta}(i)\}} 1_{\{\theta \leq v\}}\, r^{(i)}_1(w)r^{(i)}_2(w,\theta)r^{(i)}_3(\theta) \,\rmd \theta\,,
	\end{equation} where we abbreviate $q_{k,\eta}(i):=(k+ik^\gamma)^{1/2-\eta}$ and define the functions $r_{j}^{(i)}$ as 
	\begin{equation}
		r^{(i)}_1(w):=\bbP \Big( \sqrt{ \wh{T}_{(v)}(1)} = u \,,\, \sqrt{\wh{L}_{(v)}(i-1)} \in \cQ^\eta_k(i-1) \Big| \sqrt{ \wh{T}_{(v)}(i)} = w \Big)\,,
	\end{equation}
	\begin{equation}\label{eq:defr2}
		r^{(i)}_2(w,\theta):=\bbP \Big( \sqrt{ \wh{T}_{(v)}(i)} = w \Big| \sqrt{ \wh{L}_{(v)}(i)}  = \theta \Big)\,,
	\end{equation} and
	\begin{equation}\label{eq:defr3}
		r^{(i)}_3(\theta):= \bbP \Big(\sqrt{ \wh{L}_{(v)}(i)}  \in \rmd \theta\Big)\,.
	\end{equation}
	By conditioning now on $\wh{L}_{(v)}(i-1)$, we can further bound 
	\begin{equation}\label{eq:boundr1f}
		r^{(i)}_1(w) \leq \sup_{\zeta \in \cQ^\eta_k(i-1)}\bbP \Big( \sqrt{ \wh{T}_{(v)}(1)} = u  \,\Big|\,  \sqrt{\wh{L}_{(v)}(i-1)} = \zeta \,,\, \sqrt{ \wh{T}_{(v)}(i)} = w \Big)\,.
	\end{equation} Since, conditional on $\sqrt{\wh{L}_{(v)}(i-1)}=\zeta$ and $\sqrt{\wh{T}_{(v)}(i)}=w$,  we have $T_{\wt{v}_k}(1) \sim \Poigeo(\frac{\wt{\zeta}_k}{i-1};\frac{1}{i-1})$, if $i \geq k^{\frac{1}{2}+\eta-\gamma}$ and $k$ is large enough then the condition $\zeta \in \cQ^\eta_k(i-1)$ implies that $\zeta \geq u$ so that, by Lemma~\ref{lem:compound}, a simple computation using the fact that $\wh{\zeta} \geq 0$ for $\cQ^\eta_k(i-1)$ yields that
	\begin{align}
		\sup_{\zeta \in \cQ^\eta_k(i-1)} \Big( \sqrt{ \wh{T}_{(v)}(1)} = u  \,\Big|\,  \sqrt{\wh{L}_{(v)}(i-1)} = \zeta\,,\, \sqrt{ \wh{T}_{(v)}(i)} = w \Big) &\leq \rme^{-2(i-1)k^\gamma +2\sqrt{2}\wh{u}} \nonumber \\ & \leq \rme^{-2(i-1)k^\gamma +4k^{\frac{1}{2}+\eta}} \label{eq:h1bound}
	\end{align} uniformly over $u \in \cR^\eta_k(1)$ for all $k$ large enough. But, since the bound in \eqref{eq:h1bound} is trivially true for all $i \leq k^{\frac{1}{2}+\eta-\gamma}$, we conclude that, if $k$ is large enough, then for each $i=2,\dots,T-1$ we have that 
	\begin{equation}
		r^{(i)}_1(w) \leq \rme^{-2(i-1)k^\gamma +4k^{\frac{1}{2}+\eta}+2\sqrt{2}\wh{u}}
	\end{equation} uniformly over all $w$ in the range considered in~\eqref{eq:split1} and all $u \in \cR^\eta_k(1)$. 
	
	On the other hand, in light of \eqref{eq:propltb}, by Lemma~\ref{lem:compound} we have that, uniformly over $\theta \in \cQ^\eta_k(i)$, 
	\begin{align}\label{eq:h2bound}
		\sum_{w \in \cN_k} 1_{\{|w-\theta| > q_{k,\eta}(i)\}}r_2^{(i)}(w,\theta) = \bbP \Big( \Big|\sqrt{\wh{T}_{(v)}(i)} - \theta\Big| > q_{k,\eta}(i) \,\Big|\, \sqrt{\wh{L}_{(v)}(i)} = \theta\Big) &\leq 2\rme^{-\frac{(q_{k,\eta}(i))^2}{k^\gamma}}\nonumber \\ &\leq 2\rme^{-(k+ik^\gamma)^{1-2\eta-\gamma}}\,.
	\end{align} Finally, if we define $\wh{\xi}:=\min\{ v ,\sqrt{2}(k+ik^\gamma) + (k+ik)^{\frac{1}{2}+\eta}\} - \sqrt{2}(k+(i-1)k^\gamma)$ then, by~\eqref{eq:proplta} and Lemma~\ref{lem:compound}, and since $\wh{v} \in \mathfrak{I}_k^\eta(T-1)$ and $\wh{\xi} \leq \sqrt{2}k^\gamma +(k+ik^\gamma)^{\frac{1}{2}+\eta}$, we have that
	\begin{align}
		\int_{\cQ^\eta_k (i)} 1_{\{ \theta \leq v\}} r_3^{(i)}(\theta) \,\rmd \theta &\leq \bbP \Big( \sqrt{\wh{L}_{(v)}(i)} \leq \min\{ v ,\sqrt{2}(k+ik^\gamma) + (k+ik^\gamma)^{\frac{1}{2}+\eta}\} \Big) \nonumber \\ &\leq \rme^{-2(T-i)k^\gamma -2\sqrt{2}(\wh{v}-\wh{\xi})-\frac{(\wh{v}-\wh{\xi})^2}{Tk^\gamma}} \leq \rme^{-2(T-i)k^\gamma -2\sqrt{2}\wh{v} -\frac{\wh{v}^2}{Tk^\gamma}+ 4(k+ik^\gamma)^{\frac{1}{2}+\eta}}
	\end{align} for all $k$ large enough, uniformly over all $v \in \cR^\eta_k(i)$. Therefore, combining the estimates obtained on $r_j^{(i)}$ for $j=1,2,3$, we conclude that, if $\eta$ is chosen small enough so that $1-2\eta-\gamma > \frac{1}{2}+\eta$, for all $T \in [2k,\rme^{\frac{1}{2}k^\gamma}]$ and $k$ large enough (but not depending on $i$) we have 
	\begin{equation}
		\bbP \Big( \Big\{ \sqrt{ \wh{T}_{(v)}(1)} = u \,,\,\sqrt{\wh{L}_{(v)}(i)} \leq v\Big\} \cap \cJ^{k,i}_{(v)} \setminus \cL_{(v)}^{\eta,[1,T-1]}\Big) \leq r_{k,T}(\wh{u},\wh{v})\rme^{-\frac{1}{2}(k+ik^\gamma)^{1-2\eta-\gamma}}\,.
	\end{equation} Combining this estimate with \eqref{eq:ex2} immediately  gives \eqref{eq:boundtppri} for $1 < i <T-1$.
	
	On the other hand, the case $i=1$ follows at once from \eqref{eq:closeu} if $\gamma$ and $\eta$ are chosen small enough so that $\frac{3}{2}\gamma < \frac{1}{2}-\eta$.
	
	Finally, to deal with the case $i=T-1$, we first note that, by mimicking the proof of~\eqref{eq:boundintegral1}, one can show that
	\begin{equation}
		\bbP \Big( \Big\{ \sqrt{ \wh{T}_{(v)}(1)} = u \,,\,\Big|\sqrt{\wh{L}_{(v)}(T-2)} - v\Big| > k^{\frac{3}{2}\gamma}\Big\} \setminus \cL_{(v)}^{\eta,T-2}\Big) \leq r_{k,T}(\wh{u},\wh{v})\rme^{-\frac{1}{4}k^{2\gamma}}\,,
	\end{equation} and thus it will suffice to show that
	\begin{equation}\label{eq:prob2}
		\bbP \Big( \Big\{ \sqrt{ \wh{T}_{(v)}(1)} = u \,,\,\sqrt{\wh{L}_{(v)}(T-2)} \geq v-k^{\frac{3}{2}\gamma}\Big\} \cap \cJ^{k,T-1}_{(v)} \Big) \leq r_{k,T}(\wh{u},\wh{v})\rme^{-2k^\gamma} 
	\end{equation} holds uniformly over all $u \in \cR^\eta_k(1)$ and $v \in \cR^\eta_k(T)$, provided that $\eta$ is chosen sufficiently small, $T \in [k,\rme^{\frac{1}{2}k^\gamma}]$ and $k$ is taken sufficiently large. Now, by repeating the decomposition in \eqref{eq:split1}, we can write the probability in~\eqref{eq:prob2} as
	\begin{equation}\label{eq:decompint2}
		\int  \sum_{\substack{w \in \cN_k \\ |w-\theta| > q_{k,T}(T-1)}}  r^{(T-1)}_1(w)r^{(T-1)}_2(w,\theta)r^{(T-1)}_3(\theta) \,\rmd \theta\,,
	\end{equation} where $r_2^{(T-1)}$ and $r_3^{(T-1)}$ are as in \eqref{eq:defr2} and \eqref{eq:defr3} respectively (with $T-1$ in place of $i$), and $r_1^{(T-1)}$ is given by 
	\begin{equation}
		r_1^{(T-1)}:=\bbP\Big( \sqrt{\wh{T}_{(v)}(1)}= u\,,\,\sqrt{\wh{L}_{(v)}(T-2)} \geq v-k^{\frac{3}{2}\gamma}\,\Big|\, \sqrt{\wh{T}_{(v)}(T-1)}=w\Big)\,. 
	\end{equation} As in \eqref{eq:boundr1f}--\eqref{eq:h1bound} and since $v-k^{\frac{3}{2}\gamma}\geq u$ for all $k$ large enough, we can bound 
	\begin{equation}
		\sup r_1^{(T-1)} \leq \sup_{\zeta \geq v - k^{\frac{3}{2}\gamma}}\bbP \Big( \sqrt{ \wh{T}_{(v)}(1)} \leq u  \,\Big|\,  \sqrt{\wh{L}_{(v)}(T-2)} = \zeta\Big) \leq 2\rme^{-2(T-1)k^\gamma -2\sqrt{2}(\wh{v}-\wh{u}-k^{\frac{3}{2}\gamma})-\frac{\wh{v}^2}{Tk^\gamma}} 	
	\end{equation} provided that $\gamma$ and $\eta$ are chosen small enough and $k$ taken large enough. Moreover, as~in~\eqref{eq:h2bound} and since $T \geq 2k$, for all $k$ large enough we have that 
	\begin{equation}
		\sum_{w \in \cN_k} 1_{\{ |w-\theta| > q_{k,\eta}(T-1)\}} r_2^{(T-1)}(w,\theta) \leq 2\rme^{-\frac{(q_{k,\eta}(T-1))^2}{k^\gamma}}\leq 2\rme^{-(k+(T-1)k^\gamma)^{1-2\eta-\gamma}} \leq 2\rme^{-k^{(1+\gamma)(1-2\eta-\gamma)}}
	\end{equation} uniformly over all $\theta > 0$. If $\gamma$ and $\eta$ are chosen small enough then, upon inserting these last two estimates in \eqref{eq:decompint2}, \eqref{eq:prob2} now immediately follows.
\end{proof}

\section{Phase B: Proof of Theorem~\ref{t:2b}} \label{sec:proofb}

In this section we prove Theorem~\ref{t:2b}. The proof is similar to the proof of Theorem~2.4 in~\cite{Tightness}, in which asymptotics are derived for the cover time, but not the last visited vertex of the clustered set. We recall that $\tau_\rmA$ and $\ol{\tau}_\rmA$ are the first hitting (real) time of $\rmA$ and the first return (real) time to that set, by the walk $\bfX$, as defined in~\eqref{e:7.1i}, and 
that $\bbP_x$ is the probability measure under which the walk $\bfX$ starts from vertex $x \in \rmD_n$. The following three lemmas were shown in~\cite{Tightness}.
\begin{lem}[Lemma 7.6 in \cite{Tightness}]
	\label{l:8.2}
	For all $n$ large enough, $x \in \rmD_n^\circ$ and $y \in \partial \rmB(x; r_n)$, 
	\begin{equation}
		\bbP_y \big(\tau_{\partial} < \wc{\bfT}_{\rmB(x;r_n)} \big) \leq n^{-\eta_0/3}\,,
	\end{equation}
	where $\eta_0$ is as in~\eqref{e:2.3}.
\end{lem}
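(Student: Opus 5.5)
The plan is to reduce the event to a ball-covering problem and then exploit the fact that the walk, once it enters $\rmB(x;r_n)$ from its boundary, typically covers this small ball before wandering back to the far-away boundary vertex $\partial$. Fix $x \in \rmD_n^\circ$ and $y \in \partial \rmB(x;r_n)$. Set $\rmU := \rmB(x; r_n)$, and note that $\rmd(x,\rmD_n^\rmc) > \rme^{n-2\log n}$ while $\rme^{r_n}$ is much smaller. Let $R := n - 3\log n$, so that $\rmB(x;R) \subseteq \rmD_n$ and $R \ge r_n + n/2$. Then
\[
\bbP_y(\tau_\partial < \wc{\bfT}_{\rmU}) \le \bbP_y\big(\tau_{\partial \rmB(x;R)} < \wc{\bfT}_{\rmU}\big),
\]
since the walk must exit $\rmB(x;R)$ before reaching $\partial$. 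I then decompose the trajectory started at $y$ into successive excursions between $\partial \rmB(x; r_n + s)$ and $\rmU$, where $s$ is an intermediate scale, say $s = r_n$. Each time the walk is on $\partial \rmB(x;r_n+s)$, Lemma~\ref{lem:ll1}(a) gives that it returns to $\rmU$ before hitting $\partial\rmB(x;R)$ with probability $1 - \frac{s+O(1)}{R-r_n} = 1 - O(r_n/n)$. So the number $J$ of returns to $\rmU$ before escaping to scale $R$ is stochastically larger than a geometric variable with mean of order $n/r_n = n^{1/2+\eta_0}$.

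Next I need to show that a moderate number of such visits to $\rmU$ cover it with high probability. For each vertex $w\in\rmU$ and each entry point $y'\in\partial_i\rmU$ (or $y\in\partial\rmU$), I would lower bound the probability of hitting $w$ before reaching $\partial\rmB(x;r_n+s)$. By Lemma~\ref{lem:ll1}(b), applied in the ball $\rmB(w; r_n+s+1)\supseteq \rmB(x;r_n+s)$ with $\|w-y'\|\le 2\rme^{r_n}$, this probability is at least
\[
\frac{(r_n+s) - r_n - O(1)}{r_n+s+1}(1+O(1/r_n)) \ge \tfrac12 - o(1)
\]
when $s = r_n$. Hence, over $m$ independent returns (independent by the strong Markov property, with the bound uniform in the entry point), $w$ stays unvisited with probability at most $(1/2+o(1))^m$. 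A union bound over the $|\rmU| \le C\rme^{2r_n}$ vertices then forces $m \gg r_n$, namely $m = 4r_n$. Combining the two steps,
\[
\bbP_y(\tau_\partial < \wc{\bfT}_\rmU) \le \bbP(J < 4r_n) + C\rme^{2r_n}(0.51)^{4r_n},
\]
where the last term is at most $\rme^{-c r_n}$. For the first term, $\bbP(J < 4r_n) \le 4r_n \cdot O(r_n/n) = O(r_n^2/n) = O(n^{-2\eta_0})$, which is at most $n^{-\eta_0/3}$ for large $n$.

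The main obstacle is the union-bound step. It needs the per-return hitting probability to be bounded below by a constant uniformly in $w\in\rmU$ and in the entry point, which requires care with the $O(\|x-y\|^{-1})$ and $O(R^{-1})$ error terms in Lemma~\ref{lem:ll1}(b) when $w$ is close to the entry point, and with the fact that the balls are discrete. If the constant $1/2$ is not available, I would choose $s = K r_n$ with $K$ large so that the hitting probability is at least $1-2/K$. The cost is that the escape probability per return becomes $O(K r_n/n)$, which still leaves a bound of order $K r_n^2/n = o(n^{-\eta_0/3})$.
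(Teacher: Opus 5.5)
This lemma is not proved in the present paper; it is quoted from \cite{Tightness}, so there is no in-paper argument to compare against. Your excursion-counting and union-bound argument is a valid self-contained proof, and it actually gives the stronger bound $O(n^{-2\eta_0})+O(\rme^{-c r_n})$, which matches the heuristic order $r_n^2/n$. One step, however, has its inequality pointing the wrong way.

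\textbf{The reversed inclusion.} In the covering step you want a lower bound on $\bbP_{y'}(\tau_w<\tau_{\partial\rmB(x;r_n+s)})$, and you apply Lemma~\ref{lem:ll1}(b) in the ball $\rmB(w;r_n+s+1)\supseteq\rmB(x;r_n+s)$. That comparison gives an \emph{upper} bound. To exit the larger ball the walk must first exit $\rmB(x;r_n+s)$, so $\tau_{\partial\rmB(x;r_n+s)}\le\tau_{\partial\rmB(w;r_n+s+1)}$. Hence $\{\tau_w<\tau_{\partial\rmB(x;r_n+s)}\}\subseteq\{\tau_w<\tau_{\partial\rmB(w;r_n+s+1)}\}$.

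\textbf{The fix.} Use a ball around $w$ that sits \emph{inside} $\rmB(x;r_n+s)$, for instance $\rmB(w;r_n+s-1)$. It is contained in $\rmB(x;r_n+s)$ because $\|w-x\|\le\rme^{r_n}$ and $\rme^{-s}+\rme^{-1}<1$. Then $\tau_{\partial\rmB(w;r_n+s-1)}\le\tau_{\partial\rmB(x;r_n+s)}$. Taking $s=r_n$ and using $\log\|w-y'\|\le r_n+\log 2$, you get
\[
\bbP_{y'}\big(\tau_w<\tau_{\partial\rmB(x;2r_n)}\big)\ \ge\ \frac{(2r_n-1)-(r_n+\log 2)-O(1)}{2r_n-1}\big(1+O(r_n^{-1})\big)=\tfrac12-O(r_n^{-1}),
\]
uniformly in $w\in\rmU$ and $y'\in\rmU\setminus\{w\}$. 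This also settles the worry in your last paragraph. The $O(\|w-y'\|^{-1})$ error only matters when $y'$ is close to $w$, and there the main term is already close to $1$.

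With this correction the rest goes through as you wrote it:
\[
|\rmU|\,\big(\tfrac12+O(r_n^{-1})\big)^{4r_n}\le C\rme^{(2-4\log 2)r_n},
\qquad
\bbP(J<4r_n)\le 4r_n\cdot\frac{r_n+1}{R-r_n}=O(n^{-2\eta_0}),
\]
the second bound coming from Lemma~\ref{lem:ll1}(a).

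\textbf{Cleaning up the union bound.} Define the alternating stopping times (entrance into $\rmU$, then exit through $\partial\rmB(x;2r_n)$) for the walk continued indefinitely; this is possible since it is recurrent on the finite graph. Bound the bad event by the union of two events:
\begin{itemize}
\item one of the first $4r_n$ approaches to $\rmU$ reaches $\partial\rmB(x;R)$ first (this includes the initial approach from $y$);
\item some $w\in\rmU$ is missed in all of the first $4r_n$ inner segments.
\end{itemize}
The strong Markov property at the entrance times then gives the product bound directly, with no need to condition on $J\ge 4r_n$.

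You also read the bulk condition defining $\rmD_n^\circ$ on the logarithmic scale, which is the intended reading and is what guarantees $\rmB(x;n-3\log n)\subseteq\rmD_n$.
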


\begin{lem}[Lemma 7.7 in~\cite{Tightness}]
	\label{l:8.3}
	Let $\rmA \subseteq\rmD_n^\circ$ be an $(r_n,n-r_n)$-clustered set and $\bfA$ a minimal $r_n$-cover of it in the sense of~\eqref{e:2.10t} such that $|\bfA|\leq n^{1/2+\eta_0/4}$, where $\eta_0$ is as in~\eqref{e:2.3}. Then, as $n \to \infty$, 
	\begin{equation}
		\bbP_\partial \Big(\tau_{\partial \rmB(x;r_n)} < \ol{\tau}_\partial \wedge \min_{y \in \bfA \setminus \{x\}}
		\tau_{\partial \rmB(y;r_n)} \Big)
		= \frac{2\pi}{\deg(\partial) n}\Big(1+o(n^{-\eta/4})\Big) \,,
	\end{equation}
	uniformly over all such sets $\rmA$ and $x \in \bfA$.
\end{lem}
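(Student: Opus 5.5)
The plan is to first compute the probability of hitting $\partial \rmB(x;r_n)$ before returning to $\partial$, ignoring the other balls. Afterwards I subtract the event that some other ball $\rmB(y;r_n)$, $y \in \bfA\setminus\{x\}$, is hit first.

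\emph{Main term.} Started from $\partial$, the walk first jumps to a uniformly chosen neighbour $w$ of $\partial$; there are $\deg(\partial)$ of them, counted with edge multiplicity. Hence
\begin{equation*}
\bbP_\partial\big(\tau_{\partial\rmB(x;r_n)}<\ol{\tau}_\partial\big)=\frac{1}{\deg(\partial)}\sum_{w\sim\partial} \bbP_w\big(\tau_{\partial\rmB(x;r_n)}<\tau_\partial\big).
\end{equation*}
The sum on the right is an effective conductance between $\partial$ and $\rmB(x;r_n)$. To evaluate it I use the discrete Green function $G_{\rmD_n}(\cdot,x)$, which vanishes on $\partial$.

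\begin{itemize}
\item Write $h(w):=\bbP_w(\tau_{\partial\rmB(x;r_n)}<\tau_\partial)$. Optional stopping for the harmonic function $G_{\rmD_n}(\cdot,x)$ off $x$ shows that $h(w)=G_{\rmD_n}(w,x)/\bar G$, where $\bar G$ is the average of $G_{\rmD_n}(\cdot,x)$ over the hitting distribution on $\partial \rmB(x;r_n)$.
\item By the Green function asymptotics (\cite[Lemma~3.1]{Tightness}), for $x\in\rmD^\circ_n$, after the rate retuning, $\bar G = (n-r_n+O(1))$ times the normalising constant.
\item The sum $\sum_{w\sim\partial}G_{\rmD_n}(w,x)$ equals the constant arising from the last-exit decomposition: the exit distribution from $x$ to $\partial$ has total mass one.
\end{itemize}

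Put together, this gives
\begin{equation*}
\bbP_\partial\big(\tau_{\partial\rmB(x;r_n)}<\ol{\tau}_\partial\big) =\frac{2\pi}{\deg(\partial)(n-r_n+O(1))}=\frac{2\pi}{\deg(\partial)n}\big(1+O(n^{-1/2-\eta_0})\big),
\end{equation*}
uniformly in $x\in\rmD_n^\circ$. Bulk points are at log-distance at least $n-2\log n$ from $\rmD_n^\rmc$. This only contributes $O(\log n)$ to the $O(1)$ term, hence a relative error $O(\log n/n)$.

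\emph{Subtracting competitors.} The quantity in the lemma equals the main term minus the probability that, for some $y\in\bfA\setminus\{x\}$, the walk hits $\partial\rmB(y;r_n)$ first, then reaches $\partial\rmB(x;r_n)$, all before $\ol{\tau}_\partial$. I bound this by the union bound together with the strong Markov property at $\tau_{\partial\rmB(y;r_n)}$.

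\begin{itemize}
\item The first factor is at most $\frac{2\pi}{\deg(\partial)n}(1+o(1))$ by the main-term computation applied to $y$.
\item The second factor is $\sup_{z\in\partial\rmB(y;r_n)}\bbP_z(\tau_{\rmB(x;r_n)}<\tau_\partial)$. Clustering gives $\log\|x-y\|\ge n-r_n$, so by the same Green function ratio this is at most $\frac{n-\log\|x-y\|+O(1)}{n-r_n}\le C\frac{r_n}{n}$.
\end{itemize}

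Summing over at most $n^{1/2+\eta_0/4}$ points $y$ gives a total error of at most
\begin{equation*}
C\,\frac{1}{\deg(\partial)n}\, n^{1/2+\eta_0/4}\, n^{-1/2-\eta_0}=\frac{1}{\deg(\partial)n}\,O(n^{-3\eta_0/4}).
\end{equation*}
Relative to the main term this is $o(n^{-\eta_0/4})$, which is the claimed error.

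\emph{Main obstacle.} The delicate step is the uniform two-sided Green function estimate near $\partial\rmB(x;r_n)$. The error must be $O(1)$ in the logarithmic scale, uniformly over $x\in\rmD^\circ_n$ and over the hitting distribution on the ball. Getting this precision is what makes the relative error in the main term $o(n^{-\eta_0/4})$. I would obtain it by applying Lemma~\ref{lem:ll1}(a) inside an annulus around $x$ and matching with \cite[Lemma~3.1]{Tightness} outside it.
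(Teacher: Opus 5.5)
Your proof is correct and follows the standard route; the paper gives no argument of its own for this lemma, only a citation to \cite{Tightness}. The main term comes from the Green function of $\rmD_n$ with the last-exit identity $\sum_{w\sim\partial}G_{\rmD_n}(w,x)=2\pi$, and the competitor correction from a union bound with the strong Markov property at $\tau_{\partial\rmB(y;r_n)}$; it matters there that you use the single-target bound $\sup_{z\in\partial\rmB(y;r_n)}\bbP_z(\tau_{\partial\rmB(x;r_n)}<\tau_\partial)\le Cr_n/n$ rather than Lemma~\ref{l:8.3n} verbatim, whose extra factor $|\bfA|$ would be too lossy, and this yields relative error $O(n^{-3\eta_0/4})=o(n^{-\eta_0/4})$. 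The only thing to tidy is that your $\bar G$ depends on the starting point $w$; this is harmless because $G_{\rmD_n}(\cdot,x)$ is constant on $\partial\rmB(x;r_n)$ up to $o(1)$, so $h(w)$ is sandwiched between $G_{\rmD_n}(w,x)$ divided by the maximum and by the minimum of $G_{\rmD_n}(\cdot,x)$ over $\partial\rmB(x;r_n)$.
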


\begin{lem}[Lemma 3.2 in~\cite{Tightness}]
	\label{l:8.3n}
	Let $\rmA$, $\bfA$ be as in Lemma~\ref{l:8.3}, $z \in \bfA$ and $x \in \rmB(z;r_n)$. Then,%\ToO{Was $y \in \bfA \setminus \{x\}$, but it didn't seem correct.}
	\begin{equation}
		\bbP_x \Big(\min_{y \in \bfA \setminus \{z\}}
		\tau_{\partial \rmB(y;r_n)} 
		\leq \tau_\partial 
		\Big)
		\leq \frac{2|\bfA| r_n}{n} \,,%\ToO{Lemma 3.3 in the Tightness paper uses that $x$ must be closer to the corresponding $y$ than to the boundary of the domain, which may not be true here. However, I think the bound still holds by the strong Markov property. Perhaps we should say something?}
	\end{equation}
\end{lem}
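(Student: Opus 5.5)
The statement to prove is Lemma~\ref{l:8.3n}: for $z\in\bfA$ and $x\in\rmB(z;r_n)$, the probability that the walk from $x$ reaches $\partial\rmB(y;r_n)$ for some $y\in\bfA\setminus\{z\}$ before hitting $\partial$ is at most $2|\bfA|r_n/n$. The plan is a union bound over $y\in\bfA\setminus\{z\}$, which reduces the claim to the single-target estimate
\begin{equation*}
\bbP_x\big(\tau_{\partial\rmB(y;r_n)}\leq\tau_\partial\big)\leq \frac{2r_n}{n},
\end{equation*}
uniformly in $y\in\bfA\setminus\{z\}$ and $x\in\rmB(z;r_n)$.

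The single-target estimate comes from clustering and a comparison with disks. Since $\rmA$ is $(r_n,n-r_n)$-clustered and $\bfA$ is a minimal cover, $\log\|x-y\|\geq n-r_n-O(1)$. I would then compare with the Dirichlet problem in a large disk. Because $\rmA\subseteq\rmD_n^\circ$, both $x$ and $y$ lie at distance at most $\mathrm{diam}(\rmD)\rme^n$ from the boundary. Hitting $\partial$ is implied by exiting the disk $\rmB(y;n+C_\rmD)$ for a constant $C_\rmD$ depending on the domain, since that disk contains $\rmD_n$.

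Lemma~\ref{lem:ll1}(a), applied with $r=r_n$ and $R=n+C_\rmD$, then gives
\begin{equation*}
\bbP_x\big(\tau_{\rmB(y;r_n)}<\tau_{\partial\rmB(y;R)}\big)=\frac{R-\log\|x-y\|+O(\rme^{-r_n})}{R-r_n}\leq\frac{C_\rmD+r_n+O(1)}{n+C_\rmD-r_n}\leq\frac{2r_n}{n}
\end{equation*}
for $n$ large, using $r_n\to\infty$. Hitting $\partial\rmB(y;r_n)$ happens essentially when entering $\rmB(y;r_n)$, and the one-step discrepancy is absorbed by enlarging $r_n$ by one.

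The main obstacle is the one flagged in the authors' own margin note. The walk from $x$ may actually be much closer to the boundary of the domain, and the walk lives on $\wh{\rmD}_n$ rather than $\bbZ^2$. Both issues only help: killing at $\partial$ earlier can only decrease the probability of reaching the target first. The event $\{\tau_{\partial\rmB(y;r_n)}\leq\tau_\partial\}$ for the wired walk is contained in the event that a $\bbZ^2$ walk reaches $\rmB(y;r_n)$ before exiting $\rmB(y;R)$, because the two walks coincide up to $\tau_\partial$.

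So no strong Markov argument is needed beyond this monotone comparison. The only care required is that $\rmB(y;R)\supseteq\rmD_n\cup\partial\rmD_n$, which holds once $C_\rmD\geq\log\mathrm{diam}(\rmD)+1$. Summing over the at most $|\bfA|$ choices of $y$ completes the argument.
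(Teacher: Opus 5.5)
Your argument is correct. The paper gives no proof of its own for this statement. It simply cites \cite{Tightness}, with a margin note worrying that the cited argument assumes $x$ is closer to $y$ than to $\partial\rmD_n$, and suggesting a strong Markov fix.

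You combine a union bound over $y\in\bfA\setminus\{z\}$ with Lemma~\ref{lem:ll1}(a) in a disk $\rmB(y;n+C_\rmD)\supseteq\rmD_n\cup\partial\rmD_n$ centered at the \emph{target} $y$. This settles that concern more cleanly than a strong Markov decomposition would. The wired walk coincides with the $\bbZ^2$ walk until it exits $\rmD_n$, which happens before exiting this disk. So the event can only shrink, and the position of $x$ relative to $\partial\rmD_n$ plays no role. Replacing $\partial\rmB(y;r_n)$ by $\rmB(y;r_n+1)$ to apply the gambler's ruin estimate is also fine.

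The one step you assert without justification is the separation $\log\|x-y\|\ge n-r_n-o(1)$, and it is needed. Without it the bound would fail, since two centers in the same cluster would make the probability close to $1$. It goes as follows:
\begin{itemize}
\item For $n$ large, the relation $\|a-b\|\le\rme^{r_n}$ on $\rmA$ is transitive, because $2\rme^{r_n}<\rme^{n-r_n}$. Its classes therefore have diameter at most $\rme^{r_n}$, and distinct classes are at distance at least $\rme^{n-r_n}$.
\item Each class fits in a single ball $\rmB(\cdot\,;r_n)$. To absorb the floor in the radius, center the ball at a lattice point near the Jung center rather than at a point of $\rmA$.
\item A ball of diameter $2\rme^{r_n}$ meets at most one class. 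Hence a minimal cover uses exactly one ball per class.
\item For $y\neq z$ in $\bfA$ the two balls therefore meet different classes. This gives $\|x-y\|\ge \rme^{n-r_n}-3\rme^{r_n}$.
\end{itemize}
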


\begin{lem}[Lemma 3.3 in~\cite{Tightness}]
	\label{lem:nhprob}
	There exists a constant $C=C(\mathrm{D})>0$ such that, for all $x \in \rmD_n$, $t>0$,
	\begin{equation} \label{eq:form2}
		\bbP(L_t(x)=0) = \mathrm{e}^{-\frac{t}{G_{\rmD_n}(x,x)}} \leq \rme^{-\frac{t}{n+C}}.
	\end{equation} 
\end{lem}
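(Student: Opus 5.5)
We will prove the identity by a Poissonian description of the excursions of $\bfX$ away from $\partial$, measured in $\partial$-time, and then deduce the inequality from a standard upper bound on the Green function on the diagonal.

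\emph{Poisson structure.} Each visit to $\partial$ lasts an exponential holding time with rate $q_\partial := \deg(\partial)/(2\pi)$, and after it an excursion away from $\partial$ starts. Consequently, the excursions starting before $\partial$-time $t$ form a Poisson process with intensity $q_\partial\, \rmd s$ on $[0,t]$. These excursions are i.i.d., each distributed as the walk started from $\partial$ and run until $\ol{\tau}_\partial$. Thinning this Poisson process, the number of excursions that visit $x$ by $\partial$-time $t$ is Poisson with mean $t\, q_\partial\, \bbP_\partial(\tau_x < \ol{\tau}_\partial)$. Since $L_t(x)=0$ exactly when no such excursion occurs,
\begin{equation*}
\bbP(L_t(x)=0) = \exp\big(-t\, q_\partial\, \bbP_\partial(\tau_x < \ol{\tau}_\partial)\big).
\end{equation*}

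\emph{Identifying the rate.} The walk on $\wh{\rmD}_n$ has symmetric edge rates, so it is reversible with respect to counting measure. Reversibility gives the escape-probability symmetry
\begin{equation*}
q_\partial\, \bbP_\partial(\tau_x<\ol{\tau}_\partial) = q_x\, \bbP_x(\tau_\partial<\ol{\tau}_x),
\end{equation*}
which is also the effective conductance between $x$ and $\partial$. On the other hand, the Green function of the walk killed at $\partial$, evaluated at $(x,x)$, is the expected total time spent at $x$ before $\tau_\partial$. Starting from $x$, the walk makes a geometric number of visits to $x$, with success probability $\bbP_x(\tau_\partial<\ol{\tau}_x)$, and each visit has mean holding time $1/q_x$. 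Hence
\begin{equation*}
G_{\rmD_n}(x,x) = \frac{1}{q_x\, \bbP_x(\tau_\partial<\ol{\tau}_x)}.
\end{equation*}
Combining the two displays gives the identity $\bbP(L_t(x)=0)=\rme^{-t/G_{\rmD_n}(x,x)}$.

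\emph{The upper bound.} It remains to show that $G_{\rmD_n}(x,x) \le n + C$ uniformly in $x \in \rmD_n$. We use domain monotonicity: since $\rmD$ is bounded, $\rmD_n \subseteq \rmB(x; n + \log \mathrm{diam}(\rmD) + 1)$ for every $x \in \rmD_n$. Killing on the boundary of the larger set only increases the Green function, so $G_{\rmD_n}(x,x)$ is at most the Green function at the center of this ball. With the retuned rates $1/(2\pi)$ that normalization is $\log(\text{radius}) + O(1)$; this follows from Lemma~3.1 of~\cite{Tightness}, or equivalently from Lemma~\ref{lem:ll1}(b) combined with the formula for $G_{\rmD_n}(x,x)$ above. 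This yields $G_{\rmD_n}(x,x)\le n+C$ with $C$ depending only on $\rmD$.

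The main point requiring care is the bookkeeping of normalizations: one must check that the rates $1/(2\pi)$, the degree of $\partial$, and the factor $\tfrac12$ in $G_N$ all combine so that the exponent is exactly $t/G_{\rmD_n}(x,x)$ and the diagonal value is $n+O(1)$ with unit coefficient in front of $n$.
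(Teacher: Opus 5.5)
Your proof is correct. The paper gives no argument of its own and simply imports the lemma from \cite{Tightness}; your route is the standard one: Poisson thinning of the excursions from $\partial$, then reversibility together with the geometric-number-of-visits formula to identify the rate as $1/G_{\rmD_n}(x,x)$, then domain monotonicity with the diagonal bound of \cite[Lemma~3.1]{Tightness}. Your derivation also settles the bookkeeping worry in your last paragraph: it forces $G_{\rmD_n}$ to be the walk's occupation-time Green function, so the factor $\tfrac12$ in $G_N$ plays no role.

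One side remark is inaccurate: Lemma~\ref{lem:ll1}(b) would not serve equally well for the last step. Its multiplicative $(1+O(R^{-1}))$ error is of the same order as the escape probability from $x$, so it cannot give the unit coefficient in $n+C$. Rely instead on \cite[Lemma~3.1]{Tightness}, or on the sharp fact that the discrete-time Green function at the centre of $\rmB(0;\rho)$ is $\tfrac{2}{\pi}\rho+O(1)$.
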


We are now ready for:

\begin{proof}[Proof of Theorem~\ref{t:2b}]
	Beginning with the first statement, henceforth we denote by $\bfA$ an $r_n$-cover of $\rmA$. By monotonicity of $\rmA \mapsto \wc{\bfT}_\rmA$ with respect to set inclusion, we may and will assume without loss of generality that 
	$|\bfA| < n^{1/2+\eta_0/8}$. Henceforth, we shall view the random walk as a Poisson Process of excursions at rate $(2\pi)^{-1}\deg(\partial)$. In each of these excursions, the law of the walk is that of a random walk starting from $\partial$ and killed upon returning to~$\partial$. 
	By definition $\wc{T}_A$ is the time of the first excursion after which all vertices in $A$ have been visited. Clearly $\wc{X}_A$ must be a vertex visited in this excursion.
	
	Next, for each $x \in \bfA$, we define the stopping time $\wh{T}^\circ_{\rmB(x;r_n)}$ as the $\partial$-time of the first excursion when $\rmB(x;r_n)$ was hit before any other $\rmB(y;r_n)$ for $y \in \bfA \setminus \{x\}$. We then set,
	\begin{equation}
		\wc{T}^\circ_\bfA := \max_{x \in \bfA}\, \wh{T}^\circ_{\rmB(x;r_n)} \,
		\quad ; \qquad 
		\wc{X}^\circ_\bfA := \argmax_{x \in \bfA}\, \wh{T}^\circ_{\rmB(x;r_n)} \,.
	\end{equation}
	This is the time of the first excursion after which all balls $\rmB(x;r_n)$ for $x \in \bfA$ have been visited, and the center of the ball hit in the last excursion -- if for each excursion we only consider the first ball hit (if any) during this excursion and disregard the rest.
	
	Now, for each $x \in \bfA$, let us say that $x$ was \textit{properly covered} in a given excursion if, during this excursion, $\rmB(x;r_n)$ was the only ball hit among all balls $\rmB(x';r_n)$ for $x' \in \bfA$ and, in addition, $\rmB(x;r_n)$ was completely covered. In contrast, we will say that $x$ was \textit{poorly covered} in a given excursion if, during this excursion, $\rmB(x;r_n)$ was hit first among all balls $\rmB(x';r_n)$ for $x' \in \bfA$ but then either $\rmB(x;r_n)$ was not completely covered or another ball $\rmB(y;r_n)$ with $y \in \bfA \setminus \{x\}$ was hit. Let 
	$\cB_n(x)$ then be the event that during $\partial$-time $t_n^B+sn$, $x$ is not properly covered  in any of the excursions until that $\partial$-time but is, nonetheless, poorly covered in at least one of the excursions. Setting 
	\begin{equation}
		\cB_n := \bigcup_{x \in \bfA} \cB_n(x)\,,
	\end{equation}
	we wish to show that $\bbP(\cB_n)$ tends to $0$ as $n \to \infty$.
	
	To this end, we shall use the following elementary fact: if one carries out ${\rm Poisson}(\tau)$-many  identical independent two-stage experiments, in which the probability of a first-stage success is $p \in (0,1)$ and, upon a successful first stage, the probability of a second-stage failure is $q \in (0,1)$, then the probability that there are no experiments which succeeded in both stages but there is at least one experiment which succeeded in the first stage but then failed in the second is exactly
	\begin{equation}\label{e:8.5n}
		(1-\rme^{-\tau p q})\rme^{-\tau p (1-q)}=(1+o(1))\tau p q\rme^{-\tau p}	
	\end{equation} as $\tau p q \to 0$.
	Putting $\cB_n(x)$ in this context, each experiment is one excursion of the walk. Success in the first stage is hitting $\rmB(x;r_n)$ first among all balls $\rmB(x';r_n)$ for $x' \in \bfA$. Failure in the second stage is either not covering $\rmB(x;r_n)$ in that excursion or hitting another ball $\rmB(y;r_n)$ with $y \in \bfA \setminus \{x\}$ before returning to $\partial$.
	
	Thanks to Lemma~\ref{l:8.2}, Lemma~\ref{l:8.3} and Lemma~\ref{l:8.3n}, in the notation above we have
	\begin{equation}
		p = \frac{2\pi}{\deg(\partial) n}\Big(1+o(n^{-\eta_0/4})\Big) 
		\  ; \quad
		q = n^{-\eta_0/3} + \frac{2|\bfA|r_n}{n} \leq n^{-\eta_0/4} \,.
		\ ; \quad 
		\tau = \frac{\deg(\partial)}{2\pi}(t_n^B + sn) \,.
	\end{equation}
	Plugging this in~\eqref{e:8.5n} we obtain, for all $n$ large enough, $\bbP(\cB_n(x)) \leq (\log n +2s) n^{-1/2-\eta_0/4}$ and, by the union bound and our assumption on the size of $\bfA$, also that $\bbP(\cB_n) \leq (\log n+2s) n^{-\eta_0/8}$, which tends to zero as $n \to \infty$ as desired.
	
	At the same time, on the complement of $\cB_n$ we have
	\begin{equation}
		\wc{T}_\rmA \leq t_n^B + sn \Longleftrightarrow \wc{T}^\circ_\rmA \leq t_n^B + sn
	\end{equation}
	and
	\begin{equation}
		1_{\{\wc{T}_\rmA \leq t_n^B + sn\}}
		\rme^{-n} \big\|\wc{X}_\rmA - \wc{X}^\circ_\rmA\big\| \underset{n \to \infty}{\overset{\bbP}{\longrightarrow}} 0 \,.
	\end{equation}
	It follows that it is sufficient to prove~\eqref{e:2.15m}
	with $\wc{T}_\rmA$, $\wc{X}_\rmA$ replaced by
	$\wc{T}^\circ_\rmA$, $\wc{X}^\circ_\rmA$.
	
	To this end, we notice that, by standard Poisson thinning and Lemma~\ref{l:8.3}, $(\wh{T}^\circ_{\rmB(x; r_n)} :\: x \in \bfA)$ are independent Exponentials with rates
	\begin{equation}
		\frac{\deg(\partial)}{2\pi} \times \frac{2\pi+o(n^{-\eta/4})}{\deg(\partial)n} = \frac{1+o(n^{-\eta/4})}{n} \,.
	\end{equation}
	Another elementary computation then gives,
	\begin{equation}
		\bbP \Big( \wc{T}^\circ_{\rmA} \leq t_n^B+sn \,,\,\, \wc{X}^\circ_\rmA = x
		\Big) = \exp \Big(-\rme^{-s} \frac{1}{\sqrt{n}} \big\|\bfXi_{\rmA, r_n}\big\|\Big) \frac{1+o(n^{-\eta/5})}{\big\|\bfXi_{\rmA, r_n}\big\|}
	\end{equation}
	Summing over all $x$ such that $\rme^{-n} x \in \rmW$ gives the desired statement.
	
	The second statement follows easily from Lemma~\ref{lem:nhprob} using the union bound. The third is a consequence of Proposition~\ref{p:1103.6} and the convergence of the second component in Proposition~\ref{p:300.4}.
\end{proof}

\section{Joint convergence of extremes and average: Proofs of Theorems~\ref{t:1.2i},~\ref{t:1.4i}}
\label{s:5n}
In this section, we prove Theorems~\ref{t:1.2i} and~\ref{t:1.4i} and also obtain Proposition~\ref{p:300.4} as a direct consequence.

\subsection{DGFF extreme value preliminaries}
We recall that the min-extremes of $h_n$ are recorded in the multi-scale (min-)extremal process $\eta_{n,r}$, as defined in~\eqref{eq:defempgff}, and that this process admits a limit as in~\eqref{e:1.10i}. Aside from such precise asymptotics, we shall also need two additional structural results, both derived in~\cite{DingZeitouni}. The first is a quantification of the clustering of the extreme values of $h_n$. 
\TodoF{Add ref to Ding and Zeitouni's paper.}
\begin{lem}[Proposition~3.1 in~\cite{biskuplouidor20}] For any $u \geq 0$,
\label{t:103.14}
\begin{equation}
	\lim_{r \to \infty}\, \limsup_{n \to \infty}\,
		\bbP \Big( \exists x,y \in \rmD_n : \min\{ h_n(x),h_n(y)\} \leq -m_n+u\,,\, r<|x-y| < \rme^n/r \Big) = 0 \,.
\end{equation}
\end{lem}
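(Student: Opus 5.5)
The statement is a union bound over a multi-scale decomposition of the pair distance, controlled by the two-point version of the first-moment method for the DGFF minimum. This is the strategy of Ding and Zeitouni. Write $\rme^j$ for the dyadic scale of $|x-y|$, so $j$ ranges over $(\log r, n-\log r)$. It suffices to show that
\begin{equation*}
\sum_{\log r < j < n-\log r} \bbP\big(\exists x,y :\ \min\{h_n(x),h_n(y)\}\le -m_n+u,\ |x-y|\asymp \rme^j\big)
\end{equation*}
tends to $0$ as $n\to\infty$ followed by $r\to\infty$.

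First, I would reduce to the case where both points are low. By the known tightness of $\min h_n+m_n$, and since the problem is symmetric, I may assume one point $x$ has $h_n(x)\le -m_n+u$. Using the Gibbs--Markov (Gibbs-Gaussian) decomposition on the ball $\rmB(x;j+1)$, together with the fact that a near-minimum of the DGFF is typically accompanied by a low value at every point within distance $\rme^j$ of it once the clustering scale is fixed, I would reduce to bounding the event that there exist two points at distance $\asymp\rme^j$ with
\begin{equation*}
\max\{h_n(x),h_n(y)\}\le -m_n+u'.
\end{equation*}
This reduction is where I expect the notation to be heaviest. The cleaner alternative is to bound directly the expected number of pairs $x,y$ with $h_n(x)\le -m_n+u$, $|x-y|\asymp\rme^j$ and $h_n(y)\le -m_n+u$, with the barrier (truncation) event inserted.

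Second, for fixed $j$, I would decompose $h_n$ as a coarse field $\varphi$ at scale $j$ plus independent fine fields on the boxes of side $\rme^j$. The coarse field behaves like a random walk in scale from $n$ down to $j$. The pair event then requires three things: the common coarse value at $x$ and $y$ reaches about $-m_n\frac{n-j}{n}+O(\cdot)$, and afterwards the two fine fields each independently reach the minimum. Ballot (barrier) estimates, which restrict the walk to stay above the line $-m_n s/n - (\text{polylog})$ as in the standard truncated first moment, give a count of
\begin{equation*}
\rme^{2n}\,\rme^{2j}\,\rme^{-2(n-j)}\,\rme^{-4j}\;\times\;\text{polynomial corrections}.
\end{equation*}
After the $\log\log$ corrections this is summable and behaves like $\rme^{-c\min(j,n-j)}$ times a power of $\min(j,n-j)$. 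Summing over $j\in(\log r,n-\log r)$ gives $O(r^{-c})$. The barrier event itself fails only with probability $o_r(1)$ uniformly in $n$, by the standard upper tightness argument.

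The main obstacle is the sharp two-point ballot computation with the barrier. One has to verify that the polynomial prefactors from the $\log\log$ correction, namely $(n/(j(n-j)))^{3/2}$-type terms, are beaten by the exponential gain $\rme^{-c\min(j,n-j)}$, both near $j\approx\log r$ and near $j\approx n-\log r$. Near the top scale the macroscopic geometry of $\rmD$ and the boundary also enter. I would handle the boundary by restricting to the bulk $\rmD_n^\circ$ and bounding the boundary layer separately, using the lower variance there. Since this is precisely Proposition~3.1 of the cited reference, I would follow their argument verbatim after adapting to the present normalization of $h_n$, that is, the factor $\sqrt{2\pi}$ and $\alpha=2\sqrt2$.
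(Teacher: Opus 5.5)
Your first step, reducing to the case where both points are low, cannot be carried out, because the statement as printed is false. With $\min\{h_n(x),h_n(y)\}$ the constraint on $y$ is purely geometric. For large $n$, every $x\in\rmD_n$ at distance more than $2r$ from $\rmD_n^\rmc$ has some $y\in\rmD_n$ with $r<|x-y|<\rme^n/r$. Meanwhile the field within distance $2r$ of the boundary has variance $O(\log r)$, so with high probability it stays far above $-m_n$. Hence the printed event contains $\{\min_{\rmD_n}h_n\le -m_n+u\}$ up to a vanishing error. That event has probability bounded away from $0$, since $\min h_n+m_n$ converges in law to a randomly shifted Gumbel.

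The intended statement has $\max\{h_n(x),h_n(y)\}\le -m_n+u$, i.e.\ both points low. This is the Ding--Zeitouni result the paper cites. It is also the form used later in Lemma~\ref{l:1104.3} and Proposition~\ref{p:300.4}, where one needs $f_n>K+1$ on $\rmQ(x;r_n)\setminus\rmQ(x;r)$ around a low local minimum $x$.

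The heuristic you invoke for the reduction is also false. Given $h_n(x)\approx -m_n$ and $|x-y|\asymp\rme^j$, the conditional mean of $h_n(y)$ is $-m_nG_n(x,y)/G_n(x,x)$, which lies at distance of order $j$ above $-m_n$. In any case, the corrected lemma says precisely that such low companions do not exist. So drop the reduction, read the statement with $\max$, and take your ``cleaner alternative'' as the proof. The paper gives no argument beyond the citation, so that truncated two-point first moment is the argument to reproduce.

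Second, your account of where the decay comes from is wrong. The exponent in your displayed count is $2n+2j-2(n-j)-4j=0$, so there is no factor $\rme^{-c\min(j,n-j)}$ and no $O(r^{-c})$ bound. There is nothing exponential available to absorb imprecise prefactors; the decay is purely polynomial.

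The $\log\log$ term in $m_n$ contributes roughly $n^{\frac32(1+j/n)}$: a factor $n^{\frac32\frac{n-j}{n}}$ from the shared part and $n^{\frac32\frac jn}$ from each branch. This must be beaten by the ballot factors, roughly $(n-j)^{-3/2}$ for the shared part and $j^{-3/2}$ for each branch, after integrating the excess height at the branching scale against its exponential weight. What remains at scale $j$ is of order $j^{-3}$ for $j\ll n$ and $(n-j)^{-3/2}$ for $n-j\ll n$. Summed over $j\in(\log r,n-\log r)$ this gives a negative power of $\log r$. That suffices, and it is consistent with the Ding--Zeitouni form of the result, where the threshold can only be relaxed by order $\log\log r$.

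In particular, the barrier must be imposed on the full paths to both $x$ and $y$, not only on the common coarse field. With free branches each branch factor is only $j^{-1/2}$, and the first-moment bound no longer stays bounded in $n$.
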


The second relates the density of extreme values in a given set to its Lebesgue measure.
\begin{lem}\label{lem:coverdgff} There exist constants $c_\rmD,\wh{c}>0$ such that, for all $u \geq 0$, $n \geq 1$ and $\rmV \subseteq \rmD_n$, 
\begin{equation}
	\bbP\Big( \min_{x \in \rmV} h_n(x) \leq -m_n+u\Big) \leq c_\rmD \rme^{\wh{c} u}\sqrt{\frac{|\rmV|}{|\rmD_n|}}.
\end{equation}	
\end{lem}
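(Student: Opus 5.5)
The proof will go by a first-moment count of low vertices in $\rmV$, organised through the standard "lowest point is typically repelled" decomposition. The naive union bound gives $|\rmV|\,\bbP(h_n(x)\le -m_n+u)\approx |\rmV| n^{-1}\rme^{-2n}\cdot n^{3/4}\rme^{\alpha u}$, which equals $n^{-1/4}\rme^{\alpha u}|\rmV|/|\rmD_n|$. This is too weak when $|\rmV|$ is of order $|\rmD_n|$, but it is not the right quantity anyway; the square root has to come from a ballot-type factor. I would therefore bound
\[
\bbP\Big(\min_{\rmV} h_n\le -m_n+u\Big)\le \bbP(\cE^c)+\bbE\Big|\big\{x\in\rmV:\ h_n(x)\le -m_n+u,\ \cE_x\big\}\Big|,
\]
where $\cE_x$ is the event that the concentric circle averages (equivalently, the harmonic averages on dyadic boxes $\rmB(x;j)$, $j\le n$) of $h_n$ stay above the linear interpolation from $0$ to $-m_n$, shifted by a concave barrier $-(u+\lambda+(j\wedge(n-j))^{1/3})$. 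Here $\cE=\cap_x\cE_x$.

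The first step is to bound $\bbP(\cE^c)$ by $C\rme^{-c\lambda}$, uniformly in $n$ and uniformly over the whole of $\rmD_n$. This is the standard barrier estimate for the DGFF minimum (Bramson--Ding--Zeitouni / Ding--Zeitouni type), which I take as known from the extreme-value literature cited just before the statement.

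The second step estimates the expected count. For each $x$, use the Gibbs--Markov decomposition along the scales and the ballot theorem for the resulting Gaussian random walk with $n$ steps and variance $\tfrac12$ per step. This gives
\[
\bbP\big(h_n(x)\le -m_n+u,\ \cE_x\big)\le C(u+\lambda)^2\rme^{\alpha(u+\lambda)}\,n^{-3/2}\rme^{-2n} n^{3/2}=C'(u,\lambda)\,|\rmD_n|^{-1}.
\]
Summing over $x$ yields $C'|\rmV|/|\rmD_n|$, and this linear bound is even better than the square root. The catch is that the barrier probability cannot be made small uniformly without letting $\lambda$ grow, so the two terms have to be balanced.

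The third step is exactly this balancing, and it is where the square root comes from. With $p:=|\rmV|/|\rmD_n|$, we get a bound of the form $C\rme^{-c\lambda}+C\rme^{\alpha(u+\lambda)}(u+\lambda)^2 p$. Choosing $\lambda$ with $\rme^{(\alpha+c+\epsilon)\lambda}\asymp p^{-1}$ gives a power $p^{\kappa}$ for some $\kappa>0$. To reach $\kappa=\tfrac12$, I would use the sharper version of the first step: $\bbP(\cE^c)\le C\lambda\rme^{-\alpha\lambda}$ (the upper tail of $\min h_n+m_n$ has density $\lambda\rme^{-\alpha\lambda}$). Taking $\rme^{\alpha\lambda}=p^{-1/2}$ then makes both terms $O(\rme^{\hat c u}\sqrt{p}\log(1/p))$, and the logarithm is absorbed by using the concave barrier slack to lose slightly less. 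A cleaner alternative is to note that the statement permits any constant $\hat c$, so polynomial prefactors in $u$ are harmless. If the log cannot be removed directly, I would instead apply the Cauchy--Schwarz inequality to $\bbP(N\ge1)$ with the truncated first moment, $\bbP(N\ge1,\cE)\le\bbE N1_{\cE}$, splitting $\lambda$ over the dyadic shells $[\lambda,\lambda+1]$.

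The main obstacle is the first step with the sharp $\lambda\rme^{-\alpha\lambda}$ tail, uniformly near the boundary of $\rmD_n$. I would handle it by dominating $h_n$ by a DGFF on a slightly larger box containing $\rmD_n$, via the Gibbs--Markov property and the Slepian/Kahane comparison for minima. This reduces the bound to the box case, where the right tail estimate for the centered minimum is known.
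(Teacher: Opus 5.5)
Your route differs from the paper's. The paper gives no argument of its own: it encloses $\rmD_n$ in a box of side $N=2\bar c\,\rme^n$ and quotes Lemma~B.12 of \cite{BL3} with $t=1$, $s=u$. A self-contained truncated first moment is a legitimate alternative, but as written yours does not close, because Step~2 is wrong.

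On $\cE_x$ the slack $\lambda$ enters only through the ballot factor, not through the endpoint density. Write $y:=h_n(x)+m_n\le u$. In the paper's normalization $\mathrm{Var}\,h_n(x)=n/2+O(1)$ and $\alpha=2\sqrt2$, so the density of $h_n(x)$ at $-m_n+y$ is $\asymp n\,\rme^{-2n}\rme^{\alpha y}$. The bridge stays above your shifted curved barrier with probability $\lesssim(1+u+\lambda)^2/n$. Hence $\bbP(h_n(x)\le -m_n+u,\ \cE_x)\lesssim(1+u+\lambda)^2\rme^{\alpha u}\rme^{-2n}$, which is polynomial in $\lambda$, with no factor $\rme^{\alpha\lambda}$.

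With your extra $\rme^{\alpha\lambda}$, Step~3 yields only $\sqrt p\,\log^2(1/p)$, and none of your repairs removes the logarithm. The freedom in $\hat c$ concerns $u$, not $p$. The inequality $\bbP(N\ge1,\cE)\le\bbE N1_{\cE}$ is just Markov's inequality. Splitting $\lambda$ into unit shells gives $\sum_\lambda\min\{\lambda\rme^{-\alpha\lambda},\lambda^2\rme^{\alpha\lambda}p\}\asymp\sqrt p\log^2(1/p)$ again.

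Once Step~2 is corrected, the sharp tail $\lambda\rme^{-\alpha\lambda}$ is not needed at all. Your Step~1 bound $\bbP(\cE^c)\le C\rme^{-c\lambda}$, for any $c>0$, together with the choice $\lambda=(2c)^{-1}\log(1/p)$, gives $C\sqrt p+C(1+u)^2\rme^{\alpha u}(1+\log(1/p))^2p\le C'\rme^{\hat c u}\sqrt p$ for any $\hat c>\alpha$. So the ``main obstacle'' you identify is an artefact of this error.

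Two smaller points. First, your opening union bound is miscomputed. With the variance and $\alpha$ above, $\bbP(h_n(x)\le -m_n+u)\asymp n\,\rme^{-2n}\rme^{\alpha u}$ in the bulk, so the union bound is of order $n\,\rme^{\alpha u}p$. This growth in $n$ is precisely why truncation is needed; with your value $n^{-1/4}p$ the lemma would already follow from $p\le\sqrt p$.

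Second, for the reduction to a box, a Slepian/Kahane comparison of minima is not available, since the variances differ. Use Gibbs--Markov instead. For a box $\Lambda\supseteq\rmD_n$ one has $h^{\Lambda}|_{\rmD_n}\overset{d}{=}h_n+\varphi$, with $\varphi$ an independent centered Gaussian field. Evaluating $\varphi$ at the minimizer of $h_n$ on $\rmV$ and using its symmetry gives $\bbP(\min_{\rmV}h_n\le a)\le 2\,\bbP(\min_{\rmV}h^{\Lambda}\le a)$. The facts $m_N-m_n=O(1)$ and $|\rmD_n|\asymp N^2$ then only change the constants.
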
 
\begin{proof} Since by construction there exists a constant $\overline{c}:=\overline{c}(\rmD) > 0$ such that $\rmD_n \subseteq (-\overline{c} \rme^n,\ol{c}\rme^n)^2$, the result now follows at once from \cite[Lemma~B.12]{BL3} by choosing $N:=2\ol{c}\rme^{n}$, $t=1$ and $s=u$.
\end{proof}

\subsection{The Zero Average DGFF: Proof of Theorem~\ref{t:1.4i}}
Recall that the Dirichlet inner product on $\bbR^{\rmD_n}$ is given by
\begin{equation}
	\la f,\, g \ra_{\Delta} := \la f,\, \Delta g \ra \,,
\end{equation}
for $f,g \in \bbR^{\rmD_n}$ and $\Delta \equiv \Delta_{\rmD_n}$ is the (negative) discrete Laplacian on $\rmD_n$ with zero boundary conditions on $\rmD_n^\rmc$. 
The corresponding norm will be denoted by $\|\cdot\|_\Delta$. We shall take the normalization of $\Delta_{\rmD_n}$ to be such that the Green Function $G_n$ associated with $h_n$  is its inverse.

With $\bfone$ being the constant $1$ function on $\rmD_n$, define
\begin{equation}
	\ol{\psi}_n := \frac{1}{|\rmD_n|}\, G_n \bfone 
	\quad \Longleftrightarrow \quad \ol{\psi}_n(\cdot) = \frac{1}{|\rmD_n|} \sum_{x \in \rmD_n} G_n(x,\cdot) \,,
\end{equation}
and let $h^\psi_n$ and $\wh{h}_n$ denote the projection of $h$ under the Dirichlet inner product onto the space spanned by $\ol{\psi}_n$ and its complement (as subspaces of $\bbR^{\rmD_n}$) respectively. It follows that
\begin{equation}
\label{e:1104.1}
h_n = h^\psi_n \oplus \wh{h}_n\,,
\end{equation} 
where $\oplus$ denotes a sum of two independent quantities and that
\begin{equation}
	h^\psi_n = \frac{\ol{\psi}_n}{\| \ol{\psi}_n \|_\Delta^{2}} \langle h,\, \ol{\psi}_n \rangle_\Delta = 
\psi_n	\ol{h}_n
	 \quad ; \qquad 
\wh{h}_n \, \overset{\rmd}=\, h_n \, \big|\,\{ \la h ,\, \ol{\psi}_n \ra_\Delta = 0\} 
\,\overset{\rmd}=\, h_n \, \big|\, \{\ol{h}_n = 0\} \,,
\end{equation}
where 
\begin{equation}
\psi_n	 := \frac{\ol{\psi}_n}{\|\ol{\psi}_n\|_\Delta^2}
= \frac{\frac{1}{|\rmD_n|} \sum_{x \in \rmD_n} G_n(x, \cdot)}
	 	{\frac{1}{|\rmD_n|^2} \sum_{x,y \in \rmD_n} G_n(x,y)} \,,
\end{equation}
and $\ol{h}_n$ is the field average, as in~\eqref{e:1.17i}. Thanks to the independence in~\eqref{e:1104.1}, to obtain asymptotics for the joint law of $\ol{h}_n$ and $\eta_{n,r}$ it will be sufficient to derive the asymptotic laws of $h^\psi_n$ and $\wh{\eta}_{n,r}$, the extremal process of $\wh{h}_n$, separately. Asymptotics for $h^\psi_n$ are easy to get, as this field is essentially ``one dimensional'' with an exactly solvable variance. This is the content of Lemma~\ref{l:1104.1} below. On the other hand, deriving a limit in law for $\wh{\eta}_{n,r}$, namely proving Theorem~\ref{t:1.4i}, requires non-trivial work, and the remainder of this subsection is devoted to this task.

\begin{lem}
\label{l:1104.1}
The following holds:
\begin{enumerate}
	\item 
Uniformly on $\rmD$,
\begin{equation}
	\psi_n \big(\lfloor \rme^{n} \cdot \rfloor \big) 
	\underset{n \to \infty}\longrightarrow\
		\psi_\rmD \,,
\end{equation}
where $\psi_\rmD$ is as in~\eqref{e:1.9p}. As $\psi_\rmD$ is a continuous function on $\ol{\rmD}$, the functions $\{\psi_n\}$ are uniformly bounded in the supremum norm and asymptotically equicontinuous, in the sense that 
\begin{equation}
    \lim_{\delta \downarrow 0} \limsup_{n\to \infty} \sup_{|x-y| < \delta \rme^n} \big|\psi_n(x) - \psi_n(y) \big| = 0 \,.
\end{equation}
\item With $\ol{h}_\rmD$ as in~\eqref{e:1.7p},
\begin{equation}
	\ol{h}_n \underset{n \to \infty} \Longrightarrow \ol{h}_\rmD \,.
\end{equation}
\item 
With respect to the supremum norm on $\ol{\rmD}$, 
\begin{equation}
\label{e:1104.13}
h^\psi_n(\lfloor \rme^n \cdot \rfloor) \underset{n \to \infty}\Longrightarrow h^\psi_\rmD
\quad ; \qquad 
h^\psi_\rmD := \ol{h}_\rmD \psi_\rmD \,.
\end{equation}
The field $h_\rmD^\psi$ is Gaussian with mean zero and covariance given by
\begin{equation}
	\bbE h_\rmD^\psi(x)h_\rmD^\psi(y) = \frac{\int_{\rmD \times \rmD} G_\rmD(z,x) G_\rmD(z', y) \rmd z \rmd z'}{\int_{\rmD \times \rmD} G_\rmD(z,z') \rmd z \rmd z'} \,.
\end{equation}
\end{enumerate}
\end{lem}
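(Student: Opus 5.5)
The plan is to reduce all three parts to explicit Green function asymptotics and to the Gaussian structure of $\ol{h}_n$. All three objects are explicit functionals of $G_n$: we have $\psi_n = \ol{\psi}_n / \|\ol{\psi}_n\|_\Delta^2$, where $\|\ol{\psi}_n\|_\Delta^2 = |\rmD_n|^{-2}\sum_{x,y} G_n(x,y) = \Var(\ol{h}_n)$, since $\Delta G_n = I$. Moreover $\ol{h}_n$ is a centered Gaussian with variance $|\rmD_n|^{-2}\sum_{x,y}G_n(x,y)$. The first key input is the standard convergence of the discrete Green function: $G_n(\lfloor \rme^n x\rfloor,\lfloor \rme^n y\rfloor) \to G_\rmD(x,y)$ locally uniformly off the diagonal in $\rmD\times\rmD$ (with the paper's normalization). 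The second is the uniform logarithmic bound $G_n(x,y) \le n - \log(|x-y|\vee 1) + C$ from \cite[Lemma~3.1]{Tightness}, which provides an integrable dominating function $C+\log^+(\rme^n/|x-y|)$ in rescaled variables.

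For part 2, write $|\rmD_n|^{-2}\sum_{x,y}G_n(x,y)$ as a Riemann sum. Since $|\rmD_n|\sim \rme^{2n}\Leb(\rmD)$, dominated convergence, using the log bound and the integrability of $\log|x-y|$ on $\rmD\times\rmD$, gives convergence to $\ol\sigma_\rmD^2$. A centered Gaussian sequence with converging variances converges in law, which proves part 2. For part 1, apply the same argument to the numerator $|\rmD_n|^{-1}\sum_{z}G_n(z,\lfloor \rme^n x\rfloor)$. To make the convergence uniform in $x\in\ol\rmD$, the dominating bound must hold uniformly in $x$, which the log bound provides. Points $z$ within $\delta\rme^n$ of $x$ contribute $O(\delta^2\log(1/\delta))$ uniformly. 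On the remaining region, use local uniform convergence of $G_n$ together with the equicontinuity of $G_\rmD(\cdot,x)$ away from the diagonal; near $\partial\rmD$ both sides are small because $G_n(z,x)\to 0$ uniformly as $x$ approaches the boundary, by a gambler's-ruin estimate of the type in Lemma~\ref{lem:ll1}. Dividing by the positive limit of the denominator yields uniform convergence to $\psi_\rmD$. Continuity of $\psi_\rmD$ on $\ol\rmD$, extended by zero on the boundary, follows from continuity of $x\mapsto\int G_\rmD(z,x)\rmd z$, since the singularity is integrable. Uniform boundedness and asymptotic equicontinuity of $\psi_n$ then follow from uniform convergence to a uniformly continuous limit.

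Part 3 is immediate: $h^\psi_n(\lfloor\rme^n\cdot\rfloor)=\ol h_n\,\psi_n(\lfloor\rme^n\cdot\rfloor)$ is a one-dimensional Gaussian multiple of a deterministic function. The sup-norm distance to $\ol h_n\psi_\rmD$ is at most $|\ol h_n|\,\|\psi_n-\psi_\rmD\|_\infty\to0$ in probability, since $\ol h_n$ is tight. The continuous map $a\mapsto a\psi_\rmD$ together with part 2 then gives the weak convergence. The covariance formula follows by direct substitution, $\bbE h^\psi_\rmD(x)h^\psi_\rmD(y)=\ol\sigma_\rmD^2\psi_\rmD(x)\psi_\rmD(y)$, where the $\Leb(\rmD)$ factors cancel.

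The main obstacle is the uniformity in part 1 near the boundary and near the diagonal. This requires a quantitative, uniform-in-$x$ form of the Green function convergence. I would first try to cite it from \cite{biskuppims}, where the convergence of $G_n$ to $G_\rmD$ is shown uniformly on compact subsets together with the bound $G_n\le C$ near $\partial$. If that is not available, I would handle the boundary strip separately using $\max_x \sum_{z} G_n(z,x)\mathbf 1_{\rmd(x,\rmD_n^c)<\delta \rme^n} = o_\delta(1)\rme^{2n}$, which is a consequence of the expected exit time bound.
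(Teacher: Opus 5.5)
\textbf{Verdict: there is a genuine gap.} It sits at the step you yourself flag, namely uniform smallness near $\partial\rmD$ in Part~1; everything else is sound.

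Your handling of Parts 2 and 3, and of the near-diagonal and bulk regions in Part 1, follows the paper's outline. Your Riemann-sum and dominated-convergence computation of $\Var(\ol h_n)$ simply replaces the paper's citation of \cite[Theorem~1.27]{biskuppims}.

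\textbf{What is missing.} You need $\ol\psi_n$ to be uniformly small on a strip of width $\delta\rme^n$ along $\partial\rmD_n$. Correspondingly, you need $\psi_\rmD$ to vanish at $\partial\rmD$; integrability of the logarithmic singularity only gives interior continuity. None of the tools you name delivers this:
\begin{itemize}
\item Lemma~\ref{lem:ll1} is an annulus estimate in $\bbZ^2$. Centred at $x$, it says nothing about $\rmD_n^\rmc$. Centred at a boundary point, it only helps if $\rmD_n^\rmc$ contains balls of radius comparable to $\rmd(x,\rmD_n^\rmc)$ nearby, an exterior-ball-type condition that is not assumed.
\item A bound $G_n\le C$ near the boundary gives boundedness, not smallness.
\item The generic exit-time bound $\sum_z G_n(x,z)\le C\rme^{2n}$ does not improve to $o_\delta(1)\rme^{2n}$ on the strip without further input.
\end{itemize}
Your argument never uses the assumptions on $\rmD$ (simple connectivity, boundary components of positive diameter), and the statement fails without them. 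For a punctured disc, $\psi_n$ is $O(1/n)$ at lattice distance $O(1)$ from the puncture. By contrast, $\psi_\rmD$ coincides with that of the full disc, since points are polar, and so stays bounded away from zero there.

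\textbf{What the paper uses instead.} The missing ingredient is a Beurling-type estimate, proved in the paper as Lemma~\ref{lem:hitting}:
\[
\bbP_x\big(\tau_{\partial\rmB(x;\log R)}<\tau_\partial\big)\le C\,\big(\rmd(x,\rmD_n^\rmc)/R\big)^\delta .
\]
\begin{itemize}
\item Its proof uses that the piece of $\rmD_n^\rmc$ closest to $x$ is connected and of macroscopic diameter. So a walk completing a full loop inside any annulus centred at $x$, with radii between $\rmd(x,\rmD_n^\rmc)$ and $R$, must hit $\rmD_n^\rmc$.
\item By Donsker's principle and scale invariance, such a loop has probability bounded below at each of the roughly $\log(R/\rmd(x,\rmD_n^\rmc))$ exponential scales. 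The strong Markov property then gives the power decay.
\item One writes $G_n(x_n,y_n)=\bbP_{x_n}(\tau_{y_n}<\tau_\partial)\,G_n(y_n,y_n)$ with $G_n(y_n,y_n)\le n+C$.
\item Lemma~\ref{lem:ll1} is used only for the second leg: from macroscopic distance, hitting the single point $y_n$ before $\partial$ costs $O(1/n)$.
\item This yields $G_n(x_n,y_n)\le C_\epsilon\,\rmd(x,\rmD^\rmc)^\delta+o(1)$ whenever $\rmd(y,\rmD^\rmc)>2\epsilon$. Combined with the logarithmic bound for $y$ near $x$ or near $\partial\rmD$, this gives uniform smallness of $\ol\psi_n$ on a boundary strip, and, passing to the limit, of $\ol\psi_\rmD$.
\end{itemize}
The gambler's-ruin factor $1/n$ only cancels the $n$ in $G_n(y_n,y_n)$ and so gives mere boundedness. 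The smallness comes entirely from the Beurling factor, and that is the idea your proposal lacks.
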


To prove Lemma~\ref{l:1104.1}, we shall need the following auxiliary result.

\begin{lem}\label{lem:hitting} There exist $C,\delta > 0$ such that, given $x \in \rmD_n$ and $R > \rmd(x,\rmD_n^c)$, 
\begin{equation}\label{eq:bq1}
	\bbP_x(\tau_{\partial \rmB(x;\log R)} < \tau_\partial) \leq C \left(\frac{\rmd(x,\rmD_n^c)}{R}\right)^\delta.
\end{equation} In particular, given any $\varepsilon > 0$ and $x,y \in \rmD$ with $\| x-y\| \geq \varepsilon$, $\rmd(y,\rmD^c)>2\varepsilon$ and $\lfloor \rme^n x\rfloor, \lfloor \rme^n y\rfloor \in \rmD_n$, 
\begin{equation}\label{eq:bq2}
\bbP_{\lfloor \rme^n x\rfloor}(\tau_{\lfloor \rme^n y\rfloor} < \tau_\partial) \leq 	C_{\varepsilon,\rmD} \frac{\left(\rmd(x,\rmD^c)\right)^\delta }{n}
\end{equation} for some constant $C_{\varepsilon,\rmD} > 0$ and all $n$ large enough (depending only on $\varepsilon$ and $\rmD$).
\end{lem}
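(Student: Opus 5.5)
For \eqref{eq:bq1} I will use a standard Beurling-type argument on dyadic annuli. Let $z \in \partial \rmD_n$ (or a point of $\rmD_n^\rmc$) realize $d:=\rmd(x,\rmD_n^\rmc)$. Since every boundary component of $\rmD$ has positive diameter, there is a connected piece of $\rmD_n^\rmc$ that contains $z$ and reaches distance $\gtrsim R$ from $x$, at least for $R$ up to order $\rme^n$. For larger $R$ the bound is trivial after adjusting $C$, because the walk must hit $\partial$ before exiting a ball of radius $C\rme^n$.

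Consider the annuli $A_j := \rmB(x; \log(2^{j} d)) \setminus \rmB(x;\log(2^{j-1}d))$ for $1 \le j \le \log_2(R/d)$. The connected piece of the complement crosses each $A_j$. By the discrete Beurling estimate (equivalently, a Harnack/invariance-principle bound), a walk started anywhere on the inner boundary of $A_j$ hits this crossing set before exiting $\rmB(x;\log(2^{j+1}d))$ with probability at least some $p>0$, uniformly in $j$ and $n$. Hitting $\rmD_n^\rmc$ means being absorbed at $\partial$. Applying the strong Markov property at the successive exit times of the dyadic balls gives
\[
\bbP_x\big(\tau_{\partial \rmB(x;\log R)} < \tau_\partial\big) \le (1-p)^{\lfloor \log_2(R/d)\rfloor -1},
\]
which is \eqref{eq:bq1} with $\delta := -\log_2(1-p)$.

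For \eqref{eq:bq2}, write $x_n := \lfloor \rme^n x\rfloor$ and $y_n := \lfloor \rme^n y\rfloor$. To reach $y_n$ from $x_n$, the walk must first exit the ball around $x_n$ of radius $\varepsilon\rme^n/2$ before hitting $\partial$. Then, from a point $w$ with $\|w-y_n\|\ge \varepsilon \rme^n/2$, it must hit $y_n$ before $\partial$. By the strong Markov property,
\[
\bbP_{x_n}(\tau_{y_n}<\tau_\partial) \le \bbP_{x_n}\big(\tau_{\partial\rmB(x_n;\log(\varepsilon\rme^n/2))}<\tau_\partial\big)\cdot \sup_{w} \bbP_w(\tau_{y_n}<\tau_\partial).
\]
The first factor is at most $C\big(\rme^n\rmd(x,\rmD^\rmc)/(\varepsilon \rme^n/2)\big)^\delta \le C_\varepsilon\, \rmd(x,\rmD^\rmc)^\delta$ by \eqref{eq:bq1}; if $\rmd(x,\rmD^\rmc)\ge\varepsilon/2$ the bound is trivial anyway. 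For the second factor, write $\bbP_w(\tau_{y_n}<\tau_\partial) = G_n(w,y_n)/G_n(y_n,y_n)$. Since $\rmd(y,\rmD^\rmc)>2\varepsilon$ and $\|w-y_n\|\ge\varepsilon\rme^n/2$, the Green function asymptotics cited earlier from \cite[Lemma~3.1]{Tightness} give $G_n(w,y_n)\le C'_{\varepsilon,\rmD}$ and $G_n(y_n,y_n)\ge n-C$. Hence the second factor is $O(1/n)$, and multiplying the two bounds gives \eqref{eq:bq2}.

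The main obstacle I expect is making the Beurling crossing bound uniform. The difficulty is guaranteeing a connected complement piece across all scales from $d$ up to $R$ for the discretized domain $\rmD_n$, uniformly in $n$ and $x$. I would handle it using the finitely many boundary components of positive diameter: each one gives a continuum of diameter at least $c\rme^n$ in the complement, and scales beyond $c\rme^n$ are absorbed into the constant $C$.
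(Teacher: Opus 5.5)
Your proposal is correct and follows essentially the paper's route. For \eqref{eq:bq1} both arguments get geometric decay over logarithmic annuli via the strong Markov property: the paper's per-scale event is the walk completing a full loop around the annulus, which forces it to meet the complement, while yours is hitting a connected piece of $\rmD_n^\rmc$ that crosses the annulus. That is the same mechanism, and you are more explicit than the paper about why such a crossing piece exists at every scale. For \eqref{eq:bq2} both split by strong Markov at distance of order $\varepsilon\rme^n$ and then bound the probability of hitting $y_n$ from that distance by $O(1/n)$. You get this bound from the ratio $G_n(w,y_n)/G_n(y_n,y_n)$ and Green function estimates, whereas the paper compares with the full-plane walk killed outside a ball containing $\rmD_n$ and applies Lemma~\ref{lem:ll1}(b); both work. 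One small bookkeeping point: each step of your geometric bound spans two dyadic scales, so the exponent should be roughly halved, which only changes $\delta$.
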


\begin{proof} For  $r > 0$ and $m \geq 0$, consider the annuli $A_{m,r}(x):=\{ y : \rme^m r < \|x-y\| \leq \rme^{m+1}r\}$ together with $\gamma_{m,r}(x)$, the center curve in $A_{m,r}(x)$, given by $\gamma_{m,r}(x):=\{ y: \|x-y\| = \frac{(1+\rme)}{2}\rme^mr\}$. Let~$G_{m,r}(x)$ be the event that Brownian motion on $\bbR^2$ started from $\gamma_{m,r}(x)$ performs a full loop around $A_{m,r}(x)$ before exiting $A_{m,r}(x)$. It is clear that $\bbP(G_{0,r}(x))$ is positive for any $r > 0$ and, by recurrence and scale invariance of Brownian motion, that it is so uniformly in $r > 0$, so that $\inf_{m \in \bbN_0\,,r>0} \bbP(G_{m,r}(x)) > 0$. Hence, by Donsker's invariance principle, 
\begin{equation}
\liminf_{n \to \infty} \left(\inf_{m \in \bbN_0\,,\,r>0}\bbP(H_{n,m,r}(x))\right) > 0
\end{equation} where $H_{n,m,r}(x)$ is
the event that a simple symmetric random walk on $\bbZ^2$ started from $\gamma_{m,r\rme^n}(x)$ performs a full loop around $A_{m,r\rme^n}(x)$ before exiting $A_{m,r\rme^n}(x)$. 

Now, if we choose $r:=\rmd(x,\rmD_n^c)$, then, for any $1 \leq m \leq \log(\frac{R}{r}) -1$,  on the event $H_{n,m,r}(x)$ the random walk exists $\rmD_n$ before reaching $\partial \rmB(x;\log R)$. Therefore, by successive applications of the strong Markov property at the hitting times of the center curves $\gamma_{m,r\rme^n}(x)$, we conclude that, for some $q \in (0,1)$, 
\begin{equation}
\bbP_x(\tau_{\partial \rmB(x;\log R)} < \tau_\partial) \leq q^{\log(\frac{R}{r})-2} \leq  C_q \left(\frac{r}{R}\right)^\delta
\end{equation} for $C=\frac{1}{q^2}$  and $\delta:=-\log q>0$, which is precisely \eqref{eq:bq1}.

To show \eqref{eq:bq2}, we notice that, if we take $R:=(\frac{1}{3}\| \lfloor \rme^n x \rfloor - \lfloor \rme^n y \rfloor\|)\wedge \rmd(\lfloor \rme^n y\lfloor,\rmD_n^c)$ in \eqref{eq:bq1} then, for all $n$ large enough we have $R \geq \frac{\varepsilon}{2}\rme^n$, so that, since $\rmB(\lfloor \rme^n x\rfloor;\log R)$ and $\rmB(\lfloor \rme^n y\rfloor,\log R)$ are disjoint by choice of $R$, by the strong Markov property at~$\tau_{\rmB(\lfloor \rme^n y\rfloor;\log R)}$ we obtain that
\begin{equation}
\bbP_{\lfloor \rme^n x\rfloor}(\tau_{\lfloor \rme^n y\rfloor} < \tau_\partial) \leq 	C \left( \frac{\rmd(\lfloor \rme^n x \rfloor,\rmD_n^c)}{\frac{\varepsilon}{2}\rme^n}\right)^\delta \sup_{z \in \partial_i \rmB(\lfloor \rme^n y\rfloor;\log R)} \bbP_z( \tau_{\lfloor \rme^n y\rfloor} < \tau_\partial) \leq C_{\varepsilon} (\rmd(x,\rmD^c))^{\delta}  %C_\varepsilon \frac{\left(\rmd(x,\rmD^c)\right)^\delta }{n + c_{\varepsilon,\rmD}}	
\end{equation}
where we recall that, given a set $A \subseteq \bbZ^2$ we write $\partial_i A := \partial A^c$ and, to obtain the last inequality, we have used that, for $c_\rmD > 0$ large enough so that $\rmD_n \subseteq \rmB(w;n+c_\rmD)$ for all $w \in \rmD_n$,  
\begin{equation}
\sup_{z \in \partial_i \rmB(\lfloor \rme^n y\rfloor;\log R)} \bbP_z( \tau_{\lfloor \rme^n y\rfloor} < \tau_\partial) \leq \sup_{z \in \partial_i \rmB(\lfloor \rme^n y\rfloor;\log R) } \bbP_{z}(\tau_{\lfloor \rme^n y\rfloor} < \tau_{\partial \rmB(y;n+c_\rmD)})	
\end{equation} where the probability on the right-hand side is with respect to a simple symmetric random walk on all of $\bbZ^2$ (not just on $\wh{\rmD}_n$), which by Lemma~\ref{lem:ll1} can be bounded from above by 
\begin{equation}
\frac{n+c_\rmD - n - \log(\tfrac{\varepsilon}{2})+\tfrac{1}{\varepsilon}O(\rme^{-n})}{n+c_\rmD}(1+O(\tfrac{1}{n})) \leq \frac{C_{\varepsilon,\rmD}}{n} 
\end{equation} for all $n$ large enough (depending only on $\varepsilon$ and $\rmD$). Now, upon combining the last three displays, \eqref{eq:bq2} immediately follows. 
\end{proof}

We are now ready to prove Lemma~\ref{l:1104.1}.

\begin{proof}[Proof of Lemma~\ref{l:1104.1}]
For Part 1, notice that it will suffice to show that, as $n \to \infty$,
\begin{equation}
\ol{\psi}_n(\lfloor \rme^n x\rfloor) \longrightarrow \ol{\psi}(x):=	 \frac{1}{\mathrm{Leb}(\rmD)} \int_{} G_\rmD (x,y)\,\rmd y
\end{equation} uniformly over $\rmD$, since all assertions from Part 1 follow at once from this by standard arguments. To this end, note
 that, for any $x \in \rmD$,  
\begin{equation}
\ol{\psi}_n(\lfloor \rme^n x\rfloor) =\frac{(1+o_n(1))}{\textrm{Leb}(\rmD)}\int_\rmD G_n(\lfloor \rme^n x\rfloor, \lfloor \rme^n y\rfloor)\,\rmd y
\end{equation} with $o_n(1) \to 0$ as $n \to \infty$ uniformly over $x$. In addition, by standard Green's function estimates (see \cite[Lemma~3.1]{Tightness}), we have
\begin{equation}
G_n(\lfloor \rme^n x\rfloor, \lfloor \rme^n y\rfloor) \leq  n + C
\end{equation} uniformly over $x, y \in \rmD$ and
\begin{equation}\label{eq:boundGreen1}
G_n(\lfloor \rme^n x\rfloor, \lfloor \rme^n y\rfloor) \leq - \log\|x-y\| + C'
\end{equation} uniformly over all $x,y \in \rmD$ with $\|x-y\| > \frac{2}{\rme^n}$. It follows from this that, given $\varepsilon > 0$,  
\begin{equation}\label{eq:asympphin}
\ol{\psi}_n(\lfloor \rme^n x\rfloor) =\frac{1}{\textrm{Leb}(\rmD)}\int_{\|x-y\| > \varepsilon \atop \rmd(y,\rmD^\rmc) > \varepsilon} G_n(\lfloor \rme^n x\rfloor, \lfloor \rme^n y\rfloor)\,\rmd y + o_{n,\varepsilon}(1),
\end{equation} where $o_{n,\varepsilon}(1) \to \infty$ in the limit as $n \to \infty$ followed by $\varepsilon \to 0$ uniformly in $x \in \rmD$. Also, since~\eqref{eq:boundGreen1} implies that 
$
G_\rmD(x,y) \leq -\log \| x-y \| + C'
$ for all $x,y \in \rmD$, by a similar argument we have
\begin{equation}\label{eq:asympginfty}
\overline{\psi}_\rmD(x) := \frac{1}{\mathrm{Leb}(\rmD)} \int_{} G_\rmD (x,y)\,\rmd y=\frac{1}{\mathrm{Leb}(\rmD)} \int_{\|x-y\| > \varepsilon \atop \rmd(y,\rmD^c) > \varepsilon} G_\rmD (x,y)\,\rmd y + o_{\varepsilon}(1).
\end{equation} where $o_{\varepsilon}(1) \to 0$ as $\varepsilon \to 0$ uniformly in $x \in \rmD$. Now, since $G_n$ converges to $G_\rmD$ uniformly over compact~subsets of $\{ (x,y) \in \rmD^2 : x \neq y\}$ (see e.g. \cite[Theorem~1.17]{biskuppims}), we have, as $n \to \infty$,
\begin{equation}
\sup_{x:\rmd(x,\rmD^c)> \varepsilon} \left[\int_{\|x-y\| > \varepsilon \atop \rmd(y,\rmD^\rmc) > \varepsilon} |G_n(\lfloor \rme^n x\rfloor, \lfloor \rme^n y\rfloor)-G_\rmD(x,y)|\,\rmd y\right]\longrightarrow 0,
\end{equation} which, in combination with \eqref{eq:asympphin} and \eqref{eq:asympginfty}, yields that  
\begin{equation}\label{eq:convbulk}
\lim_{\varepsilon \to 0} \limsup_{n \to \infty} \left[\sup_{x:\rmd(x,\rmD^\rmc) > \varepsilon} |\overline{\psi}_n(x)-\overline{\psi}_\rmD(x)|\right]=0.
\end{equation} 
Finally, to handle the uniform convergence near the boundary of $\rmD$, we notice that it will suffice to show that, for any $x,y \in \rmD$ with $\rmd(x,\rmD^c) \leq \varepsilon$ and $\rmd(y,\rmD^c)>2\varepsilon$, one has, for some~$C_{\varepsilon,\rmD} > 0$, that
\begin{equation}\label{eq:boundHbound}
	G_n(\lfloor \rme^n x\rfloor ,\lfloor \rme^ny\rfloor) \leq C_{\varepsilon,\rmD}  (\rmd(x,\rmD^c))^\delta+\wt{o}_{n,\varepsilon,\rmD}(1)
\end{equation} 
where $\wt{o}_{n,\varepsilon,\rmD} (1) \to 0$ as $n \to \infty$ uniformly over all $x,y \in \rmD$ as above for any fixed $\varepsilon > 0$. Indeed, \eqref{eq:boundGreen1}--\eqref{eq:asympphin} combined with \eqref{eq:boundHbound} then imply that, for any $x$ with $\rmd(x,\rmD^c) \leq \varepsilon':= \big(\frac{\varepsilon}{C_{\varepsilon,\rmD} }\big)^{\frac{1}{\delta}}$, 
\begin{equation}
\ol{\psi}_n(\lfloor \rme^n x\rfloor) \leq C_{\varepsilon,\rmD}  (\rmd(x,\rmD^c))^\delta+g_{n,\varepsilon,\rmD}(x) \leq 2\varepsilon + g_{n,\varepsilon,\rmD}(x)
\end{equation} where $\sup_{x:\rmd(x,\rmD^c) \leq \varepsilon} g_{n,\varepsilon,\rmD}(x) \to 0$ as $n \to \infty$ followed by $\varepsilon \to 0$, so that 
\begin{equation}\label{eq:converror1}
\lim_{\varepsilon \to 0} \limsup_{n \to \infty}\left[\sup_{x: \rmd(x,\rmD^\rmc) \leq \varepsilon'} \overline{\psi}_n(\lfloor \rme^n x\rfloor)\right]=0.
\end{equation} In addition, by taking the limit as $n \to \infty$ in \eqref{eq:boundHbound} we obtain that $G_\rmD(x,y) \leq C_{\varepsilon,\rmD} ((\rmd(x,\rmD^c))^{\delta}$ for~any $x,y \in \rmD$ with $\rmd(x,\rmD^c) \leq \varepsilon$ and $\rmd(y,\rmD^c) > 2\varepsilon$, which combined with \eqref{eq:asympginfty} and the bound $G_\rmD(x,y) \leq - \log \|x-y\| + C'$ yields that 
\begin{equation}\label{eq:converror2}
\lim_{\varepsilon \to 0}\left[\sup_{x: \rmd(x,\rmD^c) \leq \varepsilon'} \overline{\psi}_\rmD(x)\right]=0,
\end{equation} from where the uniform convergence of $\ol{\psi}_n$ to $\ol{\psi}_\rmD$ on $\rmD$ will then readily follow in light of~\eqref{eq:convbulk} (used for $\varepsilon'$ in place of $\varepsilon$) and \eqref{eq:converror1}. Therefore, it only remains to show \eqref{eq:boundHbound}. 

To this end, fix $x,y \in \rmD$ with $\rmd(x,\rmD^c) \leq \varepsilon$, $\rmd(y,\rmD^c) > 2\varepsilon$ and write $x_n:=\lfloor \rme^n x\rfloor$, $y_n:=\lfloor \rme^n y\rfloor$ for simplicity. We may assume that $x_n,y_n \in \rmD_n$, since otherwise the bound is immediate. Then, by the strong Markov property of the walk, we have that
\begin{equation}\label{eq:greenprod}
G_n(x_n, y_n) = \bbP_{x_n}(\tau_{y_n} < \tau_\partial)G_n(y_n,y_n).
\end{equation}
By standard Green Function estimates (see \cite[Lemma~3.1]{Tightness}), we have $G_n(y_n,y_n) \leq n + C_\rmD$~for some $C_\rmD > 0$ uniformly in $y \in \rmD$. On the other hand, since $\rmd(y;\rmD^c) -\rmd(x,\rmD^c) > \varepsilon$, by Lemma~\ref{lem:hitting} we have that for all $n$ large enough,
\begin{equation}
\bbP_{x_n}(\tau_{y_n} < \tau_\partial) \leq C_{\varepsilon,\rmD} \frac{(\rmd(x,\rmD^c))^\delta}{n}.	
\end{equation}
Combining both estimates, \eqref{eq:boundHbound} (and hence \eqref{eq:boundGreen1}) immediately follows in light of \eqref{eq:greenprod}.

Turning to Part 2, the desired statement follows immediately from \cite[Theorem~1.27]{biskuppims} with the particular choice $f \equiv \frac{1}{\mathrm{Leb(\rmD)}}$ therein, upon noticing that, by our conditions on $\rmD$, we have $|\rmD_n|=\rme^{2n}\mathrm{Leb}(\rmD)(1+o(1))$ for some error term $o(1) \to 0$ as $n \to \infty$.   
Part 3 follows immediately from Parts 1 and 2.
\end{proof}

Our standing goal now is to prove Theorem~\ref{t:1.4i}, namely to derive asymptotics for $\wh{\eta}_{n,r}$. We recall that the latter was defined in~\eqref{e:1.14j}. The proof will follow from the following several lemmas, the first of which, states the required tightness. 
\begin{lem}
\label{l:5.8n} 
Let $\{r_n\}$ be as in~\eqref{e:1.11i}. Then, the sequence $\{\wh{\eta}_{n,r_n}\}$ is tight.
\end{lem}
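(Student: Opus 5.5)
Tightness of $\{\wh{\eta}_{n,r_n}\}$ as random Radon measures on $\ol{\rmD}\times\bbR\times\bbR^{\bbZ^2}$ (vague topology) reduces to showing that, for every compact set of the form $\ol{\rmD}\times[-M,\infty)\times K$, the mass $\wh{\eta}_{n,r_n}(\ol{\rmD}\times[-M,\infty)\times\bbR^{\bbZ^2})$ is tight. Here "compact" should be read in the convention of~\eqref{e:1.10i}, where the value coordinate is compactified so that neighbourhoods of $+\infty$, i.e. of very low values of $\wh h_n+m_n\to -\infty$, are not compact. Equivalently, the number of $r_n$-local minima of $\wh h_n$ below $-m_n+M$ must be tight for each fixed $M$. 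The plan is to transfer the known tightness for $h_n$ (a consequence of~\eqref{e:1.10i}) to $\wh h_n$ through the decomposition~\eqref{e:1104.1}, $h_n=\psi_n\ol h_n\oplus \wh h_n$.

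The key step is the following. By Lemma~\ref{l:1104.1}, $\psi_n$ is uniformly bounded, say $|\psi_n|\le C_\psi$, and $\ol h_n$ is tight. Hence for any $\epsilon>0$ there is $A$ with $\bbP(|\ol h_n|\le A)\ge 1-\epsilon$ for all $n$. On this event, $|h_n-\wh h_n|\le C_\psi A$ pointwise, so every $x$ with $\wh h_n(x)\le -m_n+M$ satisfies $h_n(x)\le -m_n+M+C_\psi A$. The obstacle is that $r_n$-local minima of $\wh h_n$ need not be $r_n$-local minima of $h_n$, so we cannot compare the two point counts directly. I would handle this via clustering. By Lemma~\ref{t:103.14} applied with $u=M+C_\psi A$, with high probability all points of $\{h_n\le -m_n+u\}$ are $(r,\rme^n/r)$-clustered for a fixed large $r$. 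Since $r_n\to\infty$ and $\rme^n/r_n\to\infty$, two distinct $r_n$-local minima of $\wh h_n$ are at distance greater than $\lfloor \rme^{r_n}\rfloor$ (ball notation), which eventually exceeds $r$. Both lie in the sublevel set of $h_n$, so they must be at distance at least $\rme^n/r$.

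It follows that, in each box of side $\rme^n/(2r)$, there is at most one $r_n$-local minimum of $\wh h_n$ below $-m_n+M$. Their number is therefore bounded by the number of such boxes that meet $\{h_n\le -m_n+u\}$. That number is tight, either from convergence~\eqref{e:1.10i}, or directly from Lemma~\ref{lem:coverdgff} combined with a union bound over the $O(r^2)$ boxes, giving expectation at most $O(r^2)c_\rmD\rme^{\wh c u}$. Combining the three bad events (large $|\ol h_n|$, failure of clustering, too many boxes) yields tightness of the counts, and hence of the measures.

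The main subtlety I expect is the meaning of "local minimum" near the boundary and the precise scale comparison between $r_n$ (where $\rmB(x;r)$ has radius $\rme^{r}$) and the clustering scale. If a cruder route is preferred, one can even skip the local-minimum structure and bound the count by $|\{x:\wh h_n(x)\le -m_n+M\}|$ restricted to cluster representatives, as above.
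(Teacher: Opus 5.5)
Your argument is correct and follows the paper's route. The paper's proof is the one-line observation that tightness of $\{\eta_{n,r_n}\}$, together with sup-norm tightness of $h^\psi_n=\psi_n\ol{h}_n$ (Lemma~\ref{l:1104.1}), gives the claim; your clustering step via Lemma~\ref{t:103.14} makes explicit the point that one-liner glosses over, namely that $r_n$-local minima of $\wh{h}_n$ need not be $r_n$-local minima of $h_n$. Moreover, on your good event the count is already deterministically bounded by the $O(r^2)$ boxes of side $\rme^n/(2r)$ meeting $\rmD_n$, so the final appeal to \eqref{e:1.10i} or Lemma~\ref{lem:coverdgff} is unnecessary.
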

\begin{proof}
Immediate from the tightness of $\{\eta_{n,r_n}\}$ and the tightness of $\{h^\psi_n\}$ in the supremum norm as implied by Lemma~\ref{l:1104.1}.
\end{proof}

Next, towards identifying the possible subsequential limits of $\wh{\eta}_{n,r_n}$, we have the following relation in law which each such limit must obey.
\begin{lem}
\label{l:1104.3}
Let $\wh{\eta}_\rmD$ be limit of $\{\wh{\eta}_{n,r_n}\}$ along some subsequence, with $\{r_n\}$ as in~\eqref{e:1.11i}. Then, there exists a coupling between $\wh{\eta}_\rmD$, the random measure $\eta_\rmD$ from~\eqref{e:1.10i}, and $h_\rmD^\psi$ from~\eqref{e:1104.13}, such that $\wh{\eta}_\rmD$ and $h_\rmD^\psi$ are independent and 
\begin{equation}
\label{e:1104.14}
\wh{\eta}_\rmD \circ \tau^{-1}_{h^{\psi}_\rmD} = \eta_\rmD \,,
\end{equation}
where $\tau_{\psi}: \rmD \times \bbR \times \bbR^{\bbZ^2} \to \rmD \times \bbR \times \bbR^{\bbZ^2}$ is the translation operator
\begin{equation}
	\tau_{\psi}(x,s,\omega) =(x,s + \psi(x), \omega) \,.
\end{equation}
Moreover, along the above subsequence we have
\begin{equation}
	 \big(\eta_{n,r_n}, \wh{\eta}_{n,r_n}, h_n^\psi\big) \Longrightarrow \big(\eta_\rmD, \wh{\eta}_\rmD, h^\psi_\rmD\big) \,.
\end{equation}
\end{lem}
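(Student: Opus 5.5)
We prove Lemma~\ref{l:1104.3} by passing to the limit in the exact finite-$n$ identity $h_n = h^\psi_n \oplus \wh{h}_n$ from~\eqref{e:1104.1}. The plan is to show that, up to a vanishing error, $\eta_{n,r_n}$ is the image of $\wh{\eta}_{n,r_n}$ under the random translation $\tau_{h^\psi_n(\cdot)}$ (in scaled coordinates). We then use joint tightness and the independence of $h^\psi_n$ and $\wh{\eta}_{n,r_n}$ to carry this identity to the limit.

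\textbf{Step 1 (joint tightness and independence).} Along the given subsequence, $\wh{\eta}_{n,r_n}\Rightarrow \wh{\eta}_\rmD$ by assumption. Lemma~\ref{l:1104.1}(3) gives $h^\psi_n(\lfloor \rme^n\cdot\rfloor)\Rightarrow h^\psi_\rmD$ in the supremum norm. Since the two are independent for every $n$, the pair $(\wh{\eta}_{n,r_n}, h^\psi_n)$ converges jointly to the product law. We define the coupling so that $\wh{\eta}_\rmD$ and $h^\psi_\rmD$ are independent. Because $\eta_{n,r_n}$ is tight by~\eqref{e:1.10i}, we may pass to a further subsequence along which the triple $(\eta_{n,r_n},\wh{\eta}_{n,r_n},h^\psi_n)$ converges jointly. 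Once~\eqref{e:1104.14} is established, the first component of any such limit is determined by the other two, so the triple converges along the original subsequence as well.

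\textbf{Step 2 (comparison of local minima).} We have $h_n(x)=\wh{h}_n(x)+h^\psi_n(x)$, and $h^\psi_n=\psi_n\ol{h}_n$. By Lemma~\ref{l:1104.1}(1), $\psi_n$ is uniformly bounded and asymptotically equicontinuous, and $\ol{h}_n$ is tight. Hence for any $x$ and any $y$ with $|y|\le \rme^{r_n}$ the increment $h^\psi_n(x+y)-h^\psi_n(x)$ is $o(1)$ in probability uniformly, because $\rme^{r_n}=o(\rme^n)$. Consequently the relative-height (cluster) coordinates of the two processes differ by $o(1)$ on any finite window.

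The delicate part is the indicator that $x$ is an $r_n$-local minimum. A small perturbation can change which vertex of a near-tied cluster is the local minimum, and so whether $x$ is counted. We handle this with the standard Biskup--Louidor argument, which shows that the limit in~\eqref{e:1.10i} does not depend on the choice of $r_n$ satisfying~\eqref{e:1.11i}. Concretely, we compare both processes against versions built with two scales $r_n' \ll r_n \ll r_n''$, both satisfying~\eqref{e:1.11i}. By Lemma~\ref{t:103.14}, with high probability all vertices at height within $u$ of $-m_n$ on a compact set lie in clusters of diameter at most $r'$ that are separated by distances at least $\rme^n/r'$. On that event, a vertex is an $r_n$-local minimum for $h_n$ if and only if it is one for $\wh{h}_n$, up to near-ties inside a single cluster. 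Near-ties contribute nothing in the limit, because the cluster law $\nu$ is a.s.\ free of ties below the root.

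Given this, for every test function $f\in C_c(\rmD\times\bbR\times\bbR^{\bbZ^2})$ depending on finitely many cluster coordinates,
\[
\Big|\la \eta_{n,r_n},f\ra-\la \wh{\eta}_{n,r_n},f\circ\tau_{h^\psi_n(\lfloor \rme^n\cdot\rfloor)}\ra\Big|\overset{\bbP}{\longrightarrow}0 .
\]
The integrand on the right is controlled via uniform continuity of $f$ and the sup-norm tightness of $h^\psi_n$. The number of relevant points is tight because $\wh{\eta}_{n,r_n}$ is tight.

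\textbf{Step 3 (passing to the limit).} Approximate $h^\psi_n$ in sup norm by $h^\psi_\rmD$ using a Skorokhod coupling for the triple. The map $(\mu,\psi)\mapsto \mu\circ\tau_\psi^{-1}$ is continuous on Radon measures (vague topology) times continuous functions (sup norm), provided we restrict to test functions of compact support and the shifts are uniformly bounded on compacts. It follows that $\la\eta_\rmD,f\ra=\la \wh{\eta}_\rmD\circ\tau^{-1}_{h^\psi_\rmD},f\ra$ a.s.\ for a countable determining family of $f$, which is~\eqref{e:1104.14}.

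The main obstacle is Step 2, namely making the local-minimum indicators match despite the random perturbation. We expect this to reduce to the scale-insensitivity argument combined with Lemma~\ref{t:103.14}.
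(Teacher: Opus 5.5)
Your proposal is correct and follows essentially the paper's route. You pass $\wh{\eta}_{n,r_n}\circ\tau^{-1}_{h^\psi_n}$ to the limit using independence, the continuity of $(\mu,\psi)\mapsto\mu\circ\tau_\psi^{-1}$ and the a.s.\ continuity of $h^\psi_\rmD$. You then show $\langle\eta_{n,r_n},f\rangle-\langle\wh{\eta}_{n,r_n}\circ\tau^{-1}_{h^\psi_n},f\rangle\to0$ in probability via Lemma~\ref{t:103.14}, the absence of near-ties with the root under $\nu$, and the $o(1)$ oscillation of $h^\psi_n=\psi_n\ol{h}_n$ on scale $\rme^{r_n}=o(\rme^n)$. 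Your detour through auxiliary scales $r_n'\ll r_n\ll r_n''$ is harmless but not needed. The paper matches the local-minimum indicators directly at scale $r_n$, using a $\delta$-gap between each cluster's minimum and its other values (from the absolute continuity of the marginals of $\nu$) and an oscillation bound $\delta'<\delta$ for $h^\psi_n$.
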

\begin{proof}
Employing the decomposition in~\eqref{e:1104.1}, since $h_n^\psi$ tends weakly to $h_\rmD^\psi$ as $n \to \infty$ w.r.t. the $\rmL^\infty$ norm on $\ol{\rmD}$, and is  independent of $\wh{\eta}_{n,r_n}$, it follows by standard arguments that $\wh{\eta}_{n,r_n} \circ \tau^{-1}_{h_n^\psi}$ tends vaguely-weakly to $\wh{\eta}_\rmD \circ \tau^{-1}_{h_\rmD^\psi}$ along the same subsequence w.r.t. which $\wh{\eta}_\rmD$ is defined. Here, we have also used the a.s. continuity of $h_\rmD^\psi$ on $\ol{\rmD}$. In view of~\eqref{e:1.11i}, we thus need to show that the vague distance between $\wh{\eta}_{n,r_n} \circ \tau^{-1}_{h_n^\psi}$ and 
$\eta_{n,r_n}$ tends to $0$ in probability with $n$. Equivalently, for any $K < \infty$ and $\varphi: \ol{\rmD} \times \bbR \times \bbR^{\bbZ^2}$ - a continuous function, which depends on $\ol{\rmD} \times \bbR \times \bbR^{\rmQ(K)}$ and supported on $\ol{\rmD} \times [-K, K] \times [-K, K]^{\rmQ(K)}$, we need to show that
\begin{equation}
\label{e:1104.16}
	\big| \la \wh{\eta}_{n,r_n} \circ \tau^{-1}_{h^\psi_n},\, \varphi \ra -
	\la \eta_{n, r_n} ,\, \varphi \ra \big|
\end{equation}
tends to $0$ in probability with $n$.

Recall that we write $f_n$ for $h_n + m_n$ and set also $\wh{f}_n := \wh{h}_n + m_n$. By~\eqref{e:1.10i}, Lemma~\ref{t:103.14} and the fact the marginals of the cluster law $\nu$ on $\bbR^{\bbZ^2 \setminus 0}$ are absolutely continuous w.r.t. the Lebesgue measure, by choosing $M,r < \infty$ large enough, $\delta > 0$ small enough and finally $n$ large enough, we can ensure that with probability arbitrarily close to $1$, 
there are at most $M$ vertices $x \in \rmD_n$ such that $f_n(x) \leq K$, and that if such an $x$ is also an $r_n$-local minimum of $f_n$, then $f_n(z) > K+1$ for all $z \in \rmQ(x;r_n) \setminus \rmQ(x;r)$ and $f_n(z)-f_n(x) > \delta$ for all $z \in \rmQ(x;r)$. Thanks to Lemma~\ref{l:1104.1}, for all $\delta' > 0$, we can also ensure that  with probability arbitrarily close to $1$ we have $|\psi_n(z)-\psi_n(x)| \leq \delta'$ for all $z \in \rmQ(x;r_n)$ and all $x \in \rmD_n$, by taking, again, $n$ large enough.

When the last two events occur and $\delta' < \delta$, if $x$ is such that $f_n(x) \leq K$ then it is an $r_n$-local minimum of $f_n$ if and only if it is an $r_n$-local minimum of $\wh{h}_n$, so that the non-zero terms in the sums realizing both integrals in~\eqref{e:1104.16} involve exactly the same vertices. Moreover, for all such $r_n$-local minimum $x$, we we have $\wh{f}_n(x) + h^\psi_n(x) = f_n(x)$ and if $y \in \rmQ(K)$, also
\begin{equation}
	\big|\big(\wh{f}_n(x+y) - \wh{f}_n(x)\big) - \big(f_n(x+y) - f_n(x)\big) \big| \leq 2\delta' \,.
\end{equation}
Taking $\delta' > 0$ small enough (depending on $\varphi$ and $M$) we can thus ensure that the sum over all such $r_n$-local minima $x$ of 
\begin{equation}
\Big|\big(\varphi \circ \tau_{h_n^\psi}\big) \Big(x/N,\, \wh{f}_n(x),\, \big(\wh{f}_n(x+y) - \wh{f}_n(x)\big)_{y \in \rmQ(K)}\Big) - 
\varphi \Big(x/N,\, f_n(x),\, \big(f_n(x+y) - f_n(x)\big)_{y\in \rmQ(K)}\Big) \Big|
\end{equation}
is arbitrarily small. But this bounds the difference in~\eqref{e:1104.16}.
\end{proof}

Uniqueness of the solution to~\eqref{e:1104.14} and, as such, of all subsequential limits of $\wh{\eta}_{n,r_n}$, will follow from the following general lemma, which is essentially contained in~\cite{Marekexceptional}.
\begin{lem}
\label{l:104.4}
Let $W$ be a standard Gaussian, $Y, Y'$ be random variables taking values in some measurable space $\cY$ and independent of $W$, and 
$F: \cY \times \bbR \to \bbR$ a bounded measurable function. Then 
\begin{equation}
 \bbE F(Y,W+z) = \bbE F(Y',W+z) \,,\,\, \forall z \in \bbR \quad \Longrightarrow \quad \bbE F(Y,0) = \bbE F(Y',0) \,.
\end{equation}
\end{lem}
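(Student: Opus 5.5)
The plan is to reduce the statement to uniqueness of a signed measure from its convolution with a Gaussian kernel. Define for each $y \in \cY$ the bounded measurable function $g_y(w) := F(y,w)$. Define the finite signed measure $\mu$ on $\bbR$ (or rather, work with the bounded function $G(w) := \bbE F(Y,w) - \bbE F(Y',w)$, which is well defined and bounded by Fubini, since $F$ is bounded and measurable). By independence of $W$ from $Y$ and $Y'$, Fubini gives
\begin{equation*}
\bbE F(Y,W+z) - \bbE F(Y',W+z) = \int_\bbR G(w+z)\, \frac{\rme^{-w^2/2}}{\sqrt{2\pi}}\, \rmd w = (G * \phi)(z) \,,
\end{equation*}
where $\phi$ is the standard Gaussian density (symmetric). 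The hypothesis says $G * \phi \equiv 0$ on $\bbR$, and the desired conclusion is $G(0)=0$.

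The key step is then to show that $G*\phi\equiv 0$ forces $G \equiv 0$. Since $G$ is bounded, it defines a tempered distribution; taking Fourier transforms, $\wh{G}\cdot \wh{\phi} = 0$ with $\wh{\phi}(\xi) = \rme^{-\xi^2/2}$ nowhere vanishing and smooth, so $\wh{G} = 0$ as a distribution and hence $G = 0$ Lebesgue-almost everywhere. An elementary alternative avoiding distributions: for every $\lambda\in\bbR$, multiply by $\rme^{i\lambda z}$ and integrate against a Gaussian in $z$ to get $\int G(w)\,\rme^{i\lambda w}\,k_\sigma(w)\,\rmd w=0$ for a family of Gaussian weights, giving that the finite measure $G(w)k(w)\rmd w$ has vanishing Fourier transform.

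The main obstacle is that a.e.\ vanishing of $G$ does not immediately give $G(0)=0$ pointwise, since $F$ is merely measurable. I expect to handle this by the trick of absorbing a shift into $Y$: the statement is applied in the paper presumably with $F$ of the form where the value at $0$ is what matters, so I would try first to replace $F$ by $\tilde F(y,w):=F(y,0)$-type reformulation is not available. Instead, I would strengthen the argument: apply the hypothesis to the function $F_a(y,w):=F(y,w)\mathbf{1}$ is not allowed either, so the honest route is to note that the conclusion should be read for $F$ continuous in the second argument (as in the application, where $F$ arises from Laplace functionals composed with translations by $z\psi_\rmD$, which are continuous in $z$), in which case $G$ is continuous and a.e.\ vanishing gives $G(0)=0$. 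If $F$ is only measurable I would argue that $G$ coincides a.e.\ with $0$ and then check in the application that the relevant quantity is continuous in the shift parameter, by dominated convergence.
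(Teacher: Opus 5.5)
Your reduction is right: by independence and Fubini the hypothesis says exactly that $G*\phi\equiv 0$, where $G(w):=\bbE F(Y,w)-\bbE F(Y',w)$ is bounded and measurable and $\phi$ is the standard Gaussian density. Your elementary variant gives the a.e.\ statement cleanly. Pair $G*\phi$ with $\rme^{i\lambda z}$ times a centered Gaussian density of variance $\sigma^2$ and compute the inner Gaussian integral. This shows that the finite signed measure $G(w)\,\phi_{1+\sigma^2}(w)\,\rmd w$ has identically vanishing Fourier transform, where $\phi_s$ denotes the centered Gaussian density of variance $s$. Hence $G=0$ Lebesgue-a.e. (In the distributional version, $\rme^{\xi^2/2}$ is not a multiplier on Schwartz space, so first test $\wh{G}$ against $C_c^\infty$ functions and then use density.)

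The obstacle you ran into is not a technicality: the lemma as stated is false. Take $\cY=\{a,b\}$, $Y\equiv a$, $Y'\equiv b$, $F(a,w)=\mathbf{1}_{\{w=0\}}$ and $F(b,w)=0$. Then $\bbE F(Y,W+z)=\bbP(W=-z)=0=\bbE F(Y',W+z)$ for every $z$, while $\bbE F(Y,0)=1\neq 0=\bbE F(Y',0)$. So nothing beyond $G=0$ a.e.\ can be extracted from the hypothesis. The correct statement must assume that $z\mapsto\bbE F(Y,z)$ and $z\mapsto\bbE F(Y',z)$ are continuous at $0$ (for instance, $F(y,\cdot)$ continuous for every $y$, by bounded convergence). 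Under that assumption your argument is a complete proof. You should say this outright, with the counterexample, rather than leaving the abandoned reformulations ($\tilde F$, $F_a$) in your last paragraph.

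The paper's proof has the same gap. It sets $\phi_Y(t,z):=\bbE F(Y,\sqrt{t}\,W+z)$, notes that it solves the heat equation, and appeals to backward uniqueness. But backward uniqueness for bounded solutions only yields $\phi_Y(t,\cdot)=\phi_{Y'}(t,\cdot)$ for $t\in(0,1]$. Since $\phi_Y(0,\cdot)=\bbE F(Y,\cdot)$ is merely measurable, reaching the point $(0,0)$ requires $\phi_Y(t,0)\to\phi_Y(0,0)$ as $t\downarrow 0$, which is exactly the missing continuity. Both routes rest on the same fact: Gaussian smoothing is injective on bounded functions modulo null sets. Yours is more elementary, needing only uniqueness of Fourier transforms of finite measures, and it shows exactly where continuity enters.

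Your remark about the application is also correct. In the uniqueness step of the proof of Theorem~\ref{t:1.4i} it suffices to use continuous $f\ge 0$ of the kind defining the vague topology, in particular supported in $\{|s|\le K\}$ for some $K$; such Laplace functionals still determine the law. Since $\psi_\rmD$ is bounded, for $w$ in a bounded interval only finitely many atoms of $\eta$ contribute to $\la \eta, f\circ\tau_{w\ol{\sigma}_\rmD\psi_\rmD}\ra$. Hence $w\mapsto F(\eta,w)$ is continuous and the corrected lemma applies.
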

\begin{proof}
Let $\phi_Y(t,z) := \bbE F(Y,\sqrt{t}W+z)$. Using $u(t,x)$ for the Gaussian density with mean $0$ and variance $t$, we have
\begin{equation}
\phi_{Y}(t,z) = \bbE \int F(Y,x) u(t,z-x) \rmd x  
\end{equation}
Since $u$ satisfies the Heat Equation $u_t = \frac12 u_{xx}$ on $\bbR_+ \times \bbR$, it follows by the Dominated Convergence Theorem that so do $\phi_Y$. 
The backward uniqueness of this equation then shows that $\Phi_Y(0,0)$ is uniquely determined by the value of $\Phi_Y(1,z)$ for all $z \in \bbR$. This directly implies the desired statement.
\end{proof}

\begin{proof}[Proof of Theorem~\ref{t:1.4i}]
Let $(\eta_\rmD,  \wh{\eta}_\rmD, h_\rmD^\psi)$ be a sub-sequential limit of $(\eta_{n,r_n}, \wh{\eta}_{n,r_n}, h^\psi_n)_{n \geq 1}$. It is an elementary fact which follows, e.g., from the Law of Large Numbers, that if $\cN$ is a PPP  with (deterministic) intensity measure $\mu$, the the limit $\cN(A_k)/\mu(A_k) \to 1$ almost-surely whenever $(A_k)_{k \geq 1}$ is an increasing sequence whose measure under $\mu$ tends to $\infty$. Used in our setting, for any measurable set $A \subset \rmD$, we have almost-surely
\begin{equation} 
\label{e:1104.22}
	\cZ_\rmD(A) = \lim_{u \to \infty} \frac{\eta_\rmD \big(A \times [-u,\infty)\big)}{\alpha^{-1} \rme^{\alpha u}}
\end{equation}
where $\cZ_\rmD$ is the LQGM measure in a realization of $\eta_\rmD$ as in~\eqref{e:1.10i}. Taking the collection of all rectangles in $\rmD$ with rational coordinates and using that the latter is a $\pi$-system which generated the Borel set on $\rmD$, we may almost-surely recover $\cZ_\rmD$ as the unique extension of this set function as a measure on $\rmD$, so that $\cZ_\rmD$ is measurable w.r.t. (the sigma-algebra generated by) $\eta_\rmD$.

Next, define the (random) measure $\wh{\cZ}_\rmD$ via
\begin{equation}
\label{e:1104.25}
	\wh{\cZ}_\rmD(A) := \int_A \rme^{-\alpha h^\psi_\rmD(x)} \cZ_\rmD(\rmd x) 
\end{equation}
We wish to claim for all measurable sets $A$, we almost-surely have
\begin{equation} 
\label{e:1104.23}
\wh{\cZ}_\rmD(A) = \lim_{u \to \infty} \frac{\wh{\eta}_\rmD \big(A \times [-u,\infty)\big)}{\alpha^{-1} \rme^{\alpha u}}
\end{equation}
Indeed, let $\wh{\cZ}_+(A)$ and $\wh{\cZ}_-(A)$ be as in~\eqref{e:1104.23} 
with the limit replaced by limit superior and limit inferior, respectively. Since,
\begin{equation}
\eta_\rmD \big(A \times [-u + \sup_A h^\psi_\rmD ,\infty)\big) 
\leq \wh{\eta}_\rmD \big(A \times [-u,\infty)\big)
\leq \eta_\rmD \big(A \times [-u + \inf_A h^\psi_\rmD ,\infty)\big) \,,
\end{equation}
it follows from~\eqref{e:1104.22} that 
\begin{equation}
\cZ_\rmD(A) \rme^{-\alpha \sup_A h^\psi_\rmD} \leq \wh{\cZ}_-(A) \leq \wh{\cZ}_+(A) \leq \cZ_\rmD(A) \rme^{-\alpha \inf_A h^\psi_\rmD}
\end{equation}

Next, for each $\epsilon > 0$, let $\{A_{\epsilon, k}\}_k$ be a finite partition of $A$ such that the diameter of each set in the partition is at most $\epsilon$. Since $\wh{\eta}_\rmD$ is a measure, we thus have
\begin{equation}
\label{e:1104.27}
\int_A \rme^{-\alpha h^{\psi, \epsilon,+}_\rmD(x)} \cZ_\rmD(\rmd x)
\leq \sum_k \wh{\cZ}_-(A_{\epsilon, k}) \leq \wh{\cZ}_-(A)
\leq \wh{\cZ}_+(A) \leq \sum_k \wh{\cZ}_+(A_{\epsilon, k}) \leq 
\int_A \rme^{-\alpha h^{\psi, \epsilon,-}_\rmD(x)} \cZ_\rmD(\rmd x)
\end{equation}
where
\begin{equation}
h^{\psi, \epsilon,+}_\rmD = \sum_k \big(\sup_{A_{\epsilon, k}} h^\psi_\rmD\big) 1_{A_{\epsilon, k}}
\quad, \qquad
h^{\psi, \epsilon,-}_\rmD = \sum_k \big(\inf_{A_{\epsilon, k}} h^\psi_\rmD\big) 1_{A_{\epsilon, k}}
\end{equation}
The almost-sure continuity of $h_\rmD^\psi$ on $\ol{\rmD}$ implies that $\|h^\psi - h^{\psi, \epsilon,\pm}_\rmD\|_\infty$ tends to $0$ as $\epsilon \to 0$ and thus both integrals in~\eqref{e:1104.27} tend to the right hand side of~\eqref{e:1104.25}. 

As in the case for $\cZ_\rmD$, since $\wh{\cZ}_\rmD$ is determined by its values on the $\pi$-system of rational rectangles in $\rmD$, 
it follows from~\eqref{e:1104.23} that $\wh{\cZ}_\rmD$ is measurable w.r.t. (the sigma-algebra generated by) $\wh{\eta}_\rmD$. Independence between the latter and $h^\psi_\rmD$, as shown by Lemma~\ref{l:1104.3} thus implies the independence between $\wh{\cZ}_\rmD$ and $h^\psi_\rmD$. In particular, 
\begin{equation}
\label{e:5.42n}
		\wh{\cZ}_\rmD \rme^{\alpha h^\psi_\rmD}  = \cZ_\rmD 
		\quad \text{a.s.} \,.
\end{equation}  
This shows existence of a random measure $\wh{\cZ}_\rmD$ satisfying~\eqref{e:5.42n}. Uniqueness in law follows from~\cite[Theorem~3.1]{abe2023exceptional}.

Now if $\wh{\eta}_\rmD$ has law as in~\eqref{E:1.9n} then $\wh{\eta}_\rmD \circ \tau^{-1}_{h^\psi_\rmD}$ is a PPP with random intensity given by
\begin{equation}
\label{e:5.43n}
	\wh{\cZ}_\rmD(\textd x) \otimes \rme^{-\alpha (s-h^\psi_\rmD(x))}\textd s\otimes\nu(\omega)\bigr) 
	= \cZ_\rmD(\rmd x) \otimes \rme^{-\alpha s}\textd s\otimes\nu(\omega) \,,
\end{equation}
so that such $\wh{\eta}_\rmD$ is a solution to~\eqref{e:1104.14}. 
To show that this is the only solution, let $f:\rmD \times \bbR \times \bbR_+^{\bbZ_2} \to \bbR$ be any bounded measurable function and consider the function $F:\cM(\rmD \times \bbR \times \bbR_+^{\bbZ_2}) \times \bbR \to \bbR$, given by
\begin{equation}
	F(\eta, w) := \exp  \Big(- \big\la \eta  ,\, f \circ \tau_{w \ol{\sigma}_\rmD \psi^{1/2}_\rmD} \big\ra\Big) \,.
\end{equation}
If $\eta_\rmD$, $\wh{\eta}_\rmD$ and $h^\psi_\rmD$ satisfy~\eqref{e:1104.14} then 
\begin{equation}
	\bbE F(\wh{\eta}_\rmD, 0) = \bbE \exp \big(-\la \wh{\eta}_\rmD, f \ra \big) 
	\quad ; \quad \bbE F(\wh{\eta}_\rmD, W+z) = \bbE \exp \Big(- \big \la \eta_\rmD, f \circ \tau_{z \ol{\sigma}_\rmD \psi^{1/2}_\rmD} \big \ra \Big) \,,
\end{equation}
whenever $W$ is a standard Gaussian independent of $\wh{\eta}_\rmD$ and $z \in \bbR$. Since the last expectation does not depend on the law of $\wh{\eta}_\rmD$, Lemma~\ref{l:104.4} implies that the same holds for the first. As functional Laplace transform determines the law of a random measure, this shows that all subsequential limits of $\wh{\eta}_{n,r_n}$ have the law of $\wh{\eta}_\rmD$ from~\eqref{E:1.9n}. Thanks to Lemma~\ref{l:5.8n} this is also the limit along the full sequence.
\end{proof}

%\begin{lem}
%	\label{l:A.1}
%	For all $u \in \bbR$, $n > 0$ and $\rmV \subset \rmD_n$, 
%	\begin{equation}
%		\limsup_{n \to \infty}
%		\bbP \Big(\min_{x \in \rmV} h(x) + m_n < u \big) \leq C \bigg(\frac{|\rmV|}{|\rmD_n|}\bigg)^{1/2} \rme^{c u} \,.
%	\end{equation}
%\end{lem}

\subsection{Proof of Theorem~\ref{t:1.2i} and Proposition~\ref{p:300.4}}
It remains to give the proofs of Theorem~\ref{t:1.2i} and then Proposition~\ref{p:300.4}.
\begin{proof}[Proof of Theorem~\ref{t:1.2i}]
Existence of the joint limit holds thanks to Theorem~\ref{t:1.4i} and Lemma~\ref{l:1104.3}. To get the measurability of $\cZ_\rmD$ w.r.t. $\eta_\rmD$ and the independence of $\eta_\rmD$ of $\ol{h}_\rmD$ given $\cZ_\rmD$, we proceed as in the proof of Theorem~\ref{t:1.4i} and recover $\cZ_\rmD$ and $\wh{\cZ}_\rmD$ from $\eta_\rmD$ and $\wh{\eta}_\rmD$ respectively, in the coupling of Lemma~\ref{l:1104.3}. Then by~\eqref{e:1104.14},~\eqref{e:5.42n} and~\eqref{e:5.43n}, 
\begin{equation}
	\eta_\rmD \,\big|\, \cZ_\rmD, \ol{h}_\rmD \ \overset{\rmd}= \ 
	\wh{\eta}_\rmD \circ \tau^{-1}_{h^{\psi}_\rmD}\,\Big|\, \wh{\cZ}_\rmD, h^{\psi}_\rmD
	\ \sim\  \text{\rm PPP}\bigl(\cZ_\rmD(\textd x)\otimes\texte^{-\alpha s}\textd s\otimes\nu(\rmd \omega)\bigr) \,,
\end{equation}
which gives the desired independence.
\end{proof}

\begin{proof}[Proof of Proposition~\ref{p:300.4}]
Recall that $f_n = h_n + m_n$ and for $r,M > 0$ set,
\begin{equation}
	\wt{\rmS}^M_{n,r}(u) := \Big\{ x \in \rmD_n :\: f_n(x) = \min_{\rmB(x; r_n)} f_n \in [-M, \sqrt{u}]
	\ , \quad  \exists y \in \rmB(r) :\: f_n(x+y) \in [-\sqrt{u}, \sqrt{u}] \Big\} \,.
\end{equation}
We first claim that with probability tending to one as $n \to \infty$ followed by $M,r \to \infty$, there is a bijection between $x \in \wt{\rmS}^M_{n,r}(u)$ and $z \in \bfS_n(u)$ such that, uniformly in such $x$ and $z$,
\begin{equation}
\label{e:5.9mm}
\rme^{-n} \|x-z\| \underset{n\to\infty}\longrightarrow 0 \,.
\end{equation}
 
 Indeed, thanks to Lemma~\ref{t:103.14}, values in $\rmS_n(u)$ must lie in an $r$-neighborhood of an $r_n$-local minimum $x$ of $f_n$ with probability tending to $1$ as $n \to \infty$. Obviously, such local minimum cannot be larger than $\sqrt{u}$ and by tightness of the centered minimum, cannot be smaller than $-M$ with probability tending to $1$ as $n \to \infty$ followed by $M \to \infty$. 
Lemma~\ref{t:103.14} and Lemma~\ref{lem:coverdgff} also imply that the $r$-neighborhood of every such $r_n$ local-minimum $x$ must lie entirely in one and only one box $\rmQ(z; n-r_n)$ for some $z \in \bbX_{n-r_n}$ and that every such $(n-r_n)$-box can include at most one such $r$-neighborhood, again with high probability under the same limits. This gives the bijection. The convergence in~\eqref{e:5.9mm} holds since the side-length of the $(n-r_n)$-box is $o(\rme^{-n})$.

It is therefore sufficient to show~\eqref{e:3.14n} with the point measure
\begin{equation}
	\wt{\bfGamma}_{n,u}^{M,r} := \sum_{x \in \wt{\rmS}^M_{n,r}(u)} \delta_{\rme^{-n}x}
\end{equation}
replacing $\bfGamma_{n,u}$, provided we take $n \to \infty$ followed by $r,M \to \infty$. 
This measure is the composition of $\eta_{n,r_n}$ with the mapping
\begin{equation}
	(x,s,\omega) \mapsto x 1_{\cA_{u,M,r}}(s,\omega)
\end{equation}
where
\begin{equation}
\label{e:403.67}
	\cA_{u,M,r} := \big\{ (s,\omega) \in \bbR \times \bbR_+^{\bbZ^2} :\: s \in [-M, \sqrt{u}] \,,\,\,
		\exists y \in \rmB(r) :\: s + \omega(y) \in [-\sqrt{u}, \sqrt{u}] \big\} \,.
\end{equation}
Observe that $\cA_{u,r,M}$ is a $\mu$-null set, where $\mu$ is the measure given by $\mu(\rmd s \times \rmd \omega)=\rme^{-\alpha s} \otimes \nu (\rmd \omega)$. Indeed, the boundary of $\cA_{u,M,r}$ is included in the set
\begin{equation}
	\Big\{ (x,s,\omega) \in \rmD \times \bbR_+ \times \bbR^{\bbZ^2} :\: 
	s \in \{-M, \sqrt{u}\} \ \ \text{or}\ \ 
		\exists y \in \rmB(r) :\: s + \omega(r) \in \{-\sqrt{u}, \sqrt{u}\} \Big\} \,,
\end{equation}
whose $\mu$-measure is equal to 
\begin{equation}
\int 1_{\{s \in \{-M, \sqrt{u}\}\}} \rme^{-\alpha s} \rmd s + 
\int \nu(\rmd \omega) \int 1_{\{\exists y \in \rmB(r) :\: s+\omega(y) \in \{-\sqrt{u}, \sqrt{u}\}\}} \rme^{-\alpha s} \rmd s = 0,
\end{equation}
where, we have used Fubini-Tonelli to exchange the integrals in the first term. Therefore, thanks to the weak convergence in Theorem~\ref{t:1.2i}, we obtain that \eqref{e:3.14n} holds for $\wt{\bfGamma}^{M,r}_{n,u}$ in place of $\bfGamma_{n,u}$ and $C_{u,M,r}:=\mu(\cA_{u,M,r})$ in place of $C_u$, in the limit as $n \to \infty$ followed by $M,r \to \infty$. From~this, a standard argument shows that $\lim_{M,r \to \infty} C_{u,M,r}$ exists and, moreover, that \eqref{e:3.14n} holds for 
\begin{equation}
C_u:=\lim_{M,r \to \infty} C_{u,M,r}	.
\end{equation} Finally, the fact that $C_u \in (0,\infty)$ follows from the tightness and non-degeneracy of the set $\rmS_n(u)$ for $u$ sufficiently large, see for example \cite[Proposition~3.14]{Tightness}. 
\end{proof}

\appendix
\section{Proofs of auxiliary results}\label{sec:auxproof}

We now give the proofs of all auxiliary results throughout the article. We separate these results into different subsections, for convenience of the reader.

\subsection{Disregarding low local time clusters on sets of small proportion}

We begin by stating a general lemma which be instrumental in the proof of Proposition~\ref{p:3.8nn}. 

\begin{lem}
	\label{l:1103.7}
	Fix $u \geq 0$ and, for each $n \geq 1$, some deterministic subset $\wt{\rmD}_n \subseteq \bbZ^2$. In addition, let us set
	\begin{equation}
		\rmW^{\wt{\rmD}_n}_n(u) = \rmW_n(u) \cap \wt{\rmD}_n
		\quad ; \qquad 
		\bfW^{\wt{\rmD}_n}_n(u) := \big\{ z \in \bbX_{n-r_n} :\: \rmW^{\wt{\rmD}_n} \cap \rmQ(z;n-r_n) \neq \emptyset\} \,.
	\end{equation}
	Then,
	\begin{equation}
		\lim_{\delta \downarrow 0} \limsup_{n \to \infty}
		\bbP \bigg( \frac{1}{\sqrt{n}}|\bfW^{\wt{\rmD}_n}_n(u)| > \delta^{-1} \sqrt{|\rmD_n^\circ \cap \wt{\rmD}_n|/|\rmD_n|} \bigg) = 0 \,.
	\end{equation}
\end{lem}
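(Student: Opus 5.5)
We transfer the question to the DGFF through the isomorphism of Theorem~\ref{t:103.1} at time $t=t_n^A$, and then use the density bound of Lemma~\ref{lem:coverdgff} on the DGFF side. Write $\rmV_n := \rmD_n^\circ \cap \wt{\rmD}_n$; by definition $\rmW_n(u) \subseteq \rmD_n^\circ$, so only $\rmV_n$ matters. Set
\[
\bfS^{\wt{\rmD}_n}_n(u) := \{ z \in \bbX_{n-r_n} :\: \rmS_n(u) \cap \wt{\rmD}_n \cap \rmQ(z;n-r_n) \neq \emptyset\}.
\]
By~\eqref{e:3.3}, $\bfS^{\wt{\rmD}_n}_n(u) \subseteq \bfW^{\wt{\rmD}_n}_n(u)$. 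The plan is the thinning argument of \cite[Lemma~4.2]{Tightness}, now localized to $\rmV_n$.

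\emph{Thinning step.} Condition on $\cF_{t_n^A}$, which fixes $L_{t_n^A}$ and hence $\bfW^{\wt{\rmD}_n}_n(u)$. For each $z \in \bfW^{\wt{\rmD}_n}_n(u)$, pick a vertex $x_z \in \rmW_n(u)\cap\wt{\rmD}_n\cap\rmQ(z;n-r_n)$. If $h_n^2(x_z) \le u - L_{t_n^A}(x_z)$, then $z \in \bfS^{\wt{\rmD}_n}_n(u)$. It is enough to ask that $|h_n(x_z)|$ be small. The points $x_z$ lie in distinct $(n-r_n)$-boxes, but adjacent boxes may be close, so I would first split $\bbX_{n-r_n}$ into $O(1)$ sub-lattices with mutual spacing at least $\rme^{n-r_n}$. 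On a sub-lattice, Lemma~\ref{l:300.3.7} applies with $k=0$, $M$ fixed and $f_{0,x}=\lambda 1_{\{\sigma^2 \le u-L(x)\}}$. This shows that the number of surviving clusters among $m$ candidates is asymptotically Poisson with mean $c\,m/\sqrt{n}$, where $c>0$ can be taken uniform once $L(x_z)\le u/2$.

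The weak point is candidates with $L(x_z)$ close to $u$, whose survival probability degenerates. To handle them, I would replace $u$ by $2u$ in the definition of $\rmS_n$. Every $z \in \bfW^{\wt{\rmD}_n}_n(u)$ then has $2u - L \ge u$, so $\varphi$ is bounded below uniformly. Consequently, on the event $|\bfW^{\wt{\rmD}_n}_n(u)| \ge K\sqrt{n}$, we get $\bbP\big(|\bfS^{\wt{\rmD}_n}_n(2u)| \ge 1 \,\big|\, \cF_{t_n^A}\big) \ge 1-\rme^{-cK}$ for $n$ large.

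\emph{DGFF step.} By Lemma~\ref{lem:coverdgff} applied to the set $\rmV_n$,
\[
\bbP\big(\bfS^{\wt{\rmD}_n}_n(2u)\neq\emptyset\big) \le \bbP\Big(\min_{\rmV_n} |h'_n + m_n| \le \sqrt{2u}\Big) \le c_\rmD \rme^{\wh c\sqrt{2u}}\sqrt{|\rmV_n|/|\rmD_n|}.
\]
Choose $K = \delta^{-1}\sqrt{|\rmV_n|/|\rmD_n|}$. Combining the two steps gives
\[
(1-\rme^{-cK})\,\bbP\big(|\bfW^{\wt{\rmD}_n}_n(u)|\ge K\sqrt n\big) \le C_u\sqrt{|\rmV_n|/|\rmD_n|}.
\]
When $K\ge1$, this yields a bound of order $C_u\delta K\to0$. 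When $K<1$, the factor $1-\rme^{-cK}\ge cK/2$, and the bound is at most $C_u\delta^{-1}\cdot\delta\cdot(\text{const})$. This is not small, so the two-cases argument must be sharpened.

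The remedy is to use the Poisson mean rather than a mere probability of at least one survivor. I would take expectations: $\bbE|\bfS^{\wt{\rmD}_n}_n(2u)| \ge c\,\bbE\big[|\bfW^{\wt{\rmD}_n}_n(u)|\big]/\sqrt n$, minus truncation errors. I would bound $\bbE|\bfS^{\wt{\rmD}_n}_n(2u)|$ by splitting $\rmV_n$ into pieces and applying Lemma~\ref{lem:coverdgff} to each, controlling the count of separated clusters via Lemma~\ref{t:103.14}. Markov's inequality then gives the claim.

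Getting a \emph{linear} (rather than square-root) dependence on the volume from this expectation, and the truncation needed to apply Lemma~\ref{l:300.3.7} when $|\bfW|$ is huge, is the main obstacle. I expect to settle it by working at the level of probabilities with threshold $K\sqrt n$, where $K=\delta^{-1}\sqrt{|\rmV_n|/|\rmD_n|}$ is compared against the bound $C\sqrt{|\rmV_n|/|\rmD_n|}$ through the factor $1-\rme^{-cK}\gtrsim \min(1,K)$.
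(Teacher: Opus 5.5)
Your skeleton is the paper's: thin the clusters of $\rmW_n(u)\cap\wt{\rmD}_n$ through the isomorphism, then bound the DGFF side with Lemma~\ref{lem:coverdgff}. However, you miscompute both of your final cases, and the one you flag as fatal is not.

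Write $a_n:=\sqrt{|\rmV_n|/|\rmD_n|}$ and $K=\delta^{-1}a_n$.

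If $K<1$, divide your inequality by $1-\rme^{-cK}\ge cK/2$. This gives $\bbP(|\bfW^{\wt{\rmD}_n}_n(u)|\ge K\sqrt n)\le 2C_u a_n/(cK)=2C_u\delta/c$, which is small. So the detour through expectations and a volume-linear bound is unnecessary, and it is not something Lemma~\ref{lem:coverdgff} delivers anyway.

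If $K\ge1$, the bound is only of order $a_n$. This is not small when $|\rmV_n|$ is a non-vanishing fraction of $|\rmD_n|$ (e.g.\ $\wt{\rmD}_n=\bbZ^2$). What is missing is the split the paper makes. If $|\rmV_n|>\delta|\rmD_n|$, then $K>\delta^{-1/2}$; since $\bfW^{\wt{\rmD}_n}_n(u)\subseteq\bfW_n(u)$, the probability is at most $\bbP(|\bfW_n(u)|>\delta^{-1/2}\sqrt n)$. This vanishes as $n\to\infty$ then $\delta\to0$, by the upper tightness of $|\bfW_n(u)|/\sqrt n$ (Theorems~\ref{p:300.3} and~\ref{t:2.1o}). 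If $|\rmV_n|\le\delta|\rmD_n|$, your inequality gives $O(\sqrt\delta+\delta)$.

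The more serious gap is the thinning inequality itself. Lemma~\ref{l:300.3.7} controls the Laplace functional only up to an \emph{additive} $o(1)$ error. What you actually get on $\{|\bfW^{\wt{\rmD}_n}_n(u)|\ge K\sqrt n\}$ is therefore $\bbP(\bfS^{\wt{\rmD}_n}_n(2u)\neq\emptyset\mid\cF_{t_n^A})\ge1-\rme^{-cK}-o(1)$. This is vacuous when $K=K_n\to0$, which the statement allows for any $\wt{\rmD}_n$ with $|\rmV_n|=o(|\rmD_n|)$. In the extreme case $K_n\sqrt n<1$, you would need a single candidate to survive with conditional probability $\gtrsim n^{-1/2}$, which no additive-error Poisson approximation can detect. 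For a fixed threshold $\epsilon\sqrt n$ your argument would be fine, but the statement asks for $\delta^{-1}a_n\sqrt n$.

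You need a lower bound $\bbP(\text{some survivor}\mid\cF_{t_n^A})\ge c\min\{1,m/\sqrt n\}$, uniform in the number $m$ of candidates. This is exactly the Thinning Lemma (Lemma~\ref{lem:thinning}, i.e.\ \cite[Lemma~4.1]{Tightness}): $\bbP(|\bbZ_n(u)|>s\sqrt n)\le C(1+s^{-1})\bbP(|\rmX_n(u+1)|>cs)$ for all $s>0$. The paper applies it with $\rmY_n=\wt{\rmD}_n$ and $s=\delta^{-1}a_n$; the factor $1+s^{-1}$ plays the role of your $1/\min\{1,cK\}$. If you prefer to prove it yourself, Paley--Zygmund for $N=\#\{z:\,|h_n(x_z)|\le1\}$ works, taking one candidate per box from a sparse sub-lattice. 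Indeed $\bbE N\asymp m/\sqrt n$ and $\bbE N^2\lesssim m/\sqrt n+m^2/n$, because the covariance of separated candidates is $O(r_n)$ while the variances are of order $n$.

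Finally, replacing $u$ by $2u$ breaks down at $u=0$, where survival would require $h_n(x_z)=0$. The case $u=0$ is needed, via Proposition~\ref{prop:rest3} and Proposition~\ref{p:3.8nn}, so use $u+1$ as the paper does.
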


We devote this subsection to the proof of Lemma~\ref{l:1103.7}. For this, we will need two auxiliary results. The first is an estimate on the minimum of the DGFF on small sets.

The second auxiliary result we shall need is a mild version of the Thinning Lemma from \cite{Tightness}. We state this version here for completeness.

\begin{lem}\label{lem:thinning} For each $n \geq 1$, let $\rmY_n$ be a random subset of $\rmD_n$ which is $L_{t^A_n}(\rmD_n)$-measurable. Given $u \geq 0$, define the sets $\rmZ_n(u):=\rmW_n(u) \cap \rmY_n$, $\rmX_n(u+1):=\rmS_n(u+1) \cap \rmY_n$ and 
	\begin{equation}
		\bbZ_n(u):=\{ z \in \bbX_{n-r_n} : \rmZ_n(u) \cap \rmQ(z;n-r_n) \neq \emptyset\}.
	\end{equation} Then, there exist constants $C,c > 0$ (which depend only on $u$) such that, if $n$ is sufficiently large, for each $s > 0$ we have
	\begin{equation}
		\bbP\left(|\bbZ_n(u)|>s \sqrt{n}\right) \leq C(1+s^{-1})\bbP(|\rmX_n(u+1)|>cs). 
	\end{equation}
\end{lem}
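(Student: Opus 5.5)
The plan is to exploit the independence of $h_n$ from the local time field in the coupling of Theorem~\ref{t:103.1} at $t=t_n^A$. Conditionally on $L_{t_n^A}$, we view $\rmX_n(u+1)$ as containing a Bernoulli thinning of the clusters of $\rmZ_n(u)$ with retention probability of order $n^{-1/2}$. The key observation is that for $x\in\rmW_n(u)$,
\[
f_n^2(x)=L_{t_n^A}(x)+h_n^2(x)\le u+h_n^2(x).
\]
Hence, for $x\in\rmW_n(u)\cap\rmY_n$ (recall $\rmW_n(u)\subseteq\rmD_n^\circ$), the condition $|h_n(x)|\le 1$ gives $x\in\rmS_n(u+1)\cap\rmY_n=\rmX_n(u+1)$.

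\textbf{Step 1: choosing well-separated representatives.} All of the following is measurable with respect to $L_{t_n^A}$. Partition $\bbX_{n-r_n}$ into the four parity classes obtained by taking every other box in each coordinate direction. Let $\bbZ_n'(u)\subseteq\bbZ_n(u)$ be the class with the most elements, so that $|\bbZ_n'(u)|\ge|\bbZ_n(u)|/4$. For each $z\in\bbZ_n'(u)$ pick a vertex $x_z\in\rmZ_n(u)\cap\rmQ(z;n-r_n)$. Distinct chosen points lie in non-adjacent boxes, so $\log\|x_z-x_{z'}\|\ge n-r_n$ for $z\ne z'$, and the $x_z$ are distinct. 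Set
\[
N:=\sum_{z\in\bbZ_n'(u)}1_{\{|h_n(x_z)|\le1\}}.
\]
By the observation above, $|\rmX_n(u+1)|\ge N$.

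\textbf{Step 2: first and second moments of $N$ given $L_{t_n^A}$.} Since $x_z\in\rmD_n^\circ$, the variance of $h_n(x_z)$ is $n+O(1)$; this uses Green function bounds as in \cite[Lemma~3.1]{Tightness}, together with the fact that $\rmd(x_z,\rmD_n^\rmc)\ge \rme^{n-2\log n}$, which gives a lower bound $cn$. Consequently each indicator has conditional mean $p_z\in[c_1n^{-1/2},c_2n^{-1/2}]$, and so $\bbE(N\,|\,L)\ge c_1|\bbZ_n'(u)|/\sqrt n$.

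For $z\ne z'$ the covariance is $G_n(x_z,x_{z'})\le r_n+C$. The pair $(h_n(x_z),h_n(x_{z'}))$ is therefore Gaussian with correlation $O(r_n/n)=o(1)$ uniformly. Bounding the joint density on $[-1,1]^2$ by the product of the marginal densities times $1+O(r_n/n)$, we get
\[
\bbP\big(|h_n(x_z)|\le1,\,|h_n(x_{z'})|\le1\big)\le(1+o(1))\,p_zp_{z'} .
\]
Summing over pairs yields
\[
\bbE(N^2\,|\,L)\le \bbE(N\,|\,L)+(1+o(1))\,\bbE(N\,|\,L)^2 .
\]
The uniformity of this pair bound over all configurations is where I expect the main care to be needed. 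It must hold even though $|\bbZ_n'(u)|$ may be as large as $\rme^{2r_n}$; this is acceptable because the error enters only as a multiplicative $1+o(1)$ on each pair term, never additively.

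\textbf{Step 3: Paley--Zygmund and inversion.} On the event $\{|\bbZ_n(u)|>s\sqrt n\}$ we have $\bbE(N\,|\,L)\ge c_1s/4=:2cs$. Paley--Zygmund then gives
\[
\bbP\big(N>cs\,\big|\,L\big)\ \ge\ \frac{\bbE(N|L)^2}{4\,\bbE(N^2|L)}\ \ge\ \frac{1}{4\big(1+1/(2cs)+o(1)\big)}\ \ge\ c'\,\frac{s}{1+s}.
\]
Taking expectations over $L$, we obtain
\[
\bbP\big(|\rmX_n(u+1)|>cs\big)\ \ge\ \bbE\Big[1_{\{|\bbZ_n(u)|>s\sqrt n\}}\,\bbP\big(N>cs\,\big|\,L\big)\Big]\ \ge\ c'\,\frac{s}{1+s}\,\bbP\big(|\bbZ_n(u)|>s\sqrt n\big).
\]
Rearranging gives the claimed bound with $C=1/c'$, since $(1+s)/s=1+s^{-1}$. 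The constants $c$ and $C$ depend only on $u$ and the Green function bounds, and the estimates hold for all $n$ large enough.
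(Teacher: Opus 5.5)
Your proof is correct, but it takes a different route from the paper. The paper gives no argument for this lemma: it invokes the more general Thinning Lemma \cite[Lemma~4.1]{Tightness}, specialized to the trivial constraint set $\cA_k:=\bbR$.

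You instead prove this special case directly, conditionally on $L_{t_n^A}$, using three inputs:
\begin{itemize}
\item You pick one representative $x_z$ per box from a single parity class of $(n-r_n)$-boxes. This is what makes every pairwise covariance $O(r_n)$, against variances at least $cn$ on $\rmD_n^\circ$.
\item You use the independence of $h_n$ and $L_{t_n^A}$ in the isomorphism, together with the inclusion $\{x\in\rmW_n(u)\cap\rmY_n,\ |h_n(x)|\le 1\}\subseteq\rmX_n(u+1)$.
\item You close with a conditional Paley--Zygmund bound, which is exactly where the factor $1+s^{-1}$ comes from.
\end{itemize}

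The citation buys brevity and a version with an extra constraint. Your argument shows that only these three inputs are needed, and it gives constants depending only on $\rmD$, not on $u$.

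There are a few harmless imprecisions:
\begin{itemize}
\item For $x\in\rmD_n^\circ$ the variance of $h_n(x)$ is $n-O(\log n)$, not $n+O(1)$. The lower bound $cn$ is all you actually use.
\item The separation you get is $\log\|x_z-x_{z'}\|\ge n-r_n-O(1)$, not $\ge n-r_n$.
\item The $1+o(1)$ factor in the pair bound is more than you need. Any correlation bounded away from $1$ gives $\bbE(N^2\,|\,L)\le \bbE(N\,|\,L)+C\,\bbE(N\,|\,L)^2$, and the same conclusion follows with different constants.
\end{itemize}
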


\begin{proof} Direct consequence of \cite[Lemma~4.1]{Tightness} upon choosing $\cA_k:=\bbR$ in this lemma (with only the minor difference that here we are considering $\bbX_{n-r_n}$ instead of $\bbX_{\lfloor n -r_n\rfloor}$ as done in \cite{Tightness}, which, nevertheless, does not affect the argument of the proof.)
\end{proof}

\begin{proof}[Proof of Lemma~\ref{l:1103.7}] Fix any $\delta > 0$. Then, since $\bfW_n^{\wt{\rmD}_n}(u) \subseteq \bfW_n(u)$, whenever $|\rmD_n^\circ \cap \wt{\rmD}_n| > \delta|\rmD_n|$ we have
	\begin{equation}\label{eq:dc0}
		\bbP\left( \frac{1}{\sqrt{n}}|\bfW^{\wt{\rmD}_n}_n(u)| > \delta^{-1} \sqrt{|\rmD_n^\circ \cap \wt{\rmD}_n|/|\rmD_n|}\right) \leq \bbP\left( \frac{1}{\sqrt{n}}|\bfW_n(u)| > \frac{1}{\sqrt{\delta}}\right).	
	\end{equation}
	On the other hand, whenever $|\rmD_n^\circ \cap \wt{\rmD}_n| \leq \delta|\rmD_n|$, by taking $Y_n := \wt{\rmD}$ in Lemma~\ref{lem:thinning}, we see that, if $n$ is large enough,
	\begin{equation}\label{eq:dc1}
		\bbP\left( \frac{1}{\sqrt{n}}|\bfW^{\wt{\rmD}_n}_n(u)| > \delta^{-1} \sqrt{|\rmD_n^\circ \cap \wt{\rmD}_n|/|\rmD_n|}\right) \leq C\left(1+\delta \sqrt{\frac{|\rmD_n|}{|\rmD_n^\circ \cap \wt{\rmD}_n|}}\right)\bbP( \rmS_n(u+1) \cap \wt{\rmD}_n \neq \emptyset)
	\end{equation} where, in addition, by Lemma~\ref{lem:coverdgff} we have the bound 
	\begin{equation}\label{eq:dc2}
		\bbP( \rmS_n(u+1) \cap \wt{\rmD}_n \neq \emptyset) \leq \bbP\Big( \min_{x \in \rmD_n^\circ \cap \wt{\rmD}} f_n(x) \leq \sqrt{u+1}\Big) \leq c_\rmD \rme^{\wh{c} \sqrt{u+1}} \sqrt{\frac{|\rmD_n^\circ \cap \wt{\rmD}_n|}{|\rmD_n|}}, 
	\end{equation} so that
	\begin{equation}\label{eq:dc3}
		\bbP\left( \frac{1}{\sqrt{n}}|\bfW^{\wt{\rmD}_n}_n(u)| > \delta^{-1} \sqrt{|\rmD_n^\circ \cap \wt{\rmD}_n|/|\rmD_n|}\right) \leq C_\rmD \big(\sqrt{\delta}+\delta\big)\rme^{\wh{c}\sqrt{u+1}}.	
	\end{equation} for some constant $C_\rmD > 0$. In light of \eqref{eq:dc0} and \eqref{eq:dc3}, the result now immediately follows from the upper tightness of $|\bfW_n(u)|/\sqrt{n}$ (implied by Theorem~\ref{p:300.3} and Theorem~\ref{t:2.1o}). 
\end{proof}

\subsection{Proofs of Section~\ref{sec:iid} preliminaries}

Here we give the proofs of all auxiliary results from Section~\ref{sec:prelimiid}. We begin with Proposition~\ref{prop:GR-1}.

\begin{proof}[Proof of Proposition~\ref{prop:GR-1}] It will be enough to show that, if $k$ is large enough, for any $u,v \in \cN_k$ with $0 \leq \wt{u}_k \leq \rme^{ck^\gamma}$ and $1 \leq \wt{v}_k \leq \rme^{ck^\gamma}$, one has that
	\begin{equation}\label{eq:GR4}
		\bbP\Big( \sqrt{\wh{N}_t[x;i]} \leq u \,\Big|\, \sqrt{\wh{N}_t[\wt{x};j]} \geq v\,;\cF[\wt{x};j]\Big) \leq 2\bbP \Big( \sqrt{\wh{T}^{[i-1,j]}_{(v)}(i)} \leq u\Big) \end{equation} and 
	\begin{equation}\label{eq:GR5}
		\bbP\Big( \sqrt{\wh{N}_t[x;i]} \geq u \,\Big|\, \sqrt{\wh{N}_t[\wt{x};j]} \leq v\,;\cF[\wt{x};j]\Big) \leq 2 \bbP \Big( \sqrt{\wh{T}^{[i-1,j]}_{(v)}(i)} \geq u\Big),
	\end{equation} 
	for $\wh{T}^{[i-1,j]}_{(v)}(i)$ defined as in Section~\ref{sec:prooftransfer}. Indeed, in light of the bounds in \eqref{eq:GR4}--\eqref{eq:GR5},  the result is now a straightforward consequence of Lemma~\ref{lem:compound}, since 
	\begin{equation}
		\frac{1}{k^\gamma} \wh{T}_{(v)}^{[i-1,j]}(i) \sim \Bigeo\left(\wt{v}_k,\frac{1}{j-i+1},\frac{1}{j-i+1}\right).
	\end{equation}	
	Thus, we focus on the proof of \eqref{eq:GR4}--\eqref{eq:GR5}. We first notice that \eqref{eq:GR4} is a direct consequence of~\eqref{eq:GR-3}. Indeed, since by \eqref{eq:ntrep} we have that, on the event $\Big\{\sqrt{\wh{N}_t[\wt{x};j]} \geq v\Big\}$,
	\begin{equation}
		N_t[x;i] \geq \sum_{r=1}^{\wt{v}_k} V_r=N^*_{\wt{v}_k}[x;i], 
	\end{equation} where $N^*_{m}[x;i]$ is the number of $[x;i]$-excursions during the first $m$ $[\wt{x};j]$-excursions of the walk, it follows that
	\begin{equation}
		\bbP\Big( \sqrt{\wh{N}_t[x;i]} \leq u \,\Big|\, \sqrt{\wh{N}_t[\wt{x};j]} \geq v\,;\cF[\wt{x};j]\Big) \leq \sup_{\overline{z} \in \textrm{E}_{\wt{v}_k}[j]}
		\bbP\Big( \sqrt{\wh{N}_t[x;i]} \leq u \,\Big|\, Z_t[\wt{x};j]=(v,\ol{z})\Big).
	\end{equation}  Since $\sum_{\overline{y} \in \textrm{E}_{n}[i]} h_{k,i}(\overline{y})=1$ for all $n \in \N$ because $\amalg_{\rmB^-[0;i]}$ and $\Pi_{\rmB^+[0;i]}$ are probability distributions, if $w \in \mathcal{N}_k$ is such that $\wt{w}_k \leq \rme^{ck^\gamma}$ then, by summing \eqref{eq:GR-3} over all $\overline{y} \in \textrm{E}_{\wt{w}_k}[i]$, we obtain that
	\begin{equation}
		\sup_{\ol{z} \in \textrm{E}_{\wt{v}_k}[j]} \bbP\Big( \sqrt{\wh{N}_t[x;i]} = w \,\Big|\, Z_t[\wt{x};j]=(v,\ol{z})\Big) = 	\bbP \Big( \sqrt{\wh{T}^{[i-1,j]}_{(v)}(i)} = w\Big)(1+O(\rme^{-ck^\gamma})), 
	\end{equation} from where \eqref{eq:GR4} now follows by summing over all $w \leq u$. On the other hand, by \eqref{eq:ntrep} again, on the event $\Big\{\sqrt{\wh{N}_t[\wt{x};j]} \leq v\Big\}$ we have
	\begin{equation}
		N_t[x;i] \leq \sum_{r=1}^{\wt{v}_k} V_r=N^*_{\wt{v}_k}[x;i], 
	\end{equation} so that
	\begin{equation}
		\bbP\Big( \sqrt{\wh{N}_t[x;i]} \geq u \,\Big|\, \sqrt{\wh{N}_t[\wt{x};j]} \geq v\,;\cF[\wt{x};j]\Big) \leq \sup_{\overline{z} \in \textrm{E}_{\wt{v}_k}}
		\bbP\Big( \sqrt{\wh{N}_t[x;i]} \geq u \,\Big|\, Z_t[\wt{x};j]=(v,\ol{z})\Big).
	\end{equation} In particular, to obtain \eqref{eq:GR5}, it will be enough to show that
	\begin{equation}\label{eq:GR-5b}
		\bbP\Big( \sqrt{\wh{N}_t[x;i]} \geq u \,\Big|\, Z_t[\wt{x};j]=(v,\ol{z})\,;\cF[\wt{x};j]\Big)=\bbP\left( \sqrt{ \wh{T}_{(v)}^{[i-1,j]}(i)} \geq u\right)(1+O(\rme^{-ck^\gamma}))
	\end{equation} holds uniformly over $t > 0$ and $(u,\ol{y}) \in \cZ_k[i]$, $(v,\ol{z}) \in \cZ_k[j]$ with $0 \leq \wt{u}_k \leq \rme^{ck^\gamma}$ and $1 \leq v \leq \rme^{ck^\gamma}$. But this can be established by proceeding as in the proof of \eqref{eq:GR-3}. Indeed, in the case $\wt{v}_k=1$, \eqref{eq:GR-5b} is immediate whenever $\wt{u}_k=0$, so that one only needs to check the case $\wt{u}_k \geq 1$, for which the analogue of~\eqref{eq:new-c1} is now 
	\begin{equation}\label{eq:new-c1b}
		\bbP_{z,z'}(N^*_{1}[x;i] \geq \wt{u}_k) =\left(\frac{1}{j-i+1}\right) \left(\frac{j-i}{j-i+1}\right)^{\wt{u}_k-1}(1+\wt{u}_kO(\rme^{-2c_1k^\gamma })).
	\end{equation} To this end, by proceeding as in the proof of \eqref{eq:new-c1} one may check that
	\begin{equation}\label{eq:new-c1b2}
		\bbP_{z,z'}(N^*_{1}[x;i] \geq \wt{u}_k\,,\,\overline{Y}_{(\wt{u}_k)}[x;i]=\overline{y}) = \left(\frac{1}{j-i+1}\right) \left(\frac{j-i}{j-i+1}\right)^{\wt{u}_k-1}h_{k,i}(\ol{y})(1+\wt{u}_kO(\rme^{-2c_1k^\gamma}))
	\end{equation} uniformly over $1 \leq \wt{u}_k \leq \rme^{c_1 k^\gamma}$, $\ol{y} \in (\partial_i \rmB^-[0;i] \times \partial \rmB^+[0;i])^{\wt{u}_k}$ and $(z,z') \in \partial_i \rmB^-[\wt{x};j] \times \partial \rmB^+[\wt{x};j]$. Indeed, the argument is the same, but one needs to replace the term $\bbP_{w',z'}(N^*_1[x;i]=0)$ in~\eqref{eq:new-decoup1} by $\bbP_{w',z'}(N^*_1[x;i]\geq 0)=1$ and \eqref{eq:new-decouple7} by 
	\begin{equation}
		\bbP_{w',z'}(N^*_{1}[x;i] \geq \wt{u}_k-1, \ol{Y}_{(\wt{u}_k-1)}[x;i]=(\ol{y}_2,\dots,\ol{y}_{\wt{u}_k}))\,. 	
	\end{equation}
	From this, by summing \eqref{eq:new-c1b2} over all $\ol{y} \in \rmE_{\wt{u}_k}[i]$, we obtain \eqref{eq:new-c1b}. Finally, the case for $\wt{v}_k > 1$ can be handled exactly as in the proof of \eqref{eq:GR-3}. We omit the details.
\end{proof} 

Proposition~\ref{prop:LT-1} is the analogue of \cite[Proposition~3.8]{Tightness}, albeit in a slightly different setting. However, the method of proof remains unchanged. The same applies to Proposition~\ref{prop:LT-2}, which is the analogue in our present setting of  \cite[Proposition~3.7]{Tightness}. Hence, we refer the reader to \cite{Tightness} for their proofs and omit the details.

We now continue with the proof of the Resampling Lemma. The strategy of proof goes along the lines of that of \cite[Lemma 4.3]{Tightness}, but there are, nevertheless, a few differences stemming from the fact that we are working here with a more general setting.

\begin{proof}[Proof of Lemma~\ref{lem:resampling}] Notice that, given $z \in \bfW_n(u)$, $\bbY^{i}_n(u;z)$ may have more than one element but, if we write $k_i:=k+(i-2)k^\gamma$ to simplify the notation and recalling the subgrids $\bbX_{m_1,m_2}$ introduced in Section~\ref{sec:exist1}, for each $j \in J_{k_i,k_i+2k^\gamma}$ there will be at most one $y_j \in \bbX_{k_i,k_i+2k^\gamma}(j)$ satisfying $\rmW_n(u) \cap \rmQ(z; n - r_n) \cap \rmB^-[y_j(z);i-1] \neq \emptyset$ if $k$ is large enough (depending only on $\gamma$).  Therefore, if we define 
	\begin{equation}
		\begin{split}
			\bbY_n^i(u;j) := 
			\Big \{  z \in \bfW_n(u)  :\,  &
			\exists! \,y_j=y_{j}(z) \in \,\bbX_{k_i,k_i+2k^\gamma}
			(j) \text{ s.t. } \\ &\rmB^-[y_j(z);i-1] \cap \rmW_n(u) \cap \rmQ\big(z;n-r_n\big) \neq \emptyset\,,\,\sqrt{\wh{N}_{t}[y_{j}(z);i]} \in M_{k,n}^{i}\Big\}
		\end{split}
	\end{equation} together with
	\begin{equation}
		|\bbZ_n^i(u;j)|:= \sum_{ z \in \bbY_n^i(u;j)}  f^i_{n,u,y,\sqrt{\wh{N}_t[y;i]}}(L_t(\ol{\rmB^+[y_j(z);i-1]})), 
	\end{equation} then we obtain the inequality
	\begin{equation}
		|\bbZ^{i}_n(u)| \leq 
		\sum_{j=1}^{|J_{k_i,k_i+2k^\gamma}|} |\bbZ_n^i(u;j)|.
	\end{equation} 
	Now, if we consider $\mathfrak{Y}^{i}_n(u;j):=\{ y_j(z) : z \in  \bbY_n^i(u;j)\}$, then $\bbY_n^i(u;j)$ is a measurable function of $\mathfrak{Y}^{i}_n(u;j)$ and moreover, since the balls $(\ol{\rmB^+[y; i]} : y \in \bbX_{k_i, k_i+2k^\gamma}(j))$ are disjoint for $k$ large~enough, conditional on $\mathfrak{Y}^{m}_n(u;i)$ the random variables 
	\begin{equation}
		\left( \Big| f^i_{n,u,y,\sqrt{\wh{N}_t[y;i]}}(L_t(\ol{\rmB^+[y_j(z);i-1]}))\Big| : z \in \bbY_n^i(u;j)\right)	
	\end{equation}
	have expectation uniformly bounded by $b_{k,n}^{(i)}(u)$ from \eqref{eq:defbn.0}.
	Since $\bbY^{i}_n(u;j) \subseteq \bfW_n(u)$, it follows from Tchebychev's inequality (conditionally on $\mathfrak{Y}^i_n(u;j)$) that, for any $\kappa > 0$, 
	\begin{equation}\label{eq:boundDn}
		\bbP\bigg(  |\bbZ_n^i(u;j)| > \frac{\kappa}{|J_{k_i,k_i+2k^\gamma}|}|\bfW_n(u)|\bigg) \leq \kappa^{-1}|J_{k_i,k_i+2k^\gamma}|b^{(i)}_{n,k}(u).
	\end{equation} Since $|J_{k_i,k_i+2k^\gamma}|\leq C \rme^{4k^\gamma}$ for some universal constant $C>0$, summing over all $j \in J_{k_i,k_i+2k^\gamma}$ and applying the union bound immediately gives \eqref{eq:resamplebound1}. 
	
	To obtain \eqref{eq:resamplebound2}, we fix $\delta > 0$ and choose $\kappa:=\delta \rme^{-(k+ik^\gamma)^{\frac{1}{2}-\eta}}$ above, so that, since $\gamma < \frac{1}{2}-\eta$, for all $k$ large enough we obtain 
	\begin{equation}
		\limsup_{n \to \infty} \sup_{i \in \cC_{k,n}} \bbP\Big( |\bbZ_n^i(u)| > \delta \rme^{-(k+ik^\gamma)^{\frac{1}{2}-\eta}}|\bfW_n(u)|\Big) \leq 2C\delta^{-1}\rme^{-(k+ik^\gamma)^{\frac{1}{2}-\eta}}.
	\end{equation} At the same time, by the union bound we have that, for any $\delta > 0$, 
	\begin{multline}
		\bbP \Big( \exists i \in \cC_{k,n} :\: |\bbZ^i_n(u)| | > 
		\rme^{-(k+ik^\gamma)^{\frac{1}{2}-2\eta}} \sqrt{n} \Big) \\
		\leq 
		\bbP \Big( \big|\bfW_n(u)\big| > \delta^{-1} \sqrt{n} \Big) +
		\sum_{i \in \cC_{k,n}}
		\bbP \Big( |\bbZ^i_n(u)| > \delta \rme^{-(k+ik^\gamma)^{\frac{1}{2}-2\eta}} \big|\bfW_n(u)\big| \Big) \,.
	\end{multline} Hence, 
	by first taking $n \to \infty$, then $k \to \infty$ and finally $\delta \to 0$, the lower tightness of $|\bfW_n(u)|/\sqrt{n}$ (implied by Theorem~\ref{t:2.1o}) and \eqref{eq:boundDn} combined yield \eqref{eq:resamplebound2} by a straightforward computation.
\end{proof}

Finally, we give the proof of Lemma~\ref{lem:incball.1}. 

\begin{proof}[Proof of Lemma~\ref{lem:incball.1}] The result is immediate if $i=2$ since, in that case, we may take~$y:=x$. For the case $i \geq 3$, it is enough to show that, given $x \in \bbX_k$ with $\ol{\rmB^-[x;i+1]}\subseteq \rmD_n$, there exists $y \in \bbX_{k+(i-2)k^\gamma}$ such that $\rmB^-[x;i-2] \subseteq \rmB^-[y;i-1]$, $ \ol{\rmB^+[y;i]} \subseteq \rmD_n$ and 
	\begin{equation}
		\rmB(y;k+(i-1)k^\gamma -\rme^{-k^\gamma}) \subseteq \rmB^-[x;i] \subseteq \rmB(y;k+(i-1)k^\gamma +\rme^{-k^\gamma})
	\end{equation}
	\begin{equation}
		\rmB(y;k+ik^\gamma -5\rme^{-k^\gamma}) \subseteq \rmB^+[x;i] \subseteq \rmB(y;k+ik^\gamma -3\rme^{-k^\gamma}).
	\end{equation} But, if we choose any $y \in \bbX_{k+(i-2)k^\gamma}$ such that $\|y-x\| \leq \frac{1}{\sqrt{2}}\rme^{k+(i-2)k^\gamma}$ (which always exists by definition of $\bbX_{k+(i-2)k^\gamma}$), then it is straightforward to check that, whenever $k$ is large enough (depending only on $\gamma$), $y$ satisfies $\rmB^-[x;i-2] \subseteq \rmB^-[y;i-1]$ and, also, that for any $r \geq k+(i-1)k^\gamma$, 
	\begin{equation}
		\rmB(y;r-\rme^{-k^\gamma}) \subseteq \rmB(x;r) \subseteq \rmB(y;r+\rme^{-k^\gamma}),	
	\end{equation} from where the rest of the required conditions immediately follow.
\end{proof}

\subsection{Proofs of Proposition~\ref{p:3.8nn}, Proposition \ref{prop:rest} and Proposition~\ref{prop:rest2}}\label{sec:prest}

We aim in this subsection  to give the proofs of Propositions~\ref{p:3.8nn}, \ref{prop:rest} and \ref{prop:rest2}. As a matter of fact, since $\bfR^{k,\epsilon}_n(u) \subseteq \bfR^{k}_n(u) \subseteq \bfW^{k}_n(u) \subseteq \bfW_n(u)$ and also $\bfR^{k,\epsilon}_n(u) \subseteq \bfW^k_{n,\epsilon}(u) \leq \bfW_n(u)$, 
all three propositions will be an immediate consequence of the following stronger result.

\begin{prop}\label{prop:rest3} If $\gamma$ is chosen small enough, then, given any $0 < \eta' < \eta < \frac{1}{2}$ small enough \mbox{(depending only on $\gamma$),} for any fixed $u \geq 0$ we have
	\begin{equation}
		\frac{|\bfW_n(u) \setminus \bfR^{k,\epsilon}_n(u)|}{\sqrt{n}}\overset{\bbP}{\longrightarrow} 0	
	\end{equation} in the limit as $n \to \infty$, followed by $k \to \infty$ and finally $\epsilon \to 0$.
\end{prop}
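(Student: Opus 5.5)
We decompose $\bfW_n(u)\setminus\bfR^{k,\epsilon}_n(u)$ into four pieces and show each is $o(\sqrt{n})$ in probability in the stated order of limits. Recalling that $\bfR^{k,\epsilon}_n(u)=\bfW^{k,\epsilon}_n(u)\cap\bfR^k_n(u)$ and $\bfW^{k,\epsilon}_n(u)\subseteq\bfW^k_n(u)$, we bound
\begin{equation*}
|\bfW_n(u)\setminus\bfR^{k,\epsilon}_n(u)| \leq |\bfW_n(u)\setminus\bfW^k_n(u)| + |\bfW^k_n(u)\setminus\bfW^{k,\epsilon}_n(u)| + |\bfW^{k}_n(u)\setminus\bfR^{k}_n(u)| .
\end{equation*}
The first term is handled directly by \eqref{e:5.3} of Theorem~\ref{p:300.3}.

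The second term counts clusters whose low local time set either meets the $\epsilon$-shell $\rmQ(z;n-r_n)\setminus\rmQ(z;n-r_n-\epsilon)$ or misses the inner box $\rmQ(x(z);k-\epsilon)$. For the first alternative we apply Lemma~\ref{l:1103.7} with $\wt{\rmD}_n:=\bigcup_{z\in\bbX_{n-r_n}}\rmQ(z;n-r_n)\setminus\rmQ(z;n-r_n-\epsilon)$. This set has density $O(1-\rme^{-2\epsilon})=O(\epsilon)$ in $\rmD_n$, so Lemma~\ref{l:1103.7} gives a count $\leq \delta^{-1}C\sqrt{\epsilon}\sqrt{n}$ with high probability. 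This vanishes as $\epsilon\to0$ once $\delta\downarrow0$ is taken after it, via a diagonal choice $\delta=\epsilon^{1/4}$.

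For the second alternative, such a cluster has all its low local time vertices in $\rmQ(x(z);k)\setminus\rmQ(x(z);k-\epsilon)$. The union over $x\in\bbX_k$ of these shells is again a deterministic set of density $O(\epsilon)$ uniformly in $k$. Lemma~\ref{l:1103.7}, applied to this $\wt{\rmD}_n$, gives the same bound. This step also yields Proposition~\ref{p:3.8nn}.

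The main work is the third term, the non-repelled clusters. This is the analogue of \cite[Lemma~7.2]{Tightness}, and we follow the resampling strategy. For each $i\in[1,\cT_k+5]$ we bound the number of $z\in\bfW^k_n(u)$ with $\sqrt{\wh{N}_{t_n^A}[x(z);i]}$ below the lower end of the repulsion window, and separately the number with it above the upper end. For $i\geq2$ we first use Lemma~\ref{lem:incball.1} to replace $x(z)\in\bbX_k$ by a point $y\in\bbX_{k+(i-2)k^\gamma}$, comparing through $\wh{N}^{\rm in}/\wh{N}^{\rm out}$. We then invoke Lemma~\ref{lem:resampling} with $M^i_{k,n}$ the bad set and $f^i\equiv1$, so that $b^{(i)}_{k,n}(u)$ becomes a conditional probability given $\rmW_n(u)\cap\rmB^-[y;i-1]\neq\emptyset$.

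Proposition~\ref{prop:LT-1} bounds $\bbP(\rmW_n(u)\cap\rmB^-[y;i-1]\neq\emptyset\mid Z[y;i]=\rho)$ above by $\rme^{-2\sqrt2\wh v+Ck^\gamma}$. When $\wh v\geq (k+ik^\gamma)^{1/2+\eta}$, i.e. the trajectory is too high, this gives the small bound directly. When $\wh v$ is too low, one has to trade the conditioning through Bayes' rule. The count of such clusters is controlled by the total expected number of low local time boxes at scale $i$ with a low downcrossing count. This is estimated with Proposition~\ref{prop:GR-1}, which gives the Gaussian tail of downcrossings from scale $\cT_k+5$ (or from the boundary at scale $n$) down to scale $i$, combined with Proposition~\ref{prop:LT-1}. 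The resulting bound is summable over $i$ and of the form $\rme^{-(k+ik^\gamma)^{1/2-\eta}}\sqrt n$ times the upper tight quantity $|\bfW_n(u)|/\sqrt n$.

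The steps $i\in\{1,\cT_k\}$, with the tighter window $\eta'$, are treated identically, since the window differs only by exponents. The remaining concern is that $\sum_{i\leq\cT_k+5}$ of the error bounds is $o(1)$ as $k\to\infty$; this holds because each term decays like $\rme^{-k^{1/2-\eta}}$ and there are only $\cT_k\leq k^{2}$ terms. The delicate point, which I expect to be the main obstacle, is the lower-deviation case: we must show the resampling bound $b^{(i)}_{k,n}(u)\leq \rme^{-2(k+ik^\gamma)^{1/2-\eta}}$ there. I would first try to reproduce the barrier argument of \cite[Section~7]{Tightness}, conditioning at scale $i+1$ instead of $i$ to gain the required repulsion entropy factor.
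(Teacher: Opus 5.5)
\textbf{The first two terms are fine; the third term has a genuine gap.}

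Your handling of the first two terms is the paper's own argument: it is Lemma~\ref{l:300.3.10}, namely Theorem~\ref{p:300.3} plus Lemma~\ref{l:1103.7} applied to the $\epsilon$-shells of the $k$-boxes and of the $(n-r_n)$-boxes. The gap is in the third term, which is the real content of the proposition.

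Invoking Lemma~\ref{lem:resampling} with $f^i\equiv 1$ gives nothing. The quantity $b^{(i)}_{k,n}(u)$ is an expectation \emph{conditional on} $\rmW_n(u)\cap\rmB^-[y;i-1]\neq\emptyset$, so with $f^i\equiv1$ it equals $1$. Proposition~\ref{prop:LT-1} bounds the reverse quantity $\bbP(\rmW_n(u)\cap\rmB^-[y;i-1]\neq\emptyset\mid Z_{t^A_n}[y;i]=\rho)$. Its smallness for a too-high count does not by itself make such clusters a small fraction of $\bfW_n(u)$. You would have to weigh it against the a priori law of $\wh N_{t^A_n}[y;i]$ and the number of $y$'s. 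A first-moment count of this kind with Gaussian tails only ignores the barrier over the scales between $n$ and $k+ik^\gamma$, and is far larger than $\sqrt n$: already $\bbE|\rmW_n(0)|$ is of order $n^{3/2}$, while $|\bfW_n(0)|$ is of order $\sqrt n$. Moreover, Proposition~\ref{prop:GR-1} only applies to counts at most $\rme^{ck^\gamma}$, so not from scale $n$. Theorem~\ref{prop:coupling0} only covers $\rme^{ck^\gamma}$ annuli.

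What Lemma~\ref{lem:resampling} can show is narrower. Among clusters whose count at some scale lies in a prescribed set, those with an \emph{additional} property of the interior local time are rare. This holds only if that property has conditional probability small compared with $\rme^{-4k^\gamma}$. The lower-deviation case, as you say yourself, is left open.

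\textbf{The missing ingredient is the Thinning Lemma~\ref{lem:thinning}, i.e.\ the isomorphism.} Consider clusters lying in some $\rmB^-[y;i-1]$ with $\wh N^{\rm in}_{t^A_n}[y;i]$ too high or $\wh N^{\rm out}_{t^A_n}[y;i]$ too low. These are the sets $\mathbf{M}^{k,\pm,i}_n(u)$, $i\in[2,\cT_k+5]$, with thresholds at exponent $\eta'/2$. For them the paper combines thinning with resampling as in \cite[Lemma~7.2]{Tightness}. Thinning turns a count of low local time clusters in a local-time-measurable region into a count of DGFF min-extremes in that region, for which all-scale information is available.

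Your idea of conditioning one scale higher is what the paper does, but only for $i=1$ (the set $\mathbf{M}^{k,*}_n(u)$). Lemma~\ref{lem:incball.1} does not cover $i=1$, so that case is not ``treated identically''. Knowing that scale $2$ lies in the narrower window $\cR^{\eta'/2}_k(2)$, one takes $f$ to be the indicator of a jump of order $k^{1/2-\eta'}$ between scales $2$ and $1$. One then bounds $b^{(2)}_{k,n}(u)$ by the Gaussian tail of Proposition~\ref{prop:GR-1} divided by the lower bound of Proposition~\ref{prop:LT-1}. This one-step argument cannot be iterated down the scales, since a trajectory can leave the window through small increments. That is why the nested windows $\eta'/2<\eta'<\eta$ and a global input at the scales $i\geq 2$ are needed.

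Finally, you should work with $\bfW^{k,\epsilon}_n(u)\setminus\bfR^{k,\epsilon}_n(u)$ rather than $\bfW^k_n(u)\setminus\bfR^k_n(u)$. The $\epsilon$-room is what yields $\rmW_n(u)\cap\rmQ(z;n-r_n)\subseteq\rmQ(x(z);k)\subseteq\rmB^-[y;i-1]$, which the resampling step needs.
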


Therefore, we shall devote this subsection to the proof Proposition~\ref{prop:rest3}. Our first task will~be to show that the restriction of $\bfW_n(u)$ to $\bfW^{k,\epsilon}_n(u)$ is asymptotically harmless.

\begin{lem}\label{l:300.3.10} For any fixed $u \geq 0$, 
	\begin{equation}
		\frac{|\bfW_n(u) \setminus \bfW_n^{k,\epsilon}(u)|}{\sqrt{n}} \overset{\bbP}{\longrightarrow}0,  
	\end{equation}
	in the limit as $n \to \infty$, followed by $k \to \infty$ and finally $\epsilon \to 0$.	
\end{lem}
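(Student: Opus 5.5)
We split $\bfW_n(u)\setminus\bfW_n^{k,\epsilon}(u)$ according to which defining condition of $\bfW^{k,\epsilon}_n(u)$ fails for $z$:
\begin{enumerate}
\item[(i)] $z\notin\bfW^k_n(u)$, i.e.\ the cluster meets more than one $k$-box;
\item[(ii)] $z\in\bfW^k_n(u)$, but the cluster does not enter the inner box $\rmQ(x(z);k-\epsilon)$;
\item[(iii)] the cluster is not contained in $\rmQ(z;n-r_n-\epsilon)$.
\end{enumerate}
It suffices to show that each of the three counts, divided by $\sqrt n$, tends to zero in probability in the stated order of limits.

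Case (i) is exactly \eqref{e:5.3} of Theorem~\ref{p:300.3}, with $n\to\infty$ followed by $k\to\infty$ and no dependence on $\epsilon$.

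Cases (ii) and (iii) are both handled by Lemma~\ref{l:1103.7}, applied with a deterministic thin set. For (iii), take
\[
\wt{\rmD}_n:=\bigcup_{z\in\bbX_{n-r_n}}\rmQ(z;n-r_n)\setminus\rmQ(z;n-r_n-\epsilon).
\]
Any $z$ in case (iii) has $\rmW_n(u)\cap\rmQ(z;n-r_n)\cap\wt{\rmD}_n\neq\emptyset$, so $z\in\bfW^{\wt{\rmD}_n}_n(u)$. Moreover $|\wt{\rmD}_n\cap\rmD^\circ_n|/|\rmD_n|\le 1-\rme^{-2\epsilon}\le 2\epsilon$. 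Lemma~\ref{l:1103.7} then gives, with probability tending to one, a bound of the form $\delta^{-1}\sqrt{2\epsilon}\,\sqrt n$ for this count.

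For (ii), take
\[
\wt{\rmD}'_n:=\bigcup_{x\in\bbX_k}\rmQ(x;k)\setminus\rmQ(x;k-\epsilon),
\]
whose density is again at most $C\epsilon$ uniformly in $k$ (up to lattice rounding, which is negligible for $k$ large). If $z$ falls in case (ii), then $\rmW_n(u)\cap\rmQ(z;n-r_n)$ lies in the single box $\rmQ(x(z);k)$ but avoids $\rmQ(x(z);k-\epsilon)$. Hence it meets $\wt{\rmD}'_n$, and $z\in\bfW^{\wt{\rmD}'_n}_n(u)$. Lemma~\ref{l:1103.7} again yields a count of order $\delta^{-1}\sqrt{C\epsilon}\,\sqrt n$.

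To conclude, fix $\delta$ and let $\epsilon\to0$ so that $\delta^{-1}\sqrt\epsilon\to0$. The bound holds for each fixed $\epsilon$ and $k$ as $n\to\infty$, uniformly in $k$, and the exceptional probability vanishes as $\delta\to0$ by the $\lim_{\delta\downarrow0}$ in Lemma~\ref{l:1103.7}. This yields the claim in the order $n\to\infty$, then $k\to\infty$, then $\epsilon\to0$.

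The main point to check is that Lemma~\ref{l:1103.7} may be applied with a set depending on $k$ and $\epsilon$ but not on $n$-randomness. It may: $\wt{\rmD}_n$ only needs to be deterministic, and the density bound $|\rmD^\circ_n\cap\wt{\rmD}_n|/|\rmD_n|\le C\epsilon$ holds uniformly in $k$.
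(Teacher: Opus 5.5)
Your proof is correct and follows the paper's argument. You reduce to $\bfW^k_n(u)$ via \eqref{e:5.3} of Theorem~\ref{p:300.3}, then bound the remaining clusters by Lemma~\ref{l:1103.7} applied to the thin boundary layers of the $k$-boxes and of the $(n-r_n)$-boxes; the paper does this in a single application to their union, you in two applications. Like the paper, you implicitly use that the $\delta$-bound in Lemma~\ref{l:1103.7} is uniform over the choice of deterministic sets, which its proof provides, since your sets depend on $k$ and $\epsilon$.
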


\begin{proof} By Theorem~\ref{p:300.3}, it suffices to show that
	\begin{equation}
		\frac{|\bfW_n^{k}(u) \setminus \bfW_n^{k,\epsilon}(u)|}{\sqrt{n}} \overset{\bbP}{\longrightarrow}0,  
	\end{equation}
	in the limit as $n \to \infty$, followed by $k \to \infty$ and finally $\epsilon \to 0$. To this end, we first observe that, in the notation of Lemma~\ref{l:1103.7}, we have
	\begin{equation}
		\frac{|\bfW_n^{k}(u) \setminus \bfW_n^{k,\epsilon}(u)|}{\sqrt{n}} \leq |\bfW_n^{\wt{\rmD}}(u)|,	
	\end{equation} where
	$\wt{\rmD}:=\rmA^{k,\epsilon} \cup \rmA^{n-r_n,\epsilon}
	$ with $\rmA^{m,\epsilon}$ for $m \in \{k,n-r_n\}$ given by 
	\begin{equation}
		\rmA^{m,\epsilon}:= \bigcup_{x \in \bbX_m} 	\rmQ(x;m) \setminus \rmQ(x;m-\epsilon).
	\end{equation}
	Now, notice that, since $|\rmQ(m)\setminus \rmQ(m-\epsilon)| \leq (\epsilon++o_m(1))|\rmQ(m)|$ for some $o_m(1) \to \infty$ as $m \to 1$ and, in addition, $\rmQ(z;n-r_n) \subseteq \rmD_n$ holds for all $z \in \rmD_n^\circ \cap \wt{\rmD}$ whenever $n$ is large enough, we~have  $|\rmD_n^\circ \cap \wt{\rmD}| \leq (2\epsilon+o_k(1)+o_n(1))|\rmD_n|$. The result now follows at once from Lemma~\ref{l:1103.7}.%\footnote{Adjust the statement of Lemma~\ref{l:1103.7} so that it fits with this. Need to take general $\wt{\rmD}$ and then the quotient $|\wt{\rmD}\cap \rmD_n^\circ|/|\rmD_n|$ in the probability.}
\end{proof}

We are now ready to prove Proposition~\ref{prop:rest3}.

\begin{proof}[Proof of Proposition~\ref{prop:rest3}] In light of Lemma~\ref{l:300.3.10}, it will suffice to show that, for any fixed $u \geq 0$ and $\epsilon \in (0,1)$, we have
	\begin{equation}\label{eq:rest3.1}
		\frac{|\bfW_n^{k,\epsilon}(u) \setminus \bfR_n^{k,\epsilon}(u)|}{\sqrt{n}} \overset{\bbP}{\longrightarrow}0,  
	\end{equation} in the limit as $n \to \infty$ followed by $k \to \infty$. This will follow from an application of the Thinning and Resampling Lemmas from \cite{Tightness}.
	
	Indeed, for any $z \in \bfW_n^{k,\epsilon}(u)$ there exists (a unique) $x(z) \in \bbX_k$ with $\rmQ(x(z);k) \subseteq \rmQ(z;n-r_n)$ and $\rmW_n(u) \cap \rmQ(x(z);k) \neq \emptyset$. Since $\rmW_n(u) \cap \rmQ(z;n-r_n) \subseteq \rmQ(z;n-r_n-\eps)$, the former implies that, if $n$ is large enough (depending only on $k$ and $\eps$), in fact $\rmW_n(u) \cap \rmQ(z;n-r_n) \subseteq \rmQ(x(z);k)$.
	On the other hand, if in addition $z \notin \bfR_n^{k,\epsilon}(u)$, then, since $\overline{\rmB^+[x(z);\cT_k+6]}\subseteq \rmD_n$ for all $n$ large enough (depending on $k$ and $\gamma)$, there must exist some $i \in \{1,\dots,\cT_k+5\}$ such that 
	\begin{equation}
		\sqrt{\wh{N}_{t^A_n}[x(z);i]} \notin \begin{cases}\cR^{\frac{\eta'}{2}}_k(i) & \text{ if $i \geq 2$}, \\ \\ \cR^{\eta'}_k(i) & \text{ if $i=1$,}\end{cases} 
	\end{equation} where above we have used that $\cR^{\frac{\eta'}{2}}_k(i)\subseteq \cR^{\eta'}_k(i) \subseteq \cR^{\eta}_k(i)$ because $\frac{\eta'}{2} < \eta' < \eta$. Whenever $i \geq 2$, by Lemma~\ref{lem:incball.1}, there exists $y \in \bbX_{k+(i-2)k^\gamma}$ such that 
	\begin{equation}	
		\rmQ(x(z);k) \subseteq \rmB^-[x(z),\max\{i-2,1\}] \subseteq \rmB^-[y;i-1]
	\end{equation}
	and 
	\begin{equation}
		\wh{N}^{\textrm{in}}_{t^A_n}[y;i] \geq \wh{N}_{t^A_n}[x(z);i]	 \geq \wh{N}^{\textrm{out}}_{t^A_n}[y;i].	
	\end{equation}
	On the other hand, if $i=1$ and $\sqrt{\wh{N}_{t^A_n}[x(z);2]} \in \cR^{\frac{\eta'}{2}}_k(2)$, then, for all $k$ large enough (depending only on $\gamma$ and $\eta'$),
	\begin{equation}
		\left|\sqrt{\wh{N}_{t^A_n}[x(z);2]}-\sqrt{\wh{N}_{t^A_n}[x(z);1]}\right| \geq k^{\frac{1}{2}-\eta'}.	
	\end{equation}
	It follows that if we define 
	\begin{equation}
		\begin{split}
			\mathbf{M}^{k,*}_n(u):=\Big\{ z \in \bfW_n(u) : \exists x \in \bbX_{k} \text{ s.t. }& \rmW_n(u) \cap \rmQ(z;n-r_n) \subseteq \rmQ(x;k), 	\\& \sqrt{\wh{N}_{t^A_n}[x(z);2]} \in \cR^{\frac{\eta'}{2}}_k(2),\\ & \left|\sqrt{\wh{N}_{t^A_n}[x(z);2]}-\sqrt{\wh{N}_{t^A_n}[x(z);1]}\right| \geq k^{\frac{1}{2}-\frac{\eta'}{2}}\Big\} 
		\end{split}
	\end{equation}
	and, for $i=2,\dots,\cT_k+5$, 
	\begin{equation}
		\begin{split}
			\mathbf{M}^{k,+,i}_n(u):=\Big\{ z \in \bfW_n(u) : \exists y \in \bbX_{k+(i-2)k^\gamma}\text{ s.t. } & \rmW_n(u) \cap \rmQ(z;n-r_n) \subseteq \rmB^-[y;i-1], 	\\& \wh{N}^{\textrm{in}}_{t^A_n}[y;i] \geq \sqrt{2}(k+(i-1)k^\gamma)+(k+ik^\gamma)^{\frac{1}{2}+\frac{\eta'}{2}}\Big\}, 
		\end{split}
	\end{equation}together with
	\begin{equation}
		\begin{split}
			\mathbf{M}^{k,-,i}_n(u):=\Big\{ z \in \bfW_n(u) : \exists y \in \bbX_{k+(i-2)k^\gamma}\text{ s.t. } & \rmW_n(u) \cap \rmQ(z;n-r_n) \subseteq \rmB^-[y;i-1],	\\& \wh{N}^{\textrm{out}}_{t^A_n}[y;i] \leq \sqrt{2}(k+(i-1)k^\gamma)+(k+ik^\gamma)^{\frac{1}{2}-\frac{\eta'}{2}}\Big\} 
		\end{split}
	\end{equation}
	then $\bfW^{k,\epsilon}_n(u) \setminus \bfR^{k,\epsilon}_n(u) \subseteq \mathbf{M}^{k,*}_n(u) \cup \mathbf{M}_n^{k,+,[2,\cT_k+5]}(u) \cup \mathbf{M}_n^{k,-,[2,\cT_k+5]}$, where, as usual, we extended the definition of $\mathbf{M}_n^{k,\pm,i}(u)$ to $[2,\cT_k+5]$ in place of $i$ by taking the union over all $i=2,\dots,\cT_k+5$. Thus, in order to conclude the result it will suffice to show that, for any $\delta > 0$,
	\begin{equation}\label{eq:rl1.1}
		\lim_{k \to \infty} \lim_{n \to \infty} \bbP( |\mathbf{M}^{k,*}_n(u)| > \delta \sqrt{n}) = 0	
	\end{equation} and
	\begin{equation}\label{eq:tlrl1.2}
		\lim_{k \to \infty} \lim_{n \to \infty} \bbP( |\mathbf{M}^{k,\pm,[2,\cT_k+5]}_n(u)| > \delta \sqrt{n}) = 0.	
	\end{equation} To prove \eqref{eq:rl1.1} one can use the Lemma~\ref{lem:resampling} the only key input being the fact that, if we define 
	\begin{equation}
		b_{k,n}^{(2)}(u):= \sup_{x,(v,\ol{z})} \bbP\bigg( \Big| \sqrt{\wh{N}_{t^A_n}[x;1]} - v\Big| \geq k^{\frac{1}{2}-\eta'}\, \bigg|\, Z_{t^A_n}[x;2]=(v,\ol{z})  \,,\, \rmW_n(u) \cap \rmQ(x;k)\neq \emptyset\bigg),
	\end{equation}  where the supremum is over all $(v,\ol{z}) \in \cZ^{\frac{\eta'}{2}}_k[2]$ and $x$ satisfying $\ol{\rmB^+[x;2]} \subseteq \rmD_n$, then, by virtue of Proposition~\ref{prop:GR-1} and Proposition~\ref{prop:LT-1}, we have, for some constant $C_u>0$ and all $k$ large enough, 
	\begin{equation}
		b_{k,n}^{(2)}(u) \leq 4\rme^{-\frac{1}{2}k^{1-2\eta'-\gamma} +C_uk^{\frac{1+\eta'}{2}}} \leq \rme^{-\frac{1}{4}k^{1-2\eta'-\gamma}}
	\end{equation} if $\eta'$ is small enough (depending only on $\gamma$). On the other hand, in order to establish \eqref{eq:tlrl1.2}, one needs to combine the Thinning and Resampling Lemmas as done in the proof of \cite[Lemma~7.2]{Tightness}, with the only (minor) difference that here one needs to work with $\wh{N}^{\textrm{in}}_t[y;i]$ and $ \wh{N}^{\textrm{out}}_t[y;i]$ instead of the downcrossings $N_t(y;m)$ from \cite{Tightness}. We omit the details.
\end{proof}

\subsection{Proofs of Section~\ref{s:barrier} preliminaries} \label{sec:proofprelimballot}

We now give the proofs of all auxiliary results from Section~\ref{sec:prelimballot}. We begin with the gambler's~ruin estimates for simple random walk on $\bbZ^2$.  

\begin{proof}[Proof of Lemma~\ref{lem:ll1}] Follows immediately from Proposition~6.4.1 and Proposition~6.4.3 in~\cite{LL}. 
\end{proof}

Then, we have the asymptotic estimates for the Poisson kernal and harmonic measure of~balls.

\begin{proof}[Proof of Lemma~\ref{lem:kernelfrominf}] The estimate \eqref{eq:miss1} is essentially given by \cite[Lemma~A.2]{Tightness}, but we include a short proof here for completeness. By \cite[Theorem~6.3.8]{LL} there exists $C> 0$ such that
	\begin{equation}\label{eq:qpk1}
		1-C\rme^{-(l+r')} \leq \frac{\Pi_{\rmB(\wt{x};l+r')}(v,z)}{\Pi_{\rmB(\wt{x};l+r')}(u,z)} \leq 1+C\rme^{-(l+r')}
	\end{equation} uniformly over all pairs of nearest neighbors $u,v \in \rmB(\wt{x};l)$ and all $z \in \partial \rmB(\wt{x};l+r')$. Indeed, this follows from applying \cite[Equation~(6.19)]{LL} to the map $f_{u,z}(\cdot):=\Pi_{\rmB(\wt{x};l+r')}(\cdot+u,z)$ which is harmonic on $\rmB(0;l+r'-1)$ if $r' \geq 1$ since in that case we have $\rmB(u;l+r'-1) \subseteq \rmB(\wt{x};l+r')$. 
	Hence, since any $y \in \rmB(\wt{x};l+r')$ can be joined with $\wt{x}$ by a path having at most $\rme^{l}$ steps, by iterating \eqref{eq:qpk1} over all pairs of consecutive neighbors on such path, a straightforward computation then yields \eqref{eq:miss1} for all $r'$ large enough.
	
	To show \eqref{eq:miss2}, we notice that, if we consider the hitting event $\rmH:=\{ \tau_{\partial \rmB(\wt{x};l+r')} <  \overline{\tau}_{\rmB(x;k)}\}$, then, since $\bbP_y(\rmH)>0$ for all $y \in \rmB(\wt{x};l)\setminus (\rmB(x;k) \setminus \partial_i \rmB(x;k))$, \eqref{eq:miss1} implies that 
	\begin{equation}\label{eq:pmiss2a}
		\Pi_{\rmB(0;l+r')}(0,z)(1+O(\rme^{-\ol{c}r'}))=\bbP_{y} ( Z_{\tau_{\partial \rmB(\wt{x};l+r')}}=\wt{x}+z \,|\, \rmH) \bbP_y(\rmH) + \bbP_y(  \{ Z_{\tau_{\partial \rmB(\wt{x};l+r')}}=\wt{x}+z \} \cap \rmH^c).
	\end{equation} Now, since $\rmB(x;k) \subseteq \rmB(\wt{x};l)$, by \eqref{eq:miss1} together with the strong Markov property at time $\tau_{\rmB(x;k)}$ we obtain that
	\begin{equation}
		\bbP_y(  \{ Z_{\tau_{\partial \rmB(\wt{x};l+r')}}=\wt{x}+z \} \cap \rmH^c) =  \Pi_{\rmB(0;l+r')}(0,z)(1+O(\rme^{-\ol{c}r'}))\bbP(\rmH^c), 
	\end{equation} which, in combination with \eqref{eq:pmiss2a}, immediately yields \eqref{eq:miss2}. 
	
	We conclude by proving~\eqref{eq:miss3}. To this end, we first notice that, by arguing as in the proof of the last-exit decomposition from~\cite[Proposition~4.6.4]{LL}, for any $y \in \rmB(\wt{x};l+r') \setminus \rmB(x;k+r)$ and $\wt{w} \in \partial_i \rmB(x;k)$, whenever $r' \geq 1$ (so that $\partial \rmB(x;k+r) \subseteq \rmB(\wt{x};l+r') \setminus \rmB(x;k)=:A$) we have
	\begin{equation}\begin{split}
			\bbP_y(Z_{\tau_{\rmB(x;k)}}=\wt{w}\,,\,\tau_{\rmB(x;k)} < \tau_{\partial \rmB(\wt{x};l+r')}) &=\bbP_y( Z_{\tau_{A^c}}=\wt{w}\,,\,\tau_{\partial \rmB(x;k+r)} < \tau_{A^c})\\ & = \sum_{z \in \partial \rmB(x;k+r)} G_{A}(y,z)\bbP_z( Z_{\tau_{A^c}}=\wt{w}\,,\, \tau_{A^c}<\ol{\tau}_{\partial\rmB(x;k+r)}),
		\end{split}
	\end{equation} where, for the first equality, we have used the fact that, starting from $y \in \rmB(\wt{x};l+r') \setminus \rmB(x;k+r)$, the walk must first hit $\partial \rmB(x;r+k)$ in order to reach $\rmB(x;k)$.
	On the other hand, by reverting the path of the walk and \eqref{eq:miss2} (used for $k+r$ in place of $l+r'$), we obtain 
	\begin{equation}
		\begin{split}
			\bbP_z( Z_{\tau_{A^c}}=\wt{w}\,,\, \tau_{A^c}<\ol{\tau}_{\partial \rmB(x;k+r)}) & = \bbP_{\wt{w}}(Z_{\tau_{\partial\rmB(x;k+r)}}=z\,,\,\tau_{\partial \rmB(x;k+r)} < \overline{\tau}_{\rmB(x;k)}) \\
			& = \Pi_{\rmB(0;k+r)}(0,z-x)\bbP_{\wt{w}}(\tau_{\partial \rmB(x;k+r)} < \overline{\tau}_{\rmB(x;k)})(1+O(\rme^{-\ol{c}r}))
		\end{split}
	\end{equation} for all $r$ large enough, from which a straightforward computation shows that
	\begin{equation}\label{eq:miss3cond}
		\bbP_y(Z_{\tau_{\rmB(x;k)}}=\wt{w}\,|\,\tau_{\rmB(x;k)} < \tau_{\partial \rmB(\wt{x};l+r')}) =\frac{\bbP_{\wt{w}}(\tau_{\partial \rmB(x;k+r)} < \overline{\tau}_{\rmB(x;k)})}{\sum_{\wt{w}' \in \partial_i \rmB(x;k)}\bbP_{\wt{w}'}(\tau_{\partial \rmB(x;k+r)} < \overline{\tau}_{\rmB(x;k)})}(1+O(\rme^{-\ol{c}r})).
	\end{equation} 
	On the other hand, by \eqref{eq:amalgdef} and the fact that $g_A \equiv 0$ on $A$, it follows that, for any $w \in \partial_i \rmB(0;k)$, 
	\begin{equation}\label{eq:miss3condb}
		\begin{split}
			\amalg_{\rmB(0;k)}(w)&=\sum_{z \in \partial \rmB(0;k+r)} g_{\rmB(0;k)}(z)\bbP_{w}(Z_{\tau_{\partial\rmB(0;k+r)}}=z\,,\,\tau_{\partial \rmB(0;k+r)} < \overline{\tau}_{\rmB(0;k)})\\
			&=r \bbP_w(\tau_{\partial \rmB(0;k+r)} < \overline{\tau}_{\rmB(0;k)})(1+O(\rme^{-k}))
		\end{split}
	\end{equation}
	where the last inequality holds since $g_{\rmB(0;k)}(z)=r+O(\rme^{-k})$ uniformly over all $z \in \partial \rmB(0;k+r)$. Since $\sum_{w \in \partial_i \rmB(0;k)} \amalg_{\rmB(0;k)}(w)=1$ by definition, the former implies that
	\begin{equation}\label{eq:miss3condc}
		\sum_{w \in \partial_i \rmB(0;k)} \bbP_w(\tau_{\partial \rmB(0;k+r)} < \overline{\tau}_{\rmB(0;k)}) = \frac{1}{r}(1+O(\rme^{-k}))
	\end{equation} which, by translation invariance and in light of \eqref{eq:miss3cond}, together with \eqref{eq:miss3condb} immediately yields the result.
\end{proof}

We next prove Lemma~\ref{lem:idlaw}.

\begin{proof}[Proof of Lemma~\ref{lem:idlaw}] (i) follows from the fact that, by standard gambler's ruin estimates, each (downward) excursion from $T-i$ reaches $T-j-1$ with probability $\frac{1}{j-i+1}$ and, whenever it does, the probability of repeating a downcrossing from $T-j$ to $T-j-1$ before returning to $T-i$ is again $\frac{1}{j-i+1}$. 
	
	On the other hand, (ii) is a consequence of the fact that, again by standard gambler's ruin estimates, in each of these $\wt{v}_k$ independent excursions away from~$T$, there is a probability $\frac{1}{i}$ of reaching $T-i$ and, conditional on hitting $T-i$, the local time spent at $T-i$ has distribution $\textrm{Exponential}(\frac{1}{i})$. 
	
	Finally, we obtain (iii) upon noticing that, conditional on spending $\wt{\theta}_k$ local time at site $T-i$, by standard gambler's ruin estimates, we have that the number of (downward) excursions away from $T-i$ that reach $T-j$ is $\textrm{Poisson}(\frac{\wt{\theta_k}}{j-i+1})$, while the number of downcrossings from $T-j$ to $T-j-1$ in each of these excursions is $\textrm{Geometric}(\frac{1}{j-i+1})$. 	
\end{proof}

Finally, we have the proof of the tail estimates for compound distributions.

\begin{proof}[Proof of Lemma~\ref{lem:compound}]
	The left tail estimate from (i) is contained in \cite[Lemma 4.6]{belius2017subleading}, while those from (ii) and (iii) are in \cite[Lemma A.3]{Tightness}. The right tail estimates all follow by similar arguments. Hence, we refer the reader to \cite{belius2017subleading,Tightness} and omit the details.
\end{proof}

\section*{Acknowledgments}
Both authors would like to thank Marek Biskup and Amir Dembo for many stimulating discussions. The work of O.L. 
was supported by ISF grants no.~2870/21 and~3782/25. The work of S.S. was partially supported by Fondecyt grant 1240848.

\bibliographystyle{abbrv}
\bibliography{BoxCoverTime}
\end{document}